\titleformat{\section}{\scshape \bfseries}{\thesection}{20pt}{\LARGE}
\titleformat{\subsection}{\scshape\bfseries}{\thesubsection}{20pt}{\Large}
\titleformat{\subsubsection}{\scshape \bfseries}{\thesubsubsection}{20pt}{\large}
\newcommand{\mathleft}{\@fleqntrue\@mathmargin\parindent}
\newcommand{\mathcenter}{\@fleqnfalse}
\def\VR{\kern-\arraycolsep\strut\vrule &\kern-\arraycolsep}
\def\vr{\kern-\arraycolsep & \kern-\arraycolsep}
\newtheorem{theorem}{Theorem}
\newtheorem*{theoremnonum}{Theorem}
\newtheorem{prop}[theorem]{Proposition}
\newtheorem{lemma}[theorem]{Lemma}
\newtheorem{corollary}[theorem]{Corollary}
\newtheorem{definition}[theorem]{Definition}
\newtheorem{remark}[theorem]{Remark}
\newtheorem{notation}[theorem]{Notation}
\theoremstyle{definition}
\newtheorem{obs}{Observation}
\newtheorem{ex}[theorem]{Example}
\newenvironment{example}[1][]{\begin{ex}[#1]\pushQED{\qed}}{\popQED \end{ex}}
\newtheorem{alg}[theorem]{Algorithm}
\numberwithin{theorem}{subsection}
\newcommand{\zz}{{\mathbb{Z}\mathbb{Z}}}
\newcommand{\an}{{\mathbb{A}}}
\newcommand{\dar}{{\mathrm{AR}}}
\newcommand{\dbl}{{\mathrm{BL}}}
\newcommand{\dwil}{{\mathrm{I}}}
\newcommand{\zzu}{{\mathbb{Z}\mathbb{Z}[\leq\hspace{-.1cm}u]}}
\newcommand{\rop}{{\mathbb{R}^{\mathrm{op}}\times\mathbb{R}}}
\newcommand{\uu}{{\mathbb{U}}}
\numberwithin{equation}{section}
\newcommand{\fakesection}[1]{
  \par\refstepcounter{section}
  \sectionmark{#1}
  \addcontentsline{toc}{section}{\protect\numberline{\thesection}#1}
  }
\newcommand{\lr}[1]{\langle{#1}\rangle}
\title{Persistence and Stability of the \(\mathbb{A}_n\) Quiver}
\author{Killian Meehan}
\author{David C. Meyer}
\begin{document}

% \author{Killian Meehan}
% \affil[Killian Meehan]{Kyoto University Institute for Advanced Study}
% \email[Killian Meehan]{killian.f.meehan@gmail.com}

% \author{David C. Meyer}
% \affil[David C. Meyer]{Smith College}
% \email[David C. Meyer]{dmeyer@smith.edu}

%\address{University of Missouri-Columbia, Mathematics Department, Columbia, MO, USA}
%\email[Calin Chindris]{chindrisc@missouri.edu}

%\author{Daniel Kline}
%\address{University of Missouri-Columbia, Mathematics Department, Columbia, MO, USA}
%\email[Daniel Kline]{dbkfz9@mail.missouri.edu}

%\date{\today}
%\bibliographystyle{plain}
%\subjclass[2000]{???, ???}
%\keywords{???}

\maketitle

% \begin{center}
%   Killian Meehan \& David C. Meyer
% \end{center}

%\setcounter{tocdepth}{2}
%\tableofcontents
%\tableofcontents
%\addtocontents{toc}{\setlength  {\cftchapterindent }{0em} \setlength  {\cftchapternumwidth }{\cftlastnumwidth }}

\begin{abstract}
  We introduce two new distances for zigzag persistence modules. The
  first uses Auslander-Reiten quiver theory, and the second is an
  extension of the classical interleaving distance. Both are defined
  over completely general orientations of the \(\mathbb{A}_n\) quiver.
  We compare the
  first distance to the block distance introduced by M. Botnan and
  M. Lesnick and obtain the full set of sharp Lipschitz bounds between
  the two (as bottleneck distances) over pure zigzag orientations. The
  final portion of the paper presents sharp Lipschitz bounds necessary
  for the extended interleaving distance to dominate the distance that
  is created from the Auslander-Reiten quiver. These bounds are
  obtained for general orientations of the \(\mathbb{A}_n\) quiver.
\end{abstract}

%\setcounter{tocdepth}{2}
% \tableofcontents

\fakesection{Beginnings}

\subsubsection{Introduction}

Both classical 1-D persistent homology and zigzag persistent homology
use data structures that fall under the same quiver theoretic notion:
they are both orientations of \emph{Dynkin quivers of type A}, which
are written throughout as \(\mathbb{A}_n\) where \(n\) is the number
of vertices of the quiver.

Quiver theory treats all orientations of \(\mathbb{A}_n\) equally
regarding the result that any representation of such a quiver (i.e.,
any persistence module over the underlying poset)
decomposes into \emph{interval} representations \cite{gabriel}, the
collection of which in turn form a \emph{barcode}\textemdash a stable
topological invariant of the representation/persistence module (or the
data set that generated it).

In this paper we propose two new distances on persistence modules over
\(\mathbb{A}_n\)-type quivers. We will spend the
rest of the paper constructing them, laying out their properties and
advantages, and proving stability results between these distances and
some of those already in use in persistent homology literature.

We primarily focus our attention on the comparison of
distances via their induced \emph{bottleneck distances}
(Definition \ref{def_bn}): distances that first associate a pair of
modules to their barcodes (collections of interval summands), and then
pair up the elements of the barcodes in some ``closest'' manner.

Here we briefly introduce and summarize these two new distances on
zigzag persistence modules and relay some of their most overt
properties.

\begin{itemize}

\item \textbf{\(\mathbb{A}_n\)-modules as multisets of vertices
    of the Auslander-Reiten quiver.}

  The \emph{AR distance} (section \ref{sec_ar_full}) can be
  applied to persistence modules over any orientation of
  \(\mathbb{A}_n\) and is a \emph{bottleneck distance} by
  construction. When some notion of 'endpoint parity' between a pair
  of interval modules agrees, their distance is simply sum of
  difference between endpoints (an \(\ell^1\)-type distance when
  considering intervals to be coordinate pairs, as is commonly seen in
  persistence diagrams). The distance behaves
  differently when parity does not agree. Over pure zigzag
  orientations, this distance's change in behavior relative to endpoint
  parity is a feature shared by the block distance
  \cite{botnan_lesnick}, which is reviewed in subsection
  \ref{sec_block} and compared in full with the AR distance in section
  \ref{sec_stab}.

  The properties of the AR distance are strongly influenced by the
  algebra of the underlying quiver. For
  instance, in pure zigzag orientations, interval modules of
  \([\mathrm{sink},\mathrm{sink}]\) endpoint parity are
  close to projective simple modules, and those
  with \([\mathrm{source},\mathrm{source}]\) endpoint party are close
  to injective simples modules (in this situation ``closeness''
  is relative to support size). In general, when a
  pair of intervals has non-matching endpoint parity, the poset
  structure influences their distance to a much greater
  degree than similarity in supports.
  
\item \textbf{\(\mathbb{A}_n\)-modules as persistence modules over a
    suspended poset.}

  The \emph{weighted interleaving distance} (section \ref{sec_dwil})
  considers an arbitrary orientation of \(\mathbb{A}_n\) as a series
  of connected `valleys' (maximal upward posets of the form
  \([\textrm{source},\infty)\)), and then measures
  the distance between two modules by the depth of
  the valleys on which the intervals must be isomorphic. On all
  shallower valleys they are free to differ.

  The general construction was pursued in our previous paper
  \cite{meehan_meyer_1} for the purpose of applying interleaving
  distance to finite posets without inevitably encountering an
  excessive number of module pairs whose interleaving distance was
  infinite.
\end{itemize}

\subsubsection{Contributions}

A summary of our contributions are as follows:
\begin{itemize}
\item We provide full and sharp Lipschitz bounds
  between our AR distance and the block distance,
  with the latter treated as its own induced bottleneck distance
  (Theorem \ref{thm_blar_limit}).

\item Included as part of the elucidation of the AR
  distance is as a topic of potentially independent interest: we
  provide an explicit
  formulation of the Auslander-Reiten quiver for any orientation of
  \(\mathbb{A}_n\) in Section \ref{sec_ar}. While this
  formulation follows from the
  Knitting Algorithm (see \cite{schiffler} for details on the
  Knitting Algorithm and other methods of calculating the
  Auslander-Reiten quiver for orientations of \(\mathbb{A}_n\)), our
  formulation provides full information about the
  Auslander-Reiten quiver without any iterative
  construction.

\item We provide sharp bounds for the weighted interleaving distance
  to dominate the AR distance. (Theorem
  \ref{thm_pairs}.)
\end{itemize}

\subsubsection{Acknowledgements}

The authors would like to thank
%Peter Bubenik,
%Justin Curry,
%Emerson Escolar,
%Woojin Kim,
%Michael Lesnick,
%Steve Oudot,
Vin de Silva
and
Michio Yoshiwaki
for their discussions and insights, as well as the entirety of the
members of the Hiraoka Laboratory for their support and assistance.

The first named author is also supported in part by
JST CREST Mathematics (15656429).

\subsection{Preliminaries}

\begin{notation}
  Throughout, we say that a \emph{distance} on a set \(X\) is a function
  \(d:X\times X\to[0,\infty]\) such that
  \begin{enumerate}
  \item \(d(x,x)=0\) for all \(x\in X\),
  \item \(d(x,y)=d(y,x)\) for all \(x,y\in X\), and
  \item \(d(x,y)\leq d(x,z)+d(z,y)\) for any \(x,y,z\in X\).
  \end{enumerate}
\end{notation}
That is, from the standard definition of 'metric' we surrender
identification between points with \(d(x,y)=0\) and allow distances to
take on infinite values.

\begin{definition}\label{defn_gpm}
A \emph{generalized persistence module} (GPM) \(F\) over a poset \(P\) in a
category \(\mathcal{D}\) is a functor \(F:P\to \mathcal{D}\). That is,
\(F\) is an assignment
\begin{itemize}
\item \(x\to F(x)\) for all \(x\in P\),
\item \((x\leq y)\to F(x\leq
  y)\in\mathrm{Hom}_\mathcal{D}(F(x),F(y))\) for all \(x\leq y\) in
  \(P\)
\end{itemize}
such that, for any \(x\leq y\leq z\), the inequalities are sent to
morphisms
satisfying \(F(y\leq z)\circ F(x\leq y)=F(x\leq z)\).

The category of such functors is denoted \(\mathcal{D}^P\), where
morphisms in this category are given by natural
transformations of functors.

A \emph{persistence module} is a GPM with values in the category of
finite dimensional vector spaces, and is the object of primary
interest in this document.
\end{definition}

\subsubsection{Quivers} In this paper we will frequently view our
underlying structures as both posets and as quivers. We would like to
work with familiar persistent homology structures while applying
quiver-theoretic machinery. The following
is a short, formal definition of quivers, as well as an explanation for
why they can be thought of as equivalent to posets in our setting.

\begin{definition}\label{defn_quiv}
A \emph{quiver} is a quadruple \((Q_o,Q_1,h,t)\) where
\begin{itemize}
\item \(Q_0\) is some finite set called the \emph{vertex set},
\item \(Q_1\) is a collection of \emph{arrows} between vertices,
\item \(h:Q_1\to Q_0\) is a map that sends each arrow to its
  destination (\emph{head}), and
\item \(t:Q_1\to Q_0\) is a map that sends each arrow to its source
  (\emph{tail}).
\end{itemize}

A \emph{representation} \(V\) of a quiver \(Q\) is
\begin{itemize}
\item a vector space \(V(i)\) assigned to every vertex, and 
\item a linear map \(V(a):V(ta)\to V(ha)\) assigned to every arrow.
\end{itemize}
\end{definition}

%For the purposes of this paper and the quivers we will be interested
%in, a reader unfamiliar with quiver theory may
%think of these quivers as interchangeable with
%posets. In these cases, a \emph{representation} of a quiver is just a
%GPM over the
%quiver/poset with values in the category of finite dimensional vector
%spaces.

The space of finite-dimensional representations of a quiver \(Q\),
denoted \(\mathrm{rep}(Q)\), is a category with morphisms given
pointwise, \(f=\{f_i\}_{i\in Q_0}:V\to W\), such that they satisfy
commutative squares \(f(ha)V(a)=W(a)f(ta)\) for all arrows \(a\in
Q_1\).

Quivers may, in general, have closed loops or multiple arrows between
the same pair of vertices. These
features may prevent such quivers from being posets under the
relation  \[
  \{x\leq y\text{ if and only if there exists a path }
  x\to y\}.
\] However, the converse\textemdash that posets always give rise to
quivers in a canonical way\textemdash is true.

\begin{definition}
  For a poset \(P\), the \emph{Hasse} quiver \(Q(P)\) is
  the quiver given by:
  \begin{itemize}
  \item \(Q_0=P\) as a set of vertices.
  \item There exists an arrow \(i\to j\) whenever \(i\leq j\) in
    \(P\), and there is no \(k\) (distinct from \(i\) and \(j\)) such
    that \(i\leq k\leq j\).
  \end{itemize}
\end{definition}

Under certain restrictions, quivers do give rise to posets in a
fashion that inverts the Hasse construction. When there is such a
bi-directional correspondence, as seen in the following proposition,
\emph{the space of representations of the
quiver is equivalent to the space of persistence modules over
the poset.}

\begin{prop}
  \label{prop_p_and_q}
  Let \(Q\) be a quiver such that:
  \begin{itemize}
  \item \(Q\) has no cycles (including stationary loops),
  \item for any two \(i,j\in Q_0\), there exists at most one arrow
    between \(i\) and \(j\).
  \end{itemize}
  Then \(Q\) is the Hasse quiver of some poset \(P\). Furthermore,
  suppose \(Q\) also satisfies:
  \begin{itemize}
  \item For \(i,j\in Q_0\), there is at most one path from
    \(i\to\ldots\to j\).
  \end{itemize}
  Then, the category of
  finite-dimensional representions of the quiver \(Q\) is equivalent
  to the category of functors from the poset category \(P\) to the
  category of finite-dimensional vector
  spaces. I.e., \[\mathrm{rep}(Q)\cong\mathrm{vect}^P\] as
  cagegories.
\end{prop}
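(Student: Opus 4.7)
The plan is to prove the two parts of the proposition in sequence, leveraging each hypothesis at the point it is needed. First I would define a relation $\leq$ on $Q_0$ by declaring $i\leq j$ whenever $i=j$ or there exists a directed path from $i$ to $j$ in $Q$. Reflexivity is tautological, transitivity follows from concatenation of paths, and antisymmetry is exactly the no-cycles hypothesis: if $i\leq j$ and $j\leq i$ with $i\neq j$, concatenating the two paths yields a cycle. This makes $(Q_0,\leq)$ a poset $P$.

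Next I would verify that $Q$ coincides with the Hasse quiver of $P$. Suppose there is an arrow $i\to j$ in $Q$. If there were also a longer path $i\to k_1\to\cdots\to j$, then combined with the direct arrow we would have two distinct paths $i\to\cdots\to j$, forbidden by the third hypothesis; hence every intermediate $k$ satisfying $i<k<j$ is ruled out and $i\lessdot j$ is a covering relation. Conversely, given a covering relation $i\lessdot j$ in $P$, by definition there is some path from $i$ to $j$; any such path of length at least two would pass through an intermediate vertex $k$ with $i<k<j$, contradicting the covering property, so the path must be a single arrow of $Q$.

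For the equivalence of categories, I would explicitly construct a functor $\Phi:\mathrm{rep}(Q)\to\mathrm{vect}^P$ as follows: on objects $V$, set $\Phi(V)(i)=V(i)$, and for $i\leq j$ in $P$, let $\Phi(V)(i\leq j)=V(a_k)\circ\cdots\circ V(a_1)$ where $a_1,\ldots,a_k$ is the unique directed path from $i$ to $j$ in $Q$. The existence and well-definedness of this composition is precisely where the ``at most one path'' hypothesis is essential; functoriality of $\Phi(V)$ reduces to the fact that concatenating the unique path from $i$ to $j$ with the unique path from $j$ to $k$ yields the unique path from $i$ to $k$. On morphisms, $\Phi$ sends a quiver morphism $\{f_i\}$ to the natural transformation with the same components; naturality of the square over an arbitrary relation $i\leq j$ is obtained by stacking the commutative squares of $\{f_i\}$ along the arrows of the underlying path. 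The inverse functor $\Psi:\mathrm{vect}^P\to\mathrm{rep}(Q)$ simply restricts a poset diagram to the arrows of $Q$, and $\Phi\Psi=\mathrm{id}$, $\Psi\Phi=\mathrm{id}$ follow directly from the construction.

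The main obstacle is keeping the role of the three hypotheses cleanly separated: no cycles gives antisymmetry, the single-arrow condition identifies the edge set of $Q$ with the covering relations of $P$, and the unique-path condition is what allows $\Phi(V)(i\leq j)$ to be defined unambiguously and ensures functoriality. Once these are isolated, the remainder of the argument is a routine verification of naturality and mutual inverseness, which I would not grind through in detail.
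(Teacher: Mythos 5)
Your argument is correct, and in fact the paper offers no proof of this proposition at all: it is stated as standard, followed only by a remark explaining informally why the unique-path condition is needed for the categorical equivalence. So your write-up supplies exactly the routine verification the paper omits: order by existence of directed paths (acyclicity giving antisymmetry), identify arrows with covering relations, and use uniqueness of paths to make the composite $\Phi(V)(i\leq j)$ well defined and functorial, with the restriction functor as a strict inverse.

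One point worth flagging concerns the statement rather than your reasoning. The proposition asserts that the first two hypotheses alone make $Q$ a Hasse quiver, with the unique-path condition reserved for the ``furthermore.'' Your proof of the Hasse claim invokes the unique-path hypothesis (to exclude an arrow $i\to j$ coexisting with a longer path $i\to\cdots\to j$), and this is not an oversight on your part: without it the claim as literally stated fails. The quiver on $\{1,2,3\}$ with arrows $1\to 2$, $2\to 3$, $1\to 3$ is acyclic and has no parallel arrows, yet it is not the Hasse quiver of any poset, since $1\leq 2\leq 3$ would force the arrow $1\to 3$ not to be a covering relation. So either the first conclusion should be read as implicitly assuming all three bullet conditions (as your proof does, harmlessly, since the quivers of interest in the paper satisfy them), or one needs some surrogate hypothesis excluding such ``shortcut'' arrows. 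Apart from this, the deferred verifications (naturality by stacking squares along a path, and $\Phi\Psi=\mathrm{id}$ via functoriality of a poset module along a covering chain) are indeed routine and correctly identified as such.
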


\begin{remark} The extra condition above (at most one path
  \(i\to\ldots\to j\)) is necessary for the equivalence of categories
  for the following reason.
  Any GPM over a poset, by virtue of being a
  functor from a \emph{thin} category (cardinality of any
  \(\mathrm{Hom}\)-space is at most \(1\)), has the
  property that the morphisms given by composition along any two
  parallel paths are equal; see Definition
  \ref{defn_gpm}. Contrast this with Definition \ref{defn_quiv} in
  which there are no parallel-path commutativity conditions on a
  quiver representation.

  (If one wished to obtain equivalence between the two
  categories while allowing for the existence of parallel paths,
  this would require the use of \emph{bound quivers}:
  quivers with commutativity relations, for a general reference see
  \cite{schiffler}. Such pursuits are not within the scope of this
  document.)
\end{remark}

By virtue of this equivalence, from here onward we will denote
quivers/posets by \(P\), rather than \(Q\).

The quiver of interest in this paper is \(P=\mathbb{A}_n\), the
`straight line' quiver, with arbitrary orientations for its
arrows. It satisfies all the conditions
of Proposition \ref{prop_p_and_q}.

\begin{definition}
\label{def_an} For \(n\in\mathbb{N}\),
an \(\mathbb{A}_n\)-type quiver is any quiver with vertex set
\(\{1,\ldots,n\}\) whose arrow set consists of exactly one of \[
i\to i+1\text\,\,{ or }\,\,i\gets i+1
\] for every \(i\).

The corresponding poset (whose Hasse quiver returns the original
quiver) is given by \[
  1\sim \ldots \sim n
\] where each \(\sim\) corresponds to \(<\) (for quiver arrows of the
form \(\rightarrow\)) or \(>\) (for quiver arrows of the form
\(\leftarrow\)).

\(\mathbb{A}_n\) will be said to be \emph{equioriented} if all arrows face
the same way.
\begin{center}
  \includegraphics[scale=1.5]{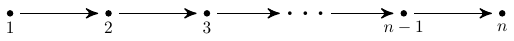}
\end{center}
\(\mathbb{A}_n\) will be said to have \emph{pure zigzag orientation}
if arrows alternate (i.e., each vertex is either a source or sink).
\begin{center}
  \includegraphics[scale=1.5]{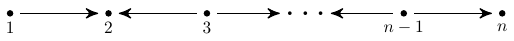}
\end{center}
\end{definition}

The following definition is a fundamental one to persistent homology.

\begin{definition}
  For an orientation of \(\mathbb{A}_n\), define the interval
  persistence module (indecomposable quiver representation) \([x,y]\) to
  be the one \[[x,y](i)=\left\{
      \begin{array}{ll}
        K & \text{if }1\leq x\leq i\leq y\leq n \\
        \\
        0 & \text{otherwise}
      \end{array}\right.
  \] where \(K\) is some base field. The internal morphisms of
  \([x,y]\) are \(1_K\) when possible, and \(0\) otherwise.
  
  From context it should always be
  clear when we mean the indecomposable \([x,y]\) or the
  \(\mathbb{Z}\)-interval \([x,y]\).

  Lastly, we will often abbreviate interval persistence modules of the
  form \([x,x]\) as \([x]\).
\end{definition}

For \(P=\mathbb{A}_n\), as it turns out, every
\(\mathbb{A}_n\)-representation\,\textemdash\,equivalently, every\(P\)
persistence module\,\textemdash\,is isomorphic to a direct
sum of interval persistence modules. The original result cited below
is quiver theoretic in origin, but this result has since been proved
independently for pointwise finite dimensional persistence modules
over \(\mathbb{R}\) \cite{cb}.

\begin{prop}[\cite{gabriel}]\label{prop_gabriel}
  Representations / persistence modules over any \(P=\mathbb{A}_n\)
  decompose into interval persistence modules. This decomposition is
  unique up to ordering and isomorphism of summands.

  Furthermore, interval persistence modules are precisely the
  indecomposable persistence modules (up to isomorphism) of \(P\).
\end{prop}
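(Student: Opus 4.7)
The plan is to prove three things: every interval $[x,y]$ is indecomposable, every finite-dimensional $\mathbb{A}_n$-representation decomposes as a direct sum of intervals, and this decomposition is unique up to permutation and isomorphism. Uniqueness will follow from the Krull--Schmidt theorem once the first assertion is in place, so the substance is in the first two.

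For indecomposability of $[x,y]$, the plan is a direct computation of $\mathrm{End}([x,y])$. An endomorphism $f:[x,y]\to[x,y]$ amounts to a tuple of scalars $(\lambda_i)_{i\in[x,y]}$ in $K$, one at each vertex in the support, and the commutativity condition along each internal arrow (all of which are identities $K\to K$) forces $\lambda_i=\lambda_{i+1}$ irrespective of the orientation of that arrow. Hence $\mathrm{End}([x,y])\cong K$, which is local with no nontrivial idempotents, so $[x,y]$ is indecomposable and its endomorphism ring satisfies the hypothesis of Krull--Schmidt, delivering uniqueness once existence is established.

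For the decomposition itself, the plan is induction on the total dimension $\sum_{i=1}^n\dim V(i)$. For $V\neq 0$ the core step is to exhibit an interval $[x,y]$ as a direct summand of $V$; the complement then has strictly smaller total dimension, and the inductive hypothesis concludes. To build such a summand, I would start at a vertex where $V$ is nonzero and propagate a one-dimensional subspace outward along the zigzag, taking images whenever the arrow points away from the current endpoint and lifting through preimages whenever it points toward it, continuing until propagation is no longer possible. The resulting subrepresentation is isomorphic to some interval $[x,y]$; the remaining task is to choose bases of each $V(i)$ that extend the chosen one-dimensional subspaces and assemble into a subrepresentation complement.

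The main obstacle is this last basis-extension step: a complement chosen at a source vertex constrains the admissible complements at the neighboring sinks, since their images must lie inside the chosen complement, so a naive vertex-by-vertex choice need not be globally compatible. The standard remedy is a single coordinated sweep through the zigzag, fixing complements at sources first and then reading off forced compatible complements at adjacent sinks, using a dimension count to confirm existence. Since this proposition is the classical Gabriel theorem specialized to the Dynkin quiver $\mathbb{A}_n$, the argument can also be short-circuited by quoting \cite{gabriel} together with the standard bijection between positive roots of the type-$A_n$ root system and contiguous $0/1$ dimension vectors, that is, interval modules.
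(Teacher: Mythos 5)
The paper itself offers no argument for this proposition: it is stated as Gabriel's theorem with a citation to \cite{gabriel} (plus a pointer to \cite{cb} for the $\mathbb{R}$-indexed analogue), so your closing remark that one may simply quote Gabriel is precisely the paper's route. Your computation $\mathrm{End}([x,y])\cong K$ is correct (all squares over arrows interior to the support are identities, forcing a single scalar, and everything outside the support is zero), and the appeal to Krull--Schmidt then does give uniqueness of any decomposition into intervals, as well as the statement that every indecomposable is an interval once existence of the decomposition is known.

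The from-scratch existence step, however, has a genuine gap. Propagating a one-dimensional subspace outward from an arbitrary nonzero vector and then hunting for a complement cannot work as stated, because the thread you produce need not be a direct summand at all, so no basis-extension sweep can succeed afterwards. Concretely, take $\mathbb{A}_3$ with orientation $1\to 2\leftarrow 3$ and $V=[1,2]\oplus[2,3]$, so $V(2)=K^2$ with the two incoming maps hitting the lines spanned by $(1,0)$ and $(0,1)$. Starting your propagation at vertex $2$ with the line spanned by $(1,1)$, both preimages are zero, so propagation stops immediately and the thread is a copy of $[2]$; but $V$ has no summand isomorphic to $[2]$, since a complement at vertex $2$ would have to contain both image lines, which already span $V(2)$. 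Thus the difficulty is not only the ``coordinated sweep'' for complements that you flag, but the choice of the thread itself: one needs a selection criterion (for instance a thread of maximal length, or an induction organized around sources and sinks as in the zigzag structure theorem of Carlsson--de~Silva, or the kernel/image filtration argument of \cite{cb}) before the complement argument can be run. As written, the plan would fail on examples like the one above; with such a criterion added, it becomes one of the standard elementary proofs, which is indeed a more self-contained route than the paper's bare citation.
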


For a very efficient exposition of the definitions and
features of additive categories, categorical products and coproducts,
and categories possessing unique decomposability properties, the
authors recommend the paper \cite{krause}.

\begin{notation}
  Throughout, by \emph{indecomposable
    representation} of \(P=\mathbb{A}_n\) we mean the unique
  representative of the isomorphism class that is precisely an
  interval representation.
\end{notation}

\subsubsection{The Auslander-Reiten Quiver}
\label{ss_ar}

The following is a crucial piece of quiver theoretic machinery that
renders possible the development of this paper's first distance.

\begin{definition}
Given a quiver \(P\), its \emph{Auslander-Reiten (AR) quiver} is a new
quiver
in which:
\begin{itemize}
\item the vertex set is the collection of isomorphism classes of
  indecomposable representations of \(P\),
\item an arrow exists from one vertex to another whenever
  there exists an irreducible morphism between the corresponding
  \(P\)-indecomposables.
\end{itemize}
When \(P=\mathbb{A}_n\), there are finitely
many indecomposable representations up to isomorphism, and
representatives of the distinct isomorphism classes can be chosen to
be precisely the collection of
\emph{interval} representations of \(P\) (Proposition
\ref{prop_gabriel}). That is, the
Auslander-Reiten quiver of some \(P=\mathbb{A}_n\) has vertex set
consisting of the interval representations of \(P\).
\end{definition}

See any of \cite{elements,schiffler,krause_notes,derksen_weyman}
for general introductions to Auslander-Reiten theory.

What is important to note for now is that, when \(P=\mathbb{A}_n\), its
Auslander-Reiten quiver has a finite vertex set, unique arrows, no
closed loops, and is a
connected graph (\cite{auslander} VI Thm 1.4). The nature of the
Auslander-Reiten quiver of any
\(P=\mathbb{A}_n\) will be discussed in detail in Subsection
\ref{sec_ar}.

%%%% Auslander's thm for 1st Brauer Thrall conjecture.

\subsection{Classic Persistent Homology Distances}

We now define two fundamental distances to persistent homology.

\subsubsection{Interleaving Distance}

The interleaving distance is a distance on generalized persistence
modules with values in any category \(\mathcal{D}\) over any poset
\(P\) (Definition \ref{defn_gpm}). We offer the following definitions
in their full generality, though in the remainder of the paper they
will be applied only to persistence modules (GPMs with values in
\(\mathrm{vect}\)) over very specific posets.

We first define translations, which are used to `shift' GPMs within a
poset and are how the size of an interleaving is measured.

\begin{definition}\label{def_translation}
A \emph{translation} \(\Lambda\) on a poset \(P\) is a map
\(\Lambda:P\to P\) such that
\begin{itemize}
\item \(x\leq\Lambda x\) for all \(x\in P\),
\item if \(x\leq y\) in \(P\), then \(\Lambda x \leq \Lambda y\).
\end{itemize}
The \emph{height} of a translation is
\[h(\Lambda)=\max_{x\in P}\{d(x,\Lambda x)\},\] where \(d\) is some
distance on \(P\).

The collection of translations over a poset \(P\) form a monoid with
left action on any \(\mathcal{D}^P\), given by the pointwise
statement \[
  F\Lambda(x)=F(\Lambda x)\text{ for all }x\in P.
\]
\end{definition}

In brief, before the full definition below, an interleaving between
two GPMs is a translation
\(\Lambda\) and a pair of morphisms from each GPM to a
\(\Lambda\)-shift of the other such that 
certain commutativity conditions are fulfilled.

\begin{definition}\label{def_il}
An interleaving between two GPMs \(F,G\) in \(\mathcal{D}^P\)
% (equivalently, two functors \(P\to\mathcal{D}\))
is a translation
\(\Lambda\) on \(P\) and a pair of morphisms (natural transformations)
\(\phi:F\to G\Lambda\),
\(\psi:G\to F\Lambda\) such that the following diagram commutes:
\begin{center}
\includegraphics[scale=1]{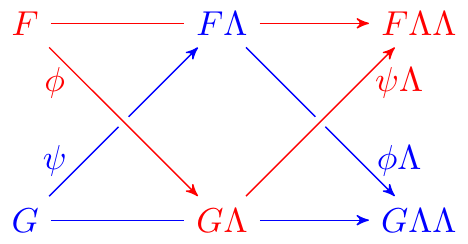}
\end{center}
% Haven't defined the maps across the top yet.
Alternatively, we say that \(F,G\) are \(\Lambda\)-\emph{interleaved}.

The \emph{interleaving distance} between \(F\) and \(G\) is \[
  D_{\mathrm{IL}}(F,G)=\inf\{\epsilon:F,G\text{ have a }\Lambda\text{-interleaving
    with }h(\Lambda)=\epsilon.\} 
\]
\end{definition}

The translations \(\phi\) and \(\psi\) are sometimes referred to as
``approximate isomorphisms'', and the interleaving distance can be
thought of as the shift distance by which there fails to be a true
isomorphism between the persistence modules.

% *************

\begin{remark}
  The above definition is not quite the traditional one seen most often
  in the literature (see \cite{bubenik}). In many definitions
  there are two translations
  \(\Lambda\) and \(\Gamma\) (one to shift \(F\), and the other to
  shift \(G\)), and the height of the interleaving is the
  height of the larger translation. In the posets we are interested in,
  the values of the interleaving distance do not change when allowing
  for two distinct translations rather than using the same translation
  twice. So, for the sake of simplicity, and without altering the
  distance, we have reduced Definition \ref{def_il} to a statement
  involving only a single translation \(\Lambda\).
\end{remark}

The collection of translations on a poset \(P\) is itself a poset
under the partial order given by the relation \[
  \Lambda\leq\Gamma\text{ if
  }\Lambda(x)\leq\Gamma(x)\text{ for all }x\in P.
\] There is rarely a \emph{unique} translation of a given height,
though occassionally it is easier to assume that we are using a
\emph{full} translation of some height.

\begin{remark}\label{rmk_full}
  By a \emph{full translation} of height \(\epsilon\), we will mean a
  maximal element
  in the poset of translations that has height \(\epsilon\). In the case
  \(P=\mathbb{Z},\mathbb{R}\), there is always a unique full translation of
  height \(\epsilon\): the translation
  \(\Lambda_\epsilon(x)=x+\epsilon\) for all \(x\in\mathbb{R}\). In
  posets that are not totally ordered, there may be multiple
  distinct full translations of certain heights.
\end{remark}

By the next result, any \(\epsilon\)-interleaving can always be taken
as using a full translation of height \(\epsilon\).

\begin{prop}
  Let \(\Lambda,\Lambda'\) be two translations over some poset \(P\)
  such that \(\Lambda'\geq\Lambda\), and let \(F,G\) be two GPMs in
  \(\mathcal{D}^P\) for some \(\mathcal{D}\). If \(F,G\) are
  \(\Lambda\)-interleaved,  then \(M,N\) are
  \(\Lambda'\)-interleaved.
\end{prop}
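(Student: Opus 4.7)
The plan is to take a given $\Lambda$-interleaving $(\phi,\psi)$ between $F$ and $G$ and post-compose it with the internal structure morphisms of $F$ and $G$ coming from the relations $\Lambda x \leq \Lambda' x$ (which hold for every $x\in P$ by hypothesis on $\Lambda,\Lambda'$). Concretely, I would define new natural transformations
\[
\phi'_x \;=\; G(\Lambda x \leq \Lambda' x)\circ \phi_x : F(x)\to G(\Lambda' x),
\qquad
\psi'_x \;=\; F(\Lambda x \leq \Lambda' x)\circ \psi_x : G(x)\to F(\Lambda' x),
\]
and claim these assemble into a $\Lambda'$-interleaving between $F$ and $G$.

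First I would verify that $\phi'$ and $\psi'$ are genuine natural transformations $F\to G\Lambda'$ and $G\to F\Lambda'$; this follows mechanically from naturality of $\phi$ and $\psi$ together with the functoriality of $F$ and $G$ applied to the chain $x\leq y$, $\Lambda x\leq \Lambda y$, $\Lambda' x\leq \Lambda' y$ (the last using monotonicity of $\Lambda'$, part of the definition of a translation).

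Next I would check the two triangle/interleaving identities. For instance, unraveling
\[
\psi'_{\Lambda' x}\circ \phi'_x \;=\; F(\Lambda\Lambda' x\leq \Lambda'^2 x)\circ \psi_{\Lambda' x}\circ G(\Lambda x\leq \Lambda' x)\circ \phi_x,
\]
I would apply naturality of $\psi$ to the relation $\Lambda x\leq \Lambda' x$ to rewrite $\psi_{\Lambda' x}\circ G(\Lambda x\leq \Lambda' x)$ as $F(\Lambda^2 x\leq \Lambda\Lambda' x)\circ \psi_{\Lambda x}$, so that the expression collapses via functoriality of $F$ to $F(\Lambda^2 x\leq \Lambda'^2 x)\circ \psi_{\Lambda x}\circ \phi_x$. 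The $\Lambda$-interleaving hypothesis rewrites $\psi_{\Lambda x}\circ\phi_x$ as $F(x\leq \Lambda^2 x)$, and one more application of functoriality yields $F(x\leq \Lambda'^2 x)$, which is exactly what the $\Lambda'$-interleaving diagram requires. The analogous computation for $\phi'_{\Lambda' x}\circ \psi'_x$ is symmetric.

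The work is entirely formal and the main (very mild) obstacle is simply keeping the bookkeeping straight when interleaving three distinct translation comparisons ($\Lambda$ versus $\Lambda^2$, $\Lambda\Lambda'$, and $\Lambda'^2$); once naturality of $\psi$ (respectively $\phi$) is invoked to swap a shift for an internal structure map, every remaining identity is forced by functoriality of $F$ and $G$.
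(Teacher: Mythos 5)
Your proposal is correct: defining \(\phi'_x = G(\Lambda x\leq \Lambda' x)\circ\phi_x\) and \(\psi'_x = F(\Lambda x\leq \Lambda' x)\circ\psi_x\), checking naturality, and collapsing the interleaving triangles via naturality of \(\psi\) (resp.\ \(\phi\)) and the thinness of the poset category is exactly the routine argument this statement calls for. The paper states the proposition without proof, treating it as formal bookkeeping, and your write-up supplies precisely that bookkeeping correctly (including the needed comparisons \(\Lambda^2 x\leq\Lambda\Lambda' x\leq\Lambda'^2 x\), which follow from monotonicity of \(\Lambda\) and the pointwise inequality \(\Lambda\leq\Lambda'\)).
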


\subsubsection{Bottleneck Distances}

We first define the general notion of a bottleneck distance
(Definition \ref{def_bn}), then present the classic
bottleneck distance (Example \ref{ex_classic_bn}), and lastly put
forward the meaning of a general distance's induced bottleneck
distance (Remark \ref{rmk_induced}).

A bottleneck distance (also a Wasserstein metric\,\textemdash\,see
\cite{wasserstein}) acts on pairs
of multisets of some set \(\Sigma\). It requires
\begin{itemize}
\item a distance \(d\) on \(\Sigma\), and
\item a function \(W:\Sigma\to[0,\infty)\)
\end{itemize}
such that \begin{equation*}\label{eqn_triangle}
  |W(f)-W(g)|\leq d(f,g),\tag{$\Delta$-ineq}
\end{equation*} for all \(f,g\in\Sigma\).

Let \(\Sigma\) be some set, and \(F,G\) two multisets (subsets with
multiplicities of elements) of
\(\Sigma\). A \emph{matching} between \(F\) and \(G\)
is a bijection \[x:F'\leftrightarrow G'\] where
\(F'\subset F\), \(G'\subset G\).

The \emph{height} of a matching \(x:F\leftrightarrow G\) is \[
  h(x)=\max\{\max_{f\in F'}\{d(f,x(f))\},\max_{f\not\in
    F'}\{W(f)\},\max_{g\not\in G'}\{W(g)\}\}.
\] That is, take the maximum over all distances (using \(d\)) between
paired elements, as well as the maxima over all of the `widths' (using
\(W\)) of the unpaired elements of \(F\) and \(G\).

\begin{definition}\label{def_bn}
Given a set \(\Sigma\), and any functions \(d\) and \(W\) as above
holding to the \ref{eqn_triangle} relationship, the \emph{bottleneck
  distance generated by} \(d\) \emph{and} \(W\) between two
multisets \(F,G\) of \(\Sigma\) is \[
  D(F,G)=\min\{h(x):x\text{ is a matching between }F\text{ and }G\}.
\]
\end{definition}

The following connects bottleneck distances to persistence
modules. From \cite{cb}, this can be generalized to \(\mathbb{R}\)
persistence modules.

\begin{definition}\label{def_barcode}
  For \(\mathbb{A}_n\), let \(\Sigma\) denote the set of (isomorphism
  classes of) indecomposable persistence modules: i.e., its
  \emph{intervals}.
  
  For a persistence module \(M\) over \(\mathbb{A}_n\), define its
  \emph{barcode} to be the multiset of \(\Sigma\) containing exactly
  the summands in its decomposition (with existence and uniqueness
  guaranteed by Proposition \ref{prop_gabriel}):
  \[
    \mathcal{B}(M)=\{[x_i,y_i]\}_{i\in I}\text{, where }M=\bigoplus_{i\in
      I}[x_i,y_i].
  \]
\end{definition}

%How are bottleneck metrics applied to persistence modules?s
%
%\begin{definition}
%  An object in an additive category is
%  
%  A category has the \emph{Krull-Schmidt} property is every object
%  decomposes uniquely (up to isomorphism and permutation of summands)
%  into a finite direct sum of objects that cannot be decomposed any
%  further: \emph{indecomposable} objects).
%\end{definition}

\begin{example}\label{ex_classic_bn}
  The `classical' bottleneck distance on persistence modules
  over \(\mathbb{R}\)) is the one given by
  \begin{itemize}
  \item \(d(f,g)=D_{\mathrm{IL}}(\{f\},\{g\})\)
  \item \(W(f)=D_{\mathrm{IL}}(\{f\},\emptyset)\),
  \end{itemize} where \(D_{\mathrm{IL}}\) is the interleaving distance of
  Definition \ref{def_il}.
  
  Let \(f=[x_1,y_1],g=[x_2,y_2]\) be indecomposable/interval
  persistence modules over \(\mathbb{R}\). Unpacking the definition of
  interleaving distance yields the equations:
  \begin{itemize}
  \item \(d(f,g)=\max\{|x_1-x_2|,|y_1-y_2|\}\) and
  \item \(W(f)=I(f,0)=1/2(y-x)\).
  \end{itemize}
  This is precisely the (\(\ell^\infty\) or \(\infty\)-Wasserstein)
  bottleneck distance that is most commonly used to measure distance between
  persistence diagrams in persistent homology literature.
\end{example}

\begin{remark}\label{rmk_induced}
  Let \(\mathcal{C}\) be any Krull-Schmidt category \cite{krause} and
  \(D\) any distance on the collection of objects in the category. Then
  there is a unique or canonical bottlneck distance induced by \(D\),
  that being the one in which any two
  objects \(X,Y\) become associated to the multisets corresponding to
  their Krull-Schmidt decompositions \[
    X=X_1\oplus\ldots\oplus X_m,\,\,Y=Y_1\oplus\ldots\oplus Y_n,
  \] and the bottleneck distance between those multisets is given by
  \begin{itemize}
  \item \(d(X_i,Y_j)=D(X_i,Y_j)\) and
  \item \(W(X_i)=D(X_i,0)\).
  \end{itemize}
\end{remark}

\subsubsection{Comparison of Bottleneck Distances}

As one of the goals of this paper is finding minimal Lipschitz bounds
between bottleneck distances, we discuss the relationship between
comparing bottleneck distances directly, and comparing their component
\(d\)'s and \(W\)'s.

For two bottleneck distances
\(D_1=\{d_1,W_1\}\) and \(D_2=\{d_2,W_2\}\), while the inequality
\(D_1\leq D_2\) implies \(W_1\leq W_2\), it does \emph{not}
necessitate that \(d_1\leq d_2\). This has the potential to be a
frustrating obstacle to comparing different bottleneck distances.

To remedy this, we define a canonical \(d\) and \(W\) for
a given bottleneck distance \(D\) that will allow for a more natural
means of comparison.

\begin{definition}[Minimal generators]\label{def_bn_canon}
  For a bottleneck distance \(D=\{d,W\}\) on multisets of some set
  \(\Sigma\), define
  \(\bar{d}(\sigma,\tau)=D(\{\sigma\},\{\tau\})\). Then \(\bar{d}\leq
  d\), and the pairs \(\{d,W\}\) and \(\{\bar{d},W\}\) both generate
  the same \(D\). Call \(\{\bar{d},W\}\)
  the \emph{minimal generators} of the bottleneck distance \(D\).
\end{definition}

A bottleneck distance \(D\) fully
recovers its minimal generators. Specifically:
\begin{itemize}
\item \(\bar{d}(\sigma,\tau)=D(\{\sigma\},\{\tau\})\), and
\item \(W(\sigma)=D(\{\sigma\},\emptyset)\).
\end{itemize}

We now get the desired comparison statement:

\begin{prop}\label{prop_bn_compare}
  For two bottleneck distances \(D_1=\{d_1,W_1\}\) and
  \(D_2=\{d_2,W_2\}\), \(D_1\leq D_2\) if and only if
  \(\bar{d}_1\leq\bar{d}_2\) and \(W_1\leq W_2\).
\end{prop}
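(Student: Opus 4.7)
The plan is to attack both directions separately. The forward direction $(\Rightarrow)$ is immediate: assuming $D_1 \leq D_2$, I would evaluate the inequality on the specific multisets $(\{\sigma\},\{\tau\})$ and $(\{\sigma\},\emptyset)$ and read off $\bar{d}_1(\sigma,\tau) = D_1(\{\sigma\},\{\tau\}) \leq D_2(\{\sigma\},\{\tau\}) = \bar{d}_2(\sigma,\tau)$ and $W_1(\sigma) = D_1(\{\sigma\},\emptyset) \leq D_2(\{\sigma\},\emptyset) = W_2(\sigma)$, directly from the recovery identities recorded after Definition \ref{def_bn_canon}.

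For the reverse direction, the central fact I would rely on is the claim built into Definition \ref{def_bn_canon}: each $D_i$ coincides with the bottleneck distance computed using its minimal generators $(\bar{d}_i,W_i)$ in place of $(d_i,W_i)$. Granting this, given any multisets $F,G$ and any matching $x:F'\leftrightarrow G'$, the height of $x$ with respect to $(\bar{d}_1,W_1)$ is bounded term-by-term by its height with respect to $(\bar{d}_2,W_2)$, since the hypotheses $\bar{d}_1 \leq \bar{d}_2$ and $W_1 \leq W_2$ hold pointwise. Taking the minimum over matchings on each side yields $D_1(F,G) \leq D_2(F,G)$.

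The one step that genuinely deserves a proof is the technical fact that $D$ can be computed using $(\bar{d},W)$ in place of $(d,W)$. The inequality $D' \leq D$ (where $D'$ denotes the bottleneck distance generated by $(\bar{d},W)$) is clear because $\bar{d} \leq d$, which itself follows because the matching $\sigma \to \tau$ is one of only two candidates available when computing $D(\{\sigma\},\{\tau\})$. For the reverse inequality, I would start from a matching $x:F' \leftrightarrow G'$ that achieves $D'(F,G)$ and modify it: whenever $\bar{d}(f,x(f)) < d(f,x(f))$, the definition of $\bar{d}$ forces $\bar{d}(f,x(f)) = \max\{W(f),W(x(f))\}$ (that being the only competing value in $D(\{f\},\{x(f)\})$), so unmatching such a pair replaces its contribution $d(f,x(f))$ by $\max\{W(f),W(x(f))\} = \bar{d}(f,x(f))$ and never raises the height. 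Unmatching all such pairs produces a matching whose height under $(d,W)$ is at most $D'(F,G)$, giving $D \leq D'$.

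I do not anticipate a real obstacle. The whole argument is formal bookkeeping once one sees the unmatching trick; the only mild subtlety is recognizing that $\bar{d}$ can strictly undercut $d$ only through the unpaired option on singletons, which is precisely what makes the trick cost-free.
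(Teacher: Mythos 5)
Your proof is correct and follows essentially the same route as the paper: the forward direction via the recovery identities \(\bar{d}_i(\sigma,\tau)=D_i(\{\sigma\},\{\tau\})\), \(W_i(\sigma)=D_i(\{\sigma\},\emptyset)\), and the reverse direction via pointwise monotonicity of the bottleneck construction in its generators once each \(D_i\) is known to be generated by \(\{\bar{d}_i,W_i\}\). The only difference is your unmatching argument, which correctly supplies a proof of the fact—asserted without proof in Definition \ref{def_bn_canon}—that \(\{d,W\}\) and \(\{\bar{d},W\}\) generate the same bottleneck distance, a welcome addition rather than a deviation.
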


\begin{proof}
  The forward implication is immediate from the above statements about
  the recovery of \(\bar{d},W\) from \(D\). The reverse implication is
  immediate from the definition of \(D\) (as is the stronger
  statement: \(d_1\leq d_2\) and \(W_1\leq W_2\)
  \(\implies\) \(D_1\leq D_2\)).
\end{proof}

\begin{notation}
  From this point onward, we allow \(D(\{\sigma\},\{\tau\})\) to be
  shortened to \(D(\sigma,\tau)\) for bottleneck distances.
\end{notation}

\section{AR-Bottleneck Distance}\label{sec_ar_full}

This bottleneck distance uses the graph-structure of some original
quiver \(Q\)'s corresponding Auslander-Reiten quiver as a means
of measuring the distance between indecomposable persistence modules.

\subsection{Definitions}

Let $Q=\mathbb{A}_n$. Let $Q'$ be the AR quiver of $Q$. For
indecomposables \(\sigma,\tau\) of \(Q\),
let \(p=p_0\ldots p_l\) denote an unoriented path in \(Q'\)
from \(\sigma\) to \(\tau\). The tail and head of a path are those of
the first and last vertex, respectively: \(tp=tp_l=\sigma\), and
\(hp=hp_0=\tau\).

\begin{definition}\label{def_ar_dist}
Define the \emph{AR distance} between two indecomposables to
be \[
  \delta_\dar(\sigma,\tau)=
  \min_{p:\sigma\to\tau}\left\{\sum_{i=1}^{l-1}|\mathrm{dim}(Q'(hp_i))
    -\mathrm{dim}(Q'(tp_i))|\right\},
\]
where \emph{dimension} of an indecomposable \(M\) of \(Q\)
(equivalently, a vertex of \(Q'\))
is \[
  \mathrm{dim}(M)=\displaystyle\sum_{i\in
    Q_0}\mathrm{dim}_KM(i),
  \text{ i.e., 
  }\mathrm{dim}([x,y])=y-x+1.
\]
That is, \(\delta_\dar(\sigma,\tau)\) is the dimension-weighted
path-length between \(\sigma\) and \(\tau\), minimized over all
possible paths.
\end{definition}

\begin{ex}
  See Figure \ref{fig_ex_ar}. Consider the interval modules
  \([2,3],[3,6]\).
  \begin{itemize}
  \item Figure \ref{fig_ex1_ar}: \(\delta_\dar([2,3],[3,6])=4\).
  \item Figure \ref{fig_ex2_ar}: \(\delta_\dar([2,3],[3,6])=8\).
  \item Figure \ref{fig_ex3_ar}: \(\delta_\dar([2,3],[3,6])=10\).
  \end{itemize}
\end{ex}

\begin{figure}
  \centering
  \begin{subfigure}[t]{1\textwidth}
    \centering
    \includegraphics[scale=.6]{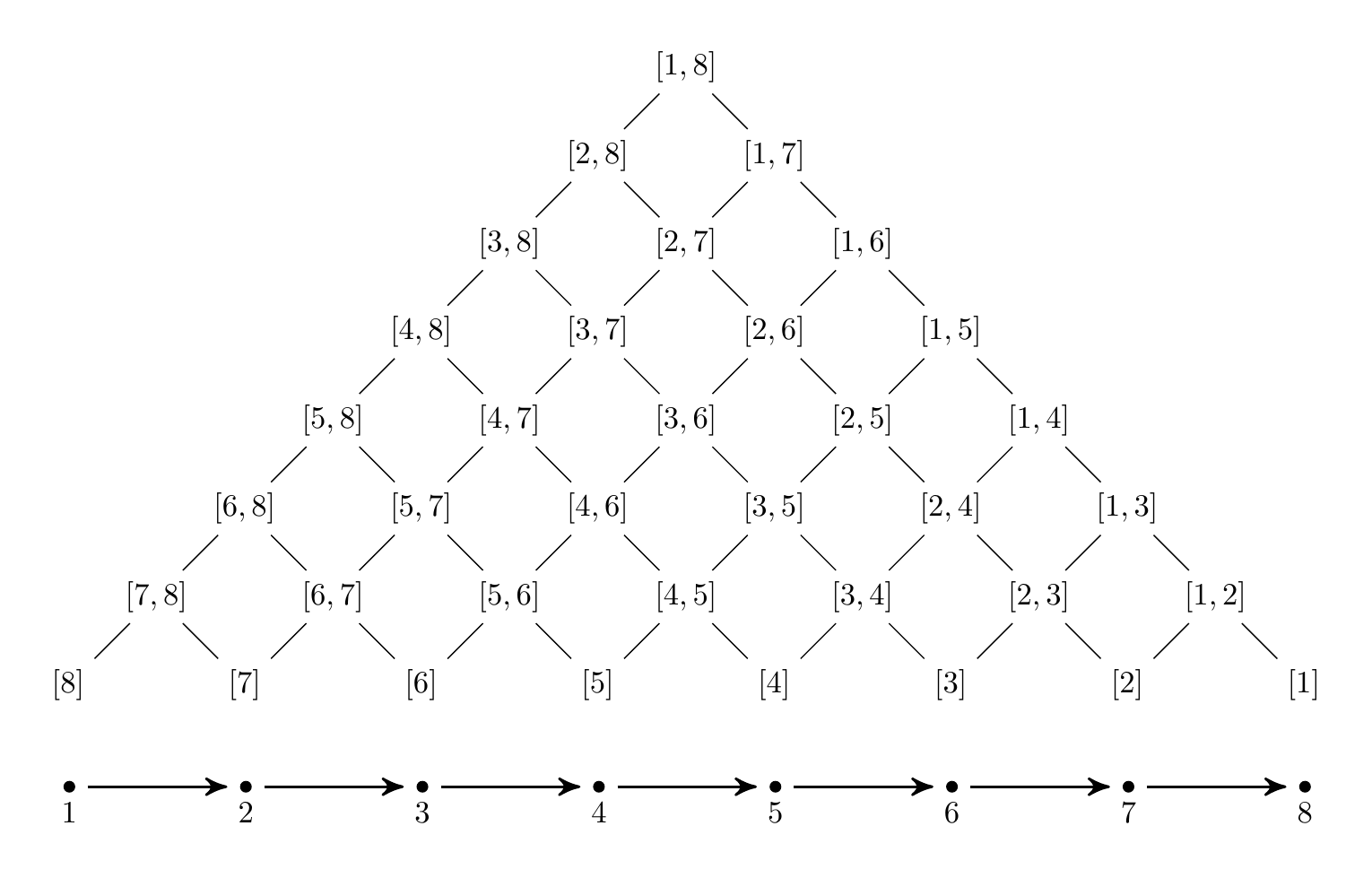}
    \caption{Equi-orientation of \(\mathbb{A}_8\) and its
      corresponding AR quiver.}
    \label{fig_ex1_ar}
  \end{subfigure}
  
  \begin{subfigure}[t]{1\textwidth}
    \centering
    \includegraphics[scale=.6]{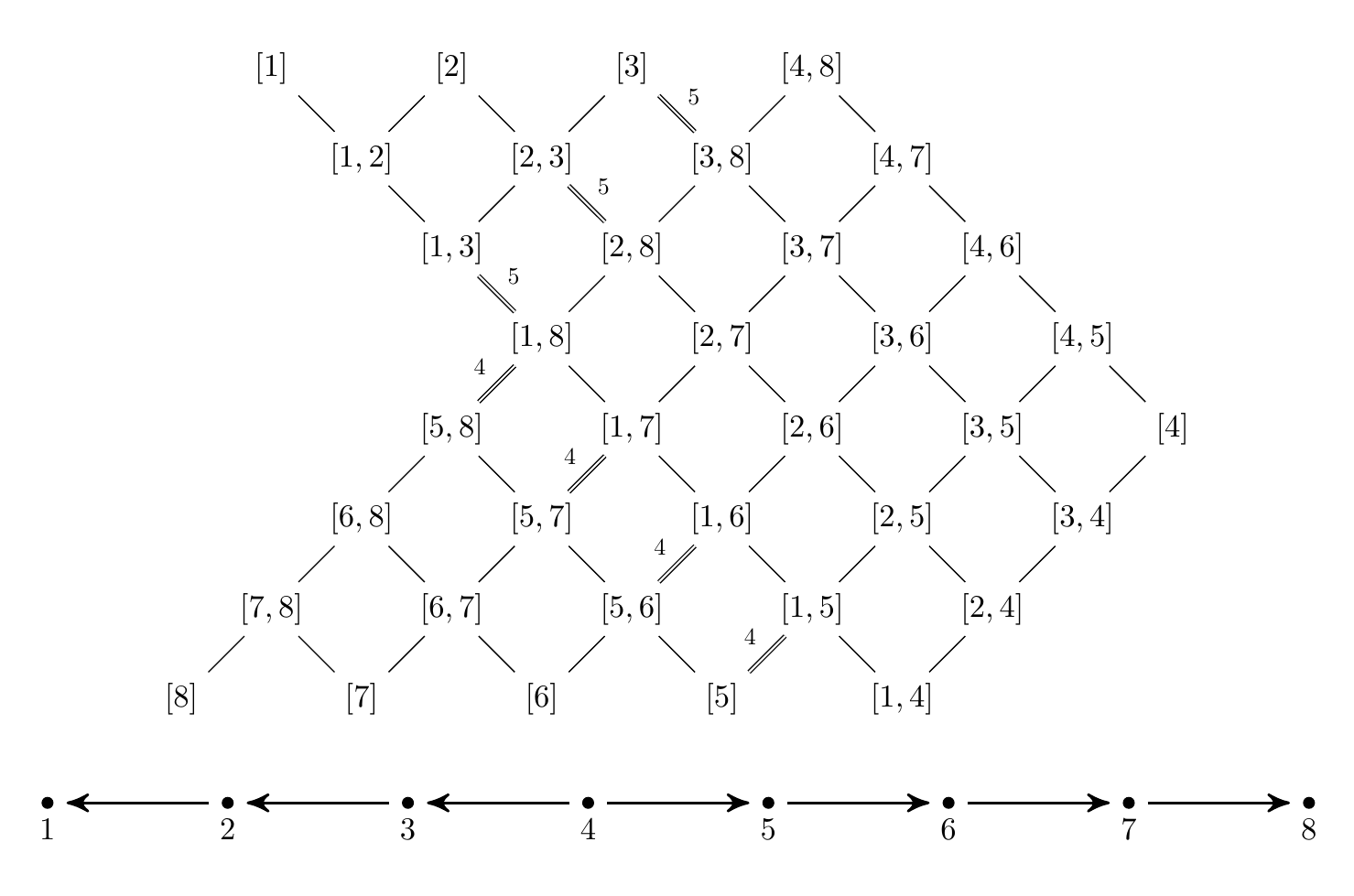}
    \caption{Orientation of \(\mathbb{A}_8\) and its corresponding
      AR quiver.}
    \label{fig_ex2_ar}
  \end{subfigure}
  
  \begin{subfigure}[t]{1\textwidth}
    \centering
    \includegraphics[scale=.6]{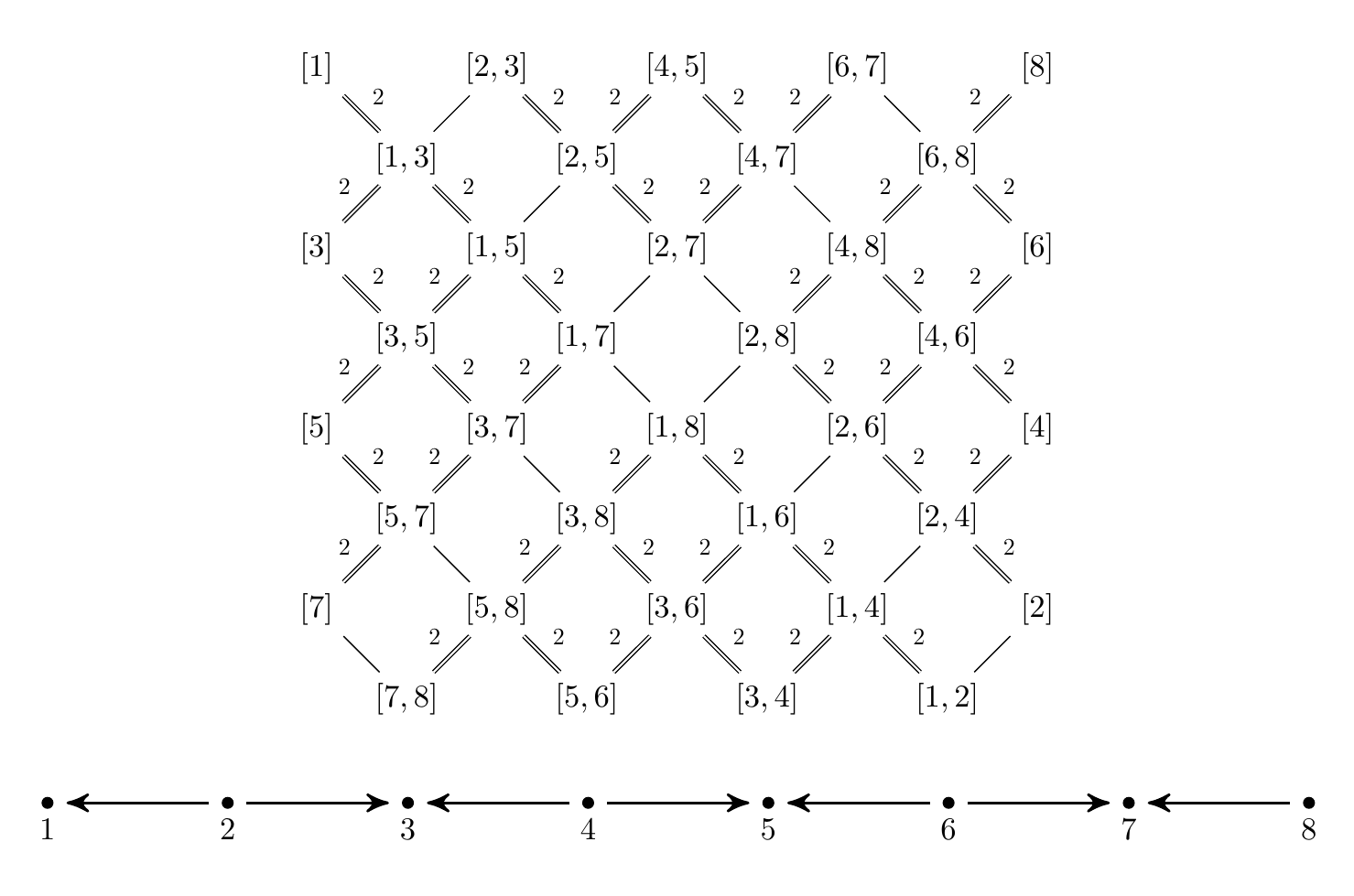}
    \caption{Zigzag orientation of \(\mathbb{A}_8\) and its
      corresponding AR quiver.}
    \label{fig_ex3_ar}
  \end{subfigure}
  \caption{Three orientations of \(\mathbb{A}_8\) and their AR
    quivers, where edges of weight more than \(1\) are drawn with
    double lines and labeled by the difference in dimensions
    between the two indecomposables that they connect.}
  \label{fig_ex_ar}
\end{figure}

\begin{definition}\label{def_dar}
Define the \emph{AR bottleneck distance} \(D_{\mathrm{AR}}\) on the space of
indecomposable representations of $Q$ to be the bottleneck distance
induced by:
\begin{itemize}
\item \(d_\dar(\sigma,\tau)=\delta_\dar(\sigma,\tau)\),
\item \(W_\dar(\sigma)=\displaystyle\min_{t\in
    Q_0}\{\delta_\dar(\sigma,[t])\}+1\).
\end{itemize}
\end{definition}

We can immediately check that \(D_\dar\) is indeed a bottleneck
distance.

\begin{prop}\label{prop_dar_triangle}
  \(D_\dar\) satisfies \ref{eqn_triangle}.
\end{prop}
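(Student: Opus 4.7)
The plan is to reduce the required inequality $|W_\dar(\sigma)-W_\dar(\tau)|\leq d_\dar(\sigma,\tau)$ to the triangle inequality for $\delta_\dar$ itself, applied in the standard ``distance-to-a-subset is $1$-Lipschitz'' fashion. The additive constant $+1$ in $W_\dar$ is irrelevant since it appears on both sides of the difference $W_\dar(\sigma)-W_\dar(\tau)$ and cancels.

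First I would verify that $\delta_\dar$ is itself a pseudo-metric on the set of indecomposables. Symmetry is immediate because paths in $Q'$ are taken unoriented, and $\delta_\dar(\sigma,\sigma)=0$ via the trivial path. The triangle inequality $\delta_\dar(\sigma,\tau)\leq \delta_\dar(\sigma,\rho)+\delta_\dar(\rho,\tau)$ follows by concatenating a minimizing path from $\sigma$ to $\rho$ with one from $\rho$ to $\tau$: the resulting path from $\sigma$ to $\tau$ has total dimension-weighted length equal to the sum, and the minimum over all paths is bounded above by this particular path.

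Next I would apply the standard argument. Let $s^{*}\in Q_0$ achieve $W_\dar(\sigma)-1=\delta_\dar(\sigma,[s^{*}])$ and $t^{*}\in Q_0$ achieve $W_\dar(\tau)-1=\delta_\dar(\tau,[t^{*}])$. Using that the minimum at $\sigma$ is taken over all vertices, in particular at $t^{*}$, and then using the triangle inequality for $\delta_\dar$,
\begin{align*}
W_\dar(\sigma)-W_\dar(\tau) &= \delta_\dar(\sigma,[s^{*}])-\delta_\dar(\tau,[t^{*}])\\
&\leq \delta_\dar(\sigma,[t^{*}])-\delta_\dar(\tau,[t^{*}])\\
&\leq \delta_\dar(\sigma,\tau)=d_\dar(\sigma,\tau).
\end{align*}
By symmetry (swap the roles of $\sigma$ and $\tau$) we likewise obtain $W_\dar(\tau)-W_\dar(\sigma)\leq d_\dar(\sigma,\tau)$, hence $|W_\dar(\sigma)-W_\dar(\tau)|\leq d_\dar(\sigma,\tau)$.

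There is no genuine obstacle in this argument; the only thing to double-check is that $W_\dar$ is finite and well-defined, which follows from the Auslander--Reiten quiver of $\mathbb{A}_n$ being finite and connected (so $\delta_\dar(\sigma,[t])<\infty$ for every simple $[t]$ and every indecomposable $\sigma$), and from $Q_0$ being a finite set, so the minimum in the definition of $W_\dar$ is attained.
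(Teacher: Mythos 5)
Your proposal is correct and follows essentially the same route as the paper's proof: both reduce the statement to the triangle inequality for the graph distance $\delta_\dar$ and then minimize over simples at the vertex realizing $W_\dar(\tau)$ (your subtracted form $W_\dar(\sigma)-W_\dar(\tau)\leq\delta_\dar(\sigma,[t^{*}])-\delta_\dar(\tau,[t^{*}])\leq d_\dar(\sigma,\tau)$ is the same inequality the paper writes as $W_\dar(\sigma)\leq d_\dar(\sigma,\tau)+W_\dar(\tau)$). Your explicit verification that $\delta_\dar$ satisfies the triangle inequality by concatenating minimizing paths is a detail the paper treats as immediate, but it is the same argument.
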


\begin{proof}
  Simply note that for any \(\sigma,\tau\), and any simple \([t]\), by
  the graph-distance definition of \(\delta_\dar\) it is immediate that \[
    d_\dar(\sigma,[t])\leq d_\dar(\sigma,\tau)+d_\dar(\tau,[t]),
  \] and so, minimizing over \([t]\) with respect to
  \(W_\dar(\tau)\), \[
    W_\dar(\sigma)\leq d_\dar(\sigma,[t])+1\leq d_\dar(\sigma,\tau)+W_\dar(\tau).
  \] Combining with the symmetric statement (swapping \(\sigma\) and
  \(\tau\)) we get the full statement of the equation
  \ref{eqn_triangle}.
\end{proof}

\begin{remark}
The reason for the \(+1\) in the definition of \(W_{\mathrm{AR}}\)
above is simply that there are no zero representations in the AR
quiver. As in \cite{escolar_hiraoka}, we account for the distance to
zero being distance to a simple indecomposable, plus one additional
traversal (of dimension-weight \(1\)).

Put another way, we attach a zero representation to every simple
indecomposable in the AR quiver (see Figure \ref{fig_ex1_ar}).
For \(Q=\mathbb{A}_n\), let
\(\bar{Q}'\) denote the AR quiver of
\(Q\) supplemented with the vertices \(0_i\) for all vertices \(i\)
of \(Q\), and with extra edges \([i]\rightarrow 0_i\). Then we may
alternatively define \(W_\dar(\sigma)=\displaystyle\min_{i\in
  Q_0}\{\delta_\dar(\sigma,0_i)\}\).
\end{remark}

\begin{ex}
  See Figure \ref{fig_ex_ar}. Consider the interval modules
  \([2,3],[3,6]\).
  \begin{itemize}
  \item Figure \ref{fig_ex1_ar}: \(D_\dar([2,3],[3,6])=4\).
  \item Figure \ref{fig_ex2_ar}: \(D_\dar([2,3],[3,6])=4\).
  \item Figure \ref{fig_ex3_ar}: \(D_\dar([2,3],[3,6])=8\).
  \end{itemize}
\end{ex}

\subsection{AR Quiver Construction Algorithm}\label{sec_ar}

From here we present an algorithm for determining the shape of the
Auslander-Reiten quiver for any quiver of the form
\(Q=\mathbb{A}_n\). This algorithm arises as a consequence of the
Knitting Algorithm (see \cite{schiffler} Chapter 3.1.1), but has been
streamlined to the specific case of \(Q=\mathbb{A}_n\), and is able
to elucidate the full structure of such AR quivers without
the sequential construction method that the Knitting Algorithm and
other similar methods require.

We maintain the convention of many quiver theoretic publications, in
which the AR quiver is drawn with arrows always
directed left to right, with the leftmost indecomposables being simple
projectives and the rightmost indecomposables being simple
injectives. Vertical orientation is arbitrary, but will be fixed under
the following method. Key to this structural result about AR quivers
for arbitrary orientations of any \(\mathbb{A}_n\) is the fact that
the indecomposables fit into a \emph{diagonal grid} with axes
for the left and right endpoints of the intervals. The algorithm
instructs the formation of these axes, which subsequently induce the
entire shape of the AR quiver.

\begin{notation}
There are two separate and obvious orderings on the vertices of any
orientation of \(\mathbb{A}_n\), the first being the ordering of the
vertices according to their labeling as a subset of
\(\mathbb{Z}\), and the second being the ordering given by
the poset relation \(\leq_P\). The following discussions are carried
out in the language of the \emph{vertices as a subset of}
\(\mathbb{Z}\). So, by all
comparative words (increasing, decreasing, greater, lesser) we will
mean relative to the inherited \(\mathbb{Z}\)-ordering of the vertices
from left to right in the poset.
\end{notation}
%\begin{center}
%\includegraphics[scale=1]{}
%\end{center}

\afterpage{
%  \begin{figure*}[t!]
%    \centering
  \begin{figure}
    \centering
    \includegraphics[scale=.75]{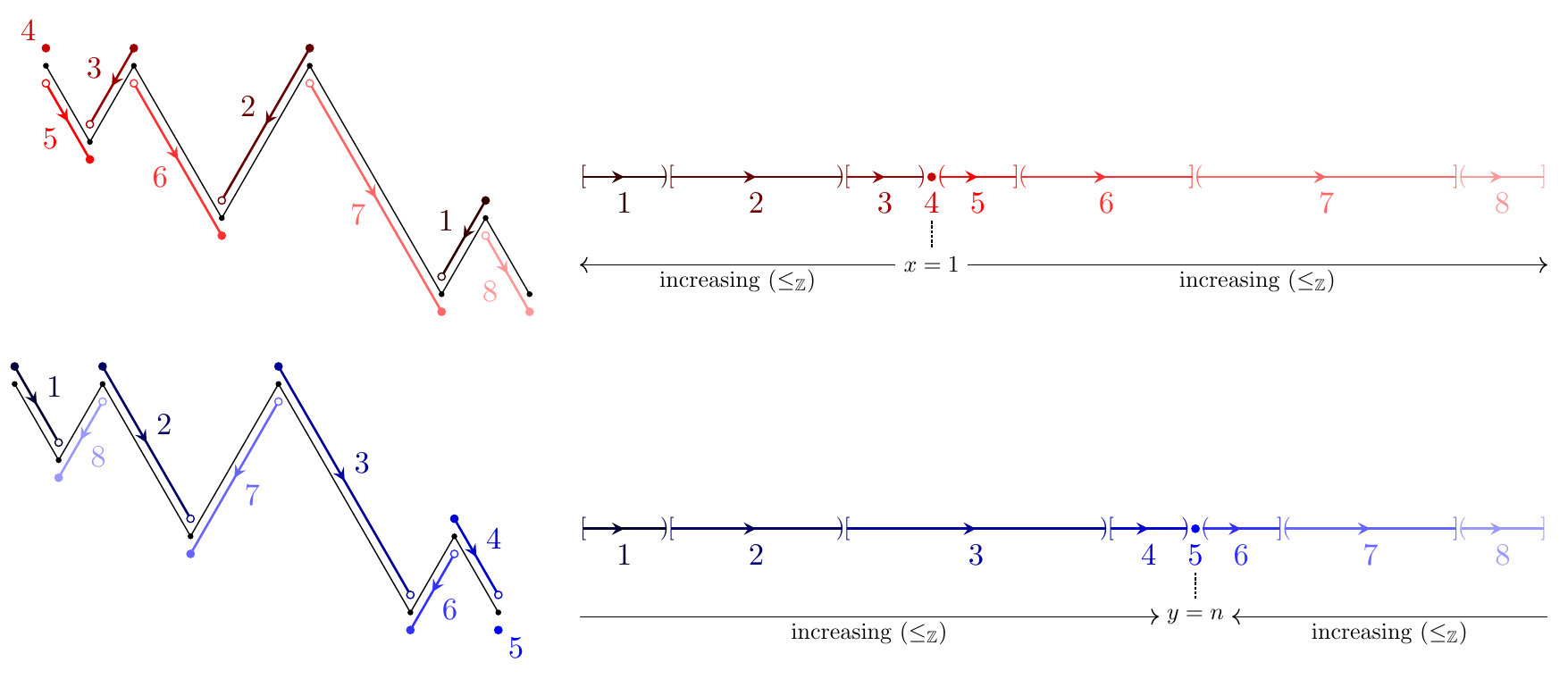}
    \caption{An orientation of \(\mathbb{A}_n\) and the subsequent
      arrangements of the \(x\) and \(y\) axes.}
    \label{fig_ar_super1}
  \end{figure}
  
%  \vspace{1cm}
  \begin{figure}
    \centering
    \includegraphics[scale=.75]{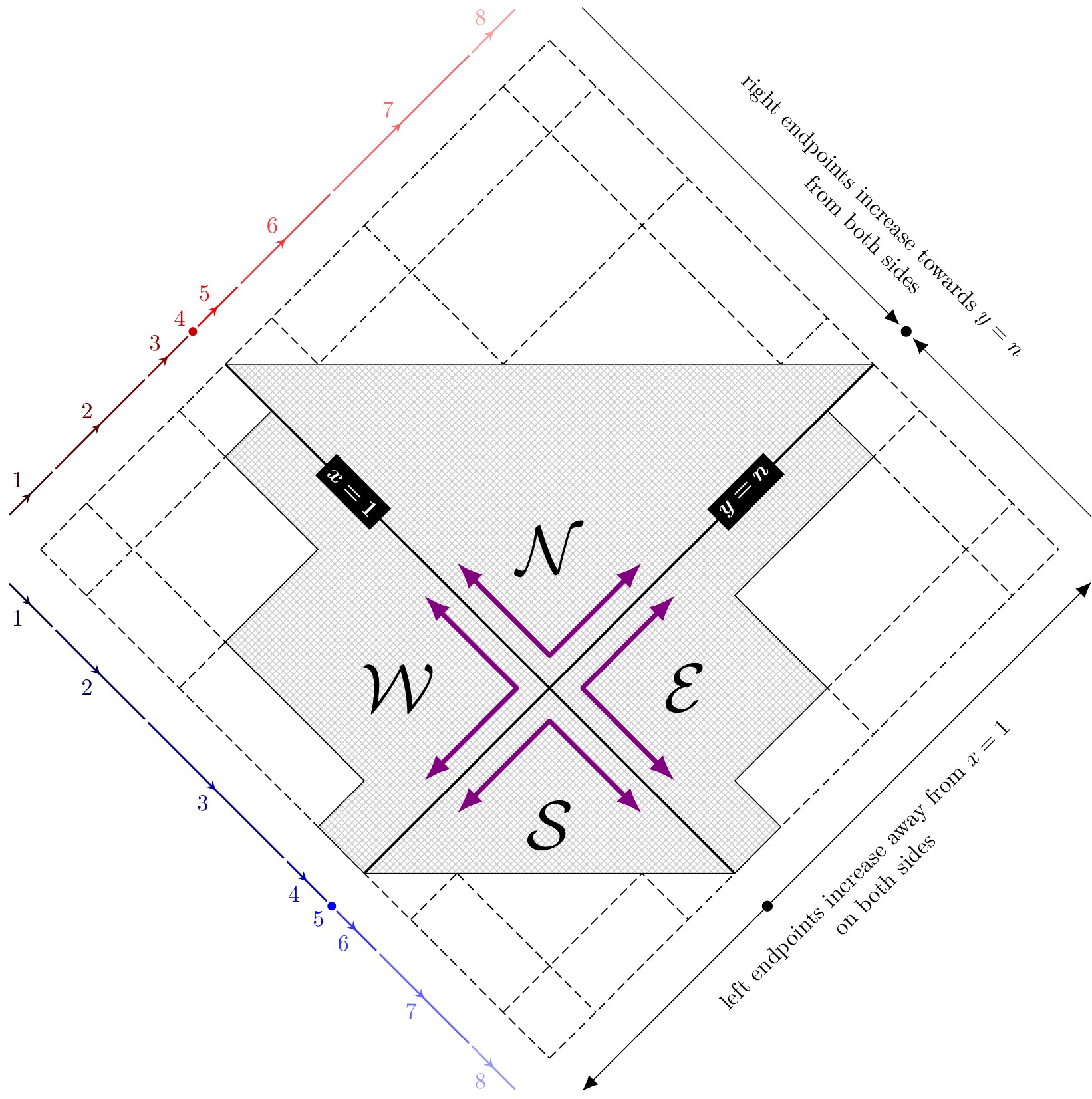}
    \caption{AR quiver for an orientation on \(\mathbb{A}_n\). The
      purple arrows denote the direction of decreasing dimension of
      indecomposables (always away from the diagonals \(x=1\) and
      \(y=n\)).}
    \label{fig_ar_super2}
  \end{figure}
  % \end{figure*}
\clearpage
}

\begin{alg}\label{alg_main} The construction of the left and right
  (\(x\) and \(y\)) axes of the AR quiver for some \(Q=\mathbb{A}_n\)
  are as follows.
  \begin{itemize}
  \item
    For the \(x\)-axis (south west to north east), list the vertices
    in the following order:
    
    Take all vertices of \(\mathbb{A}_n\) that are in some segment of
    the form \((\text{min},\text{next max}]\), and list them on the axis
    in \emph{reverse} \(\leq_{\mathbb{Z}}\) order. Then, take all
    remaining vertices and list them in forward \(\leq_{\mathbb{Z}}\)
    order.
    
    % Maximal vertices are listed first, in decreasing order (right to
    % left in \(\mathbb{A}_n\)), and each maximal is followed by the vertices
    % counting down to the left, until just before encountering the nearest
    % minimal. Minimal vertices are listed next, in
    % increasing order (left to right), with each minimal preceded by the
    % vertices descending to it from the left.
    
    Note that the values of this x-axis %(i.e., the left endpoints)
    always increase \emph{away} from \(x=1\).

  \item
    For the \(y\)-axis (north west to south east), list the
    vertices in the following order:
    
    Take all vertices of \(\mathbb{A}_n\) that are in some segment of
    the form \([\text{max},\text{next min})\), and list them on the axis
    in forward \(\leq_{\mathbb{Z}}\) order. Then, take all remaining
    vertices and list them in \emph{reverse} \(\leq_{\mathbb{Z}}\)
    order.
    
    % Maximal vertices are listed first, in increasing order (left to
    % right in \(\mathbb{A}_n\)), and each maximal is followed by the vertices
    % counting up to the right, until just before encountering the nearest
    % minimal. Minimal vertices are listed next, in
    % decreasing order (right to left), with each minimal preceded by the
    % vertices descending to it from the right.
    
    Note that the values of this y-axis %(i.e., the right endpoints)
    always increase \emph{toward} \(y=n\).
    
  \end{itemize}
\end{alg}

\begin{ex}
  In Figure \ref{fig_ar_super1}, we represent an orientation of \(\mathbb{A}_n\)
  with an implied arbirary density of vertices along the edges. Segments of the
  poset are taken and rearranged to form the \(x\) and \(y\) axes
  according to the algorithm.
\end{ex}

\begin{notation}\label{not_ewsn}
  From the separation made by the diagonals \(x=1\) and \(y=n\), we
  label the corresponding regions of the AR quiver by the four
  cardinal compass directions.

  \(\mathcal{E}_Q\subset\Sigma_Q\) is the collection of all interval
  modules \([x,y]\) where the vertex \(x\) is contained in some \(Q\)
  interval of the form \((\text{sink},\text{next source}]\), and \(y\)
  is in some \([\text{source},\text{next sink})\) (and \(x\neq 1,y\neq
  n\)).

  \(\mathcal{W}_Q\subset\Sigma_Q\) is the collection of all interval
  modules \([x,y]\) where the vertex \(x\) is contained in some
  \((\text{source},\text{next sink}]\), and \(y\) is in some
  \([\text{sink},\text{next source})\) (\(x\neq 1,y\neq n\)).

  \(\mathcal{S}_Q\subset\Sigma_Q\) is the collection of all interval
  modules \([x,y]\) where the vertex \(x\) is contained in some
  \((\text{source},\text{next sink}]\), and \(y\) is in some
  \([\text{source},\text{next sink})\) (\(x\neq 1,y\neq n\)).

  \(\mathcal{N}_Q\subset\Sigma_Q\) is the collection of all interval
  modules \([x,y]\) where the vertex \(x\) is contained in some
  \((\text{sink},\text{next source}]\), and \(y\) is in some
  \([\text{sink},\text{next source})\) (\(x\neq 1,y\neq n\)).

  Let \(\bar{\mathcal{E}}\) (similarly \(\bar{\mathcal{W}}\),
  \(\bar{\mathcal{S}}\), \(\bar{\mathcal{N}}\)) denote the original region
  along with all diagonal modules (those with either \(x=1\) or
  \(y=n\)) that are adjacent to it in the AR quiver. In addition, in
  all four cases, let this set also include the module \([1,n]\).
\end{notation}

\begin{remark}\label{rmk_monotone}
  Within each of the regions \(\bar{\mathcal{E}}\), \(\bar{\mathcal{W}}\),
  \(\bar{\mathcal{S}}\), and \(\bar{\mathcal{N}}\), the \(x\) and \(y\)
  coordinate axes are \emph{monotone} (Figure \ref{fig_ar_super2}).
\end{remark}

The following is a direct consequence of Algorithm \ref{alg_main} (and
Remark \ref{rmk_monotone}).

\begin{prop}{Formula for \(\delta_\dar\).}\label{prop_formula}
  
  Let \(\sigma = [x_1,y_1]\) and \(\tau = [x_2,y_2]\) be indecomposables over
  \(Q\). Then the graph distance \(\delta_\dar(\sigma,\tau)\) of
  Definition \ref{def_dar} is given by \[
    \delta_\dar(\sigma,\tau)=\delta^x(x_1,x_2)+\delta^y(y_1,y_2),
  \] where \[
    \delta^x(\sigma,\tau)=\left\{
      \begin{array}{ll}
        |x_1-x_2| & \text{if }\sigma,\tau\in\bar{\mathcal{E}}\cup\bar{\mathcal{N}}\\
                  & \text{ or }\sigma,\tau\in\bar{\mathcal{W}}\cup\bar{\mathcal{S}},
        \\
        x_1-1+x_2-1 & \text{otherwise}.
      \end{array}\right.
  \] and \[
    \delta^y(\sigma,\tau)=\left\{
      \begin{array}{ll}
        |y_1-y_2| & \text{if
                    }\sigma,\tau\in\bar{\mathcal{W}}\cup\bar{\mathcal{N}}\\
                  & \text{ or }\sigma,\tau\in\bar{\mathcal{E}}\cup\bar{\mathcal{S}},
        \\
        n-y_1+n-y_2 & \text{otherwise}.
      \end{array}\right.
  \]
\end{prop}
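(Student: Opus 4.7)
The plan is to exploit the diagonal grid structure of the AR quiver, established by Algorithm \ref{alg_main} and depicted in Figure \ref{fig_ar_super2}, reducing the minimization over unoriented paths to two independent one-dimensional problems along the $x$- and $y$-axes.

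First I would decompose arbitrary paths. Since $\dim([x,y]) = y - x + 1$, adjacent indecomposables in the AR quiver differ in exactly one endpoint, so every edge is either an $x$-\emph{edge} (changing only $x$) or a $y$-\emph{edge} (changing only $y$), and its dimension-weight equals the absolute difference of the changing coordinate. For any unoriented path $p = p_0\cdots p_l$ from $\sigma = [x_1,y_1]$ to $\tau = [x_2,y_2]$ this yields
\[
  \sum_{i=1}^{l-1}\bigl|\dim(hp_i) - \dim(tp_i)\bigr| \;=\; \sum_{x\text{-edges}}|\Delta x| \;+\; \sum_{y\text{-edges}}|\Delta y|,
\]
and I would then argue that the two sums can be minimized independently because the intermediate indecomposable $[x_2,y_1]$ is itself a vertex of the AR quiver, a direct consequence of the grid picture of Figure \ref{fig_ar_super2}.

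Next I would evaluate each minimum separately. By Algorithm \ref{alg_main}, the $x$-axis arranges all vertices in a linear sequence with $x = 1$ at a pivot and values increasing away from it on both sides; one half consists of $(\text{sink},\text{next source}]$-vertices and the other of $(\text{source},\text{next sink}]$-vertices. By Notation \ref{not_ewsn} these correspond respectively to the $x$-coordinates of modules in $\bar{\mathcal{E}}\cup\bar{\mathcal{N}}$ and in $\bar{\mathcal{W}}\cup\bar{\mathcal{S}}$. Monotonicity of each half (Remark \ref{rmk_monotone}) makes the telescoping sum over consecutive $x$-values equal $|x_1 - x_2|$ whenever both endpoints lie on the same half, whereas any path connecting opposite halves must traverse the pivot $x=1$ and attains its minimum $(x_1-1)+(x_2-1)$ along the monotone detour through $1$. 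This establishes the formula for $\delta^x$, and the analogous argument on the $y$-axis---with pivot $y=n$ and halves corresponding to $\bar{\mathcal{E}}\cup\bar{\mathcal{S}}$ versus $\bar{\mathcal{W}}\cup\bar{\mathcal{N}}$---yields $\delta^y$.

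The main obstacle will be justifying that the two coordinate contributions can be minimized independently rather than being traded off in some subtle way. Once I have the grid structure of Figure \ref{fig_ar_super2} firmly in place, however, this follows from the disjointness of the $x$-edge and $y$-edge sets in any path together with the existence of the intermediate vertex $[x_2,y_1]$ through which a separable minimizer can be routed; no interleaved strategy can beat the sum of the two axis-wise optima. After that, the formula is simply a bookkeeping consequence of the axis monotonicity already built into the algorithm.
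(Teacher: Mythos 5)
Your proposal follows essentially the same route as the paper, which disposes of Proposition \ref{prop_formula} with the single remark that it ``follows immediately from the monotonicity of the two axes in each of the four regions'' of the AR quiver: you spell out that terse appeal by splitting any unoriented path into edges that change only the left endpoint and edges that change only the right endpoint, bounding each family below by the axis distance built into Algorithm \ref{alg_main} and Remark \ref{rmk_monotone}, and realizing the bound by an L-shaped path. The one concrete misstep is the assertion that the corner \([x_2,y_1]\) is always a vertex of the AR quiver: it is not when \(x_2>y_1\) (for instance \(\sigma=[1,2]\), \(\tau=[5,8]\)), since then \([x_2,y_1]\) is not an interval. In that case one has \(x_1\le y_1<x_2\le y_2\), so the other corner \([x_1,y_2]\) is an interval and the route that adjusts \(y\) first and then \(x\) serves instead; note also that the lower bound does not need any corner vertex at all, only the fact that each edge moves exactly one coordinate to an adjacent axis position, so the induced walk on each axis costs at least \(\delta^x\) resp.\ \(\delta^y\). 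Finally, for the upper bound you should record the one-line check that every intermediate stop along the two monotone axis-legs is a genuine interval (all intermediate left-endpoint values stay \(\le\) the fixed right endpoint, and dually), so the L-shaped walk stays inside the AR quiver. With that case split and check, your argument is complete and coincides with the paper's intended one.
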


Proposition \ref{prop_formula} follows immediately from the
monotonicity of the two axes in each of the four regions of the AR
quiver.

\subsection{Distance to Zero in \(D_\dar\)}

The dimension of an indecomposable is a lower bound for its
\(W_\dar\) value. The following characterizes precisely when this is
achieved.

\begin{prop}\label{prop_decreasing_path}
  For any indecomposable \(\sigma = [x,y]\), \(W_\dar(\sigma)\geq
  y-x+1\). Furthermore, \(W_\dar(\sigma)=y-x+1\) if and only if there
  is a path of decreasing dimension from \(\sigma\) to a simple
  indecomposable in the AR quiver.
\end{prop}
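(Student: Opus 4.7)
The plan is to unfold the definitions and reduce the claim to the elementary triangle inequality for absolute values, exploiting the observation that \(\delta_\dar\) is a shortest-path distance in the AR quiver where edges are weighted by absolute differences of dimensions. A key structural input I will use from Algorithm \ref{alg_main} and Proposition \ref{prop_formula} is that every edge of the AR quiver of \(\mathbb{A}_n\) corresponds to fixing one endpoint of an interval and changing the other, so adjacent indecomposables always have \emph{distinct} dimensions; in particular every edge weight is strictly positive.

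For the lower bound, I would take any path \(\sigma_0 = \sigma - \sigma_1 - \cdots - \sigma_l = [t]\) in the AR quiver and set \(d_i = \dim \sigma_i\). The length of the path is \(\sum_{i=0}^{l-1}|d_i - d_{i+1}|\), and by the telescoping triangle inequality this is at least \(|d_0 - d_l| = (y-x+1) - 1 = y - x\). Minimizing over the choice of simple target \([t]\) gives \(\min_t \delta_\dar(\sigma,[t]) \geq y - x\), and then the definition \(W_\dar(\sigma) = \min_t \delta_\dar(\sigma,[t]) + 1\) yields \(W_\dar(\sigma) \geq y - x + 1\). I also need to remark that the minimum is achieved, which is clear because the AR quiver is a finite connected graph.

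For the equality characterization, the same telescoping inequality tells me that \(\sum|d_i - d_{i+1}| = |d_0 - d_l|\) if and only if the signed differences \(d_i - d_{i+1}\) all share a common sign. Since \(d_0 - d_l > 0\), this forces each \(d_i - d_{i+1} \geq 0\), and combined with the structural fact that neighbors in the AR quiver never share a dimension this is equivalent to \((d_i)\) being \emph{strictly} decreasing. Thus \(W_\dar(\sigma) = y - x + 1\) precisely when some path from \(\sigma\) to a simple has strictly decreasing dimensions, which is exactly the stated condition. The only genuinely non-formal step is justifying the positivity of edge weights (the ``no equal-dimension neighbors'' fact); everything else is bookkeeping on the telescoping bound, so I do not anticipate a substantive obstacle.
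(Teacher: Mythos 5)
Your proof is correct, and it reaches the result by a slightly different and arguably cleaner route than the paper. The paper's argument splits the biconditional: for the ``if'' direction it invokes the compass-region structure and the explicit distance formula (Proposition \ref{prop_formula}) to compute \(\delta_\dar([x,y],[t])=(t-x)+(y-t)=y-x\) along a decreasing path, and for the ``only if'' direction it analyzes the shape of a minimal non-monotone path (a dip \([x,y]\supset[x_1,y_1]\subset[x_2,y_2]\supset[t]\)) and shows the extra legs contribute strictly positive weight. Your telescoping argument handles both directions uniformly: the lower bound is the triangle inequality \(\sum_i|d_i-d_{i+1}|\geq|d_0-d_l|\), and the equality case is the standard characterization that all signed increments share a sign, which is more robust than the paper's implicit single-valley picture since it treats arbitrary paths without any case analysis. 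The one input you correctly isolate, positivity of edge weights, does follow from the grid structure of Algorithm \ref{alg_main}: every arrow of the AR quiver moves one step along exactly one of the two endpoint axes, so adjacent indecomposables differ in exactly one endpoint and hence in dimension, which upgrades ``weakly decreasing'' to ``strictly decreasing'' and matches the paper's phrasing. (Two cosmetic points: when \(\sigma\) is itself simple the claim is trivial and your assumption \(d_0-d_l>0\) should be set aside for that degenerate case; and note that for the ``if'' direction alone you do not even need edge-weight positivity, since a weakly decreasing path already telescopes to \(y-x\).) In short, the paper buys concreteness from its formula-based computation, while your argument buys generality and brevity by reducing everything to the metric structure of the weighted graph.
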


\begin{proof}
  The first statement is immediate from the dimension-weighting of the
  edges in the definition of \(\delta_\dar\) (Definition
  \ref{def_ar_dist}) and the induced distance \(D_\dar\) (Definition
  \ref{def_dar}).
  
  Let \(\sigma=[x,y]\) be an indecomposable with decreasing path to
  some simple \([t]\). Then necessarily \(x\leq t\leq y\), and the
  existence of a decreasing path guarantees that \([x,y]\) and \([t]\)
  are in the same
  \(\bar{\mathrm{compass}}\) region. Hence,
  \(\delta_\dar([x,y],[t])=t-x+y-t=y-x\), and so
  \(W_\dar(\sigma)=y-x+1=\mathrm{dim}(\sigma)\).

  The converse also follows from the definitions cited above.
  If there is not a path of decreasing dimension, then
  any path of minimal weight from \([x,y]\) to \([t]\) must be of the
  form \[
    [x,y]\to\ldots\to [x_1,y_1]\to[x_2,y_2]\to\ldots\to[t]
  \] where \([x,y]\supset[x_1,y_1]\subset[x_2,y_2]\supset[t]\). Then,
  \(\delta_\dar([x,y],[t])\geq t-x+y-t+(x_1-x_2)+(y_2-y_1)\) where at
  least one of the parenthetical terms is strictly positive.
\end{proof}

\begin{corollary}\label{cor_escape_east_west}
For any
indecomposable \([x,y]\in\bar{\mathcal{E}}\cup\bar{\mathcal{W}}\),
\[
  W_\dar(\sigma)=\mathrm{dim}(\sigma).
\]
\end{corollary}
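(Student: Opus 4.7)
The plan is to exploit Proposition \ref{prop_decreasing_path} together with the closed-form distance formula of Proposition \ref{prop_formula}. Since $W_\dar(\sigma)\geq\dim(\sigma)$ always holds, it suffices to exhibit a simple indecomposable $[t]$ such that $\delta_\dar(\sigma,[t]) = \dim(\sigma)-1 = y-x$, whence $W_\dar(\sigma)\leq(y-x)+1=\dim(\sigma)$. By Proposition \ref{prop_formula}, for any simple $[t]$ the two coordinate contributions $\delta^x(x,t)$ and $\delta^y(y,t)$ are each bounded below by $|x-t|$ and $|y-t|$ respectively, with equality exactly when $\sigma$ and $[t]$ lie in the appropriate matching regions. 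The minimum of $|x-t|+|y-t|$ over $t\in\mathbb{Z}$ is $y-x$, achieved for any $t\in[x,y]$, so the task reduces to locating a suitable simple $[t]$ with $x\leq t\leq y$ lying in a region compatible with $\sigma$ via the formula.

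I would first identify which simples live in each bar-region. A simple $[t,t]$ lies in $\mathcal{E}$ iff $t$ is simultaneously in some $(\text{sink},\text{next source}]$ and in some $[\text{source},\text{next sink})$; the only way this can happen is $t=\text{source}$. Symmetrically, simples in $\mathcal{W}$ correspond to sinks, and the boundary cases $t=1$ or $t=n$ land in the appropriate $\bar{\mathcal{E}}$ or $\bar{\mathcal{W}}$ as adjacent diagonal modules. Thus for $\sigma\in\bar{\mathcal{E}}$ I want to find a source $t\in[x,y]$, and for $\sigma\in\bar{\mathcal{W}}$ a sink in the same range.

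Next I would prove that such a vertex exists. For $\sigma=[x,y]\in\bar{\mathcal{E}}$, let $\alpha$ be the smallest source with $\alpha\geq x$; the definition of $\mathcal{E}$ forces $x\in(\text{sink},\alpha]$. I claim $\alpha\leq y$. Otherwise, $y<\alpha$ would mean $y$'s preceding source $\alpha'$ satisfies $\alpha'<x$, so $\alpha'\leq\text{sink}_{\text{just before }x}$, but then the next sink after $\alpha'$ is $\leq\text{sink}_{\text{just before }x}<x\leq y$, contradicting $y\in[\alpha',\text{next sink})$. Hence $\alpha\in[x,y]$, and $[\alpha]\in\bar{\mathcal{E}}$, so Proposition \ref{prop_formula} yields $\delta_\dar(\sigma,[\alpha])=(\alpha-x)+(y-\alpha)=y-x$ as desired. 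An identical argument with sources and sinks swapped handles $\sigma\in\bar{\mathcal{W}}$. The corner case $\sigma=[1,n]$ can be treated by direct substitution in Proposition \ref{prop_formula}: both the absolute-value and the summed-deviation formulas yield the same answer $n-1$, so the result holds unconditionally for $[1,n]$.

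The main obstacle is the existence argument in the third paragraph: carefully threading through the alternation of sources and sinks in $\mathbb{A}_n$ (with possible transit vertices between them) to guarantee $\alpha\leq y$, and then verifying that the simple $[\alpha]$ actually lies in $\bar{\mathcal{E}}$ (not merely in some region) so that the monotone axes of Remark \ref{rmk_monotone} genuinely apply and Proposition \ref{prop_formula} delivers the absolute-value form on both coordinates. Once those bookkeeping points are confirmed, the chain $\dim(\sigma)\leq W_\dar(\sigma)\leq\delta_\dar(\sigma,[\alpha])+1=\dim(\sigma)$ closes the argument.
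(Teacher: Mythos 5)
Your core computation is sound and it is a genuinely different mechanism from the paper's. The paper's proof goes through the decreasing-path criterion of Proposition \ref{prop_decreasing_path}: it observes that the projective and injective simples sit at the outer corners of the east and west regions and that, from the shape of the AR quiver produced by Algorithm \ref{alg_main}, every module in $\bar{\mathcal{E}}\cup\bar{\mathcal{W}}$ has a path of strictly decreasing dimension to one of those corner simples. You instead bypass the decreasing-path language entirely: you identify the simples inside $\mathcal{E}$ (resp.\ $\mathcal{W}$) as exactly the sources (resp.\ sinks), locate such a vertex $\alpha\in[x,y]$ by a short combinatorial argument on the alternation of sinks and sources, and then read off $\delta_\dar(\sigma,[\alpha])=|x-\alpha|+|y-\alpha|=y-x$ from Proposition \ref{prop_formula}. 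For $\sigma\in\mathcal{E}\cup\mathcal{W}$ proper this is correct (your argument that $\alpha\leq y$, and that $2\leq\alpha\leq n-1$ so $[\alpha]$ really lies in the same region, both check out), and both proofs ultimately rest on the same monotonicity of the axes; the difference is that the paper reaches the corner simple qualitatively while you reach an interior source/sink quantitatively.

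The gap is coverage: the corollary is stated for $\bar{\mathcal{E}}\cup\bar{\mathcal{W}}$, which by Notation \ref{not_ewsn} includes the diagonal modules (those with $x=1$ or $y=n$) adjacent to the east or west regions, not just $\mathcal{E}\cup\mathcal{W}$ and $[1,n]$. Your existence step explicitly invokes ``the definition of $\mathcal{E}$'' to place $x$ in $(\mathrm{sink},\alpha]$, which is unavailable when $x=1$ or $y=n$, and the only boundary case you treat is $[1,n]$. These extra cases are genuinely used later in the paper: in Example \ref{ex_bd} the modules $[1,2]\in\bar{\mathcal{W}}$ and $[9,10]\in\bar{\mathcal{E}}$ are exactly of this diagonal type and are dispatched by citing this corollary. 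The repair is not deep --- for a diagonal module the two branches of $\delta^x$ (when $x=1$) or $\delta^y$ (when $y=n$) in Proposition \ref{prop_formula} coincide, so one only needs a source/sink in the support whose simple lies in a bar-region compatible with the one coordinate that still matters, and that in turn requires saying which compass region the relevant diagonal modules are adjacent to in the AR quiver --- but as written your case analysis does not prove the statement in the generality in which it is asserted and applied, whereas the paper's decreasing-path argument covers the diagonal modules uniformly.
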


\begin{proof}
  Note that the projective simple and injective simple
  indecomposable modules form (respectively) the outer corners of the
  east and west regions, and it is immediate from the shape of the AR
  quiver (Algorithm \ref{alg_main}) that there are decreasing
  paths from any module in \(\bar{\mathcal{E}}\) or \(\bar{\mathcal{W}}\) to
  one of these.
\end{proof}

For any indecomposable in the north and south
regions, from Figure \ref{fig_ar_super1}
we see that there exists a path of decreasing dimension to the flat
north or south \emph{boundary}, but these boundaries are
not comprised of exclusively simple indecomposables. This complicates
the situation for \(W_\dar(\sigma)\) when
\(\sigma\in\mathcal{N}\cup\mathcal{S}\).

%\begin{center}
%  \includegraphics{}
%\end{center}

\begin{definition}
The \emph{north boundary} is the collection of indecomposables that
comprise the very top of the AR quiver. As a consequence of Algorithm
\ref{alg_main} (see also Notation \ref{not_ewsn}) this is exactly the
set \[
  B_N=\{N_i=[\text{source},\text{next
    sink}]\}\cup\{[s]:s\not\in\cup_iN_i\}\subset\bar{\mathcal{N}}.
\] The intervals are listed left to right on the boundary of the
AR quiver in \emph{increasing} order of their endpoints (as a subset
of \(\mathbb{Z}\)). This is the
construction pictured above: the north boundary is all red intervals
and blue simples listed in sequence according to \(\leq_\mathbb{Z}\).

The \emph{south boundary} is \[
B_S=\{S_j=[\text{sink},\text{next
  source}]\}\cup\{[s]:s\not\in\cup_jS_j\}\subset\bar{\mathcal{S}}.
\] These are listed left to right in the AR quiver in
\emph{decreasing} order (as a subset of \(\mathbb{Z}\)).
\end{definition}

\begin{example}\label{ex_bd}
  Consider the orientation of \(Q=\mathbb{A}_{10}\) and its
  north and south boundaries as seen in Figure \ref{fig_trunc}.
  
  The red intervals are the starting points for finding intervals
  with \(W_\dar>\mathrm{dim}\). Do note first that by Corollary
  \ref{cor_escape_east_west} the red intervals \([1,2]\) and
  \([9,10]\) in fact satisfy \(W_\dar=\mathrm{dim}\) as they are in
  \(\bar{\mathcal{W}}\) and \(\bar{\mathcal{E}}\) respectively.

  The boundary intervals contained strictly within \(\mathcal{N}\) or
  \(\mathcal{S}\) are of
  potential concern. Any non-simple such indecomposables
  have \(W_\dar>\mathrm{dim}\). This is immediate by observing that
  all paths leading away from these indecomposables are paths of
  \emph{increasing} dimension, violating the condition of Proposition
  \ref{prop_decreasing_path}.
  \begin{figure}
    \centering
    \includegraphics[scale=.7]{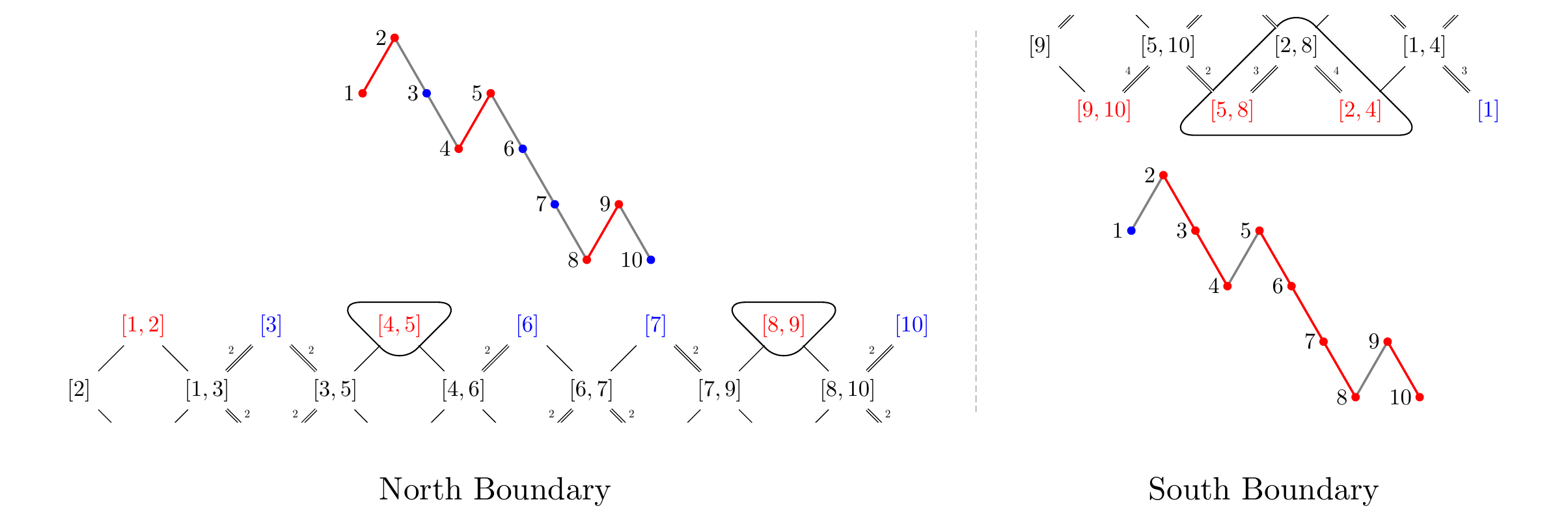}
    \caption{From Example \ref{ex_bd}. These are the truncated views
      of an AR quiver highlighting the
      structures of its north and south boundaries. The
      indecomposables with \(W_\dar>\mathrm{dim}\) are outlined.}
    \label{fig_trunc}
  \end{figure}
  These are still not the only intervals with \(W_\dar>\mathrm{dim}\),
  however. In this example, we see that the full collection of such
  intervals is
  \begin{itemize}
  \item North: \([4,5],[8,9]\).
  \item South: \([5,8],[2,4],[2,8]\).
  \end{itemize}
  The southern collection of intervals manifest the final feature of
  interest: since the boundary intervals \([5,8]\) and \([2,4]\) are
  adjacent, the interval \([2,8]\) caught above them also has no
  decreasing path to a simple indecomposable.
\end{example}

The preceding discussion motivates the following classification.

\begin{definition}\label{defn_hull}
For an orientation \(Q\) of \(\mathbb{A}_n\), define
\(\mathrm{hull}(Q)\) to be the union of the following sets:
\[
H_N=\{[\text{source},\text{sink}]\subset[2,n-1]:\text{any subintervals of
  the form }[\text{sink},\text{next source}]\text{ have length one}\},
\] and \[
H_S=\{[\text{sink},\text{source}]\subset[2,n-1]:\text{any subintervals of
  the form }[\text{source},\text{next sink}]\text{ have length one}\}.
\] Each set vaccuously includes the intervals with no interior
subintervals of opposite orientation.

Call \(H_N\subset\mathcal{N}\) the \emph{north hull} and
\(H_S\subset\mathcal{S}\) the \emph{south hull}.
\end{definition}

To conclude this section we will provide explicit formulas for
\(W_\dar(\sigma)\) when \(\sigma\in\mathrm{hull}(Q)\), resulting in
upper and lower bounds on \(W_\dar\) (Proposition \ref{prop_diam}).

\begin{lemma}{Values of \(W_\dar\) for
    \(\mathrm{hull}(P)\).}\label{lemma_hull_w}

  Let \(Q\) be some orientation of
  \(\mathbb{A}_n\) with non-empty hull.
  Suppose \([x,y]\in H_N\) (symmetrically, \([x,y]\in H_S\)). Define
  \([x_\bullet,y_\bullet]\) to be the largest interval containing
  \([x,y]\) that is also in \(H_N\). Let \(e=x_\bullet-1\) and
  \(E=y_\bullet+1\). Then \(W_\dar([x,y])\) is attained by passing
  through one of the simples \([e]\) or \([E]\). That is, \[
    W_\dar([x,y])=\min\{\delta_\dar([x,y],[e]),\delta_\dar([x,y],[E])\}+1.
  \] Moreover, the precise distances to these indecomposables are
  given by
  \begin{equation*}
    \delta_\dar([x,y],[e])=
    \left\{
      \begin{array}{ll}
        x+y-2 & \text{if }e>1\text{ and is the leftmost sink}\\
        \\
        x+y-2e & \text{otherwise}
      \end{array}\right.
  \end{equation*}
  and
  \begin{equation*}
    \delta_\dar([x,y],[E])=
    \left\{
      \begin{array}{ll}
        2E-(x+y) & \text{if }E<n\text{ and is the rightmost source}\\
        \\
        2n-(x+y) & \text{otherwise}.
      \end{array}\right.
  \end{equation*}
\end{lemma}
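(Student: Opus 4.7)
The plan is to identify the region of the AR quiver that contains $[e]$ (and by symmetry $[E]$) using the hull-maximality of $[x_\bullet,y_\bullet]$, read off $\delta_\dar([x,y],[e])$ and $\delta_\dar([x,y],[E])$ from Proposition \ref{prop_formula}, and finally verify that no simple is strictly closer to $[x,y]$ than the smaller of those two distances.

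First I would analyze $e = x_\bullet - 1$. Since $x_\bullet$ is a source, the arrow between $e$ and $x_\bullet$ is oriented $x_\bullet \to e$, so $e$ is either a sink or a pass-through whose other incident edge points still further to the left. If $e$ were a sink but not the leftmost sink of $Q$, then the next source $x'$ strictly to the left of $e$ would satisfy $x' \geq 2$, and $[x',y_\bullet]$ would strictly contain $[x_\bullet,y_\bullet]$ while still belonging to $H_N$ (the new interior sink $e$ is immediately followed by the source $x_\bullet$, yielding a length-one subinterval), contradicting maximality. Hence $e$ is either (a) a pass-through with both incident edges oriented leftward, placing $[e] \in \bar{\mathcal{N}}$ by Notation \ref{not_ewsn}, or (b) a sink that (when $e > 1$) is necessarily the leftmost sink of $Q$, placing $[e] \in \bar{\mathcal{W}}$; the degenerate case $e = 1$ is a sink on the $x = 1$ diagonal and falls harmlessly under (b). Feeding each case into Proposition \ref{prop_formula} produces the two claimed sub-formulas: in (a) both axes stay within the same region-pair and give $(x-e)+(y-e)$, while in (b) the $x$-axis straddles the diagonal $x=1$ and gives $(x+e-2)+(y-e)$; the two expressions coincide at $e=1$. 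The analysis of $E$ is symmetric.

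Finally, I would rule out any simple $[t]$ beating $\min(\delta_\dar([x,y],[e]), \delta_\dar([x,y],[E]))$ by a four-way case split on the region containing $[t]$. The hull hypothesis forces every vertex of $[x_\bullet,y_\bullet]$ to be a source, a sink, or a pass-through going right, so any $[t] \in \bar{\mathcal{N}}$ must satisfy $t \leq e$ or $t \geq E$; Proposition \ref{prop_formula} then produces $x+y-2t$ or $2t-x-y$ respectively, minimized exactly at $t = e$ or $t = E$. For $[t] \in \bar{\mathcal{W}}$ (a sink), the formula evaluates to $x+y-2$ when $t \leq y$ and grows strictly otherwise, and $x+y-2 \geq \delta_\dar([x,y],[e])$ in both subcases for $e$ since $e \geq 1$. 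The source case $[t] \in \bar{\mathcal{E}}$ is symmetric, while $[t] \in \bar{\mathcal{S}}$ yields the constant $x+2n-y-2$, which dominates the others because $y \leq n-1$. The main obstacle is the $\bar{\mathcal{N}}$-case: it is precisely the hull condition that rules out any leftward-pass-through vertex hiding inside $[x_\bullet,y_\bullet]$, and this is what pins the minimum over all simples to $[e]$ or $[E]$.
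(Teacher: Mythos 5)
Your overall route is the paper's: locate \([e]\) and \([E]\) in the AR quiver, evaluate via Proposition \ref{prop_formula}, and then check region by region that no other simple does better. Your first two paragraphs are sound, and the maximality argument showing that a sink \(e>1\) is forced to be the leftmost sink makes explicit something the paper only asserts. The gap is in the \(\bar{\mathcal{N}}\) case of your last paragraph. The claim that \(x+y-2t\) (for \(t\le e\)) is ``minimized exactly at \(t=e\)'' is only available when \([e]\) is itself an \(\bar{\mathcal{N}}\)-simple, i.e.\ your case (a). In the branch where \(e>1\) is the leftmost sink, \([e]\in\bar{\mathcal{W}}\) and the value to be matched is \(\delta_\dar([x,y],[e])=x+y-2\); a hypothetical \(\bar{\mathcal{N}}\)-simple at some \(2\le t<e\) would satisfy \(\delta_\dar([x,y],[t])=x+y-2t\le x+y-4<x+y-2\), and nothing you have written rules it out from undercutting \(\min\{\delta_\dar([x,y],[e]),\delta_\dar([x,y],[E])\}\). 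The hull hypothesis you invoke only forbids leftward pass-throughs \emph{inside} \([x_\bullet,y_\bullet]\); what is needed is that there are none to the left of \(e\) either. That is true and easy (a leftward pass-through below the leftmost sink would propagate arrow by arrow down to vertex \(1\) and make \(1\) an earlier sink, so every vertex below \(e\) is a source or rightward pass-through, whose simples fall into your east/south cases), and it is exactly the step the paper supplies with its remark that if there is any (low-N) candidate then \(e=x_\bullet-1\) is one; the symmetric fact is needed to the right of \(E\) when \(E<n\) is the rightmost source.

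Second, ``the analysis of \(E\) is symmetric'' conceals both an argument and a mismatch. The left--right mirror does not preserve \(H_N\) (it exchanges \(H_N\) with \(H_S\)); the correct symmetry is reflection combined with reversing all arrows, under which your case (b) for \(e\) corresponds to \(E<n\) being the rightmost source, with \([E]\in\bar{\mathcal{E}}\), and Proposition \ref{prop_formula} then gives \(\delta_\dar([x,y],[E])=2n-(x+y)\) in that case and \(2E-(x+y)\) when \([E]\in\bar{\mathcal{N}}\) or \(E=n\). Note that this is the opposite pairing from the one displayed in the statement of Lemma \ref{lemma_hull_w}, and it is the pairing consistent with Corollary \ref{cor_1zz_w} and with a direct computation (for \(1\gets 2\to 3\gets 4\to 5\) one finds \(\delta_\dar([2,3],[4])=5=2n-(x+y)\), not \(2E-(x+y)=3\)). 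So you should write the \(E\) case out rather than appeal to symmetry, and be aware that doing it correctly reproduces the paper's case analysis and Corollary \ref{cor_1zz_w} but interchanges the two branch conditions in the displayed formula for \(\delta_\dar([x,y],[E])\).
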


\begin{proof}
  Let \([x,y]\in H_N\), meaning that \(x\) is a source and \(y\) is a
  sink. Let \(1\leq t\leq n\).

  \textit{Case \(t<x_\bullet\) :}
  We proceed by possible regions in which
  \([t]\) may lie and give the corresponding \(\delta_\dar\).
  \begin{flalign*}
    & [t]\in\bar{\mathcal{N}}:
    & & \delta_\dar([x,y],[t]) = x+y-2t & \tag{low-N} \\
    & [t]\in\bar{\mathcal{E}}\setminus\bar{\mathcal{N}}:
    & & \delta_\dar([x,y],[t]) = 2(n-t)-(y-x) & \tag{low-E} \\
    & [t]\in\bar{\mathcal{W}}\setminus\bar{\mathcal{N}}:
    & & \delta_\dar([x,y],[t]) = x+y-2 & \tag{low-W} \\
    & [t]\in\mathcal{S}:
    & & \delta_\dar([x,y],[t]) = 2(n-1)-(y-x) & \tag{low-S}\\
%  \end{align*}
\textit{Case \(y_\bullet<t\) :}\\
%  \begin{align*}
    & [t]\in\bar{\mathcal{N}}:
    & & \delta_\dar([x,y],[t])=2t-x-y \tag{high-N} \\
    & [t]\in\bar{\mathcal{E}}\setminus\bar{\mathcal{N}}:
    & & \delta_\dar([x,y],[t])=2n-x-y \tag{high-E} \\
    & [t]\in\bar{\mathcal{W}}\setminus\bar{\mathcal{N}}:
    & & \delta_\dar([x,y],[t])= 2(t-1)-(y-x) \tag{high-W} \\
    & [t]\in\mathcal{S}:
    & & \delta_\dar([x,y],[t])= 2(n-1)-(y-x) \tag{high-S}
  \end{flalign*}
  \textit{Case \(x_\bullet\leq t\leq y_\bullet\) :} The only
  possibilities are
  \begin{enumerate}
  \item \([t]\) is some source with \(x_\bullet\leq t<y_\bullet\), so
    \([t]\) is in the east region. That is, \[
      \delta_\dar([x,y],[t])=|x-t|+2n-y-t.
    \] Clearly, this value is minimized by all sources \(x\leq
    m<y_\bullet\). Choosing any of these gives us
    \begin{equation*}
      \delta_\dar([x,y],[t])=\delta_\dar([x,y],[x])=2n-x-y. \tag{mid-E}
    \end{equation*}
  \item \([t]\) is some sink with \(x_\bullet<t\leq y_\bullet\), so
    \([t]\) is in the west region. That is, \[
      \delta_\dar([x,y],[t])=x-1+t-1+|y-t|.
    \] Clearly, this value is minimized by all sinks \(x_\bullet<
    m\leq y\). Choosing any of these gives us
    \begin{equation*}
      \delta_\dar([x,y],[t])=\delta_\dar([x,y],[y])=x+y-2. \tag{mid-W}
    \end{equation*}

  \item \([t]\) is anything else, in which case it is interior to a
    segment of the form \([\textrm{source},\textrm{next sink}]\), and thus
    lies on the south boundary. That is,
    \begin{equation*}
      \delta_\dar([x,y],[t])=2(n-1)-(y-x) \tag{mid-S}
    \end{equation*}
  \end{enumerate}

  We exclude various equations from consideration.
  \begin{itemize}
  % \item As \([x_\bullet,y_\bullet]\in\mathrm{Hull}(P)\),
  %   \(x_\bullet\geq 2\) and \(x_\bullet\) is a source. Thus, there is always
  %   a sink \(t< x_\bullet\), or equivalently, there is
  %   always some \(t<x_\bullet\) with
  %   \([t]\in\bar{\mathcal{E}}\setminus\bar{\mathcal{N}}\). As
  %   (low-S) with no \(t\)-dependence is \(\geq\) (low-E) no matter the
  %   \(t<x_\bullet\) that is chosen, we need never consider the
  %   equation (low-S)
  %   for minimizing \(\delta_\dar([x,y],[t])\).
  % \item
  %   As \([x_\bullet,y_\bullet]\in\mathrm{Hull}(P)\),
  %   \(y_\bullet\leq n-1\) and \(y_\bullet\) is a sink. Thus,
  %   there is always a candidate \(t\) for the equation (high-W). As
  %   (low-W) with no \(t\)-dependence is \(\geq\) (high-W) no matter
  %   the \(y_\bullet<t\) that is chosen, we need never consider the
  %   equation (low-W).
  % \item (high-S) and (high-E) are also out of consideration by
  %   arguments symmetric to those above.
  \item It is easy to check that (low-N) \(\leq\) (low-W) \(\leq\)
    (high-W) \(\leq\) (high-S). As \(x_\bullet\) is a source and
    \(x_\bullet\geq 2\), there always exists some sink \(t<x_\bullet\)
    (and thus
    \([t]\in\bar{\mathcal{W}}\)), so we need never use the biggest two
    equations. %If \([t]\in\bar{\mathcal{N}}\), we may choose the
    %optimal equation (low-N), but this is not always guaranteed.
  \item Similarly, (high-N) \(\leq\) (high-E) \(\leq\)
    (low-E) \(\leq\) (low-S). As \(y_\bullet\) is a sink and
    \(y_\bullet\leq n-1\), there always exists some source
    \(t>y_\bullet\) (and thus
    \([t]\in\bar{\mathcal{E}}\)), so we need never use the biggest two
    equations. %If \([t]\in\bar{\mathcal{N}}\), we may choose the
    %optimal equation (high-N), but this is not always guaranteed.
  \item All mid-type equations are unnecessary for consideration as
    well. Simply note that (mid-E) \(=\) (high-E), (mid-W) \(=\)
    (low-W), and (mid-S) = (low,high-S).
  \end{itemize}

  % As \([x_\bullet,y_\bullet]\in H_N\), \(x_\bullet\geq 2\) and is either
  % the most leftward maximal vertex, or is interior to some interval of
  % the type \([\text{max},\text{next min}]\). Then
  % the simple \([x_\bullet-1]=[e]\) is either in the east region or on 
  % the north boundary (possible on the diagonal part of the boundary if
  % \(e=1\)). Similarly, \([y_\bullet+1]=[E]\) is always in either the
  % west region or is on the north boundary. That is, one of \(\{\)(low-N)
  % or (low-E)\(\}\) and one of \(\{\)(high-N) or (high-W)\(\}\) are
  % always valid equation candidates, and can be subsituted (and
  % minimized) by \(e\) and \(E\), respectively.

  From this, we can conclude that no matter the poset orientation, the
  only candidates for minimizing \(\delta_\dar([x,y],[t])\) are
  (low-N), (low-W), (high-N), and (high-E).

  The only time that there is \emph{no} (low-N) candidate is if
  %\(x_\bullet\) is the leftmost source that is greater than \(1\),
  %and \(x_\bullet-1\) is a sink
  \(e\) is the leftmost sink and \(e\neq 1\). But in this case,
  \(e=x_\bullet-1\) is a candidate for (low-W).
  Conversely, if there is
  \emph{any} (low-N) candidate, then \(e=x_\bullet-1\) is a also a
  candidate, and minimizes the equation.

  The symmetric statements are true of (high-N) and (high-E), which
  are minimized by substituting \(E\).

  The statement of the lemma follows.
\end{proof}

\begin{lemma}\label{lemma_non_hull}
  If
  \([x,y]\in(\mathcal{N}\cup\mathcal{S})\setminus\mathrm{hull}(P)\),
  then \(W_\dar([x,y])=\mathrm{dim}([x,y])\).
\end{lemma}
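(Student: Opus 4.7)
The plan is to combine Proposition \ref{prop_decreasing_path} with the explicit formula from Proposition \ref{prop_formula}. Since $W_\dar([x,y]) \geq \mathrm{dim}([x,y]) = y - x + 1$ always, it will suffice to produce a simple $[t]$ with $\delta_\dar([x,y], [t]) \leq y - x$. By Proposition \ref{prop_formula}, whenever $[t]$ lies in the same $\bar{\mathrm{compass}}$ region as $[x,y]$ and $x \leq t \leq y$, the formula collapses to $\delta_\dar([x,y], [t]) = (t - x) + (y - t) = y - x$, so the entire proof reduces to locating an appropriate simple in the correct region.

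I will take $[x,y] \in \mathcal{N}$; the $\mathcal{S}$ case is completely symmetric by swapping the roles of sinks and sources. The negation of $[x,y] \in H_N$ unpacks into three exhaustive cases: (a) $x$ is not a source, and hence lies strictly between some sink and the next source; (b) $y$ is not a sink, lying symmetrically between a sink and the next source; or (c) $x$ is a source and $y$ is a sink, but some subinterval $[s_1, s_2] \subseteq [x,y]$ of the form $[\text{sink}, \text{next source}]$ has length $\geq 2$. I will set $t = x$ in case (a), $t = y$ in case (b), and choose any $t$ strictly between $s_1$ and $s_2$ in case (c) (such a $t$ exists by the length condition). In every case $t$ is a vertex strictly between some sink and the next source, and $x \leq t \leq y$ by construction.

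The main technical step, and the only place I expect any real work, is verifying that each chosen $[t]$ lies in $\mathcal{N}$. From Notation \ref{not_ewsn} this demands simultaneously $t \in (\text{sink}, \text{next source}]$ and $t \in [\text{sink}, \text{next source})$; for any $t$ with $s < t < s'$ (for sink $s$ and next source $s'$) both hold, since $s < t < s'$ forces $t$ into each of the half-open segments $(s, s']$ and $[s, s')$. Once this membership is confirmed in all three cases, $[t]$ and $[x,y]$ both lie in $\bar{\mathcal{N}}$, so Proposition \ref{prop_formula} gives $\delta_\dar([x,y], [t]) = y - x$ and hence $W_\dar([x,y]) \leq y - x + 1$; combined with the lower bound from Proposition \ref{prop_decreasing_path}, equality follows.
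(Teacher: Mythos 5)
Your proof is correct and follows essentially the same route as the paper's: the paper likewise produces a vertex \(t\in[x,y]\) whose simple \([t]\) lies on the boundary of the same compass region and concludes \(\delta_\dar([x,y],[t])=y-x\), hence \(W_\dar=\mathrm{dim}\). Your contribution is merely to spell out the case analysis (non-source left endpoint, non-sink right endpoint, or a long interior \([\text{sink},\text{next source}]\) segment) and the membership check that the paper leaves implicit.
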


\begin{proof}
  If \([x,y]\not\in\mathrm{hull}(P)\), then there exists \(t\in[x,y]\)
  such that \([t]\) is on the boundary of the same region in which
  \([x,y]\) lies. Then \(\delta_\dar([x,y],[t])=y-t+x-t\), and so
  \(W_\dar([x,y])=y-x+1=\mathrm{dim}([x,y])\).
\end{proof}

The subsequent proposition follows from Lemmas
\ref{lemma_hull_w}, \ref{lemma_non_hull} and Corollary
\ref{cor_escape_east_west}:

\begin{prop}\label{prop_diam} All intervals \(\sigma\) have the
  property that \[
    \mathrm{dim}(\sigma)\leq W_\dar(\sigma)\leq n.
  \]
  The set \(\mathrm{hull}(P)\) is precisely the collection of
  intervals \(\sigma\) such that
  \(W_\dar(\sigma)>\mathrm{dim}(\sigma)\). Furthermore, the diameter
  \(W_\dar=n\) is always attained by the indecomposable \([1,n]\).
\end{prop}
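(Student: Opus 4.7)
The plan is to piece Proposition \ref{prop_diam} together from Corollary \ref{cor_escape_east_west}, Lemma \ref{lemma_non_hull}, and Lemma \ref{lemma_hull_w}, supplemented by Proposition \ref{prop_decreasing_path} for the lower bound and a direct verification at \([1,n]\). First I would observe that the bound \(\mathrm{dim}(\sigma) \leq W_\dar(\sigma)\) is immediate from Proposition \ref{prop_decreasing_path}.

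To characterize the set where this inequality is strict, I would split into three exhaustive cases on the region in which \(\sigma\) lies. When \(\sigma \in \bar{\mathcal{E}} \cup \bar{\mathcal{W}}\), Corollary \ref{cor_escape_east_west} forces \(W_\dar(\sigma) = \mathrm{dim}(\sigma)\). When \(\sigma \in (\mathcal{N} \cup \mathcal{S}) \setminus \mathrm{hull}(P)\), Lemma \ref{lemma_non_hull} gives the same conclusion. In the remaining case, \(\sigma = [x,y] \in \mathrm{hull}(P)\), Lemma \ref{lemma_hull_w} provides the explicit formula \(W_\dar([x,y]) = \min\{\delta_\dar([x,y],[e]),\delta_\dar([x,y],[E])\} + 1\). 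Comparing each of its four piecewise values to \(y - x + 1\) reduces to elementary inequalities such as \(x > e\) (which holds since \(e = x_\bullet - 1 < x_\bullet \leq x\)) or \(x > 1\) (which holds because the hull is contained in \([2, n-1]\)). This yields strict inequality \(W_\dar(\sigma) > \mathrm{dim}(\sigma)\) throughout the hull, giving the desired characterization.

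For the upper bound \(W_\dar(\sigma) \leq n\), the two non-hull cases immediately give \(W_\dar(\sigma) = \mathrm{dim}(\sigma) \leq n\). For \(\sigma = [x,y] \in \mathrm{hull}(P)\), the key observation is that the minimum of the two quantities appearing in Lemma \ref{lemma_hull_w} is bounded above by their arithmetic mean. In each of the four piecewise subcases of that lemma, the mean telescopes cleanly to one of \(E\), \(E - e + 1\), \(n\), or \(n - e + 1\), each of which lies in \([1,n]\) by virtue of \(1 \leq e\) and \(E \leq n\). Thus \(W_\dar(\sigma) \leq n\) in every case. The final claim---that \(W_\dar([1,n]) = n\)---follows because \([1,n]\) is included in every closed region \(\bar{\mathcal{E}},\bar{\mathcal{W}},\bar{\mathcal{S}},\bar{\mathcal{N}}\) by Notation \ref{not_ewsn}, so Corollary \ref{cor_escape_east_west} applies to give \(W_\dar([1,n]) = \mathrm{dim}([1,n]) = n\).

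The main obstacle should be the strictness argument within the hull: one must verify, case by case, that each of Lemma \ref{lemma_hull_w}'s four piecewise values is strictly larger than the dimension of the interval, using only the defining features of the hull (the endpoints form a source/sink pair, the interval is interior to \([2,n-1]\), and it is contained in a maximal hull interval \([x_\bullet, y_\bullet]\)). Once this bookkeeping is settled, the rest of the proposition is an essentially mechanical reassembly of the preceding lemmas.
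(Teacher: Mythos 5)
Your proposal is correct and follows essentially the same route as the paper, which simply asserts that the proposition follows from Corollary \ref{cor_escape_east_west}, Lemma \ref{lemma_non_hull}, and Lemma \ref{lemma_hull_w}; you supply the omitted bookkeeping (the strictness comparison of the hull formulas with \(y-x+1\), and the bound \(W_\dar\leq n\) via the min-versus-average telescoping to \(E\), \(E-e+1\), \(n\), \(n-e+1\)) exactly in the spirit intended. The case split \(\bar{\mathcal{E}}\cup\bar{\mathcal{W}}\), \((\mathcal{N}\cup\mathcal{S})\setminus\mathrm{hull}(P)\), \(\mathrm{hull}(P)\) is exhaustive and the treatment of \([1,n]\) via Corollary \ref{cor_escape_east_west} is exactly how the diameter claim is meant to be settled.
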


And, as \(D_\dar(\sigma,\tau)\leq\max\{W_\dar(\sigma),W_\dar(\tau)\}\)
for all pairs \(\sigma,\tau\), we get the following corollary.

\begin{corollary}\label{cor_dar_n}
  For any \(P=\mathbb{A}_n\), \(D_\dar\leq n\).
\end{corollary}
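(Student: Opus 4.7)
The plan is to unpack the definition of the bottleneck distance and observe that the empty (or trivial) matching already realizes a height of at most $n$. Given any two persistence modules $M, N$ over $\mathbb{A}_n$, Proposition \ref{prop_gabriel} lets us write them as direct sums of intervals, producing barcodes $\mathcal{B}(M), \mathcal{B}(N)$ which are multisets of indecomposables. By Remark \ref{rmk_induced}, $D_\dar(M,N)$ is the bottleneck distance on these multisets generated by $d_\dar$ and $W_\dar$.

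The key observation is that among the matchings over which the infimum in Definition \ref{def_bn} is taken, one is always permitted to take the \emph{empty} matching (i.e., $F' = G' = \emptyset$, so nothing is paired and every interval contributes its $W_\dar$ value). Its height is
\[
  h(\emptyset) = \max\Bigl\{\max_{\sigma \in \mathcal{B}(M)} W_\dar(\sigma),\ \max_{\tau \in \mathcal{B}(N)} W_\dar(\tau)\Bigr\}.
\]
By Proposition \ref{prop_diam}, every summand of either module satisfies $W_\dar(\sigma) \leq n$, so $h(\emptyset) \leq n$. Since $D_\dar(M,N) \leq h(\emptyset)$, the corollary follows.

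There is essentially no obstacle here: all the real work lies upstream in Lemma \ref{lemma_hull_w}, Lemma \ref{lemma_non_hull}, and Corollary \ref{cor_escape_east_west}, which collectively yield the uniform bound $W_\dar \leq n$ in Proposition \ref{prop_diam}. The only mild subtlety worth noting is that one should remark that the parenthetical inequality $D_\dar(\sigma,\tau) \leq \max\{W_\dar(\sigma), W_\dar(\tau)\}$ given immediately before the corollary is really a statement about singletons, and that the bound on multisets requires the empty-matching argument above; this extension is immediate once the definition of bottleneck height is unpacked.
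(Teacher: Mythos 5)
Your proof is correct and matches the paper's argument: the paper likewise deduces the corollary from Proposition \ref{prop_diam} via the observation that \(D_\dar(\sigma,\tau)\leq\max\{W_\dar(\sigma),W_\dar(\tau)\}\), which is exactly the (empty-matching) bound you spell out. Your explicit extension from singletons to arbitrary barcodes is the same step the paper leaves implicit, so there is nothing further to add.
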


\subsection{Behavior of \(D_\dar\) on Pure Zigzag
  Orientations}\label{sec_zz_shape}

Recall that in Definition \ref{def_an} we say \(P=\mathbb{A}_n\) has
\emph{pure zigzag orientation} if the directions of
any two adjacent arrows are opposite; alternatively, if every vertex is
a source (minimal) or a sink (maximal).

\begin{figure}
  \centering
  \includegraphics[scale=1]{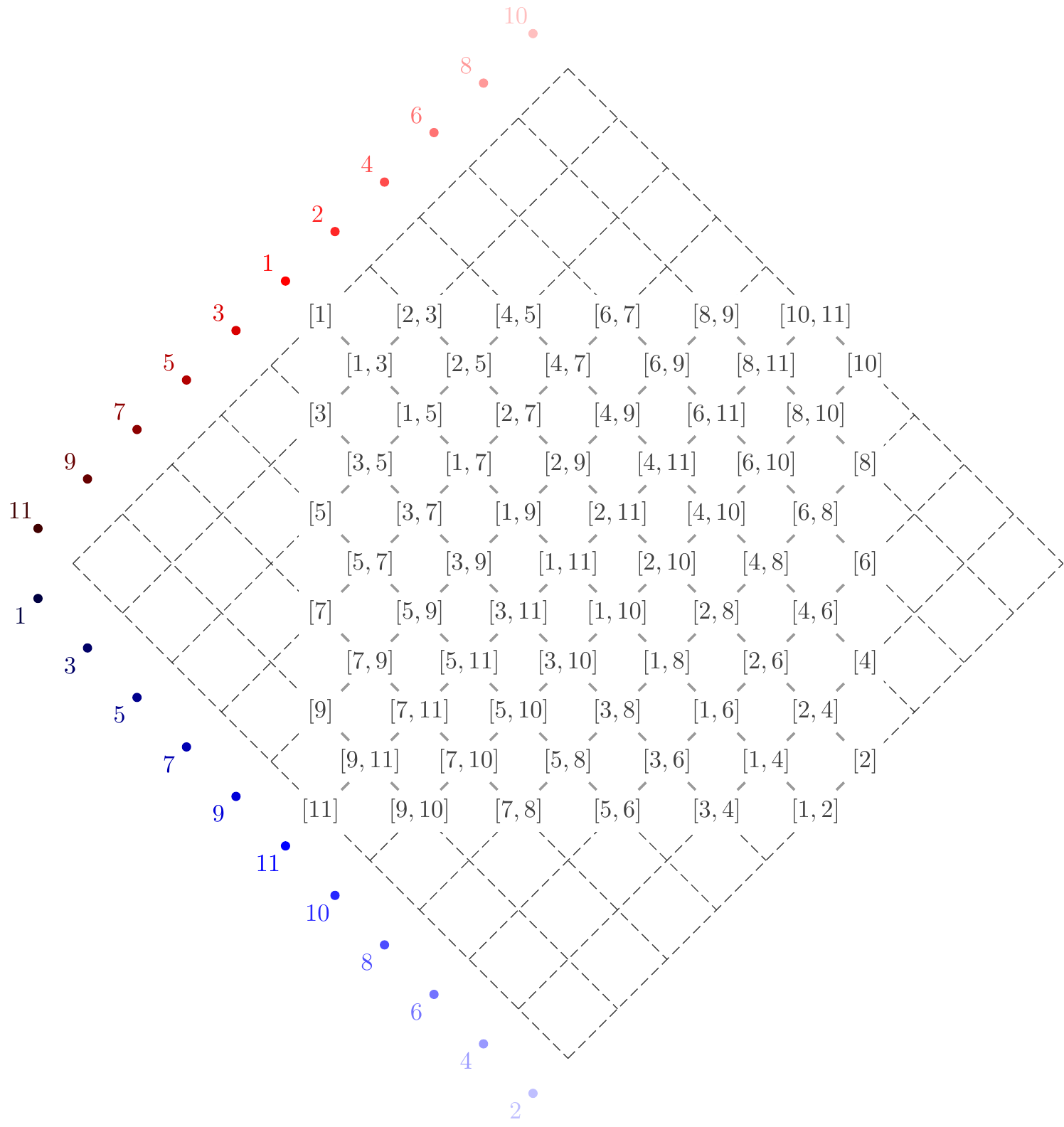}
  \caption{AR quiver of the \(\mathbb{A}_{11}\) zigzag quiver with
    upward orientation.}
  \label{fig_zig_ar}
\end{figure}

As zigzag is an orientation that is often of particular independent
interest, we will here espouse some properties of \(D_\dar\)
specifically for the zigzag setting.

The Auslander-Reiten quiver of a zigzag orientation of
\(\mathbb{A}_{11}\) is shown in Figure \ref{fig_zig_ar}.

\begin{notation} There are
slight differences in the AR quiver based on the original orientation
starting and ending at a max or min. This results in four zigzag
orientation types, which we label as follows for convenience:
\begin{itemize}
\item in (uu) orientation, \(1\) and \(n\) are sinks,
\item in (ud) orientation, \(1\) is a sink and \(n\) is source,
\item in (du) orientation, \(1\) is a source and \(n\) is a sink,
\item in (dd) orientation, \(1\) and \(n\) are sources.
\end{itemize}
%The differences in the AR quivers for these various orientations are
%illustrated in Figure \ref{fig_orient}.
\end{notation}

\begin{remark}[Hull of zigzag orientation]
  From Definition \ref{defn_hull}, we immediately see that an
  \(\mathbb{A}_n\) quiver with zigzag orientation has
  \(H_N=\{[\textrm{min},\textrm{max}]\subset[2,n-1]\}\) and
  \(H_S=\{[\textrm{max},\textrm{min}]\subset[2,n-1]\}\). That is,
  \(\mathrm{hull}(P)\) is precisely the
  the entire north and south regions of AR quiver (which excludes the
  diagonals).
\end{remark}

As an immediate consequence of Lemma \ref{lemma_hull_w}, we have the
following.

\begin{corollary}[to Lemma \ref{lemma_hull_w}]\label{cor_1zz_w}
  For a zigzag orientation of some \(\mathbb{A}_n\) quiver, any
  \(\sigma=[x,y]\) in \(\mathrm{hull}(P)\) has \[
    W_\dar(\sigma)=\min\{x+y-1,2n-x-y+1\}.
  \]
\end{corollary}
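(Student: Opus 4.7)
By the reversal symmetry between $H_N$ and $H_S$, it suffices to establish the formula for $\sigma = [x, y] \in H_N$; so $x$ is a source and $y$ a sink, both in $[2, n-1]$, placing $\sigma \in \mathcal{N}$. The plan is to apply Lemma \ref{lemma_hull_w} (equivalently, to compute $\delta_\dar(\sigma, [t])$ directly from Proposition \ref{prop_formula}) and watch its expressions collapse under the zigzag alternation.

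In a zigzag orientation, every vertex in $[2, n-1]$ is either a source or a sink, so any non-diagonal simple $[t]$ (with $2 \leq t \leq n-1$) lies either in $\mathcal{E}$ (if $t$ is a source) or $\mathcal{W}$ (if $t$ is a sink). For $[t] \in \mathcal{E}$, Proposition \ref{prop_formula} gives $\delta_\dar(\sigma, [t]) = |x - t| + (2n - y - t) \geq 2n - x - y$, with equality attained at $t = x$; while for $[t] \in \mathcal{W}$, it gives $\delta_\dar(\sigma, [t]) = (x + t - 2) + |y - t| \geq x + y - 2$, with equality attained at $t = y$. Because $[x, y] \in H_N$ forces both $x$ and $y$ to lie strictly in $[2, n-1]$, both bounds are realized by honest non-diagonal simples. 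For the diagonal simples $[1]$ and $[n]$, one reads off their $\bar{}$-region assignments from Algorithm \ref{alg_main} and verifies in each zigzag subcase that $\delta_\dar(\sigma, [1])$ and $\delta_\dar(\sigma, [n])$ are at least $\min\{x+y-2,\, 2n-x-y\}$, so they cannot lower the minimum.

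Combining gives $\min_t \delta_\dar(\sigma, [t]) = \min\{x + y - 2,\, 2n - x - y\}$, and hence $W_\dar(\sigma) = \min\{x + y - 1,\, 2n - x - y + 1\}$ by Definition \ref{def_dar}. The chief subtlety is the diagonal bookkeeping: across the four zigzag sub-orientations $(\mathrm{uu}), (\mathrm{ud}), (\mathrm{du}), (\mathrm{dd})$ the memberships of $[1]$ and $[n]$ in $\bar{\mathcal{E}}, \bar{\mathcal{W}}, \bar{\mathcal{N}}, \bar{\mathcal{S}}$ shift, so one must be careful in selecting which branch of Proposition \ref{prop_formula} applies. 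Fortunately this does not affect the final count, since both candidate minima $x+y-2$ and $2n-x-y$ are already witnessed by the non-diagonal simples $[y]$ and $[x]$ respectively, so the diagonal analysis serves only as a sanity check that no smaller value is attained.
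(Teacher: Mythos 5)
Your proof is correct, but it does not follow the paper's route: the paper treats this corollary as an immediate specialization of the statement of Lemma \ref{lemma_hull_w}, whereas you rerun the minimization over all simples directly from Proposition \ref{prop_formula}. Your computation is complete in the zigzag setting: interior sources are exactly the simples in \(\mathcal{E}\) and interior sinks exactly those in \(\mathcal{W}\), the two displayed lower bounds are attained at the admissible non-diagonal simples \(t=x\) and \(t=y\), and the diagonal check is indeed painless because with \(x_2=1\) (resp.\ \(y_2=n\)) both branches of \(\delta^x\) (resp.\ \(\delta^y\)) coincide, so \([1]\) and \([n]\) contribute at least \(\min\{x+y-2,\,2n-x-y\}\) no matter which \(\bar{\phantom{x}}\)-regions they attach to. One small precision: the symmetry exchanging \(H_N\) and \(H_S\) is reversal of all arrows (passing to the opposite orientation, under which the AR quiver and \(\delta_\dar\) are unchanged), not the left--right reflection \(i\mapsto n+1-i\), which preserves each hull; either way the reduction to \(H_N\) is legitimate. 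What your direct route buys is robustness: if one instead substitutes the zigzag data literally into the displayed case formulas of Lemma \ref{lemma_hull_w}, then when \(n\) is a sink one gets \(E=n-1\) equal to the rightmost source, and the first branch would return \(\delta_\dar([x,y],[E])=2E-(x+y)=2n-2-x-y\) instead of \(2n-(x+y)\); in pure zigzag \([E]\) is a source simple lying in \(\mathcal{E}\) rather than on the north boundary, so the ``otherwise'' value is the correct one, exactly as your computation shows and as is confirmed by the paper's own example \(D_\dar([2,3],[3,6])=8\) for the zigzag orientation of Figure \ref{fig_ex3_ar}. So your argument proves the corollary and doubles as a consistency check on the lemma's case conditions, while the paper's one-line appeal to the lemma is shorter but depends on reading those conditions correctly.
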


\begin{example}\label{ex_bad}
  For \(D_\dar\) over zigzag orientations, there will be intervals of
  small dimension and large \(W_\dar\) value. Consider
  \(\mathbb{A}_{100}\) with either (ud) or (du) zigzag
  orientation. In either case,
  \(\sigma=[50,51]\) has a dimension of \(2\),
  but by Corollary \ref{cor_1zz_w},
  \(W_\dar(\sigma)=100=\mathrm{diam}(W_\dar)\) (Proposition
  \ref{prop_diam}).
\end{example}

\begin{example}\label{ex_worse}
  To extend the previous example to any zigzag orientation of
  \(\mathbb{A}_n\), consider:
  \begin{itemize}
  \item if \(n\) is even, the indecomposable \([n/2,n/2+1]\) has
    dimension \(2\) and \(W_{\mathrm{AR}}\) value of \(n\),
  \item if \(n\) is odd, the indecomposable
    \([\frac{n-1}{2},\frac{n-1}{2}+1]\) has dimension \(2\) and
    \(W_{\mathrm{AR}}\) value of \(n-1\).
  \end{itemize}
\end{example}

\begin{remark}\label{rmk_alt_zigzag}
  \afterpage{
    \begin{figure}
      \centering

      \begin{subfigure}[t]{1\textwidth}
        \centering
        \includegraphics[scale=.4]{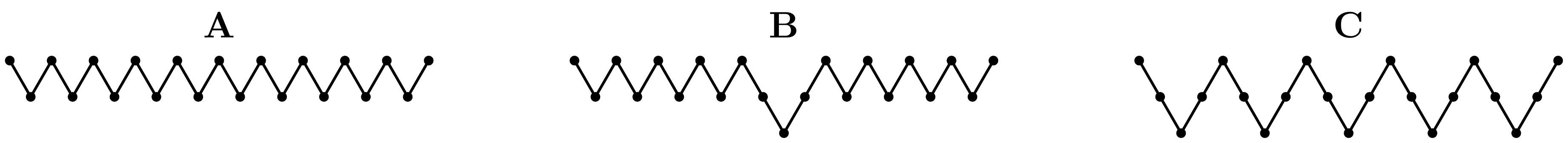}
        %\label{fig_7_orient}
        %\caption{Orientations.}
      \end{subfigure}

      \vspace{.5cm}
      \begin{subfigure}[t]{1\textwidth}
        \centering
        \includegraphics[width=\textwidth]{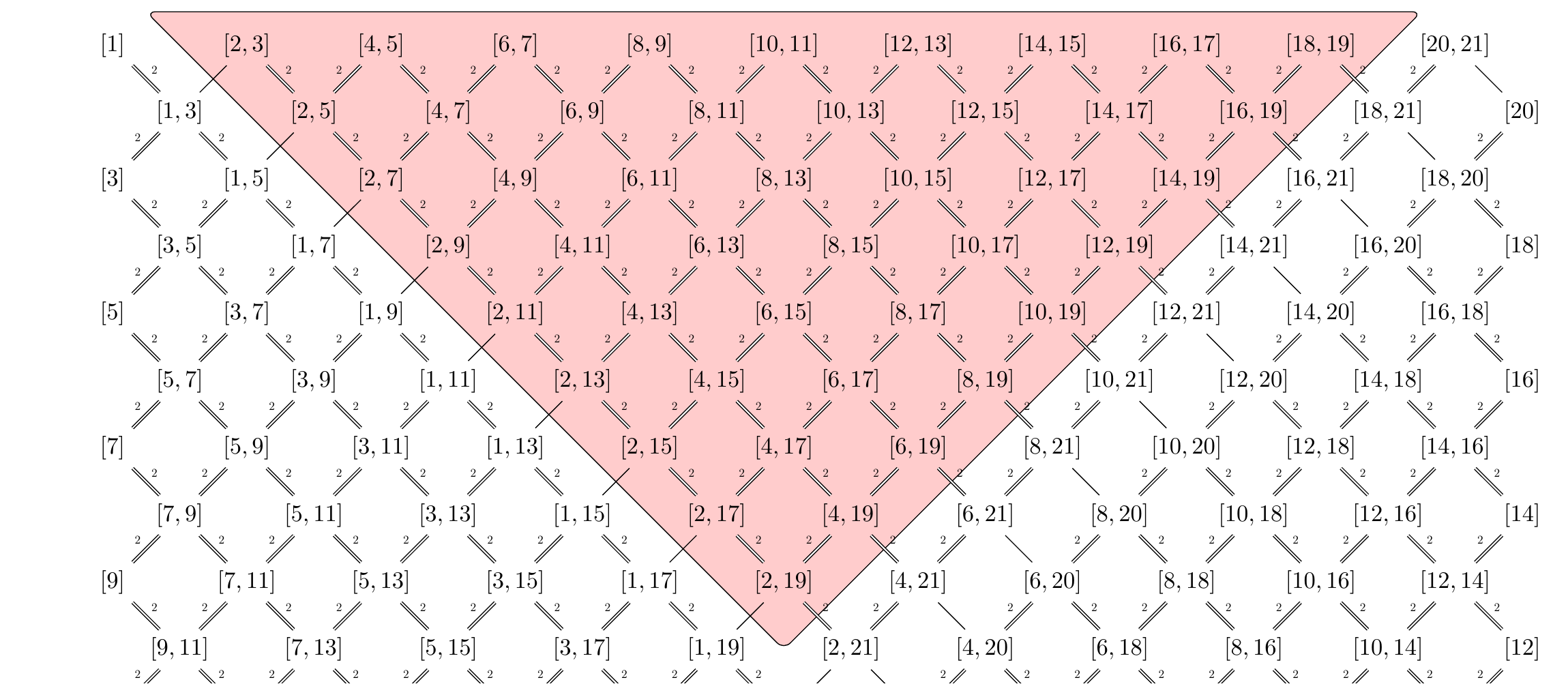}
        \caption{'s AR quiver.}
        \label{fig_7_1}
      \end{subfigure}
      
      \vspace{.5cm}
      \begin{subfigure}[t]{1\textwidth}
        \centering
        \includegraphics[width=\textwidth]{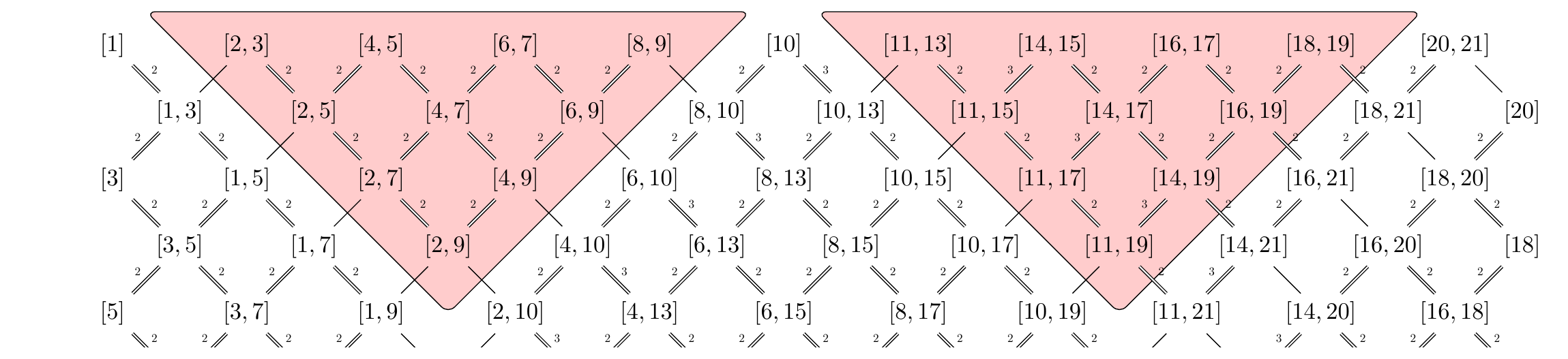}
        \caption{'s AR quiver.}
        \label{fig_7_2}
      \end{subfigure}
      
      \vspace{.5cm}
      \begin{subfigure}[t]{1\textwidth}
        \centering
        \includegraphics[width=\textwidth]{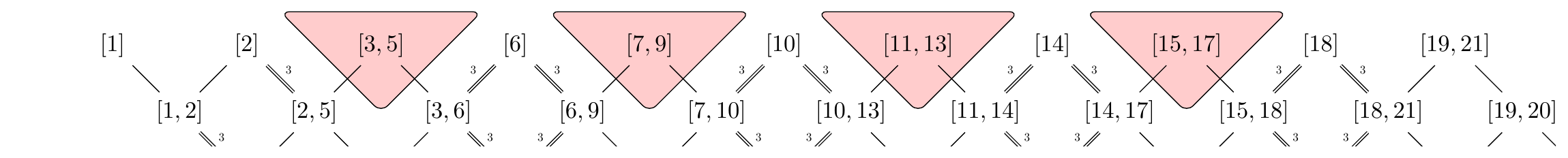}
        \caption{'s AR quiver.}
        \label{fig_7_3}
      \end{subfigure}
 
      \vspace{.5cm}
     
      \caption{Three orientations of \(\mathbb{A}_{21}\) and the north
        boundaries of their AR quivers. Not only do the ``problem''
        regions shrink, but the disparities between \(W_\dar\) and
        dimension of the problem indecomposables also shrink.}
      \label{fig_boundary_ex_triple}
    \end{figure}
    \clearpage
  }
  Note that any orientation less than ``pure'' zigzag (Figure
  \ref{fig_7_1}) will possess
  reduced dimension-to-\(W_\dar\) disparities.

  For example, consider a poset with zigzag orientation everywhere
  save for the middle of the poset, in which there is a consecutive
  pair of rightward (or leftward) edges
  \(\rightarrow\rightarrow\) (Figure \ref{fig_7_2}). This splits the
  entire north
  region from one giant hull into two hulls by introducing the simple
  \([10]\) in the middle of the north boundary, providing a path of
  decreasing dimension to a simple for many modules formerly in the
  hull.

  For another example, if
  \(\mathbb{A}_n\) has orientation
  \(\cdot\cdot\cdot\hspace{-.1cm}
  \rightarrow\rightarrow\leftarrow\leftarrow\rightarrow\rightarrow
  \hspace{-.1cm}\cdot\cdot\cdot\)
  where the zigzag feature switches every \emph{other} vertex (Figure
  \ref{fig_7_3}), then it turns out that the
  difference \(W_\dar(\sigma)-\mathrm{dim}(\sigma)\in\{0,2\}\) for all
  indecomposables \(\sigma\) due to a high distribution of simples
  over the north boundary.

  The last orientation in this example proves to be a worthwhile
  course of investigation for zigzag persistence, and is the focus of
  Section \ref{sec_stab_r}.
\end{remark}

\subsection{\(D_\dbl\) and \(D_\dar\): Features and
  Stability}\label{sec_further}

In this section we discuss the block distance \(D_\dbl\) of
\cite{botnan_lesnick} and explore the differences and similarities
between \(D_\dbl\) and the Auslander-Reiten quiver distance
\(D_\dar\).

There is one rather cumbersome notational concern to be overcome when
considering these two distances: for quiver theoretic purposes we have
labeled our vertices in sequential order on the zigzag quiver itself,
while recent literature considers zigzag intervals as indexed over
a particular poset denoted \(\mathbb{Z}\mathbb{Z}\), which then
corresponds to some persistence module in \(\mathbb{R}^2\). The
disparity of notation and structure will be addressed
with care when it comes time to consider the distances side by side
(Definition \ref{defn_convert}),
but is worth bearing in mind throughout. As such we will take to the
following convention:

\begin{notation}
  For a zigzag interval \(I\), denote by \(I_\an\) the interval as
  viewed over a \(\mathbb{Z}\)-labeled \(\mathbb{A}_n\) quiver, and by
  \(I_\zz\) a corresponding interval over the poset \(\zz\)
  (Definition \ref{defn_zz}).

  Of a final note is that there is \emph{no canonical association} of
  vertices in \(\mathbb{A}_n\) with points in \(\zz\). Throughout, we
  refuse to declare any point at which \(\mathbb{A}_n\) and \(\zz\) are
  ``fused''. The reader is encouraged to keep this in mind during
  the subsequent material, and to be convinced that this lack of choice
  is of no consequence to the work provided. This is in fact ideal
  when taking into account that we will eventually consider extending
  to \emph{limits} of zigzag quivers with unbounded length (Section
  \ref{subsub_limits}).
\end{notation}

\subsubsection{Posets}

\begin{definition}\label{defn_zz}%[As seen in \cite{botnan_lesnick}]
  Let \(\mathbb{Z}\mathbb{Z}\) be the poset consisting of all points
  \(\{(i,i),(i,i-1)\in\mathbb{Z}^2\}_{i\in\mathbb{Z}}\) and having the
  subposet order
  inherited from \(\mathbb{Z}^{\mathrm{op}}\times\mathbb{Z}\).
  Generally, an \emph{interval} of this poset is written as \(\langle
  i,j\rangle\), which denotes one of \[
    (i,j),[i,j),(i,j],\text{ or }[i,j].
  \] An interval \(\langle i,j\rangle\) in \(\zz\) is the convex set \[
    \langle i,j\rangle=\{(x,y):i\sim x,y\sim j\},
  \] where the \(\sim\) represent either \(\leq\) or \(<\) depending
  on the respectively closed or open endpoints of \(\langle i,j\rangle\).

  An \emph{interval representation} of \(\mathbb{Z}\mathbb{Z}\) is written
  \(\langle i,j\rangle_{\mathbb{Z}\mathbb{Z}}\). For any point
  \((x,y)\in\mathbb{Z}\mathbb{Z}\),
  \[
    \langle i,j\rangle_{\mathbb{Z}\mathbb{Z}}(x,y)=\left\{
    \begin{array}{ll}
      K & \text{if }(x,y)\in\langle i,j\rangle \\
      \\
      0 & \text{otherwise}.\\
    \end{array}\right.
  \] The internal maps of \(\langle
  i,j\rangle_{\mathbb{Z}\mathbb{Z}}\) are \(1_K\) where possible, and
  \(0\) otherwise.

  % E.g., \([5,7)_{\mathbb{Z}\mathbb{Z}}\) looks like
  % \begin{center}
  %   \begin{tikzpicture}[commutative diagrams/every diagram]
  %     \matrix[matrix of math nodes, name = m, commutative
  %     diagrams/every cell,
  %     column sep = .6cm] {
  %   \mathbb{Z}\mathbb{Z} & \ldots & (5,4) & (5,5) & (6,5) & (6,6) & (7,6) & (7,7) & \ldots \\
  %   {[5,7)}_{\mathbb{Z}\mathbb{Z}} & \ldots & 0 & K & K & K & K & 0 & \ldots \\};

  %   \path[commutative diagrams/.cd,every arrow,every label]
  %   (m-1-3)edge(m-1-2)
  %   (m-1-3)edge(m-1-4)
  %   (m-1-5)edge(m-1-4)
  %   (m-1-5)edge(m-1-6)
  %   (m-1-7)edge(m-1-6)
  %   (m-1-7)edge(m-1-8)
  %   (m-1-9)edge(m-1-8)
    
  %   (m-2-3)edge node[swap]{$0$}(m-2-2) 
  %   (m-2-3)edge node[]{$0$}(m-2-4)
  %   (m-2-5)edge node[swap]{$1_K$}(m-2-4)
  %   (m-2-5)edge node[]{$1_K$}(m-2-6)
  %   (m-2-7)edge node[swap]{$1_K$}(m-2-6)
  %   (m-2-7)edge node[]{$0$}(m-2-8)
  %   (m-2-9)edge node[swap]{$0$}(m-2-8);
  % \end{tikzpicture}
  % \end{center}

  %\(\{(5,5),(6,5),(6,6),(7,6)\}\subset\mathbb{Z}\mathbb{Z}\), and has
  %three non-zero internal morphisms.)
\end{definition}

\begin{definition}
  Let \(\uu\subset\mathbb{R}^{\mathrm{op}}\times\mathbb{R}\)
  be the subposet consisting of all points \((x,y)\in\mathbb{R}^2\)
  such that \(x\leq y\). We will have it inherit the ordering of
  \(\mathbb{R}^{\mathrm{op}}\times\mathbb{R}\): that
  \((x,y)\leq(w,z)\) if and only if \(x\geq w\) and \(y\leq z\).

  %%%
  \begin{comment}
  Let \(\uu^*\) be the poset of decorated points of
  \(\uu\), where each point is denoted \(\langle x,y\rangle\)
  and takes one of the following forms:
  \[
    (x,y),[x,y),(x,y],\text{ or }[x,y].
  \]
  
  In \(\uu^*\), \(\langle x,y\rangle\leq \langle
  w,z\rangle\)
  if and only if \((x,y)\leq (w,z)\) in \(\uu\) and any
  additional decoration restrictions are also satisfied: namely, that
  for every point \((x,y)\) in \(\uu\),
  \begin{center}
    \begin{tabular}{ccc}
      \([x,y)<\)
      & \begin{tabular}{@{}c@{}}\([x,y)\)\\[.1cm]\((x,y]\)\end{tabular} & \(<[x,y]\)
      % \\
      % $(x,y)<$ && $< [x,y]$ \\
      % & $(x,y]$ & \\
    \end{tabular}
  \end{center}
  in \(\uu^*\).
  \end{comment}
\end{definition}

The connection between \(\zz\) and \(\uu\) as subposets of
\(\mathbb{R}^{\mathrm{op}}\times\mathbb{R}\) is shown in Figure
\ref{fig_zz_and_u}.

\begin{figure*}[t]
  \centering
  \includegraphics[scale=1]{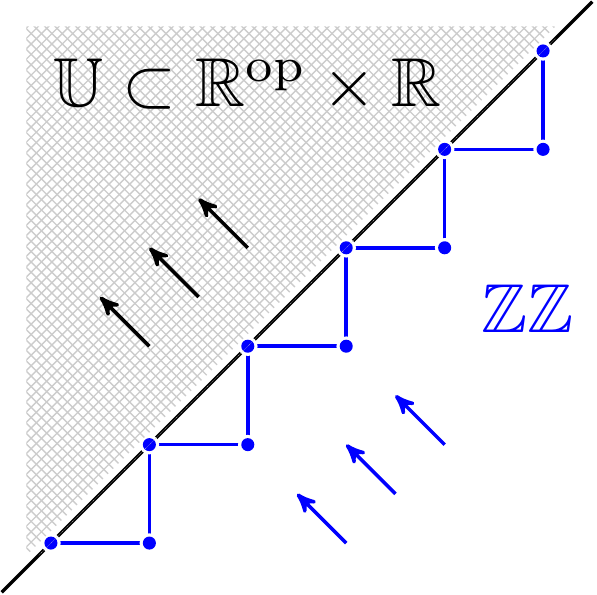}
  \caption{A visualization of the connection between the posets
    \(\mathbb{Z}\mathbb{Z}\) and \(\uu\) as subposets of
    \(\mathbb{R}^{\mathrm{op}}\times\mathbb{R}\). The arrows denote
    the diagonal increasing vector under
    \(\leq_{\mathbb{R}^{\mathrm{op}}\times\mathbb{R}}\).}
  \label{fig_zz_and_u}
\end{figure*}

% ************** is this equivalent?

\begin{definition}[See \cite{botnan_lesnick} sections 2.5 and 3
  for original details]\label{defn_embed}

  For a point \(u\in\uu\), define \(\zzu\) to be the
  subposet of \(\zz\) consisting of all the points of \(\zz\) that are
  \(\leq u\) when considering both \(\uu\) and \(\zz\) as
  subposets of \(\rop\) (see
  Figure \ref{fig_zz_and_u_nice}).

  \begin{figure*}[t]
    \centering
    \includegraphics[scale=1]{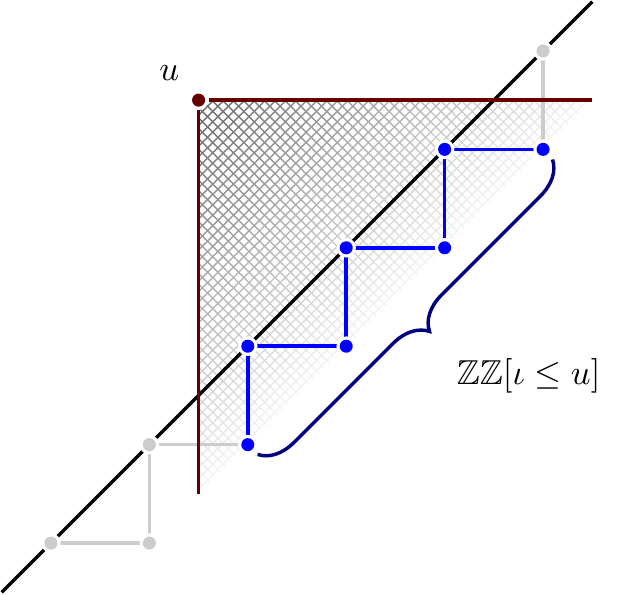}
    \caption{The restriction of the poset \(\zz\) under a point
      \(u\in\uu\).}
    \label{fig_zz_and_u_nice}
  \end{figure*}
  
  For a zigzag persistence module \(M_\zz\), define \(M|_{\zz[\leq
    u]}\) to be the restriction of \(M\) to the subposet
  \(\zzu\). Then define the colimit functor
  \(\tilde{E}:\mathrm{vect}^{\zz}\to\mathrm{vect}^{\rop}\)
  by: \[\tilde{E}(M)(u)=\varinjlim M|_{\zz[\leq u]},\] the colimit
  of the diagram given by the \([\leq u]\) restriction, for every
  \(u\in\uu\subset\rop\).
    
  Under \(\tilde{E}\), interval \(\zz\) modules are sent to the
  following block modules. (See figure \ref{fig_all_embed}.)
  % , after
  %which \(\tilde{E}\) acts on a general \(M_\zz=\bigoplus M_\zz^i\) by
  %\[
  %  \tilde{E}(\bigoplus M_\zz^i)=\bigoplus\tilde{E}(M_\zz^i).
  %\]
  %
  %Let \(I_\zz=\lr{i,j}_\zz\) be an interval \(\zz\) module.
  \begin{center}
    \begin{tabular}{ l l l }
      \(\tilde{E}((i,j)_\zz)\)&\(=(i,j)_\dbl\)&\(=\{(x,y)\in\uu:i<x,y<j\}\)\\
      \(\tilde{E}([i,j)_\zz)\)&\(=[i,j)_\dbl\)&\(=\{(x,y)\in\uu:i\leq y<j\}\)\\
      \(\tilde{E}((i,j]_\zz)\)&\(=(i,j]_\dbl\)&\(=\{(x,y)\in\uu:i<x\leq j\}\)\\
      \(\tilde{E}([i,j]_\zz)\)&\(=[i,j]_\dbl\)&\(=\{(x,y)\in\uu:x\leq i, j\leq y\}\)
    \end{tabular}
  \end{center}
%  ***Make a note of the way these are labeled in Botnan Lesnick (the
%  bold face o, c, oc, and co.)***
\end{definition}

\begin{figure*}[t]
  \centering
  \begin{tabular}{ c c c c }
      \includegraphics[scale=.5]{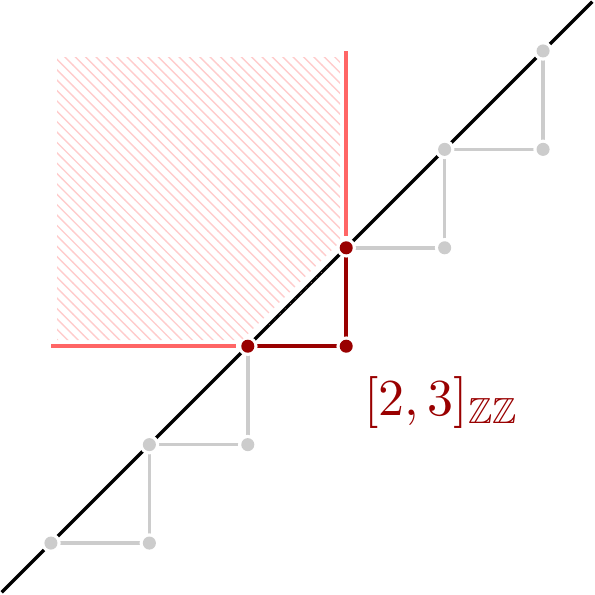}&\hspace{.5cm}
      \includegraphics[scale=.5]{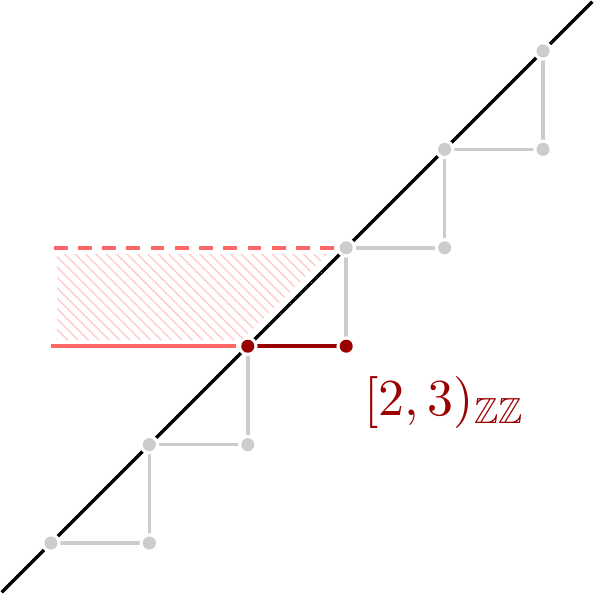}&\hspace{.5cm}
      \includegraphics[scale=.5]{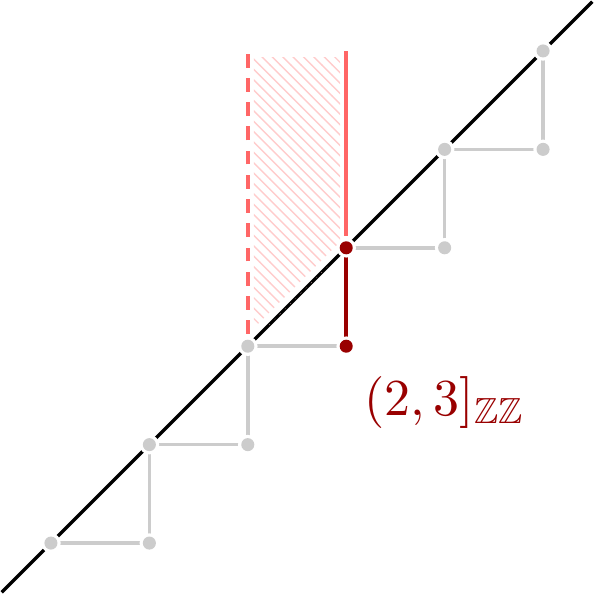}&\hspace{.5cm}
      \includegraphics[scale=.5]{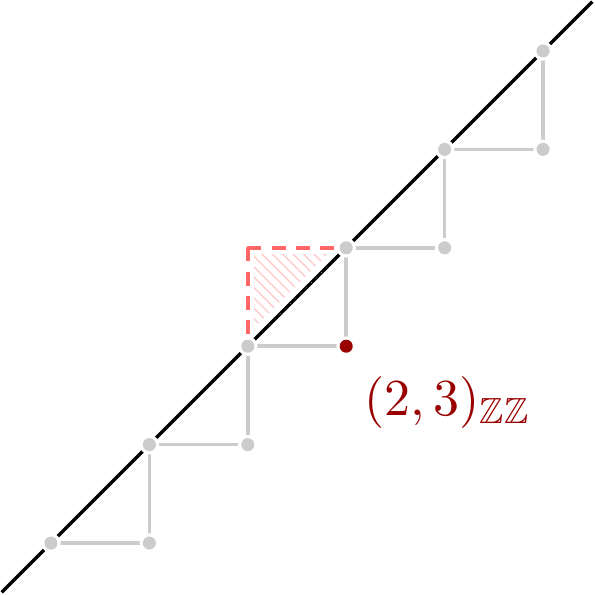}
  \end{tabular}
  \caption{The various types of \(\zz\) interval modules and their
    corresponding \(\uu\) modules under \(\tilde{E}\).}
  \label{fig_all_embed}
\end{figure*}

\subsubsection{Block Module Distance}
\label{sec_block}

\begin{definition}\label{defn_u_il}
  Let \(D_\uu\) denote the \emph{interleaving distance} on \(\uu\):

  Let \(\bar{\epsilon}=(-\epsilon,\epsilon)\in\rop\) be the
  ``increasing'' vector of length \(\epsilon\) for
  \(\uu\). For a \(\uu\) persistence module \(M\), define
  the new \(\uu\) persistence module
  \(M(\bar{\epsilon})(u)=M(u+\bar{\epsilon})\). Similarly, for a
  morphism of persistence modules \(\phi\), define
  \(\phi(\bar{\epsilon})(u)=\phi(u+\bar{\epsilon})\).

  For any \(M\), \(\epsilon\), let \(1_{M,M(\bar{\epsilon})}\) be the
  morphism that takes the value \(1_K\) on
  \(u\in\mathrm{supp}(M)\cap\mathrm{supp}(M(\bar{\epsilon}))\), and is
  zero otherwise. (It is simple to check that the \(K\)-span of this
  morphism gives precisely \(\mathrm{Hom}(M,M(\bar{\epsilon}))\).)

  Two \(\uu\) persistence modules are said to be
  \(\epsilon\)-interleaved if there exist morphisms \(\phi:M\to
  N(\bar{\epsilon})\) and \(\psi:N\to M(\bar{\epsilon})\) such that
  \begin{itemize}
  \item \(\psi(\bar{\epsilon})\circ\phi=1_{M,M(2\bar{\epsilon})}\), and
  \item \(\phi(\bar{\epsilon})\circ\psi=1_{N,N(2\bar{\epsilon})}\).
  \end{itemize}
  
  For two \(\uu\)
  persistence modules \(M,N\), \[D_\uu(M,N)=\inf\{\epsilon:M,N\text{
      are }\epsilon\text{-interleaved}\}.\]

  (In full generality, this definition would make use of arbitrary
  \(\uu\)-translations, but implicit in this distance is the use of an
  \(\ell^\infty\) norm, in which case we may as well default to the
  \emph{diagonal} vector of length \(\epsilon\) to define the
  translation at all points. This aligns with the earlier notion of a
  \emph{full} translation of a given height.)
\end{definition}

\begin{definition}
  From Definitions \ref{defn_embed} and \ref{defn_u_il}, define the
  block distance to be the composition
  \[
    D_\dbl(M_\zz,N_\zz)\vcentcolon= (D_\uu\circ \tilde{E}) (M_\zz,N_\zz)
    = D_\uu(\tilde{E}(M_\zz),\tilde{E}(N_\zz)).
  \]
%%Specifically, we will be primarily interested in comparing with the
%%\emph{bottleneck metrics induced by} the above metric (Remark \ref{rmk_induced}).
% \begin{itemize}
% \item
%   \(D_{\mathrm{coL}}(M_\zz,N_\zz)=D_\drr(\tilde{E}(M_\zz),\tilde{E}(N_\zz))\),
% \item
%   \(D_\dbl(M_\zz,N_\zz)=D_\drr(\tilde{F}(M_\zz),\tilde{F}(N_\zz))\).
% \end{itemize}

\end{definition}

\begin{prop}[\cite{botnan_lesnick}, Lemma 3.1]\label{prop_features_bl}
  
  The bottleneck distance induced by \(D_\dbl\) can be generated by the
  following \(W_\dbl\) and \(d_\dbl\).

  \begin{itemize}
  \item
    \(W_\dbl((i,j)_\zz%_{\mathbb{Z}\mathbb{Z}})=W_\mathrm{BL}((i,j)_{\mathrm{BL}}
    )=1/4(j-i).
      \)
  \item
    \(W_\dbl([i,j]_\zz%_{\mathbb{Z}\mathbb{Z}})=W_\mathrm{BL}([i,j)_{\mathrm{BL}}
    )=\infty.
      \)
    \item
      \(W_\dbl([i,j)_\zz%_{\mathbb{Z}\mathbb{Z}})=W_\mathrm{BL}((i,j]_{\mathrm{BL}}
      )=1/2(j-i).
      \)
    \item
      \(W_\dbl((i,j]_\zz%_{\mathbb{Z}\mathbb{Z}})=W_\mathrm{BL}([i,j]_{\mathrm{BL}}
      )=1/2(j-i).
      \)
  \end{itemize}

%  \item If \([x,y]\) is of \([\mathrm{min},\mathrm{max}]\) or
%    \([\mathrm{max},\mathrm{min}]\) parity, \(W([x,y])=1/2(y-x)\).
%  \item If \([x,y]\) is of \([\mathrm{min},\mathrm{min}]\) parity,
%    \(W([x,y])=1/4(y-x)\).
%  \item If \([x,y]\) is of \([\mathrm{max},\mathrm{max}]\) parity,
%    then \(W([x,y])=\infty\).
  If \(\langle i_1,j_1\rangle_\zz\) and
  \(\langle i_2,j_2\rangle_\zz\)
  are two zigzag/block modules \emph{of the same endpoint parity}, then
  \begin{itemize}
  \item %\begin{align*}
    \(d_\dbl
    (\langle i_1,j_1\rangle_\zz,\langle
           i_2,j_2\rangle_\zz)%\vspace{.1cm}\\
    =%\min
    %\{
    \max\{|i_1-i_2|,|j_1-j_2|\}%,
    %\max\{W_\dbl(\langle
    %i_1,j_1\rangle_\zz),W_\dbl(\langle
    %i_2,j_2\rangle_\zz)\}\}.
           \)
%  \end{align*}
  \end{itemize}
  Otherwise, define
  \(d_{\mathrm{BL}}=\max\{W_\dbl(\lr{i_1,j_1}_\zz),W_\dbl(\lr{i_2,j_2}_\zz)\}\),
  the max of the \(W\)-values.
  
%  Note that here, \(d_\mathrm{BL}=\bar{d}_\mathrm{BL}\) is
%  already minimized with \(W_\mathrm{BL}\), as in Definition
%  \ref{def_bn_canon}.
\end{prop}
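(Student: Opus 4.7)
The plan is to reduce every claim to a support-containment computation in $\uu$ and then compute four explicit cases.

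First, I would unpack Definition \ref{defn_u_il} for the specific situation where $M$ and $N$ are block modules. Since every $\mathrm{Hom}(M,N(\bar{\epsilon}))$ is the $K$-span of $1_{M,N(\bar{\epsilon})}$, the only candidates for $\phi,\psi$ are multiples of these canonical maps, and the composition $\psi(\bar{\epsilon})\circ\phi$ factors through $N(\bar{\epsilon})$ pointwise. Hence $M$ and $N$ are $\epsilon$-interleaved if and only if
\[
  \mathrm{supp}(M)\cap\mathrm{supp}(M(2\bar{\epsilon}))\subseteq\mathrm{supp}(N(\bar{\epsilon}))
\]
and the symmetric condition with $M,N$ swapped both hold. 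Specialising $N=0$, the distance to zero is the infimum of $\epsilon$ for which $\mathrm{supp}(M)\cap\mathrm{supp}(M(2\bar{\epsilon}))=\emptyset$. This single observation drives every formula below.

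Next I would compute $W_\dbl$ block by block using the explicit descriptions of $\tilde{E}$ from Definition \ref{defn_embed}. For $(i,j)_\dbl$ the shifted support is $\{(x,y):x>i+2\epsilon,\ y<j-2\epsilon\}$, which meets the original iff $4\epsilon<j-i$, giving $W_\dbl=(j-i)/4$. For the half-open blocks $[i,j)_\dbl$ and $(i,j]_\dbl$ only one coordinate is constrained, so the threshold becomes $2\epsilon<j-i$ and $W_\dbl=(j-i)/2$. For $[i,j]_\dbl$ the support is an entire upper-left quadrant, which is stable under diagonal shifts, so the supports always intersect and $W_\dbl=\infty$.

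For $d_\dbl$ on two blocks of the same parity I would translate the support-containment condition into inequalities on endpoints. For $\lr{i_1,j_1}_\dbl$ and $\lr{i_2,j_2}_\dbl$ of matching shape, the constraint that $\mathrm{supp}(M)\cap\mathrm{supp}(M(2\bar{\epsilon}))$ lie in $\mathrm{supp}(N(\bar{\epsilon}))$ reduces (by matching the two ``active'' coordinate inequalities that define each block) to $i_2-i_1\leq\epsilon$ and $j_1-j_2\leq\epsilon$; the symmetric condition supplies the absolute values, giving $d_\dbl=\max\{|i_1-i_2|,|j_1-j_2|\}$. The mixed-parity case falls out from the general bottleneck inequality $d\leq\max\{W(M),W(N)\}$ together with the fact that, when the two parities differ, any non-trivial containment of supports already forces $\epsilon\geq\min\{W(M),W(N)\}$, leaving the max formula as the only possibility.

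The main obstacle is the bookkeeping in the mixed cases of the $[i,j]_\dbl$-type block paired with a finite-width block, since one of the supports is unbounded; here I would carefully verify that the support intersection $\mathrm{supp}(M)\cap\mathrm{supp}(M(2\bar{\epsilon}))$ is itself unbounded, so no finite $\epsilon$ can embed it into a finite-width $N(\bar{\epsilon})$, which is exactly the content of $W_\dbl=\infty$ propagating to mixed-parity $d_\dbl$ and justifying the fallback formula $d_\dbl=\max\{W_\dbl(M),W_\dbl(N)\}$.
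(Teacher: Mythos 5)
The paper itself gives no proof of this proposition; it is imported verbatim from Botnan--Lesnick (Lemma 3.1), with the remark that it follows from the left Kan extension description of \(\tilde{E}\). So your proposal is a from-scratch argument, and it has two genuine gaps. The first is the central reduction ``\(M,N\) are \(\epsilon\)-interleaved iff \(\mathrm{supp}(M)\cap\mathrm{supp}(M(2\bar{\epsilon}))\subseteq\mathrm{supp}(N(\bar{\epsilon}))\) and symmetrically.'' Necessity is fine, and your use of it for \(N=0\) is exact, so the four \(W_\dbl\) computations are correct. But sufficiency is justified by the claim that \(\mathrm{Hom}(M,N(\bar{\epsilon}))\) is the span of the indicator map \(1_{M,N(\bar{\epsilon})}\); the parenthetical in Definition \ref{defn_u_il} only asserts this for \(\mathrm{Hom}(M,M(\bar{\epsilon}))\), a module against its own shift. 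For two \emph{different} blocks the indicator on the overlap is frequently not a natural transformation (any point of \(\mathrm{supp}(N(\bar{\epsilon}))\setminus\mathrm{supp}(M)\) lying above the overlap, or of \(\mathrm{supp}(M)\setminus\mathrm{supp}(N(\bar{\epsilon}))\) lying below it, forces the map to be zero), so \(\mathrm{Hom}\) can vanish even when the supports overlap. Hence the containment conditions do not by themselves produce the required \(\phi,\psi\); the actual work is a type-by-type verification that the canonical maps are natural exactly when the endpoint inequalities hold, including the asymmetric regime where one self-overlap is empty and the other is not. Relatedly, your criterion (with the ``empty intersection'' escape) computes something like \(\min\{\max\{|i_1-i_2|,|j_1-j_2|\},\max\{W_\dbl(\sigma),W_\dbl(\tau)\}\}\) rather than \(\max\{|i_1-i_2|,|j_1-j_2|\}\); that discrepancy is harmless for \emph{generating} the induced bottleneck distance (Definition \ref{def_bn_canon}), but you should say explicitly which statement you are proving.

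The second gap is the mixed-parity case. What has to be shown there is a \emph{lower} bound: for blocks of different \(\zz\)-type the true interleaving distance equals \(\max\{W_\dbl(\sigma),W_\dbl(\tau)\}\) (the upper bound is trivial, since at \(\epsilon=\max\{W,W\}\) both modules can be interleaved with \(0\)). Your argument only ``forces \(\epsilon\geq\min\{W(M),W(N)\}\),'' which is strictly weaker and does not rule out an interleaving at some \(\epsilon\) between the two \(W\)-values; and your concrete discussion covers only the pairing of a \([\cdot,\cdot]_\dbl\)-type quadrant with a finite-width block, leaving untreated the pairs such as \([i_1,j_1)_\dbl\) versus \((i_2,j_2]_\dbl\), or \((i_1,j_1)_\dbl\) versus a half-open band, where both \(W\)-values are finite. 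Establishing \(\epsilon\geq\max\{W,W\}\) in each of these mixed cases (again via the naturality obstructions for the candidate morphisms) is exactly the content of the cited Lemma 3.1, and it is missing from the proposal.
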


The above result on interval modules is obtained from the more general
definition, in which the projection of \(\zz\) interval modules to
\(\dbl\) interval modules is by \emph{left Kan extension} via
colimit. See the original work \cite{botnan_lesnick} for more detail.

\subsubsection{Intervals of zigzag \(\mathbb{A}_n\) as intervals of
  \(\mathbb{Z}\mathbb{Z}\)}
Finally, in order to make comparisons between \(D_\dar\) and
\(D_\dbl\), we need to be able to relate \(\mathbb{A}_n\) modules to
\(\zz\) modules before embedding via \(\tilde{E}\).

\begin{definition}\label{defn_convert}
  For some \(P=\mathbb{A}_n(z)\) define the functor
  \(\mathcal{Z}:\mathrm{vect}^P\to\mathrm{vect}^\zz\) by how it acts
  on the following indecomposables. For any \(x\in P\), there is some
  associated \((i,i)\in\zz\) (the positioning in which \(P\) is
  ``fused'' to \(\zz\) is fixed ahead of time and is entirely
  arbitrary).
  \begin{itemize}
  \item \(\mathcal{Z}([x+1,x+2k-1]_\an)=(i,i+k)_\zz\).
  \item \(\mathcal{Z}([x,x+2k]_\an)=[i,i+k]_\zz\).
  \item \(\mathcal{Z}([x,x+2k-1]_\an)=[i,i+k)_\zz\).
  \item \(\mathcal{Z}([x+1,x+2k]_\an)=(i,i+k]_\zz\).
  \end{itemize}
\end{definition}

\begin{definition}
  Let \(P=\mathbb{A}_n(z)\) and let \(Z\) be the \(\zz\)-interval (not
  module) given by
  \(\mathcal{Z}([1,n]_\an)\). Define \(\Sigma_\zz(P)\) to be the
  subcategory of \(\mathrm{vect}^\zz\) given by all modules with
  support contained in the \(\zz\)-interval
  \(Z=\mathcal{Z}([1,n]_\an)\). 
\end{definition}

\begin{prop}
  The functor (natural transformation)
  \[
    \mathcal{Z}:\mathrm{vect}^P\to\Sigma_\zz(P)
  \] is an equivalence of categories (natural equivalence).
\end{prop}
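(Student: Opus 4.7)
The plan is to exhibit $\mathcal{Z}$ as an essentially surjective, fully faithful functor, from which the equivalence of categories follows. Both $\mathrm{vect}^P$ and $\Sigma_\zz(P)$ are Krull-Schmidt categories whose indecomposable objects are (up to isomorphism) interval modules---see Proposition \ref{prop_gabriel} for $P = \mathbb{A}_n$ and the analogous well-known decomposition for bounded-support $\zz$-interval modules. It therefore suffices to verify essential surjectivity on indecomposables and full faithfulness on the Hom-spaces between indecomposables; the functor then extends uniquely to arbitrary objects via biproducts.

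For essential surjectivity, the first observation is that $\mathcal{Z}$ lands in $\Sigma_\zz(P)$ by construction: each of the four cases in Definition \ref{defn_convert} produces a $\zz$-interval module whose support is contained in the interval $Z = \mathcal{Z}([1,n]_\an)$. The next step is to verify that the assignment on indecomposables is a bijection from interval modules of $P$ to interval modules of $\zz$ supported in $Z$. The four cases in the definition are indexed by the parities of $x$ and $y$ relative to the zigzag source/sink pattern of $P$ (in a zigzag orientation every vertex is uniquely a source or a sink), and these four parity combinations correspond in turn to the four decoration types of a $\zz$-interval (open or closed at each endpoint). A direct accounting---matching each parity class to its decoration class and checking that the index $(i, i+k)$ enumerates the targets as $(x,y)$ ranges over all $P$-intervals---confirms the bijection.

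Full faithfulness then reduces to a combinatorial check. Any Hom-space between two interval modules over $P$ (respectively $\zz$) is either zero or one-dimensional, generated by the natural scalar map on the overlap of supports. Non-vanishing of $\mathrm{Hom}([x_1,y_1]_\an, [x_2,y_2]_\an)$ is characterized by an overlap condition compatible with the source/sink orientations at the common endpoints, and the analogous criterion in $\mathrm{vect}^\zz$ is phrased in terms of containment together with the open/closed decorations at endpoints. The four-case parity-to-decoration dictionary in Definition \ref{defn_convert} is precisely what translates one criterion into the other. The main obstacle is carrying out this case analysis: sixteen combinations of left/right endpoint parities must be checked against their $\zz$-decoration counterparts on both source and target, and bookkeeping the parity conventions carefully is essential. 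Nothing conceptually deep arises, and once complete, essential surjectivity together with full faithfulness yields the desired equivalence.
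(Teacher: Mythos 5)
Your proposal is correct in substance, but it takes a noticeably heavier route than the paper. The paper's entire proof is the observation that the reverse statements of Definition \ref{defn_convert} define an inverse to $\mathcal{Z}$: since the four cases of the definition are precisely a relabelling of endpoints (parity of endpoint versus open/closed decoration), the assignment is invertible on the nose, and indeed $\mathcal{Z}$ is induced by an isomorphism between the poset $P$ and the subposet of $\zz$ underlying the interval $Z=\mathcal{Z}([1,n]_\an)$, so one gets an isomorphism of categories rather than merely an equivalence. You instead argue essential surjectivity plus full faithfulness, reducing to indecomposables via Krull--Schmidt. That works, and your reduction steps are legitimate (Hom-spaces between interval modules on both sides are at most one-dimensional, generated by the overlap map, and additivity propagates full faithfulness from indecomposables to all objects), but it buys you less for more work: you still owe the sixteen-case parity-to-decoration check you defer, and strictly speaking you also owe a verification that your object-plus-Hom assignment respects composition, since Definition \ref{defn_convert} only specifies $\mathcal{Z}$ on objects and ``extends via biproducts'' presupposes functoriality on morphisms between indecomposables. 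The paper's inverse-functor (equivalently, poset-isomorphism) viewpoint disposes of all of these checks at once, which is why its proof is one line; if you keep your route, state the Hom-criterion on each side explicitly and note that composition of overlap maps is again an overlap map or zero, compatibly under the dictionary.
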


\begin{proof}
  The inverse of \(\mathcal{Z}\) is given by the reverse statements
  of Definition \ref{defn_convert}.
\end{proof}

\begin{definition}
  For a \(\zz\) module \(I_\zz\), define dimension
  \(\mathrm{dim}(I_\zz)=\sum\limits_{i\in\zz}\mathrm{dim}_K(I_\zz(i))\) to
  be the sum of the dimensions of the vector spaces of \(I_\zz\).
\end{definition}

\begin{notation}
  In any setting where we have fixed some \(P=\mathbb{A}_n(z)\) and
  some \(\mathcal{Z}:\mathrm{vect}^P\to\Sigma_\zz(P)\) (``some'' only
  because this is technically dependent on our consistently hand-waved
  choice of \(\an\leftrightarrow\zz\) anchor), we will drop the
  equivalence \(\mathcal{Z}\) altogether and simply denote by
  \(\sigma_\an\) and \(\sigma_\zz\) the same module viewed as a
  member of either of the two equivalent categories.
  
  Also, despite the disparity in \emph{labeling} between \(\an\) and
  \(\zz\) modules (Definition \ref{defn_convert}), the
  \emph{dimension} of \(\sigma\) is the same in both contexts: \[
    \mathrm{dim}_\an(\sigma_\an)=\mathrm{dim}_\zz(\mathcal{Z}(\sigma_\an)=\sigma_\zz).
  \] For this reason, we will simply write \(\mathrm{dim}\)
  with no need for subscripting based on the category.
\end{notation}

\section{Stability Between \(D_\dbl\) and \(D_\dar\) over Pure Zigzag}
\label{sec_stab}

Algebraic stability results usually refer to obtaining bounds between
some distance and its induced bottleneck distance (Remark
\ref{rmk_induced}). The following are two important examples of
stability that have been paraphrased into this paper's vocabulary.

The first stability result is in fact an \emph{isometry}.

\begin{theoremnonum} For \(\mathrm{vect}\)-valued
  persistence modules over \(\mathbb{R}\), the interleaving distance
  and its induced bottleneck distance are \emph{isometric}.

  That is, the interleaving distance can be taken to be
  \emph{diagonal} over the indecomposable summands without any loss
  of sharpness.
\end{theoremnonum}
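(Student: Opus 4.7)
The plan is to prove the two inequalities $D_{\mathrm{IL}} \leq D_B$ and $D_B \leq D_{\mathrm{IL}}$ separately, where $D_B$ denotes the bottleneck distance induced by $D_{\mathrm{IL}}$ on the barcode multisets. By Proposition \ref{prop_gabriel} (generalized to $\mathbb{R}$-indexed pfd modules via \cite{cb}), every persistence module decomposes uniquely into intervals, so this reduces the entire statement to a comparison of $D_{\mathrm{IL}}$ on modules versus $D_{\mathrm{IL}}$ summand-by-summand.

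The easy direction is $D_{\mathrm{IL}} \leq D_B$. Given an $\epsilon$-matching $\chi$ between $\mathcal{B}(M)$ and $\mathcal{B}(N)$, I would construct an $\epsilon$-interleaving by taking direct sums of the per-pair interleavings. Concretely, for each matched pair $(I, J) = (I, \chi(I))$ we have (by definition of $d$) an $\epsilon$-interleaving $(\phi_I, \psi_I)$ between the interval summands $I$ and $J$; for each unmatched interval $K$ (of either side) we have (by definition of $W$) that $K$ is $\epsilon$-interleaved with the zero module, which amounts to the composition $K \to K\Lambda_{2\epsilon}$ being zero. Assembling these pointwise yields morphisms $\Phi = \bigoplus \phi_\bullet$ and $\Psi = \bigoplus \psi_\bullet$ giving an $\epsilon$-interleaving of $M$ with $N$, because the interleaving diagram decomposes as a direct sum of interleaving diagrams.

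The hard direction is $D_B \leq D_{\mathrm{IL}}$; this is where the real content lies. The strategy is the \emph{induced matching} technique: given an $\epsilon$-interleaving $(\phi, \psi)$ between $M$ and $N$, I would extract a canonical matching of their barcodes of cost at most $\epsilon$. The construction proceeds via persistence submodules and quotients, tracking how intervals in $\mathcal{B}(M)$ are transported into intervals of $\mathcal{B}(N\Lambda_\epsilon) = \mathcal{B}(N)$ shifted by $\epsilon$. For each interval $I = (b, d) \in \mathcal{B}(M)$ of length at least $2\epsilon$, the image of the canonical generator under $\phi$ lies in some unique interval $J = (b', d') \in \mathcal{B}(N)$ with $|b-b'| \leq \epsilon$ and $|d-d'| \leq \epsilon$, and minimality of the induced matching ensures that this pairing is injective on long-enough intervals. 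Intervals of length less than $2\epsilon$ are left unmatched and absorbed into the unmatched cost, which is exactly $W_{\mathrm{IL}}$ of such an interval. This whole apparatus is the content of the Bauer--Lesnick induced matching theorem.

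The main obstacle is precisely this second direction, and in particular verifying that the induced matching is well-defined and simultaneously respects \emph{both} endpoints of each interval with slack at most $\epsilon$; naive pointwise arguments only control one endpoint at a time. Once the induced matching is in hand, the bound $h(\chi) \leq \epsilon$ follows from the explicit endpoint estimates above, giving $D_B(\mathcal{B}(M), \mathcal{B}(N)) \leq \epsilon$. Taking the infimum over all interleavings completes the proof and yields the isometry $D_{\mathrm{IL}} = D_B$.
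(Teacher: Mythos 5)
Your outline is correct and matches the route the paper itself relies on: the paper gives no independent proof, noting the direction $D_{\mathrm{IL}}\leq$ (induced bottleneck) is trivial (your direct-sum-of-per-pair-interleavings argument) and citing \cite{cs17}, \cite{algebraic_stability}, \cite{induced_matchings}, and \cite{bauer_lesnick2} for the nontrivial converse, which is exactly the Bauer--Lesnick induced-matching argument you invoke. Just be aware that your sketch of the hard direction likewise defers the real work (the simultaneous two-endpoint control via image modules and ordered matchings) to that cited theorem rather than proving it, so your proposal is at the same level of detail as the paper's treatment.
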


The fact that a distance is a lower bound on its own induced
bottleneck distance is trivial. The non-trivial direction for the above
result is seen originally in \cite{cs17}. It was then algebraically
presented and proved in \cite{algebraic_stability} (Theorem 4.4). The
categorically focused ``induced matching'' version of the result
appears in \cite{induced_matchings} (Theorem 3.5), which is emphasized
even further in the entirety of \cite{bauer_lesnick2} (particularly
Theorems 1.4, 1.7).

The following is the initial stability result for the block distance.

\begin{theoremnonum}[\cite{botnan_lesnick} Proposition 2.12 and Theorem
  3.3]\label{thm_stab_bl} For \(\mathrm{vect}\)-valued persistence
  modules over \(\zz\) embedded via \(\tilde{E}\) as block \(\uu\)
  persistence modules, \(D_\dbl\) and its induced bottleneck distance
  \(\widehat{D}_\dbl\) satisfy
  \[
    D_\dbl\leq \widehat{D}_\dbl\leq \frac{5}{2}D_\dbl.
  \]
\end{theoremnonum}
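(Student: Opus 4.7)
The plan is to treat the two inequalities separately. The lower bound $D_\dbl \leq \widehat{D}_\dbl$ is the routine direction: given any matching between the barcodes of $\tilde{E}(M_\zz)$ and $\tilde{E}(N_\zz)$ achieving cost $\epsilon$, one constructs a global $\epsilon$-interleaving of $\tilde{E}(M_\zz)$ and $\tilde{E}(N_\zz)$ by taking the direct sum of pairwise $\epsilon$-interleavings on matched summands (each such exists because $d_\dbl$ of Proposition \ref{prop_features_bl} is precisely the $D_\uu$-distance between single block summands, by construction) together with the canonical $\epsilon$-interleavings between each unmatched summand and the zero module (each exists because $W_\dbl$ is exactly the $D_\uu$-distance to zero). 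This uses only that $D_\uu$ respects direct sums, which is immediate from Definition \ref{defn_u_il}.

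For the upper bound $\widehat{D}_\dbl \leq \tfrac{5}{2} D_\dbl$, my plan is to use an \emph{induced matching} strategy in the spirit of Bauer--Lesnick, adapted to block modules. Starting from an $\epsilon$-interleaving $(\phi,\psi)$ of two block-decomposable $\uu$-modules, I would extract persistence-diagram-like invariants attached to each of the four block types by counting ranks of internal maps over suitably shaped test regions in $\uu$. Since the four types of blocks $(i,j)_\dbl$, $[i,j)_\dbl$, $(i,j]_\dbl$, $[i,j]_\dbl$ each have a distinctive support shape in $\uu$ (a square, two half-strips, and an upper set respectively; see Figure \ref{fig_all_embed}), the counts of each type along the $\uu$-diagonal can be read off from rank functions over well-chosen rectangles. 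I would then prove that each such counting invariant is $\tfrac{5}{2}\epsilon$-stable under an $\epsilon$-interleaving, and use a greedy matching argument (type-by-type) to assemble the individual stabilities into a single bottleneck matching with cost at most $\tfrac{5}{2}\epsilon$.

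The main obstacle is the heterogeneity of block types and the fact that $\tilde{E}$ distorts distances non-uniformly across the four parity classes. Indeed, among the four types the $W_\dbl$-weightings are $\tfrac{1}{4}(j-i)$, $\tfrac{1}{2}(j-i)$, $\tfrac{1}{2}(j-i)$, and $\infty$, so a single $\epsilon$-interleaving may force cross-type matchings whose costs scale very differently from the underlying endpoint shifts. The constant $\tfrac{5}{2}$ must arise from a worst-case cross-type matching: case analysis by the four parities of both intervals gives up to sixteen sub-cases, and the bound $\tfrac{5}{2}\epsilon$ should emerge as the supremum of the ratios across these. The delicate sub-case is an $(i,j)_\zz$ open-open summand of width $\approx 4\epsilon$ being forced to interact, through the interleaving, with a closed-closed $[i',j']_\zz$ whose $W_\dbl$ is infinite; here one must argue that no such pathological matching is actually forced by an $\epsilon$-interleaving. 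Verifying sharpness of the constant would require exhibiting explicit pairs of block modules achieving the ratio $\tfrac{5}{2}$, presumably an open-open block matched against a half-open block of carefully tuned width.
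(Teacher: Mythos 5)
Your lower-bound argument is fine, and it is worth noting that the paper itself offers no proof of this statement at all: it is quoted verbatim from Botnan--Lesnick (their Proposition 2.12 and Theorem 3.3), so the only ``proof'' in the paper is the citation. The inequality \(D_\dbl\leq\widehat{D}_\dbl\) is indeed the routine direction, and your direct-sum-of-interleavings argument for it is correct, using that \(d_\dbl\) and \(W_\dbl\) of Proposition \ref{prop_features_bl} are exactly the \(D_\uu\)-distances between single blocks and from a block to zero.

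The upper bound, however, is where the entire content of the theorem lives, and your proposal does not actually contain a proof of it. You never define the rank-counting invariants, never prove the asserted \(\tfrac{5}{2}\epsilon\)-stability, and never exhibit where the constant \(\tfrac{5}{2}\) comes from; the whole second paragraph is a plan whose hard steps are deferred. Moreover, the heuristic you give for the constant is off target: in the actual argument (Botnan--Lesnick), one does \emph{not} bound a worst case over sixteen cross-type matchings. Instead one shows that an \(\epsilon\)-interleaving of block-decomposable \(\uu\)-modules induces matchings \emph{within each of the four block types separately} (e.g.\ by restricting to suitable lines in \(\uu\) and invoking the one-parameter induced-matching/isometry theorem for the half-open types, with separate arguments for the open and closed types), and the factor \(\tfrac{5}{2}\) emerges from the analysis inside a single type, not from forced interactions between an open-open block and a closed block. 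Your worry about an \((i,j)_\zz\) block being ``forced'' to match a \([i',j']_\zz\) block of infinite \(W_\dbl\) is likewise a non-issue once the matching is constructed type by type --- but establishing that such a type-preserving matching exists is precisely the nontrivial structural input that your sketch assumes rather than proves. As written, the proposal is a reasonable research outline but has a genuine gap at the theorem's core; closing it would essentially amount to reproducing the Botnan--Lesnick argument.
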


As the block distance separates by \(\langle\cdot,\cdot\rangle_\zz\)
type, the result above is proved independently for each of the four
cases. In three of these cases the above statement is tight with the
constant
of \(5/2\). In \cite{bjerkevik} it is shown that for the case of
\((\cdot,\cdot)_\zz\) modules, the block distance and its induced
bottleneck distance are \emph{isometric} (i.e., the \(5/2\) can be
replaced with \(1\)).

These theorems are immensely important results for the topic at hand, but
do not reflect the sort of stability theorem that we will provide for
\(D_\dar\). As it has been defined, \(D_\dar\) is foundationally a
bottleneck distance in the first place, and thus \emph{is} its own
induced bottleneck distance. As such,
any algebraic stability result of the type discussed here would be
trivial for \(D_\dar\). Instead, we examine comparative stability of
the kind \(D_\dar\leq A\cdot D_\dbl\) and \(D_\dbl\leq B\cdot D_\dar\)
over pure zigzag orientations.

The following is our final result for this section: full minimal
Lipschitz constants comparing a modification of \(D_\dar\) to
\(D_\dbl\) over the four kinds of \(\zz\) modules (this echoes the
piecewise stability results of the block distance
\cite{botnan_lesnick}, as this modified \(D_\dar\) also shares
the trait that it ``separates'' modules by
\(\langle\cdot,\cdot\rangle_\zz\) type).

\begin{theoremnonum}[Theorem \ref{thm_blar_limit}]
  The following are the minimal Lipschitz constants comparing
  \(D_\dbl\) with the modification \(D_\dar^{2,\infty}\) of \(D_\dar\)
  over some poset \(P=\mathbb{A}_n(z)\) of pure zigzag orientation.
  % \begin{itemize}
  % \item \(D_\dar\leq 2n\cdot D_\dbl\) (Proposition \ref{prop_stab1})
  %   and \(D_\dbl\leq \dfrac{n}{4}\cdot D_\dar\) (Proposition
  %   \ref{prop_stab2}).
  % \item \(D_\dar^r\leq 8r\cdot D_\dbl\) (Proposition \ref{prop_stabr1}) and
  %   \(D_\dbl\leq \dfrac{n}{4r}\cdot D_\dar^r\) (Proposition \ref{prop_stabr2}).
  % \item \(D_\dar^{r,\infty}\leq 8r\cdot D_\dbl\) and \(D_\dbl\leq
  %   \dfrac{1}{2r}D_\dar^{r,\infty}\) (both from Corollary
  %   \ref{cor_blar_limit}).
  % \end{itemize}
  \begin{itemize}
  \item If \(\sigma_\zz,\tau_\zz\in(\cdot,\cdot)_\zz\), then \(
      \,\,2D_\dbl\leq D_\dar^{2,\infty}\leq 16D_\dbl.
    \)
  \item If \(\sigma_\zz,\tau_\zz\in[\cdot,\cdot]_\zz\), then \(
      \,\,2D_\dbl\leq D_\dar^{2,\infty}\leq 4D_\dbl\,\,
    \) (if \(D_\dbl < \infty\)).
  \item If \(\sigma_\zz,\tau_\zz\in[\cdot,\cdot)_\zz\), then \(
      \,\,2D_\dbl\leq D_\dar^{2,\infty}\leq 8D_\dbl.
    \)
  \item If \(\sigma_\zz,\tau_\zz\in(\cdot,\cdot]_\zz\), then \(
      \,\,2D_\dbl\leq D_\dar^{2,\infty}\leq 8D_\dbl.
    \)
  \end{itemize}
\end{theoremnonum}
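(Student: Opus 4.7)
The plan is to reduce the two-sided Lipschitz comparison to a comparison of minimal generators via Proposition \ref{prop_bn_compare}, then split into the four endpoint-parity cases and handle each separately. Within each parity class, the ratio $D_\dar^{2,\infty}/D_\dbl$ is controlled by the worse of two ratios: $W_{\dar^{2,\infty}}/W_\dbl$ and $\bar d_{\dar^{2,\infty}}/\bar d_\dbl$. Both $W$'s are known explicitly (Proposition \ref{prop_features_bl} on the $D_\dbl$ side; Corollary \ref{cor_escape_east_west}, Corollary \ref{cor_1zz_w}, and Lemma \ref{lemma_hull_w} on the $D_\dar$ side), and both $\bar d$'s have closed-form pairwise expressions (Proposition \ref{prop_features_bl}, and for $D_\dar$ the sum-of-coordinate formula from Proposition \ref{prop_formula}). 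So the whole theorem is, in principle, a collection of optimizations of explicit two-variable expressions over pairs of same-parity intervals in a pure-zigzag $\mathbb{A}_n$.

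The $2 D_\dbl \leq D_\dar^{2,\infty}$ lower bound in each case I expect to come from same-parity pairs of intervals $\sigma_\zz,\tau_\zz$ whose endpoints differ by equal amounts on both coordinates, so that $\bar d_\dbl$ collapses to an $\ell^\infty$ of these differences while $\bar d_{\dar^{2,\infty}}$ (via Proposition \ref{prop_formula} on the monotone $\bar{\mathcal{N}}\cup\bar{\mathcal{S}}$ or $\bar{\mathcal{E}}\cup\bar{\mathcal{W}}$ sides of the AR quiver) gives the $\ell^1$ sum; the scaling by $2$ then produces the exact factor $2$. For the upper bounds, I would optimize separately: (i) the $W$-ratio, where the ``hull'' modules of Proposition \ref{prop_diam} cause $W_\dar$ to drift above dimension and drive the constants for the $(\cdot,\cdot)_\zz$ case upward, while the $[\cdot,\cdot]_\zz$ case is protected because $W_\dbl = \infty$; and (ii) the $\bar d$-ratio, which is a clean $\ell^1$-vs-$\ell^\infty$ comparison up to the parity-induced ``jump'' in Proposition \ref{prop_formula} (the ``otherwise'' branches of $\delta^x$ and $\delta^y$).

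The piecewise pattern of constants $16, 4, 8, 8$ should reflect exactly this interplay: the factor $16 = 2 \cdot 8$ in the $(\cdot,\cdot)_\zz$ case comes from combining the $\ell^1$-vs-$\ell^\infty$ factor of $2$ with the $W_\dbl = (j-i)/4$ baseline (a further factor of $4$ smaller than the $\an$-dimension) and the ``2'' in $D_\dar^{2,\infty}$; the factor $8$ for the half-open cases reflects the larger $W_\dbl = (j-i)/2$ baseline; and the factor $4$ for the closed-closed case is essentially the $\bar d$-only comparison, since $W_\dbl = \infty$ removes $W$ from the optimization entirely (and the condition $D_\dbl < \infty$ rules out pairing one such module with $\emptyset$). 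Sharpness of each upper bound is then demonstrated by exhibiting a pair realizing the ratio, again using small-dimension ``central'' hull intervals of the kind in Example \ref{ex_bad} and Example \ref{ex_worse}.

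The main obstacle I anticipate is in the $(\cdot,\cdot)_\zz$ case, precisely because of the hull blow-up: for odd-dimensional $\an$ intervals that sit deep in $\mathcal{N}$ or $\mathcal{S}$, $W_\dar$ can reach $n$ while the corresponding $W_\dbl$ remains $O(1)$, so without the right pairing or the right interpretation of the ``$2,\infty$'' decoration the ratio would blow up with $n$. The right bookkeeping, to keep the constant at $16$, will require using the $\bar d_\dbl$-comparison to absorb the $W_\dar$ excess for matched pairs (showing that when two such hull modules are present the pairwise $\bar d_\dbl$ is also forced to be large), and falling back on the parity-matching convention only when one module is unmatched. This is where I expect the most delicate case analysis; the other three parity classes should fall out with substantially less effort.
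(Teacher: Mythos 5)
Your overall skeleton --- split by \(\zz\)-type, reduce to the minimal generators via Proposition \ref{prop_bn_compare}, and read the constants off explicit \(W\)- and \(\bar d\)-formulas --- is essentially the paper's proof, which does exactly this by tabulating the generators (Tables \ref{tab_w1} and \ref{tab_d1}) and comparing columns. The genuine gap is that you compare \(D_\dbl\) against the generators of the \emph{unmodified} \(D_\dar\) rather than those of \(D_\dar^{2,\infty}\): Corollary \ref{cor_1zz_w} and Lemma \ref{lemma_hull_w} give the pure-zigzag hull values \(\min\{x+y-1,\,2n-x-y+1\}\), and Proposition \ref{prop_formula} the unrefined graph distance, but the distance in the theorem is built from the \(r\)-zigzag refinement and the poset-limit extension, whose generators are Proposition \ref{prop_d_inf} together with Corollary \ref{cor_rzz_w} and Remark \ref{rmk_convert_2}: \(W_\dar^{2,\infty}(\sigma)=W_\dar^{2}(\sigma)=2(y-x)+1\) (plus \(2\) only for the dimension-two intervals), while \(d_\dar^{2,\infty}\) equals \(2(|x_1-x_2|+|y_1-y_2|)\) within a \(\zz\)-type and reverts to \(\max\{W_\dar^{2}\}\) across types. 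Consequently the ``obstacle'' your last paragraph is organized around --- small hull intervals with \(W_\dar\) near \(n\), to be absorbed by forcing the matched \(\bar d_\dbl\) to be large --- does not exist for \(D_\dar^{2,\infty}\): eliminating that hull blow-up is precisely what the refinement accomplishes (Remark \ref{rmk_rz_hull}, Example \ref{ex_refine}), and it is the only reason the theorem's constants are \(n\)-free. Moreover your rescue mechanism could not work even for the unmodified distance, since in a bottleneck distance \(W\) enters only for \emph{unmatched} summands: the bad configuration is a single small central interval against the empty barcode (\(W_\dar\approx n\) versus \(W_\dbl\approx 1/2\)), which no pairing can absorb --- that is exactly why the unmodified comparison carries the sharp \(n\)-dependent constant of Proposition \ref{prop_stab1} and why the modifications were introduced.

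With the correct generators the argument collapses to the paper's short one: within a fixed \(\zz\)-type the \(d\)-comparison alone gives a factor \(2r=4\), and the four \(W_\dbl\) behaviors of Proposition \ref{prop_features_bl} (\((j-i)/4\), \(\infty\), \((j-i)/2\), \((j-i)/2\), set against \(W_\dar^{2,\infty}\approx 4(j-i)\)) are what push the open-open constant to \(16\), give \(8\) in the two half-open cases, and leave \(4\) in the closed-closed case, where \(W_\dbl=\infty\) removes the \(W\)-comparison and forces the matching on the block side (whence the proviso \(D_\dbl<\infty\)); the legitimacy of treating the four classes separately is the separation statement of Proposition \ref{prop_d_inf}, which you never invoke. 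A smaller inaccuracy: your proposed witnesses for the left-hand bound, pairs whose endpoints differ by equal amounts in both coordinates, realize the \emph{largest} \(\ell^1\)-versus-\(\ell^\infty\) ratio, not the smallest; the pairs realizing the minimal ratio \(2\) are those differing in a single coordinate, i.e.\ the first column of Table \ref{tab_d1}, and the remaining (cross-type and \(W\)-versus-\(W\)) checks needed for that inequality to hold for \emph{all} pairs are the Table \ref{tab_w1} comparisons your sketch leaves out.
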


\subsection{Partitioning of Intervals and Modifications of \(D_\dar\)}

Throughout, we compare \(D_\dbl\) with the original \(D_\dar\) and
then two further modifications of it. \(D_\dar^r\) is a modification
of \(D_\dar\) that acts by projecting into a poset refinement of pure
zigzag (called \(r\)-zigzag) in order to avoid a large hull, all while
preserving the structure of the projected modules over sources and
sinks. \(D_\dar^{r,\infty}\) is a further modification that views
original zigzag modules over \(r\)-zigzag posets of unbounded length.
This perspective both compares more favorably with \(D_\dbl\) and may
be of independent interest to anyone who does not wish to be limited
to bounded zigzag posets in the first place.

The remainder of this section chronicles Lipschitz stability between
\(D_\dbl\) and original \(D_\dar\) and the fact that in both
directions the minimal Lipschitz constants involve \(n\) itself (the
length of \(P=\mathbb{A}_n\). The first modification \(D_\dar^r\)
removes one of these dependencies, while the second modification to
\(D_\dar^{r\infty}\) removes the other.

The most persistent discrepancy (the one removed by the
\(D_\dar^{r\infty}\) modification) is discussed in the following
remark.

\begin{remark}[Partitions of \(\Sigma_P\): \(\zz\) vs\(.\) compass]
  \label{rmk_w_table}
  We require a brief discussion of the connection between the subsets
  \(\mathcal{E},\mathcal{W},\mathcal{S},\mathcal{N}\) of \(\Sigma_P\)
  and the subsets
  \((\cdot,\cdot)_\zz,[\cdot,\cdot]_\zz,[\cdot,\cdot)_\zz,(\cdot,\cdot]_\zz\)
  of \(\Sigma_\zz(P)\) under the functor \(\mathcal{Z}\) (Definition
  \ref{defn_convert}). When trying to pair the compass regions
  precisely to the partitions by \(\zz\)-type, the inconvenience
  becomes that the diagonals (of the AR quiver) belong to different
  members of the \(\zz\)-partition \emph{depending on the
  orientation of} \(\mathbb{A}_n\).
  % \begin{figure}
  %   \centering
  %   \includegraphics[scale=.4]{}
  %   \caption{The four compass regions of the AR quiver of
  %     \(\mathbb{A}_n\) labeled by the corresponding type of
  %     \(\zz\)-modules they contain. The diagonals present a problem of
  %     notation for globalizing this comparison.}
  %   \label{fig_ewsn}
  % \end{figure}
  
  Define the sets:
  \begin{itemize}
  \item \(\mathcal{D}_{nw}=
    \{[1,\cdot]\in\Sigma_P:[1,\cdot]\text{ is northwest of
    }[1,n]\}\).

    % In \(u*\) orientations of \(\mathbb{A}_n\) (the
    % vertex \(1\) is a sink)
    % \(\mathcal{D}_{nw}\subset[\cdot,\cdot]_\zz\).
    % In \(d*\) orientations
    % \(\mathcal{D}_{nw}\subset(\cdot,\cdot]_\zz\).
  \item \(\mathcal{D}_{ne}=
    \{[\cdot,n]\in\Sigma_P:[\cdot,n]\text{ is northeast of
    }[1,n]\}\).

    % In \(*u\) orientations
    % \(\mathcal{D}_{ne}\subset(\cdot,\cdot]_\zz\).
    % In \(*d\) orientations
    % \(\mathcal{D}_{ne}\subset(\cdot,\cdot)_\zz\).
  \item \(\mathcal{D}_{se}=
    \{[1,\cdot]\in\Sigma_P:[1,\cdot]\text{ is southeast of
    }[1,n]\}\).

    % In \(u*\) orientations
    % \(\mathcal{D}_{se}\subset[\cdot,\cdot)_\zz\).
    % In \(d*\) orientations
    % \(\mathcal{D}_{se}\subset(\cdot,\cdot)_\zz\).
  \item \(\mathcal{D}_{sw}=
    \{[\cdot,n]\in\Sigma_P:[\cdot,n]\text{ is southwest of
    }[1,n]\}\).

    % In \(*u\) orientations
    % \(\mathcal{D}_{sw}\subset[\cdot,\cdot]_\zz\).
    % In \(*d\) orientations
    % \(\mathcal{D}_{sw}\subset[\cdot,\cdot)_\zz\).
  \end{itemize}
  Supplement when necessary with the bar notation from Notation
  \ref{not_ewsn}, i.e.,
  \[\bar{\mathcal{N}}=\mathcal{N}\cup\mathcal{D}_{nw}
    \cup\mathcal{D}_{ne}\cup\{[1,n]\}.\]
  
  % Take as an example a poset of \(P=\mathbb{A}_n^{\mathrm{uu}}\)
  % orientation (\(1\) and \(n\) are sinks). In this situation
  % \(\mathcal{D}_{nw},\mathcal{D}_{sw}\subset[\cdot,\cdot]_\zz\),
  % \(\mathcal{D}_{se}\subset[\cdot,\cdot)_\zz\), and
  % \(\mathcal{D}_{ne}\subset(\cdot,\cdot]_\zz\).

  % Equivalently,
  % the full corresponence of intervals is:
  % \((\cdot,\cdot)_\zz=\mathcal{E}\) with no diagonals,
  % \([\cdot,\cdot]_\zz=\bar{\mathcal{W}}\) absorbs \(\mathcal{W}\) and
  % both its adjacent
  % diagonal segments, and each of
  % \([\cdot,\cdot)_\zz=\mathcal{S}\cup\mathcal{D}_{se}\) and
  % \((\cdot,\cdot]_\zz=\mathcal{N}\cup\mathcal{D}_{nw}\) absorb one.
  See Table \ref{tab_partitions}.

  \begin{table}
    \centering
    {\tabulinesep=1.5mm \begin{tabu}{c|c|c|c|c}
        & \((\cdot,\cdot)_\zz=\)
        & \([\cdot,\cdot]_\zz=\)
        & \([\cdot,\cdot)_\zz=\)
        & \((\cdot,\cdot]_\zz=\)
        \\
        \hline
        \(\mathbb{A}_n^{\mathrm{uu}}(z)\)
        & \(\mathcal{E}\)
        & \(\bar{\mathcal{W}}\)
        & \(\mathcal{S}\cup\mathcal{D}_{se}\)
        & \(\mathcal{N}\cup\mathcal{D}_{ne}\)
        \\
        \hline
        % \multicolumn{2}{c}{} \\[.5em]
        \(\mathbb{A}_n^{\mathrm{ud}}(z)\)
        & \(\mathcal{E}\cup\mathcal{D}_{ne}\)
        & \(\mathcal{W}\cup\mathcal{D}_{nw}\)
        & \(\bar{\mathcal{S}}\)
        & \(\mathcal{N}\)
        \\
        \hline
        \(\mathbb{A}_n^{\mathrm{du}}(z)\)
        & \(\mathcal{E}\cup\mathcal{D}_{se}\)
        & \(\mathcal{W}\cup\mathcal{D}_{sw}\)
        & \(\mathcal{S}\)
        & \(\bar{\mathcal{N}}\)
        \\
        \hline
        \(\mathbb{A}_n^{\mathrm{dd}}(z)\)
        & \(\bar{\mathcal{E}}\)
        & \(\mathcal{W}\)
        & \(\mathcal{S}\cup\mathcal{D}_{sw}\)
        & \(\mathcal{N}\cup\mathcal{D}_{nw}\)
        \\
    \end{tabu}}
  \caption{Equality of partitions by compass regions of the AR quiver
    and by endpoint type in \(\zz\), dependent on orientation of
    \(P\).}
  \label{tab_partitions}
\end{table}

This leads to Lemmas \ref{lemma_x} and \ref{lemma_y}, which introduce
\(n\)-dependence in \(D_\dbl\leq A\cdot D_\dar,D_\dar^r\) Lipschitz
constants. This is resolved at last when comparing with the
modification \(D_\dar^{r,\infty}\), as seen in Proposition
\ref{prop_d_inf}.
\end{remark}

Finally, we introduce a notational convention for use in Tables
\ref{tab_w1} and \ref{tab_d1}.

\begin{notation}\label{not_comp_diff}
  For the remainder of the work on stability, we invoke the following
  notational conventions for the sake of filling out Tables
  \ref{tab_w1} and \ref{tab_d1} with greater readability.

  Let \(\sigma=[x_1,y_1]_\an\) and \(\tau=[x_2,y_2]_\an\). We will
  denote the by the following values various quantities originating in
  Proposition \ref{prop_formula}:
  \begin{itemize}
  \item \(\mathrm{LH}^{\mathrm{diff}}(\sigma,\tau)=|x_1-x_2|\), the left
    hand \emph{support difference} of the modules,
  \item \(\mathrm{RH}^{\mathrm{diff}}(\sigma,\tau)=|y_1-y_2|\), the right
    hand \emph{support difference} of the modules,
  \item \(\mathrm{LH}^{\mathrm{comp}}(\sigma,\tau)=x_1-1+x_2-1\), the left
    hand \emph{support complements} of the modules, also allowing for
    the notation \(\mathrm{LH}^{\mathrm{comp}}(\sigma)=x_1-1\),
  \item \(\mathrm{RH}^{\mathrm{comp}}(\sigma,\tau)=n-y_1+n-y_2\), the right
    hand \emph{support complements} of the modules, also allowing for
    the notation \(\mathrm{RH}^{\mathrm{comp}}(\sigma)=n-y_1\).
  \end{itemize}
\end{notation}

\subsection{Unmodified Stability}

\begin{table}
  \begin{subtable}[t]{\textwidth}
    \centering
    {\tabulinesep=1.5mm \begin{tabu}{c|c|c|c|c}
        & \((i,j)_\zz\)&%\subset\mathcal{E}^+\) &
        \([i,j]_\zz\)%\subset\mathcal{W}^+\)
        & \([i,j)_\zz\)&%\subset\mathcal{S}\) &
        \((i,j]_\zz\)%\subset\mathcal{N}\)
        \\
        \hline
        % \(W_\col\) & \((j-i)/4\) & \((j-i)/4\) & \((j-i)/4\) &
        % \((j-i)/4\) \\
        % \hline
        \(W_\dbl\) & \((j-i)/4\) & \(\infty\) & \((j-i)/2\) &
        \((j-i)/2\) \\
        \hline
        \(W_\dar\) & \(y-x+1\) & \(y-x+1\) &
        \(\min\left\{\begin{array}{l}x+y-1\\2n-x-y+1\end{array}\right\}\)
        &
        \(\min\left\{\begin{array}{l}x+y-1\\2n-x-y+1\end{array}\right\}\)
        \\
        \hline
        \(W_\dar^r\) & \(r(y-x)+1\) & \(r(y-x)+1\) &
        \(r(y-x)+1,3\) & \(r(y-x)+1,3\) \\
        \hline
        \(W_\dar^{r,\infty}\) & \(r(y-x)+1\) & \(r(y-x)+1\) &
        \(r(y-x)+1,3\) &
        \(r(y-x)+1,3\) \\
      \end{tabu}}
    \caption{Table of \(W\)-values over any poset of pure
      zigzag orientation, partitioned by \(\zz\) interval type. For
      sources of individual formulas see:
      Row 1, Prop \ref{prop_features_bl};
      Row 2, Corollary \ref{cor_escape_east_west} and Corollary
      \ref{cor_1zz_w};
      Row 3, Corollary \ref{cor_rzz_w};
      Row 4, Proposition \ref{prop_d_inf}.}
    \label{tab_w11}
  \end{subtable}
  \begin{subtable}[t]{\textwidth}
    \centering
    {\tabulinesep=1.5mm \begin{tabu}{c|c|c|c|c}
        & \((i,j)_\zz\)&%\subset\mathcal{E}^+\) &
        \([i,j]_\zz\)%\subset\mathcal{W}^+\)
        & \([i,j)_\zz\)&%\subset\mathcal{S}\) &
        \((i,j]_\zz\)%\subset\mathcal{N}\)
        \\
        \hline
        % \(W_\col\) & \((j-i)/4\) & \((j-i)/4\) & \((j-i)/4\) &
        % \((j-i)/4\) \\
        % \hline
        \(W_\dbl\) & \((\mathrm{dim}+1)/8\) & \(\infty\) &
        \((\mathrm{dim}+1)/4\) & \((\mathrm{dim}+1)/4\) \\
        \hline
        \(W_\dar\) & \(\mathrm{dim}\) & \(\mathrm{dim}\) &
        \(\mathrm{dim}+\min\left\{\begin{array}{l}2\cdot\mathrm{LH}^{\mathrm{comp}}\\
                                        2\cdot\mathrm{RH}^{\mathrm{comp}}\end{array}\right\}\)
        &
        \(\begin{array}{c}\text{same as}\\\text{previous}\\\text{column}\end{array}\)
        \\
        \hline
        \(W_\dar^r\) & \(r\cdot\mathrm{dim}\) & \(r\cdot\mathrm{dim}\) &
        \(r\cdot\mathrm{dim}\) & \(r\cdot\mathrm{dim}\) \\
        \hline
        \(W_\dar^{r,\infty}\) & \(r\cdot\mathrm{dim}\) & \(r\cdot\mathrm{dim}\) &
        \(r\cdot\mathrm{dim}\) & \(r\cdot\mathrm{dim}\) \\
      \end{tabu}}
    \caption{Simplified table of approximate \(W\)-values that
      emphasize major scaling features.}
    \label{tab_w12}
  \end{subtable}
  \caption{In both tables, recall that the difference between
    \(\mathrm{dim}_\zz\) and \(\mathrm{dim}_\an\) (which is the
    difference between \(j-i\) and \(y-x\)) is given by Definition
    \ref{defn_convert}, and is in all cases essentially a factor of
    \(2\) (with \(\mathrm{dim}_\an\) being the larger one).}
  \label{tab_w1}
\end{table}

\begin{prop}[Unmodified Right-Hand Stability]\label{prop_stab1}
  Over pure zigzag orientation,
  \[
    D_\dar\leq 2n\cdot D_\dbl
  \]
  is the minimal Lipschitz constant satisfying the above inequality.
\end{prop}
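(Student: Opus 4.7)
The plan is to apply Proposition \ref{prop_bn_compare} and reduce the proposition to establishing two pointwise inequalities on the minimal generators of the bottleneck distances, namely
\[
  W_\dar(\sigma)\leq 2n\cdot W_\dbl(\sigma_\zz) \quad\text{and}\quad
  \bar d_\dar(\sigma,\tau)\leq 2n\cdot \bar d_\dbl(\sigma_\zz,\tau_\zz)
\]
for all intervals $\sigma,\tau$ of some pure zigzag $P=\mathbb{A}_n(z)$, together with the exhibition of a single pair for which the constant $2n$ is attained.

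For the $W$-inequality I would split into the four $\zz$-interval types of Table \ref{tab_w1}. The $[i,j]_\zz$ case is vacuous since $W_\dbl = \infty$; the $(i,j)_\zz$ case is handled by Corollary \ref{cor_escape_east_west} combined with Table \ref{tab_partitions}, which forces $W_\dar = \dim$, and the conversion of Definition \ref{defn_convert} yields a ratio bounded by a small absolute constant. The decisive cases are $[i,j)_\zz$ and $(i,j]_\zz$: there Corollary \ref{cor_1zz_w} allows $W_\dar$ to reach $n$ whenever the interval straddles the middle of $P$, while $W_\dbl$ can be as small as $1/2$, producing exactly the factor $2n$.

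For the $\bar d$-inequality I would split on whether the two intervals carry the same $\zz$-endpoint parity. If they do, Table \ref{tab_partitions} places both into a common $\bar{\textrm{compass}}$ region, so Proposition \ref{prop_formula} reduces $\delta_\dar$ to the $\ell^1$ sum of support-endpoint differences; after rescaling $\an$-labels to $\zz$-labels this sum is bounded by a small multiple of the $\ell^\infty$ quantity $d_\dbl$, well below $2n$. If the parities disagree, then $d_\dbl = \max\{W_\dbl(\sigma),W_\dbl(\tau)\} = \bar d_\dbl$, and using $\bar d_\dar \leq \max\{W_\dar(\sigma),W_\dar(\tau)\}$ together with the $W$-inequality established above yields $\bar d_\dar \leq 2n\cdot\bar d_\dbl$ immediately.

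Sharpness is the easy half of the argument: Example \ref{ex_worse} exhibits, for even $n$, the interval $\sigma = [n/2,n/2+1]_\an$ with $W_\dar(\sigma) = n$, and tracing through Definition \ref{defn_convert} identifies $\sigma_\zz$ as an interval of the form $(i,i+1]_\zz$ or $[i,i+1)_\zz$ with $W_\dbl(\sigma_\zz) = 1/2$; comparing $\sigma$ with the zero module therefore gives $D_\dar(\sigma,0)/D_\dbl(\sigma_\zz,0) = 2n$, so no smaller Lipschitz constant is possible. The main obstacle will be the bookkeeping inside the matched-parity $d$-bound, since Table \ref{tab_partitions} shows that the compass decomposition partitions $\zz$-types differently under each of the four starting/ending orientations of $P$, and one must confirm the $\ell^1$-versus-$\ell^\infty$ comparison survives the boundary cases where one endpoint lies on a diagonal of the AR quiver.
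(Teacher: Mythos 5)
Your proposal is correct and follows essentially the same route as the paper: sharpness comes from Example \ref{ex_worse} (a length-two middle interval with \(W_\dar=n\) versus \(W_\dbl=1/2\)), and sufficiency ultimately rests on the diameter bound \(W_\dar\leq n\) of Proposition \ref{prop_diam}/Corollary \ref{cor_dar_n} together with \(W_\dbl\geq 1/2\) outside the open-open case, which is exactly what the paper's terse ``sufficiency follows from Corollary \ref{cor_dar_n}'' compresses. Your case-by-case check via Proposition \ref{prop_bn_compare} and the tables just makes that implicit bookkeeping explicit (and in the same-parity step you may equally compare against \(d_\dbl\) rather than \(\bar d_\dbl\), since the stronger unbarred criterion noted in the proof of Proposition \ref{prop_bn_compare} suffices).
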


\begin{proof}
  Necessity is obtained from Example \ref{ex_worse}. A module of the
  form \([x,x+1]_\an\) can have \(W_\dar=n,n-1\), and this corresponds
  to some module of the form \([i,i+1)_\zz\) or \((i,i+1]_\zz\), both
  of which have \(W_\dbl=1/2\).
  Sufficiency follows from Corollary \ref{cor_dar_n}.
\end{proof}

In the other direction, we must address the misalignment issues
brought to attention in Remark \ref{rmk_w_table}.

\begin{lemma}[Partitioning Non-alignment (see Remark
  \ref{rmk_w_table})]\label{lemma_x}
  Let \(P=\mathbb{A}_n(z)\) be a poset of pure zigzag
  orientation. Then if \(D_\dbl<\infty\), \[D_\dbl\leq n/4\cdot
    D_\dar\] where \(n/4\) is a \emph{lower bound} for the Lipschitz
  constant in the inequality above.
\end{lemma}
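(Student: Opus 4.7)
The plan is to invoke Proposition \ref{prop_bn_compare} and reduce the bottleneck inequality $D_\dbl \leq (n/4)\cdot D_\dar$ to the two pointwise comparisons $W_\dbl \leq (n/4)\cdot W_\dar$ and $\bar{d}_\dbl \leq (n/4)\cdot \bar{d}_\dar$ on the space of indecomposables with finite $W_\dbl$ (which is all that matters under the hypothesis $D_\dbl < \infty$). For the $W$-bound, every finite entry in the $W_\dbl$ row of Table \ref{tab_w1} is at most $(j-i)/2$ in the $\zz$-labelling, and since $\mathcal{Z}$ halves lengths, this is at most $n/4$. Meanwhile $W_\dar(\sigma) \geq \mathrm{dim}(\sigma) \geq 1$ by Proposition \ref{prop_diam}, so $W_\dbl \leq n/4 \leq (n/4)\cdot W_\dar$ pointwise.

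For the $\bar{d}$-bound I would split on whether $\sigma_\zz$ and $\tau_\zz$ share the same $\zz$-endpoint type. If they do, then by Table \ref{tab_partitions} both indecomposables sit inside a single bar-compass region of the AR quiver, so Proposition \ref{prop_formula} collapses to $d_\dar = |x_1-x_2|+|y_1-y_2|$; meanwhile $d_\dbl = \max\{|i_1-i_2|,|j_1-j_2|\}$, and for intervals of matching endpoint parity the correspondence $\mathcal{Z}$ halves each coordinate distance, so $d_\dbl \leq (|x_1-x_2|+|y_1-y_2|)/2 = d_\dar/2$. This inequality passes to the minimal generators (either directly, or using $\bar{d}_\dar \geq 1$ together with $\bar{d}_\dbl \leq n/4$ in the event that $\bar{d}_\dar$ drops below $d_\dar$). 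If the $\zz$-endpoint types differ, Proposition \ref{prop_features_bl} gives $d_\dbl = \max\{W_\dbl(\sigma),W_\dbl(\tau)\} \leq n/4$, and any two distinct indecomposables have $\bar{d}_\dar \geq 1$, again yielding $\bar{d}_\dbl \leq (n/4)\cdot \bar{d}_\dar$.

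To see that $n/4$ cannot be improved, I would exhibit the pair $\sigma = [1,n]_\an$ and $\tau = [2,n]_\an$ over a pure zigzag orientation of type (ud) (so $n$ is even). Under $\mathcal{Z}$, $\sigma$ becomes a $[\cdot,\cdot)_\zz$ module with $W_\dbl(\sigma) = n/4$ and $\tau$ becomes a $(\cdot,\cdot)_\zz$ module with $W_\dbl(\tau) = n/8$; because their endpoint types differ, $\bar{d}_\dbl(\sigma,\tau) = \max\{W_\dbl(\sigma),W_\dbl(\tau)\} = n/4$. On the AR side both modules sit in $\bar{\mathcal{E}}$, so Proposition \ref{prop_formula} gives $d_\dar(\sigma,\tau) = |1-2|+|n-n| = 1$, hence $\bar{d}_\dar(\sigma,\tau) = 1$. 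The single-summand pair $(\sigma,\tau)$ then realizes the ratio $n/4$, forcing every admissible Lipschitz constant to be at least $n/4$.

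The main obstacle is bookkeeping-heavy rather than conceptual: one has to track which diagonal interval modules get reassigned to which $\zz$-endpoint class under a given pure zigzag orientation (the phenomenon summarized in Remark \ref{rmk_w_table} and Table \ref{tab_partitions}). This is exactly what makes the sharp example possible, since $\sigma = [1,n]$ and $\tau = [2,n]$ live adjacent to each other in $\bar{\mathcal{E}}$ of the AR quiver yet land in distinct $\zz$-endpoint classes, and it is also the source of a potential pitfall in the same-type branch of the case analysis, where one must be careful to use the bar-regions (rather than the strict compass regions) when invoking Proposition \ref{prop_formula}.
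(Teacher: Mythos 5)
The sharpness half of your write-up is essentially the paper's own proof of this lemma: the paper also takes \(\sigma=[1,n]_\an\) against \(\tau_1=[2,n]_\an\) (with the backup pair \(\tau_2=[1,n-1]_\an\)), observes that the mismatch of \(\zz\)-endpoint type forces \(D_\dbl=\max\{W_\dbl\}\approx n/4\) while \(D_\dar=1\), and stops there --- the inequality \(D_\dbl\le n/4\cdot D_\dar\) itself is deferred to Proposition \ref{prop_stab2}, where it is handled by comparing generators from Tables \ref{tab_w1} and \ref{tab_d1}. Your computation of the example in the (ud) case is correct, but note the lemma is stated for an arbitrary pure zigzag orientation; your single witness only covers (ud) with \(n\) even, and the reason the paper carries the second pair \([1,n-1]\) is precisely that for other orientations \([2,n]\) can be of \([\cdot,\cdot]_\zz\) type and the first pair then has \(D_\dbl=\infty\).

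The upper-bound half, however, has a genuine gap. The hypothesis \(D_\dbl<\infty\) does \emph{not} let you discard indecomposables with \(W_\dbl=\infty\): a pair of modules can perfectly well contain \([\cdot,\cdot]_\zz\) summands as long as these are matched to one another, so such summands still enter both pointwise comparisons that Proposition \ref{prop_bn_compare} demands, and there the comparison fails. Concretely, take (uu) orientation and the two sink simples \(\sigma=[1]_\an\), \(\tau=[n]_\an\); both are of \([\cdot,\cdot]_\zz\) type, so \(\bar{d}_\dbl(\sigma,\tau)=d_\dbl(\sigma,\tau)=(n-1)/2\) (there is no truncation by \(W_\dbl\), which is infinite), whereas \(W_\dar([1])=W_\dar([n])=1\) gives \(\bar{d}_\dar(\sigma,\tau)=\min\{2(n-1),1\}=1\). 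Thus \(\bar{d}_\dbl\le (n/4)\,\bar{d}_\dar\) is false, and the fallback you invoke in the same-type branch (``\(\bar{d}_\dbl\le n/4\)'') is exactly what breaks, since it tacitly assumes finite \(W_\dbl\)-values; note also that this pair satisfies \(D_\dbl<\infty\), so the hypothesis does not exclude it. In short, a blanket reduction via Proposition \ref{prop_bn_compare} cannot give the inequality: the closed-closed pairs must be treated by comparing \(d_\dbl\) with \(d_\dar\) directly (as the paper's table-based argument in Proposition \ref{prop_stab2} does, per \(\zz\)-type), not by routing through the \(W\)-truncated minimal generators.
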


\begin{proof}
  No matter the orientation of \(P\), one of
  \(\sigma=[1,n]_\an,\tau_1=[2,n]_\an\) is in some
  \((\cdot,\cdot\}_\zz\) and the other is in the associated
  \([\cdot,\cdot\}_\zz\). Similarly, one of
  \(\sigma=[1,n]_\an,\tau_2=[1,n-1]_\an\) is in some
  \(\{\cdot,\cdot)_\zz\) and the other is in the associated
  \(\{\cdot,\cdot]_\zz\). That is to say,
  \(D_\dbl(\sigma,\tau_i)=\max\{W_\dbl(\sigma),W_\dbl(\tau_i)\}\approx
  n/4\) or \(\infty\) (for \(i=1,2\). See Table \ref{tab_w12}).

  However, both pairs have a \(D_\dar\)
  distance of \(1\) (recall that all \(D_\dar\) distances from a
  diagonal to an adjacent region are of the form
  \(\mathrm{LH}^{\mathrm{diff}}+\mathrm{RH}^{\mathrm{diff}}\)).
\end{proof}

We are now prepared to state this stability result.

\begin{prop}[Unmodified Left-Hand Stability]\label{prop_stab2}
  Over pure zigzag orientation, so long as \(D_\dbl<\infty\),
  \[
    D_\dbl\leq n/4\cdot D_\dar
  \]
  where \(n/4\) is the minimal Lipschitz constant satisfying the above
  inequality \emph{across all pairs of indecomposables}.
\end{prop}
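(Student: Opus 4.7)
The lower bound of $n/4$ on the Lipschitz constant is already established by Lemma \ref{lemma_x}: the pair $(\sigma,\tau_1)=([1,n]_\an,[2,n]_\an)$ (or $\tau_2=[1,n-1]_\an$) sits in an orientation where $D_\dar(\sigma,\tau_i)=1$ while $D_\dbl(\sigma,\tau_i)=\max\{W_\dbl(\sigma),W_\dbl(\tau_i)\}$ achieves $n/4$, so any Lipschitz constant must be at least $n/4$. My strategy for the sufficiency direction is to invoke Proposition \ref{prop_bn_compare}, which reduces $D_\dbl\leq (n/4)\cdot D_\dar$ to verifying the two pointwise inequalities $W_\dbl\leq (n/4)\cdot W_\dar$ (on indecomposables with $W_\dbl<\infty$) and $\bar{d}_\dbl\leq (n/4)\cdot \bar{d}_\dar$ (on pairs with $\bar{d}_\dbl<\infty$).

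The $W$-comparison is essentially free from Table \ref{tab_w1}: whenever $W_\dbl$ is finite it equals either $(j-i)/4$ or $(j-i)/2$, and since any $\zz$-interval arising from $\mathcal{Z}$ satisfies $j-i\leq n/2$ (the $\an$-dimension is bounded by $n$ and halves under $\mathcal{Z}$), one gets $W_\dbl\leq n/4$; meanwhile $W_\dar\geq 1$ because AR-edges carry weight at least $1$ and Definition \ref{def_dar} adds an offset of $1$. For the $\bar d$-comparison I split by the $\zz$-type of $(\sigma,\tau)$. If they share a type, Table \ref{tab_partitions} places them in a common union from the compass-region dichotomy of Proposition \ref{prop_formula}, so $d_\dar=|x_1-x_2|+|y_1-y_2|$. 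The key observation is that within a single $\zz$-type the $\an$-endpoints shift by $2$ for each unit shift in $\zz$ (by the four formulas of Definition \ref{defn_convert}), giving
\[
d_\dbl=\max\{|i_1-i_2|,|j_1-j_2|\}=\tfrac{1}{2}\max\{|x_1-x_2|,|y_1-y_2|\}\leq \tfrac{1}{2}d_\dar.
\]
If the types differ, Proposition \ref{prop_features_bl} gives $d_\dbl=\max\{W_\dbl(\sigma),W_\dbl(\tau)\}\leq n/4$, and $\bar{d}_\dar\geq 1$ on any pair of distinct indecomposables of $\mathbb{A}_n$, closing the case.

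The main obstacle will be the same-$\zz$-type case where the unmatching option dominates in $D_\dar$, i.e.\ where $\bar{d}_\dar=\max\{W_\dar(\sigma),W_\dar(\tau)\}$ while $\bar{d}_\dbl$ is still forced to equal $d_\dbl$ (this happens most acutely when both modules are $[\cdot,\cdot]_\zz$ and hence $\max W_\dbl=\infty$). Here $d_\dbl$ can grow of order $n/2$, while $W_\dar$ could be as small as the common dimension of the two modules. Reconciling these via the bare bound $d_\dbl\leq d_\dar/2$ is not enough, since $d_\dar$ itself may be much larger than $\max W_\dar$. To close this gap I would use the AR-quiver picture (Algorithm \ref{alg_main}) to show that whenever the unmatching option becomes strictly cheaper in $D_\dar$ the two modules must already sit in the relevant $\bar{\mathcal{E}}$ or $\bar{\mathcal{W}}$ region where Corollary \ref{cor_escape_east_west} gives $W_\dar=\mathrm{dim}$, and then use Definition \ref{defn_convert} to bound $d_\dbl$ by a constant times this common dimension. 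Carrying this bookkeeping out carefully, orientation by orientation (uu, ud, du, dd), will produce the uniform Lipschitz constant $n/4$; this case-by-case tabulation is the step most likely to consume the proof, and mirrors the kind of piecewise argument that will be refined in the later modifications $D_\dar^{r}$ and $D_\dar^{r,\infty}$.
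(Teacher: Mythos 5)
Your overall architecture (necessity from Lemma \ref{lemma_x}, sufficiency by reducing through Proposition \ref{prop_bn_compare} to the pointwise comparisons of \(W\)'s and minimal generators \(\bar{d}\)) is the same in spirit as the paper's, which disposes of sufficiency by reading off Tables \ref{tab_w1} and \ref{tab_d1} and singling out the opposite-parity column (the pair \([1,n-1]_\an\), \([2,n]_\an\), needing only \(n/8\)). Your easy cases are fine: \(W_\dbl\leq n/4\) whenever finite while \(W_\dar\geq 1\), and for any pair with \(\max\{W_\dbl\}<\infty\) one has \(\bar{d}_\dbl\leq\max\{W_\dbl\}\leq n/4\leq (n/4)\bar{d}_\dar\). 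You are also right that the only place anything can go wrong is the case you flag: both modules of type \([\cdot,\cdot]_\zz\), where \(\max\{W_\dbl\}=\infty\) forces \(\bar{d}_\dbl=d_\dbl\) while \(\bar{d}_\dar\) may collapse to \(\max\{W_\dar(\sigma),W_\dar(\tau)\}\) via the empty matching.

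The gap is that your proposed way of closing that case does not work. Knowing that \([\cdot,\cdot]_\zz\) modules lie in the west region, where Corollary \ref{cor_escape_east_west} gives \(W_\dar=\mathrm{dim}\), cannot yield a bound of the form \(\bar{d}_\dbl\leq C\cdot\mathrm{dim}\): for two intervals of the same \(\zz\)-type, \(d_\dbl=\max\{|i_1-i_2|,|j_1-j_2|\}\) measures how far apart the intervals \emph{sit}, not how large they are. Concretely, in \(\mathrm{uu}\) orientation take the two sink simples \(\sigma=[1]_\an\) and \(\tau=[n]_\an\): both are closed-closed, \(W_\dar(\sigma)=W_\dar(\tau)=1\), so the empty matching gives \(\bar{d}_\dar(\sigma,\tau)=1\), yet \(d_\dbl(\sigma,\tau)=(n-1)/2\) by Proposition \ref{prop_features_bl}, which is not \(\leq(n/4)\cdot 1\) and is certainly not controlled by the common dimension. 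So the "bookkeeping by orientation" you defer to cannot recover the constant \(n/4\) along this route; this configuration must either be excluded or handled by a genuinely different mechanism. Note that the paper's own proof compares the tabulated generators \(d,W\) rather than the minimal generators \(\bar{d}\) and does not discuss this empty-matching configuration either, which is all the more reason that your "main obstacle" needs an actual argument rather than the sketch given; as written, the sufficiency direction is not established.
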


\begin{proof}
  Necessity is given by Lemma \ref{lemma_x}. Sufficiency follows
  below.

  Sufficiency follows from Tables \ref{tab_w1} and \ref{tab_d1}, with
  special concern being given to the final column of Table
  \ref{tab_d1}. The most extreme comparison from this column (suppose
  \(\mathrm{uu}\) orientation for ease of notation) are the
  pair of modules \(\sigma_\an=[1,n-1]_\an\) and
  \(\tau_\an=[2,n]_\an\), which correspond to
  \(\sigma_\zz=[i,i+(n-1)/2)_\zz\) and \(\tau_\zz=(i,i+(n-1)/2]_\zz\)
  for some \(i\in\mathbb{Z}\). But though \(n\)-dependent, these only
  require a Lipschitz constant of \(n/8\), and thus \(n/4\) remains
  permissible.
\end{proof}

\begin{table}
  \centering
  {\tabulinesep=1.5mm \begin{tabu}{c|c|c|c|c}
      & \(\begin{array}{l}\sigma,\tau\text{ are of}\\
            \text{same }\zz\text{ type}\\\end{array}\) &
          \(\begin{array}{c}
              \sigma\in\langle\cdot,\cdot\}\\
              \tau\in\,\,\rangle\cdot,\cdot\}
            \end{array}\)
            & \(\begin{array}{c}
                  \sigma\in\{\cdot,\cdot\rangle\\
                  \tau\in\{\cdot,\cdot\langle
                \end{array}\)
                & \(\begin{array}{c}
                      \sigma\in\{\cdot,\cdot\rangle\\
                      \tau\in\,\,\}\cdot,\cdot\langle
                    \end{array}\)
                    \\
        \hline
        \(d_\dbl\) & \(\max\left\{\begin{array}{l}
                                    \mathrm{LH}^{\mathrm{diff}},\\
                                    \mathrm{RH}^{\mathrm{diff}}
                                  \end{array}\right\}\)
                                & \(\max\left\{\begin{array}{l}
                                    W_\dbl(\sigma),\\
                                    W_\dbl(\tau)
                                  \end{array}\right\}\)
                                & \(\max\left\{\begin{array}{l}
                                    W_\dbl(\sigma),\\
                                    W_\dbl(\tau)
                                               \end{array}\right\}\)
                                & \(\max\left\{\begin{array}{l}
                                    W_\dbl(\sigma),\\
                                    W_\dbl(\tau)
                                               \end{array}\right\}\)\\
        \hline
        \multicolumn{2}{c}{} \\[.5em]
        %\hline
        & \(\begin{array}{l}
            \sigma,\tau\in\bar{\mathcal{C}}\text{ where}\\
            \mathcal{C}\in\{\mathcal{E},\mathcal{W},\mathcal{S},\mathcal{N}\}
            \end{array}\)
        &
        \(\begin{array}{c}(\sigma,\tau)\in\text{ one of}\\
            \mathcal{N}\cup\mathcal{D}_{ne}\times\mathcal{W}\cup\mathcal{D}_{sw},\\
            \mathcal{E}\cup\mathcal{D}_{ne}\times\mathcal{S}\cup\mathcal{D}_{sw}\phantom{,}\end{array}\)
        &
        \(\begin{array}{c}(\sigma,\tau)\in\text{ one of}\\
            \mathcal{N}\cup\mathcal{D}_{nw}\times\mathcal{E}\cup\mathcal{D}_{se},\\
            \mathcal{W}\cup\mathcal{D}_{nw}\times\mathcal{S}\cup\mathcal{D}_{se}\phantom{,}\end{array}\)
        &
        \(\begin{array}{c}(\sigma,\tau)\in\text{ one of}\\
            \mathcal{N}\times\mathcal{S},\\
            \mathcal{W}\times\mathcal{E}\phantom{,}\end{array}\)  
        \\
        \hline
        \(d_\dar\) &
        \(\mathrm{LH}^{\mathrm{diff}}
        +\mathrm{RH}^{\mathrm{diff}}\)
        & \(\mathrm{LH}^{\mathrm{comp}}+\mathrm{RH}^{\mathrm{diff}}\)
        &
        \(\mathrm{LH}^{\mathrm{diff}}+\mathrm{RH}^{\mathrm{comp}}\)
        &
        \(\mathrm{LH}^{\mathrm{comp}}+\mathrm{RH}^{\mathrm{comp}}\)
        \\
        \hline
        \(d_\dar^r\) &
        \(r\cdot(\mathrm{LH}^{\mathrm{diff}}+\mathrm{RH}^{\mathrm{diff}})\)
        &
        \(r\cdot(\mathrm{LH}^{\mathrm{comp}}+\mathrm{RH}^{\mathrm{diff}})\)
        &
        \(r\cdot(\mathrm{LH}^{\mathrm{diff}}+\mathrm{RH}^{\mathrm{comp}})\)
        &
        \(r\cdot(\mathrm{LH}^{\mathrm{comp}}+\mathrm{RH}^{\mathrm{comp}})\)
        \\
        \hline
        \(d_\dar^{r,\infty}\)
        &
        \(r\cdot(\mathrm{LH}^{\mathrm{diff}}+\mathrm{RH}^{\mathrm{diff}})\)
        & \(\max\left\{\begin{array}{l}
                                    W_\dar^r(\sigma)\\
                                    W_\dar^r(\tau)
                       \end{array}\right\}\)
        & \(\max\left\{\begin{array}{l}
                                    W_\dar^r(\sigma)\\
                                    W_\dar^r(\tau)
                       \end{array}\right\}\)
        & \(\max\left\{\begin{array}{l}
                                    W_\dar^r(\sigma)\\
                                    W_\dar^r(\tau)
                                  \end{array}\right\}\) \\
  \end{tabu}}
\caption{Table of \(d\)-values over any
  poset of pure zigzag orientation, partitioned by \(\zz\) interval
  type. For sources of individual formulas see: row 1, Prop
  \ref{prop_features_bl}; row 2, Prop \ref{prop_formula} and Notation
  \ref{not_comp_diff}; row 3, Remark \ref{rmk_convert_2} and Example
  \ref{ex_refine}; row 4, Proposition \ref{prop_d_inf}.}
\label{tab_d1}
\end{table}

\subsection{Stability with \(r\)-zigzag}\label{sec_stab_r}

It seems to the authors that the AR distance's tendency to have
\emph{hulls} in pure zigzag orientations such that intervals with small
supports have \(W\)-values at or near the entire diameter of
\(D_\dar\) is undesirable under quite a few perspectives (namely,
for finding Lipschitsz bounds with other more ``well-behaved''
distances). See Example \ref{ex_worse} and its subsequent discussion
Remark \ref{rmk_alt_zigzag} for motivation, from
which we have already seen in Proposition \ref{prop_stab1} that that
any relationship \(D_\dar\leq A\cdot D_\dbl\) requires a constant that
scales with \(n\).

\begin{definition}\label{defn_r_zz}
  Let \(P=\mathbb{A}_n(z)\) be some pure zigzag orientation and
  \(r\in\mathbb{Z}_{\geq 2}\). Define
  \(P^r=\mathbb{A}_n(z,r)\) to be the following poset. Let \(P^r\) have
  sources and sinks collectively labeled \(1_r,2_r,\ldots,
  (n-1)_r,n_r\), alternating from source to sink in the same sequence
  as the vertices \(1,2,\ldots,n-1,n\) of \(P\). For each \(1\leq
  i\leq n-1\), add \(r-1\) vertices between \(i_r\) and \((i+1)_r\)
  such that the segment \([i_r,(i+1)_r]\) is totally ordered.
  \begin{center}
    \includegraphics[scale=.5]{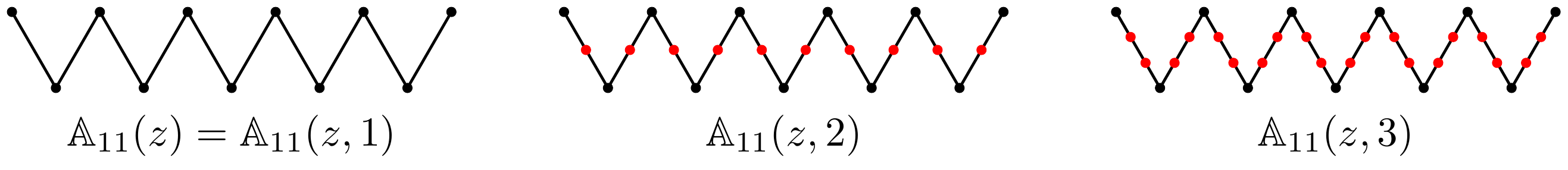}
  \end{center}
  Let \(R\) be the embedding from \(\Sigma_P\to\Sigma_{P^r}\) (the
  collections of isomorphism classes of indecomposable representations
  over each poset) given
  by \(R([x,y])=[x_r,y_r]\). (We note that \(R\) clearly depends on
  the original\(P\) and the choice of \(r\), but we will simply write
  \(R\) in all cases and leave the dependence on \(P,r\) clear by
  context.)

  Finally, define \(D_\dar^r\) on the set of indecomposable
  representations of \(P\) by \[
    D_\dar^r(\sigma,\tau)=D_\dar(R(\sigma),R(\tau)).
  \] where the right hand \(D_\dar\) is the AR distance over
  \(P^r\).
\end{definition}

% \begin{definition}\label{defn_n_zz}
%   % \(\mathbb{A}_n\) has \emph{double zigzag} orientation if \(n\) is
%   % odd and every odd-indexed vertex is a max (sink) or min (source),
%   % with every even-indexed vertex being neither.
%   \(R\)-zigzag can be viewed as a poset refinement of regular zigzag
%   orientation.
%   \(\mathbb{A}_n\) is defined as having \(R\)-\emph{zigzag}
%   orientation if
%   \begin{itemize}
%   \item the length of the poset is an integer multiple of \(R\), i.e.,
%     \(n-1=NR\) for some \(N\in\mathbb{Z}_{>\geq 1}\),
%   \item every vertex \(i\) for which \(i-1\) is
%     a multiple of \(R\) is either a sink or source,
%   \item all other vertices are  neither.
%   \end{itemize}
%   Figure \ref{fig_7_3} features the AR quiver for a \(2\)-zigzag
%   orientation of \(\mathbb{A}_{21}\).
% \end{definition}

The endpoint conversion from \(\mathbb{A}_n(z,r)\) intervals to \(\zz\)
intervals is similar to that of Definition \ref{defn_convert}, but has
the labeling disparities increased by a factor of \(R\).

\begin{remark}\label{rmk_convert_2}
  For some module \([x,y]\) over a pure zigzag orientation
  \(\mathbb{A}_n(z)\) and some \(r\in\mathbb{Z}_{>0}\), \[
    \mathrm{dim}([x_r,y_r])=r\cdot[\mathrm{dim}([x,y])-1]+1=r\cdot(y-x)+1.
  \]
\end{remark}

The following result is immediate from Definition \ref{defn_hull}.

\begin{remark}\label{rmk_rz_hull}
  Let \(P=\mathbb{A}_n(z)\) have pure zigzag orientation and \(P^r\) be
  its \(r\)-zigzag refinement. As
  \(\mathrm{hull}(\mathbb{A}_n^r(z))=\{[x_r,(x+1)_r]|1\leq x< n\}\),
  it follows that \(\{R([x,x+1])\}_{1\leq x< n}=\mathrm{hull}(P^r)\).
\end{remark}

\begin{corollary}[to Lemma \ref{lemma_hull_w}]\label{cor_rzz_w}
  For any indecomposable \(\sigma\) over pure zigzag orientation, if
  \(\sigma=[x,x+1]\) then \[
    W_\dar^r(\sigma)=\mathrm{dim}(\sigma)+2=r+3,
  \] and otherwise \[
    W_\dar^r(\sigma)=\mathrm{dim}(\sigma).
  \]
\end{corollary}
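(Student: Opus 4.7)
The plan is to reduce everything to a direct application of the machinery already established for \(D_\dar\) on \(P^r\): by Definition \ref{defn_r_zz}, \(W_\dar^r(\sigma)=W_\dar(R(\sigma))\), so I only need to compute the AR distance to zero of \(R(\sigma)=[x_r,y_r]\) in the AR quiver of the refined poset \(P^r\). The two cases of the corollary will then correspond, via Remark \ref{rmk_rz_hull}, to whether or not \(R(\sigma)\) lies in \(\mathrm{hull}(P^r)\).

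First, for \(\sigma\) not of the form \([x,x+1]\): by Remark \ref{rmk_rz_hull} the module \(R(\sigma)\) is outside \(\mathrm{hull}(P^r)\), so it sits in either \(\bar{\mathcal{E}}\cup\bar{\mathcal{W}}\) or \((\mathcal{N}\cup\mathcal{S})\setminus\mathrm{hull}(P^r)\), and Corollary \ref{cor_escape_east_west} or Lemma \ref{lemma_non_hull} respectively pins \(W_\dar(R(\sigma))=\mathrm{dim}(R(\sigma))\), which by Remark \ref{rmk_convert_2} equals \(r(y-x)+1\); this is the value the corollary calls \(\mathrm{dim}(\sigma)\) in the refined setting.

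For \(\sigma=[x,x+1]\), I assume without loss of generality that \(x\) is a source and \(x+1\) is a sink of \(P\) (the \(H_S\) case is symmetric), so \(R(\sigma)=[x_r,(x+1)_r]\in H_N\) in \(P^r\). Because the monotone chains of length \(r\geq 2\) that \(P^r\) inserts between consecutive sources and sinks of \(P\) prevent any strictly larger source-to-sink interval from satisfying the hull condition, \(R(\sigma)\) is already the maximal hull interval containing itself. In the notation of Lemma \ref{lemma_hull_w} this gives \(x_\bullet=x_r\), \(y_\bullet=(x+1)_r\), \(e=x_r-1\), \(E=(x+1)_r+1\), and a direct check against Notation \ref{not_ewsn} shows that the intermediate-chain vertices \(e\) and \(E\) both yield simples \([e],[E]\) that sit in the same region \(\mathcal{N}_{P^r}\) as \(R(\sigma)\). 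Proposition \ref{prop_formula} in its ``same-region'' form then evaluates \(\delta_\dar(R(\sigma),[e])=|x_r-(x_r-1)|+|(x+1)_r-(x_r-1)|=1+(r+1)=r+2\), and symmetry gives the same value for \([E]\), so \(W_\dar^r(\sigma)=(r+2)+1=r+3\).

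The main place for care is the region-assignment step: once we know that \([e]\) and \([E]\) share a compass region with \(R(\sigma)\), Proposition \ref{prop_formula} collapses to the clean sum of endpoint differences and the computation is immediate. The subtlety is that in \(P^r\), unlike in the original pure zigzag \(P\), the flanking vertex \(e=x_r-1\) (respectively \(E=(x+1)_r+1\)) is not a sink (respectively source) but rather a generic interior point of a source-to-sink chain of \(P^r\), so one must be careful to read Lemma \ref{lemma_hull_w} along its ``otherwise'' low-\(N\)/high-\(N\) branch rather than along the ``leftmost sink''/``rightmost source'' shortcut; that is precisely the branch that produces the \(r+2\) used above.
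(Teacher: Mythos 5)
Your proof is correct and takes essentially the route the paper intends for this corollary: reduce via \(W_\dar^r(\sigma)=W_\dar(R(\sigma))\), use Remark \ref{rmk_rz_hull} to identify \(\mathrm{hull}(P^r)\) with the images \(R([x,x+1])\), apply Lemma \ref{lemma_hull_w} (correctly through its low-N/high-N branch, since in \(P^r\) the flanking vertices \(e,E\) are interior chain vertices, giving \(\delta_\dar=r+2\) and hence \(r+3\)), and fall back on Corollary \ref{cor_escape_east_west}, Lemma \ref{lemma_non_hull}, and Proposition \ref{prop_diam} for the non-hull case. The only caveat, inherited from the paper's own loose statement of Remark \ref{rmk_rz_hull}, is the boundary intervals \([1,2]\) and \([n-1,n]\), whose images touch the ends of \(P^r\) and so lie outside the hull with \(W_\dar^r=\mathrm{dim}\) rather than \(\mathrm{dim}+2\).
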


\begin{example}\label{ex_refine}
  The following is a visualization of the module embedding \(R\) from
  \(P=\mathbb{A}_6^{\mathrm{du}}\) to its \(3\)-zigzag refinement
  \(P^3\).
  \begin{center}
    \includegraphics[scale=.75]{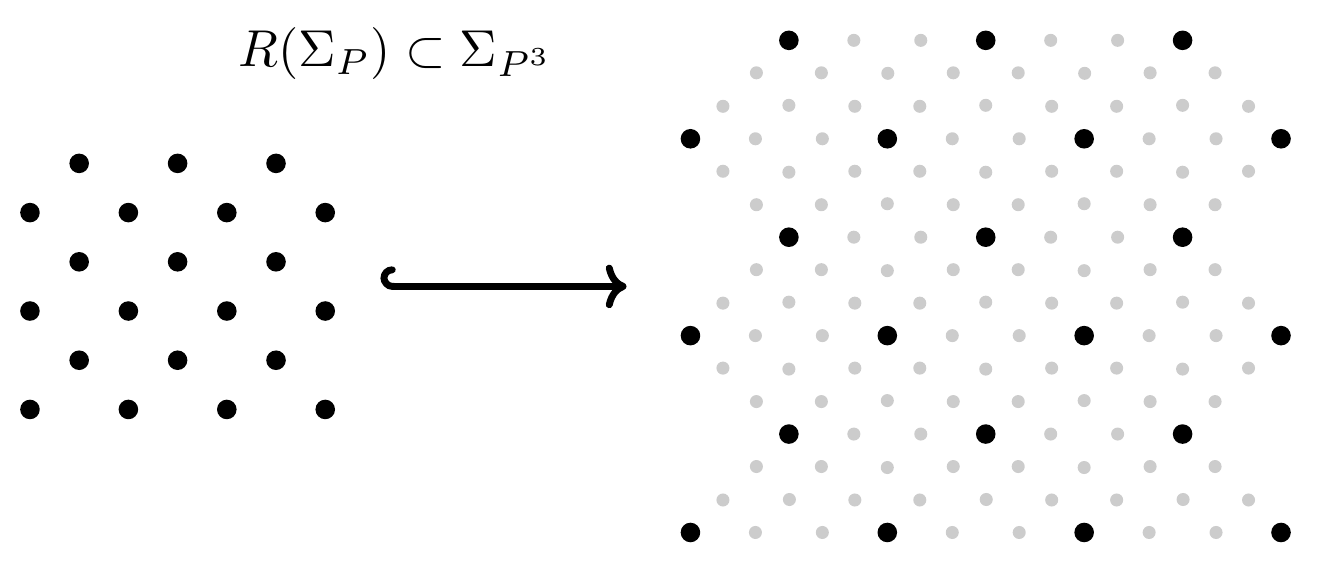}
  \end{center}
  Though unlabeled for clarity, interval modules maintain the same
  relative position across the two AR quivers under
  \(R\). Shape, location, and relative distance between
  indecomposables are essentially unchanged. However, along the north
  and south boundaries, it is immediate that the gray dots in this
  area are simples, removing the presence of pure zigzag's large
  hulls.
\end{example}

\begin{prop}[\(r\)-zigzag Right-Hand Stability]\label{prop_stabr1}
  Over an \(r\)-zigzag orientation \(P=\mathbb{A}_n(z,r)\) with
  \(r\geq 2\), \[
    D_\dar\leq 8r\cdot D_\dbl.
  \]
\end{prop}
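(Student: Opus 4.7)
The plan is to reduce the statement to comparisons of minimal generators and then read off the constants from the tables, with the extra factor of $r$ coming from the refinement $R$ of Definition \ref{defn_r_zz} and the constant $8$ coming from the conversion of Definition \ref{defn_convert}.

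First I would invoke Proposition \ref{prop_bn_compare} to reduce the statement $D_\dar \leq 8r\cdot D_\dbl$ to the pair of inequalities $\bar d_\dar \leq 8r\cdot \bar d_\dbl$ and $W_\dar \leq 8r\cdot W_\dbl$ on indecomposables. For the $W$-bound, I would work case-by-case through the four rows of Table \ref{tab_w11} (equivalently, the four $\zz$-types under $\mathcal Z$), using Corollary \ref{cor_rzz_w} to compute $W_\dar^r(\sigma)=r(y-x)+c$ with $c\in\{1,3\}$ and the formulas $W_\dbl$ from Proposition \ref{prop_features_bl}. The conversion of Definition \ref{defn_convert} gives $y-x\leq 2(j-i)$ in each case, so the worst ratio occurs for $(\cdot,\cdot)_\zz$, where $W_\dbl = (j-i)/4$ and $W_\dar^r \leq 2r(j-i)+1$; bounding $+1$ by a similar term under the standing assumption that the modules have strictly positive $\zz$-support yields $W_\dar^r \leq 8r\cdot W_\dbl$. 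The other three $\zz$-types give smaller constants (with $[\cdot,\cdot]_\zz$ being vacuous since $W_\dbl=\infty$), so $8r$ suffices uniformly.

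For the $d$-bound I would again split by whether $\sigma$ and $\tau$ share the same $\zz$-type or not, reading off $d_\dar^r$ from Table \ref{tab_d1} and $d_\dbl$ from Proposition \ref{prop_features_bl}. When the $\zz$-types agree, we have $d_\dbl=\max\{\mathrm{LH}^{\mathrm{diff}},\mathrm{RH}^{\mathrm{diff}}\}$ measured in $\zz$-coordinates, while $d_\dar^r = r(\mathrm{LH}^{\mathrm{diff}}+\mathrm{RH}^{\mathrm{diff}})$ measured in $\an$-coordinates; Definition \ref{defn_convert} introduces at most a factor of $2$ in each coordinate, and the sum is at most twice the max, giving $d_\dar^r \leq 4r\cdot d_\dbl \leq 8r\cdot d_\dbl$. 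When the $\zz$-types differ, $d_\dbl$ collapses to $\max\{W_\dbl(\sigma),W_\dbl(\tau)\}$, and the corresponding entry of $d_\dar^r$ involves $\mathrm{LH}^{\mathrm{comp}}$ and/or $\mathrm{RH}^{\mathrm{comp}}$ rather than differences; these complement terms are exactly what $W_\dar^r$ minimizes over, so $d_\dar^r(\sigma,\tau) \leq W_\dar^r(\sigma)+W_\dar^r(\tau) \leq 2\cdot\max\{W_\dar^r(\sigma),W_\dar^r(\tau)\}$, which by the $W$-bound already established is at most $16r\cdot\max\{W_\dbl(\sigma),W_\dbl(\tau)\} = 16r\cdot d_\dbl$; a slightly sharper estimate avoiding the doubling (or a direct bound of each compass-difference term against the larger of the two $W_\dbl$'s) brings this down to $8r$.

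The main obstacle is the differing-$\zz$-type case: the four entries $\mathcal{D}_{nw},\mathcal{D}_{ne},\mathcal{D}_{sw},\mathcal{D}_{se}$ of Remark \ref{rmk_w_table} mean that each pairing of $\zz$-types corresponds to a specific pair of compass regions, and one must verify that the $\mathrm{comp}$-based $d_\dar^r$ entry is controlled by the $\max\{W_\dbl(\sigma),W_\dbl(\tau)\}$ for \emph{that} particular pair of types with the correct constant. The bookkeeping here, replicated across each row of Table \ref{tab_partitions} for the four zigzag orientation flavors (\textrm{uu}, \textrm{ud}, \textrm{du}, \textrm{dd}), is what actually pins the constant at $8r$ rather than something larger; once this is done, combining with the $W$-bound and invoking Proposition \ref{prop_bn_compare} closes the argument.
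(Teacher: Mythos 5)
Your reduction to minimal generators and your treatment of the $W$-comparison and the same-$\zz$-type $d$-comparison are essentially sound (modulo the loose handling of the $+1$, which closes once you use the exact conversion $y-x=2(j-i)-2$ for open-type intervals). The genuine gap is in the differing-$\zz$-type case, which is precisely the case the paper's proof is devoted to. Your key inequality $d_\dar^r(\sigma,\tau)\leq W_\dar^r(\sigma)+W_\dar^r(\tau)$ is false: after the $r$-refinement the hull is essentially gone, so $W_\dar^r$ is \emph{dimension}-based ($r(y-x)+1$ or $r(y-x)+3$ by Corollary \ref{cor_rzz_w}), not complement-based, whereas $d_\dar^r$ in columns two through four of Table \ref{tab_d1} is built from $\mathrm{LH}^{\mathrm{comp}}$ and $\mathrm{RH}^{\mathrm{comp}}$. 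Concretely, take $\sigma=[x,x+3]_\an$ and $\tau=[x+1,x+2]_\an$ centrally placed in a long pure zigzag, so that the endpoint parities disagree on both sides: then $d_\dar^r\approx 2rn$ while $W_\dar^r(\sigma)+W_\dar^r(\tau)=4r+4$ and $d_\dbl=\max\{W_\dbl(\sigma),W_\dbl(\tau)\}$ is of order $1$. This also shows that \emph{no} constant bounds the raw $d_\dar^r$ by $d_\dbl$ in these columns, so the "$\leq 16r\,d_\dbl$, then sharpen to $8r$" chain cannot be repaired at the level of $d$'s.

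The fix is available inside the framework you set up but is not the step you wrote: for singletons the bottleneck caps the minimal generator, $\bar d_\dar^{\,r}(\sigma,\tau)=\min\{d_\dar^r,\max\{W_\dar^r(\sigma),W_\dar^r(\tau)\}\}\leq\max\{W_\dar^r(\sigma),W_\dar^r(\tau)\}$, and since in the differing-type columns $\bar d_\dbl=d_\dbl=\max\{W_\dbl(\sigma),W_\dbl(\tau)\}$, the already-established bound $W_\dar^r\leq 8r\,W_\dbl$ gives $\bar d_\dar^{\,r}\leq 8r\,\bar d_\dbl$ directly, with no doubling and no orientation-by-orientation bookkeeping over Table \ref{tab_partitions}. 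This is in substance what the paper does: it observes that when supports shrink, $d_\dar^r$ ``reverts'' to $\max W_\dar^r$ in the bottleneck, and it checks the worst crossover case, supports near $\mathcal{Z}([n/6,5n/6])$, where $d_\dar^r\approx\max W_\dar^r\approx r\cdot(2n/3)$, confirming that $8r$ still suffices there. So your overall architecture matches the paper, but as written the crucial differing-parity step rests on a false claim and must be replaced by the bottleneck cap argument.
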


Compare with Proposition \ref{prop_stab1} in which the large hull of
unmodified \(W_\dar\) caused \(n\)-dependence in the inequality.

\begin{proof}
  Necessity comes from the first column of Table
  \ref{tab_w12}. Sufficiency of the remaining columns for
  \(W\)-values is easy to check.
  
  First column \(d\)-values in Table \ref{tab_d1} require only a
  constant of \(2r\). We only show the sufficiency of \(8r\) when
  comparing fourth column intervals from Table \ref{tab_d1}.
  
  Suppose then that \(\sigma,\tau\) are two indecomposables with
  opposite parity of both left and right
  endpoints. \(d_\dar^r(\sigma,\tau)\) becomes large (and \(d_\dbl\)
  becomes small) when \(\sigma,\tau \) have small supports and are
  positioned centrally within the poset. However, if the supports are
  too small \(d_\dar^r\) will revert to \(\max W_\dar^r\) values,
  which we already know are stable.

  The largest value of \(d_\dar^r(\sigma,\tau)\) such that
  \(d_\dar^r<\max W_\dar^r\)'s is with \(\sigma\) and \(\tau\) both having
  supports as close as possible to
  \(\mathcal{Z}([n/6,5n/6])=[n_r/6,5n_r/6]\), while still
  possessing opposite parity on left and right endpoints. In
  such a situation, \(d_\dar^r(\sigma_\an,\tau_\an)\approx
  W_\dar^r(\sigma_\an)\approx
  W_\dar^r(\tau_\an)\approx r\cdot(2n/3)\). But then, \(d_\dar^r\approx 2r\cdot
  d_\dbl\), and so \(8r\) remains permissible.
\end{proof}

Considering the opposite inequality, we encounter a repeat of the
partition misalignments.

\begin{lemma}[Partitioning Non-alignment for \(r\)-zigzag]\label{lemma_y}
  Let \(P=\mathbb{A}_n(z)\) be a poset of pure zigzag
  orientation and \(P^r\) be its \(r\)-zigzag extension. Then if
  \(D_\dbl<\infty\), \[D_\dbl\leq \dfrac{n}{4r}\cdot D_\dar^r\] where
  \(n/4r\) is a \emph{lower bound} for the Lipschitz constant in the
  inequality above.
\end{lemma}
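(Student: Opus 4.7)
The plan is to mirror the proof of Lemma \ref{lemma_x}, using essentially the same diagnostic pair of indecomposables but accounting for the refinement factor $r$. The key insight is that under the embedding $R:\Sigma_P\to\Sigma_{P^r}$, AR-distances between neighbouring indecomposables along a monotone axis of the AR quiver rescale by a factor of $r$, while $D_\dbl$ is defined on the $\zz$-category (via the fused $\an\leftrightarrow\zz$ anchoring) and is unaffected by the refinement.

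First, I would select the pair $\sigma_\an=[1,n]_\an$ and $\tau_{1,\an}=[2,n]_\an$, together with the symmetric pair $\sigma_\an$ and $\tau_{2,\an}=[1,n-1]_\an$, exactly as in Lemma \ref{lemma_x}. As in that argument, for every pure zigzag orientation of $P$, at least one of these two pairs straddles a ``left'' or ``right'' parity split in the $\zz$-partition of Table \ref{tab_partitions}, so
\[
  d_\dbl(\sigma_\an,\tau_{i,\an})=\max\{W_\dbl(\sigma_\an),W_\dbl(\tau_{i,\an})\},
\]
which is either of order $n/4$ or $\infty$. Under the standing hypothesis $D_\dbl<\infty$, we may pick (orientation-dependently) a pair realising $D_\dbl(\sigma_\an,\tau_{i,\an})\approx n/4$.

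Second, I would compute $D_\dar^r(\sigma_\an,\tau_{i,\an})=D_\dar(R(\sigma_\an),R(\tau_{i,\an}))$ in the AR quiver of $P^r$. The modules $R(\sigma_\an)$ and $R(\tau_{i,\an})$ differ only in a single endpoint, and the refined segment from $1_r$ to $2_r$ (respectively from $(n-1)_r$ to $n_r$) contains $r$ successive vertices of $P^r$, so the two endpoints lie exactly $r$ positions apart on the relevant monotone axis of the AR quiver of $P^r$. Proposition \ref{prop_formula} applied to $P^r$ then yields $\delta_\dar(R(\sigma_\an),R(\tau_{i,\an}))=r$, and since by Corollary \ref{cor_escape_east_west} these diagonal modules have $W_\dar^r$-values of order $rn\gg r$, matching is always preferred and $D_\dar^r(\sigma_\an,\tau_{i,\an})=r$.

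Combining the two estimates gives $D_\dbl(\sigma_\an,\tau_{i,\an})/D_\dar^r(\sigma_\an,\tau_{i,\an})\approx (n/4)/r=n/(4r)$, so any Lipschitz constant $C$ satisfying $D_\dbl\leq C\cdot D_\dar^r$ must obey $C\geq n/(4r)$. The only delicate step is the orientation-by-orientation verification that at least one of the two pairs keeps $D_\dbl$ finite; this is the same bookkeeping already carried out in Lemma \ref{lemma_x}, choosing between the left- and right-endpoint pair so as to avoid landing in the $[\cdot,\cdot]_\zz$ block where $W_\dbl=\infty$.
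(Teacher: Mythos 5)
Your proposal is essentially the paper's own proof, which reads in full that the argument of Lemma \ref{lemma_x} applies verbatim with the modules \(\sigma=[1,n]_\an\), \(\tau_1=[2,n]_\an\), \(\tau_2=[1,n-1]_\an\) viewed through the embedding \(R\); your computation that \(D_\dar^r(\sigma,\tau_i)=r\) (endpoints now \(r\) apart on the monotone axis of the AR quiver of \(P^r\)) while \(D_\dbl(\sigma,\tau_i)\approx n/4\) is exactly the intended content. The one wrinkle you gloss over\,\textemdash\,that in the \(\mathrm{uu}\) orientation \([1,n]_\an\) is itself of \([\cdot,\cdot]_\zz\) type, so \emph{both} listed pairs have \(D_\dbl=\infty\) and a nearby substitute pair such as \([1,n-1]_\an,[2,n-1]_\an\) must be used\,\textemdash\,is equally glossed over in the paper's Lemma \ref{lemma_x}, so it is not a gap relative to the paper's argument.
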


\begin{proof}
  The proof follows identically to that of Lemma \ref{lemma_x}, where
  the example modules \(\sigma,\tau_1,\tau_2\) are all viewed through
  the functor \(R_P\).
\end{proof}

\begin{prop}[\(r\)-zigzag Left-Hand Stability]\label{prop_stabr2}
  Over pure zigzag orientation, so long as \(D_\dbl<\infty\),
  \[
    D_\dbl\leq \dfrac{n}{4r}\cdot D_\dar^r
  \]
  where \(n/4r\) is the minimal Lipschitz constant satisfying the
  above inequality.
\end{prop}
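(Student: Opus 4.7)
The plan is to mimic the proof of Proposition \ref{prop_stab2} exactly, with the $r$-zigzag refinement $R$ inserted in the appropriate places. Necessity of the constant $n/(4r)$ is immediate from Lemma \ref{lemma_y}, which already exhibits a witness pair (the images under $R$ of $[1,n]_\an$ and its diagonal neighbors $[2,n]_\an$ and $[1,n-1]_\an$) realizing this lower bound. So the substance of the argument lies in verifying sufficiency.

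For sufficiency, I would translate the bottleneck comparison to a comparison of minimal generators via Proposition \ref{prop_bn_compare}: it suffices to show $W_\dbl \leq n/(4r)\cdot W_\dar^r$ and $\bar d_\dbl \leq n/(4r)\cdot \bar d_\dar^r$ on every indecomposable and every pair thereof. Both checks reduce to a case-by-case comparison of the entries of the rows labeled $W_\dbl, W_\dar^r$ in Table \ref{tab_w11} and of the rows labeled $d_\dbl, d_\dar^r$ in Table \ref{tab_d1}. The key observation that makes the constant drop by a factor of $r$ (relative to Proposition \ref{prop_stab2}) is that $R$ multiplies the raw $\mathbb{Z}$-endpoint separations on the $\mathbb{A}_n$ side by $r$ without changing the block-module image in $\uu$: indeed, $\tilde E\circ\mathcal Z$ and $\tilde E\circ\mathcal Z\circ R$ land on block modules of the same $\uu$-size, so $W_\dbl$ and $d_\dbl$ are unchanged under $R$, while every entry of Table \ref{tab_d1} in the $d_\dar^r$ row carries a factor of $r$ relative to its $d_\dar$ counterpart (Remark \ref{rmk_convert_2}). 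This is precisely the mechanism by which the Lipschitz constant is divided by $r$.

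The hard part will be identifying the pair that witnesses tightness and then checking that no other entry requires a larger constant. As in the unmodified case, the pairs in the first three columns of Table \ref{tab_d1} can never force a constant worse than the fourth column, since there the supports must differ only by shifts that are themselves bounded by the maximal $W$-values. The only delicate entry is again the one flagged in the proof of Proposition \ref{prop_stab2}: the pair $\sigma_\an=[1,n-1]_\an, \tau_\an=[2,n]_\an$, whose $\mathcal{Z}$-images have $\zz$-endpoints of opposite parity on both sides. Under $R$, this pair has $d_\dar^r(\sigma,\tau)=r\cdot d_\dar(\sigma_\an,\tau_\an)$ (just two steps), while $d_\dbl(\sigma_\zz,\tau_\zz) = \max\{W_\dbl(\sigma_\zz),W_\dbl(\tau_\zz)\} \approx (n-1)/4$. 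Thus the ratio $d_\dbl/d_\dar^r$ for this pair scales like $n/(8r)$, leaving the constant $n/(4r)$ comfortably permissible.

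Putting these observations together, one concludes by Proposition \ref{prop_bn_compare} that $D_\dbl\leq n/(4r)\cdot D_\dar^r$, and by Lemma \ref{lemma_y} the constant cannot be improved, establishing the proposition.
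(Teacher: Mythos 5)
Your proposal is correct and follows essentially the same route as the paper: necessity via Lemma \ref{lemma_y}, and sufficiency by mirroring Proposition \ref{prop_stab2} through the tables, with Remark \ref{rmk_convert_2} supplying the factor of \(r\) on the AR side while \(D_\dbl\) is computed on the unrefined modules, and the same extreme pair \([1,n-1]_\an,[2,n]_\an\) showing only \(n/(8r)\) is needed outside the misalignment cases. The only quibble is the phrasing that \(\tilde{E}\circ\mathcal{Z}\circ R\) gives blocks of the same \(\uu\)-size (literally it would scale them); the correct and intended point, which your argument actually uses, is simply that \(D_\dbl\) is evaluated on the original \(P\)-modules and so is untouched by \(R\).
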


\begin{proof}
  Necessity follows from Lemma \ref{lemma_y}.
  
  Sufficiency parallels the proof of Proposition
  \ref{prop_stab2} using Remark \ref{rmk_convert_2}. (In the event
  that it is of interest to the reader, outside of the misalignment
  cases handled by Lemma \ref{lemma_y}, the smaller weight \(n/8r\)
  suffices for all remaining cases. This is a
  further mirroring of the proof of Proposition \ref{prop_stab2}.)
\end{proof}

% \begin{proof}
%   Consider some \(R\)-zigzag orientation of
%   \(\mathbb{A}_n^{\mathrm{uu}}\) (\(\mathrm{uu}\) chosen only for ease
%   of notation) where \(n-1=NR\). Let
%   \(I_\an=[1,n-R]_\an^{\mathrm{ud}}\), and
%   \(J_\an=[1+R,n]_\an^{\mathrm{du}}\). These correspond to
%   \(I_\zz=[i,i+N)_\zz\) and \(J_\zz=(i,i+N]_\zz\) for some
%   \(i\in\mathbb{Z}\).

%   Then \(
%     d_\dar(I_\an,J_\an)=2R,
%   \) while \(
%     d_\dbl(I_\zz,J_\zz)=\max\{W_\dbl(I_\zz),W_\dbl(J_\zz)\}=N/2.
%   \)
% \end{proof}

By projecting from pure zigzag into an \(r\)-zigzag poset and removing
the hull, we have successfully eliminated the \(n\) dependence of one
side of our inequalities. The final modification at last removes the
other.

\subsection{Stability with Poset Limits}\label{subsub_limits}

The following is a further modification of \(D_\dar^r\) that
assumes the representation category of some original
\(P=\mathbb{A}_n(z)\) or \(P^r=\mathbb{A}_n(z,r)\) is embedded into a
poset of similar structure that is lengthened on either end.

There are two advantages to this modification. 1) This modification
obtains stability with \(D_\dbl\) in a way that does not depend on the
original length \(n\) of the poset. 2) This modifies \(D_\dar\) over
pure zigzag orientations (via first modifying to \(D_\dar^r\)) in such
a way that one may consider the modules over a zigzag poset of
\emph{unbounded length}, which may be of independent interest to
many.

\begin{definition}\label{defn_wedge}
  Let \(P=\mathbb{A}_n\) and \(P'=\mathbb{A}_m\) be two orientations
  of \(\mathbb{A}\)-type quivers of any lengths. Assign the labelling
  \(P=\{1\sim 2\sim\ldots\sim n\}\) and \(P'=\{1'\sim2'\sim\ldots\sim
  m'\}\). Then define \[
    P\wedge P'
  \] to be the poset obtained from joining the \(P\)-vertex \(n\)
  with the \(P'\)-vertex \(1'\), along with the original
  \(\leq,\leq'\) relationships and any added inequalities induced by
  the association of \(n\) with \(1'\).
\end{definition}

\begin{definition}
  Let \(P=\mathbb{A}_n(z)\) be some pure zigzag orientation. Let
  \(P^r=\mathbb{A}_n(z,r)\) be its \(r\)-zigzag refinement (Definition
  \ref{defn_r_zz}). For \(f\in\mathbb{Z}_{\geq 1}\), define the poset
  \(P^{r,f}=\mathbb{A}_n(z,r\pm f)\) as follows.

  First define the poset \(U_r=\{1_u\geq \ldots \geq (1+r)_u\leq \ldots \leq
  (1+2r)_u\}\) and \(D_r=\{1_d\leq \ldots \leq (1+r)_d\geq \ldots \geq
  (1+2r)_d\}\) (\(=\) the opposite poset of \(U_r\)). Define
  \(P^{r,1}\) to be
  \begin{itemize}
  \item \(U_r\wedge P^r\wedge U_r\) if
    \(P=\mathbb{A}_n^{\mathrm{uu}}\),
  \item \(U_r\wedge P^r\wedge D_r\) if
    \(P=\mathbb{A}_n^{\mathrm{ud}}\),
  \item \(D_r\wedge P^r\wedge U_r\) if
    \(P=\mathbb{A}_n^{\mathrm{du}}\),
  \item \(D_r\wedge P^r\wedge D_r\) if
    \(P=\mathbb{A}_n^{\mathrm{dd}}\).
  \end{itemize}
  Below is an example of \(P^{3,1}\) for
  \(P=\mathbb{A}_6^{\mathrm{du}}\).
  \begin{center}
    \includegraphics[scale=.5]{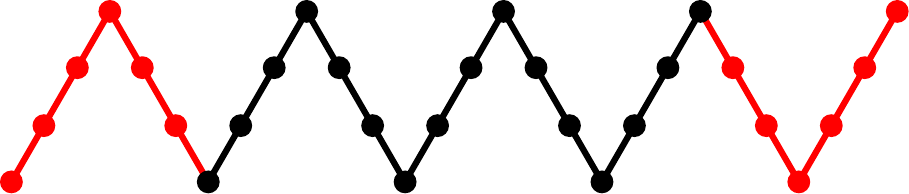}
  \end{center}
  Define \(P^{r,f}\) inductively (i.e., the number of wedges on both
  sides of appropriately chosen \(U_r\) or \(D_r\) is equal to
  \(f\)). In this way, the \(r\)-zigzag structure and sink/source
  orientation of the left and right endpoints remain unchanged from
  \(P^r\) to \(P^{r,f}\).

  Let \(F:\Sigma_{P^r}\to\Sigma_{P^{r,f}}\) be the functor
  \(F([x,y]_{P^r})=[x,y]_{P^{r,f}}\). That is, the supports of
  interval modules remain \emph{fixed} within \(P^r\) considered as a
  subposet of \(P^{r,f}\).
\end{definition}

\begin{definition}
  For \(\sigma,\tau\) over some pure-zigzag orientation
  \(P=\mathbb{A}_n(z)\), define \[
    D_\dar^{r,f}(\sigma,\tau)=D_\dar(F\circ R(\sigma),F\circ R(\tau)).
  \]

  Define \[
    D_\dar^{r,\infty}(\sigma,\tau)=\lim_{f\to\infty}D_\dar^{r,f}(\sigma,\tau).
  \]
\end{definition}

% \begin{prop}
%   The AR quiver partitions of \(P=\mathbb{A}_n(z)\) by corresponding
%   \(\langle\cdot,\cdot\rangle_\zz\) type induce partitions of the AR
%   quiver of \(P^r=\mathbb{A}_n(z,r)\) under \(R\), which then remain
%   as convex and connected subgraphs under \(F\) within the AR quiver
%   of \(P^{r,f}=\mathcal{A}_n(z,r\pm f)\).
% \end{prop}

Again, take note that in the following proposition the separation into
pieces of the AR quiver of \(P^r\) when embedded by
\(F_P\) align \emph{precisely} with the \(\langle\cdot,\cdot\rangle\)
partitioning of the AR quiver.

\begin{prop}[\(D_\dar^{r,\infty}\) Separates by \(\zz\)-type]\label{prop_d_inf}
  For \(P=\mathbb{A}_n(z)\),
  \(D_\dar^{r,\infty}\) separates modules by \(\zz\) region.
  That is, the image of the functor
  \(F:\Sigma_{P^r}\to\Sigma_{P^{r,f}}\) consists of the four
  connected components
  \begin{align*}
    &F(\{(\cdot,\cdot)_{\zz,P^r}\})\subset\{(\cdot,\cdot)_{\zz,P^{r,f}}\},\\
    &F(\{[\cdot,\cdot]_{\zz,P^r}\})\subset\{[\cdot,\cdot]_{\zz,P^{r,f}}\},\\
    &F(\{[\cdot,\cdot)_{\zz,P^r}\})\subset\{[\cdot,\cdot)_{\zz,P^{r,f}}\},\\
    &F(\{(\cdot,\cdot]_{\zz,P^r}\})\subset\{(\cdot,\cdot]_{\zz,P^{r,f}}\}.
  \end{align*}

  Moreover, \(D_\dar^{r,\infty}\) is the bottleneck distance given by:
  \begin{itemize}
  \item \(d_\dar^{r,\infty}(\sigma_\an,\tau_\an)=|x_1-x_2|+|y_1-y_2|\)
    if \(\sigma,\tau\) are in the same
    \(\langle\cdot,\cdot\rangle_\zz\) region, and
    \(d_\dar^{r,\infty}(\sigma_\an,\tau_\an)=\infty\) otherwise.
  \item \(W_\dar^{r,\infty}(\sigma)=y_1-x_1+3\) if
    \(\sigma=[x_1,x_1+r]\) where \(x\) is a sink
    or source vertex, and \(W_\dar^{r,\infty}(\sigma)=y_1-x_1+1\)
    otherwise. That is,
    \(W_\dar^{r,\infty}(\sigma)=W_\dar^r(\sigma)\).
  \end{itemize}
\end{prop}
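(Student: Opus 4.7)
The plan is to deduce all three parts of Proposition \ref{prop_d_inf} from Proposition \ref{prop_formula} by tracking how compass regions behave under the embedding $F \colon \Sigma_{P^r} \to \Sigma_{P^{r,f}}$ as $f$ grows. The key observation is that the compass regions $\mathcal{E}, \mathcal{W}, \mathcal{S}, \mathcal{N}$ in the AR quiver of $P^{r,f}$ are determined by the \emph{local} sink/source structure at each module's endpoints, whereas membership in the diagonal corner classes $\mathcal{D}_{nw}, \mathcal{D}_{ne}, \mathcal{D}_{sw}, \mathcal{D}_{se}$ additionally requires that $x$ or $y$ coincide with the extremal vertices of the ambient poset. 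For any $f \geq 1$, the vertices $1_{P^r}$ and $n_{P^r}$ are no longer extremal in $P^{r,f}$, so no $F\sigma$ lies on a diagonal of $P^{r,f}$.

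The first task is to verify that the compass region of $F\sigma$ in $P^{r,f}$ is precisely the one predicted by the sink/source parities of the endpoints of $\sigma$ in $P^r$. For $\sigma$ originally lying in $\mathcal{E}, \mathcal{W}, \mathcal{S}$, or $\mathcal{N}$ of $P^r$, these parities are unchanged in $P^{r,f}$. For $\sigma$ on one of the diagonals of $P^r$, the wedge construction in Definition \ref{defn_wedge} was arranged so that attaching $U_r$ (respectively $D_r$) on the side where $1_{P^r}$ or $n_{P^r}$ is a sink (respectively source) preserves that parity inside $P^{r,f}$. A finite case analysis across the four orientations $uu, ud, du, dd$, checked against Table \ref{tab_partitions}, then confirms that the resulting compass-region assignment is exactly $(\cdot,\cdot)_\zz \to \mathcal{E}$, $[\cdot,\cdot]_\zz \to \mathcal{W}$, $[\cdot,\cdot)_\zz \to \mathcal{S}$, and $(\cdot,\cdot]_\zz \to \mathcal{N}$, giving the stated separation into four components.

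With the compass assignments settled, the $d$-formula follows directly from Proposition \ref{prop_formula}. For $\sigma, \tau$ of the same $\zz$-type, $F\sigma$ and $F\tau$ land in the same compass region (and in particular in the same $x$- and $y$-axis grouping), so $\delta_\dar(F\sigma, F\tau) = |x_1 - x_2| + |y_1 - y_2|$, a value independent of $f$. For distinct $\zz$-types, at least one of the complement terms from Proposition \ref{prop_formula} is activated, contributing either $x_1 + x_2 - 2$ or an expression of the form $2N_f - y_1 - y_2$, where $N_f$ denotes the length of $P^{r,f}$. Since both $N_f$ and the $P^{r,f}$-labels of $x_i, y_i$ grow linearly with $f$, the distance diverges, yielding $\infty$ in the limit. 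An analogous argument handles the $W$-formula: any simple $[t]$ in a compass region distinct from $F\sigma$'s contributes a distance diverging with $f$, so the infimum defining $W_\dar^{r,\infty}(\sigma)$ is realized by simples in $F\sigma$'s own region. These simples, and their distances to $F\sigma$, are identical to those in the $P^r$-computation underlying Corollary \ref{cor_rzz_w}, so $W_\dar^{r,\infty}(\sigma) = W_\dar^r(\sigma)$.

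The main obstacle will be the case analysis in the second paragraph: confirming for each of the four orientations that the wedge rule aligns the now-interior former-diagonal modules with the predicted $\zz$-type. This is elementary but requires careful bookkeeping of which endpoints are sinks versus sources and of how the corner vertices $1_{P^r}, n_{P^r}$ fit into the compass classification once they are no longer extremal.
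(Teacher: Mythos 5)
Your proposal is correct and follows essentially the same route as the paper: the paper's proof also rests on how the wedges alter the AR-quiver axes (checked orientation-by-orientation against Table \ref{tab_partitions}), with the former diagonal modules absorbed into the interior compass regions and cross-region distances pushed to infinity as \(f\) grows. You simply phrase the paper's pictorial axis-splitting argument through Proposition \ref{prop_formula} (difference terms are \(f\)-independent, complement terms diverge), which is the same mechanism stated algebraically rather than via the figures.
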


\begin{proof}
  \begin{figure*}[h!]
    \centering
    \includegraphics[scale=.5]{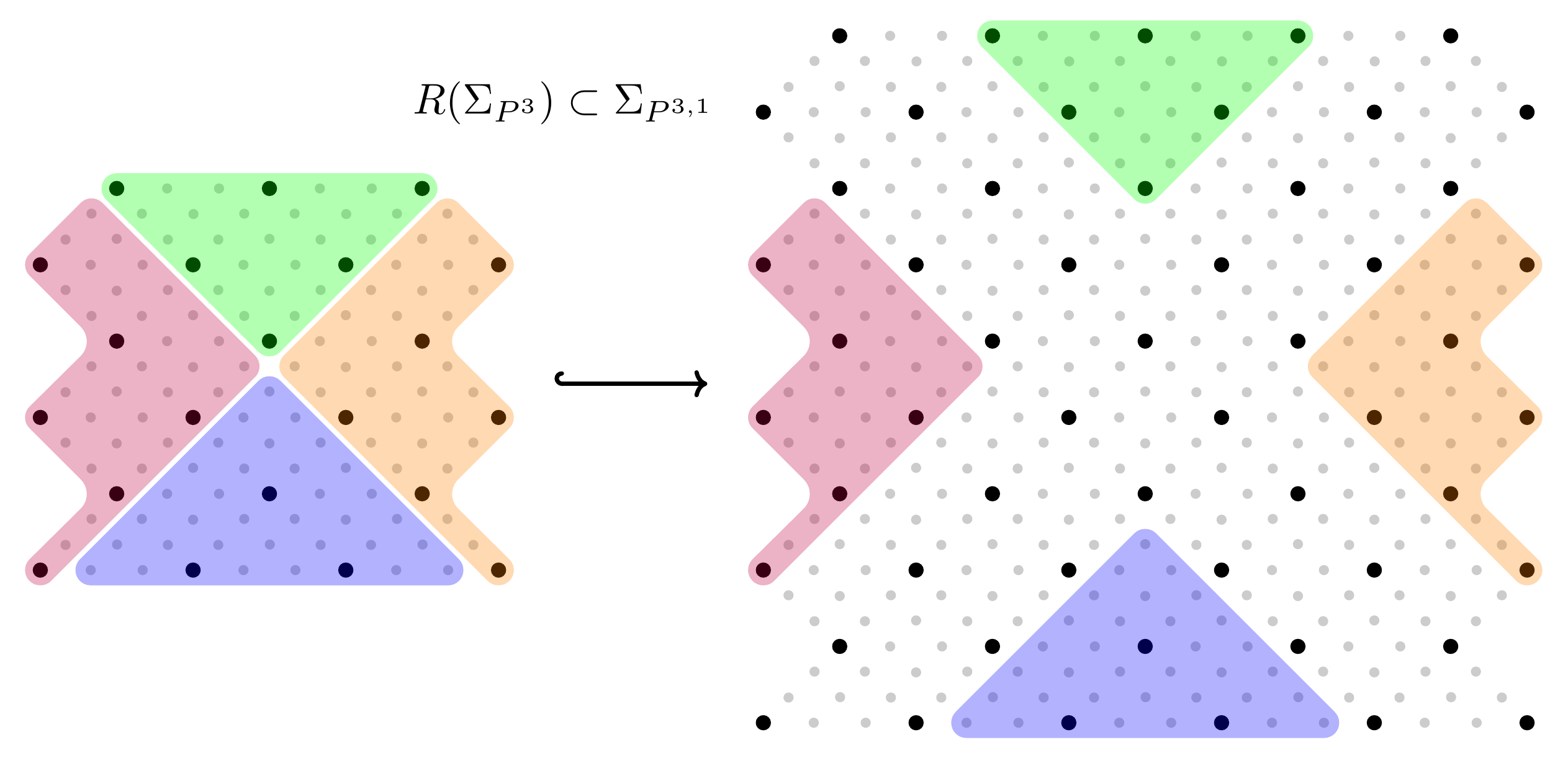}
    \caption{Again, the thicker dots represent
      indecomposables from the AR quiver of
      \(P=\mathbb{A}_6^{\mathrm{du}}\) under the 3-zigzag embedding
      functor \(R\). Depicted here is the embedding \(F\) of modules
      of the AR quiver of \(P^{3}\) into that of the extension by
      \(D_3\) on the left and \(U_3\) on the right.}
    \label{fig_emb_2}
  \end{figure*}
  As we have seen, from \(P=\mathbb{A}_n(z)\) to
  \(P^r=\mathbb{A}_n(z,r)\), the AR
  quiver becomes refined by a factor of \(r\) along both axes while
  the relative positions of the embedded modules from \(P\) remain the
  same (Example \ref{ex_refine}). This separation and the fact that
  \(W_\dar^{r\infty}\) remains completely unchanged from \(W_\dar^r\)
  can be checked individually from the four possible orientations of
  \(P^r\) in Figures \ref{fig_epx} and \ref{fig_epy}. % Figure

  In all four images, when wedging
  with \(U_r\) or \(D_r\), the new axis contains the \(A\)'s in
  sequence, the \(B\)'s in sequence, but separates the two sub-axes by
  the \(C\)'s. Wedges on the \emph{left} side of the
  poset are added to the \emph{middle of the }\(x\)\emph{-axis} and to the
  \emph{ends of the }\(y\)\emph{-axis}. Wedges on the \emph{right}
  side of the poset are added to the \emph{ends of the
  }\(x\)\emph{-axis} and to the \emph{middle of the }\(y\)\emph{-axis}.

  Compare these case by case with the partitions in Table
  \ref{tab_partitions} in Remark \ref{rmk_w_table}.
  \begin{figure*}[h!]
    \centering
    \begin{subfigure}[t]{0.45\textwidth}
      \centering
      \includegraphics[scale=1]{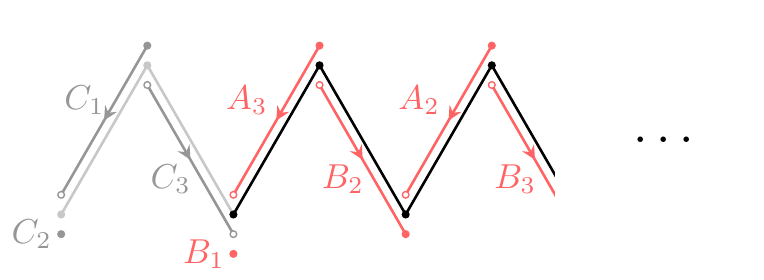}
      \caption{When \(P\) is of \(d*\) orientation, the original
        \(x=1\) (contained in \(B_1\) in the image) is grouped with
        the other \(B_i\)'s, which are all \emph{open} left endpoints.}
    \end{subfigure}%
    \hspace{.5cm}
    \begin{subfigure}[t]{0.45\textwidth}
      \centering
      \includegraphics[scale=1]{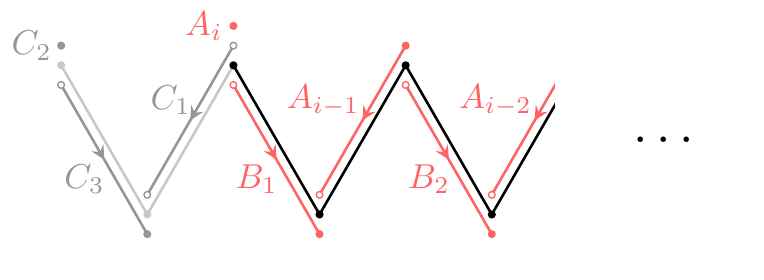}
      \caption{For \(u*\) orientations, the original \(x=1\) is
        grouped with the \emph{closed} endpoints when the original
        axis becomes split by the wedges.}
    \end{subfigure}
    \caption{}
    \label{fig_epx}
  \end{figure*}
  
  \begin{figure*}[h!]
    \centering
    \begin{subfigure}[t]{0.45\textwidth}
      \centering
      \includegraphics[scale=1]{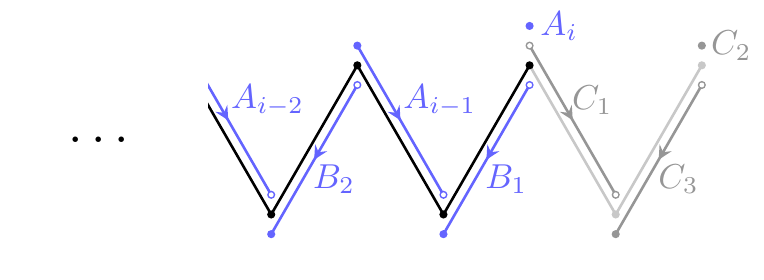}
      \caption{For \(*u\) orientations, the original axis value
        \(y=n\) is a closed right endpoint, and is grouped with the
        other closed endpoints.}
    \end{subfigure}%
    \hspace{.5cm}
    \begin{subfigure}[t]{0.45\textwidth}
      \centering
      \includegraphics[scale=1]{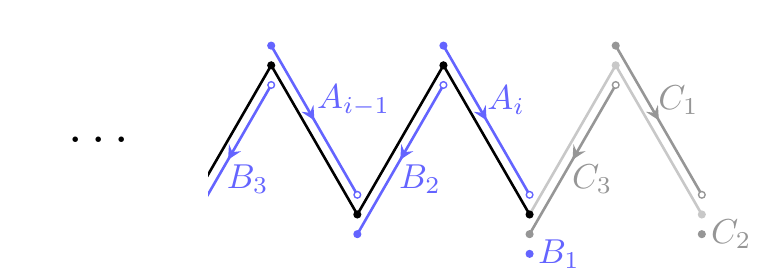}
      \caption{Finally, the original axis value \(y=n\) is open, and
        is grouped in the new axis with the other open endpoints.}
    \end{subfigure}
    \caption{}
    \label{fig_epy}
  \end{figure*}
\end{proof}

\begin{remark}\label{rmk_dar_finite}
  While \(d_\dar^{r,\infty}\) may attain infinite values, the
  final bottleneck distance \(D_\dar^{r,\infty}\) does not, by virtue of
  the fact that \(W_\dar^{r,\infty}=W_\dar^r\) is always bounded by the length
  of the original \(r\)-zigzag orientation (Corollary \ref{cor_dar_n}).
\end{remark}

The following theorem is our concluding result on comparisons
of \(D_\dar\) with \(D_\dbl\).

\begin{theorem}[Sharp \(D_\dar^{r,\infty}\) vs\(.\) \(D_\dbl\)
  Lipschitz Constants]\label{thm_blar_limit}
  Let \(P=\mathbb{A}_n(z)\) be of pure zigzag orientation. The
  following are the four stability results between \(D_\dbl\) and
  \(D_\dar^{r,\infty}\) partitioned by \(\zz\)-type (as neither
  distance directly compares modules from different regions of the
  partition).
  \begin{itemize}
  \item If \(\sigma_\zz,\tau_\zz\in(\cdot,\cdot)_\zz\), then \(
      \,\,r\cdot D_\dbl\leq D_\dar^{r,\infty}\leq 8r\cdot D_\dbl.
    \)
  \item If \(\sigma_\zz,\tau_\zz\in[\cdot,\cdot]_\zz\), then \(
      \,\,r\cdot D_\dbl\leq D_\dar^{r,\infty}\leq 2r\cdot D_\dbl\,\,
    \) (if \(D_\dbl < \infty\)).
  \item If \(\sigma_\zz,\tau_\zz\in[\cdot,\cdot)_\zz\), then \(
      \,\,r\cdot D_\dbl\leq D_\dar^{r,\infty}\leq 4r\cdot D_\dbl.
    \)
  \item If \(\sigma_\zz,\tau_\zz\in(\cdot,\cdot]_\zz\), then \(
      \,\,r\cdot D_\dbl\leq D_\dar^{r,\infty}\leq 4r\cdot D_\dbl.
    \)
  \end{itemize}
\end{theorem}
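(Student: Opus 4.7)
The plan is to use Proposition \ref{prop_bn_compare} to reduce each of the four stability statements to a pair of pointwise comparisons of their minimal generators,
\[
  r \cdot W_\dbl \leq W_\dar^{r,\infty} \leq c \cdot W_\dbl,
  \qquad
  r \cdot \bar d_\dbl \leq \bar d_\dar^{r,\infty} \leq c \cdot \bar d_\dbl,
\]
with $c \in \{8r, 2r, 4r, 4r\}$ depending on the $\zz$-type, and then to read both sides directly off the last rows of Tables \ref{tab_w1} and \ref{tab_d1}. Each of the four inequalities in the theorem will then follow by Proposition \ref{prop_bn_compare} together with a sharpness example.

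The first step is a separation-by-$\zz$-type reduction. By Proposition \ref{prop_d_inf}, $d_\dar^{r,\infty}(\sigma,\tau) = \infty$ whenever $\sigma_\zz$ and $\tau_\zz$ lie in different $\zz$-types, so no matching realising the bottleneck $D_\dar^{r,\infty}$ ever pairs a cross-type couple. By Proposition \ref{prop_features_bl}, cross-type $d_\dbl$ equals $\max\{W_\dbl(\sigma), W_\dbl(\tau)\}$, so a bottleneck matching for $D_\dbl$ never benefits from pairing across types either. Both bottleneck distances therefore factor over the four $\zz$-types, and each of the four inequalities can be verified inside a single type.

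The second step is the pointwise arithmetic within a fixed $\zz$-type. Table \ref{tab_d1} gives $\bar d_\dar^{r,\infty}(\sigma,\tau) = r(|x_1-x_2| + |y_1-y_2|)$ in $\an$-coordinates, while Proposition \ref{prop_features_bl} gives $\bar d_\dbl(\sigma,\tau) = \max\{|i_1-i_2|, |j_1-j_2|\}$ in $\zz$-coordinates, where Definition \ref{defn_convert} introduces a factor of $2$ between the two coordinate systems. The elementary inequality
\[
  \tfrac{r}{2}\max\{a,b\} \;\leq\; r(a+b) \;\leq\; 2r \cdot \max\{a,b\}
\]
combined with this factor-of-$2$ conversion gives the $d$-comparison, producing the constant $4r$ for the half-open types. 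The $W$-comparison reduces to comparing $r(y-x)+1$ (or $r+3$ for the small hull module $[x,x+1]_\an$) to the four piecewise formulas $(j-i)/4$, $\infty$, $(j-i)/2$, $(j-i)/2$ of Proposition \ref{prop_features_bl}. In the closed-closed case $W_\dbl = \infty$, so the $W$-inequality is automatic and the upper constant collapses to $2r$; in the open-open case the finer relation $y-x = 2(j-i)-2$ inflates the upper constant to $8r$.

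The final step is sharpness. For the upper bounds, one exhibits in each type a pair $\sigma = [x_1,y_1]_\an$, $\tau = [x_1+m, y_1+m]_\an$ of diagonally shifted intervals of the same $\zz$-type, on which $d_\dbl$ attains the $\ell^\infty$ maximum while $d_\dar^{r,\infty}$ attains the $\ell^1$ sum $2rm$; the conversion factor of $2$ between $\an$- and $\zz$-coordinates supplies the extra doubling that produces $8r$ in the open-open case. For the lower bounds, the $W$-comparison becomes tight for large supports asymptotic to the underlying poset length, since then $W_\dar^{r,\infty}/W_\dbl$ is dominated by its leading term. The main technical obstacle I anticipate is the simultaneous realisation of sharpness for both $d$ and $W$ within a single extremal family: in the $(\cdot,\cdot)_\zz$ case the $W$-ratio only approaches $8r$ asymptotically, so the sharp example must be a family of modules of unbounded support, which is where the $\infty$-limit of Section \ref{subsub_limits} enters crucially—the embedding $F$ into the unbounded poset $P^{r,\infty}$ ensures that $W_\dar^r$ is not artificially cut off by the diameter bound of Corollary \ref{cor_dar_n}, so the sharpness witnesses truly realise the stated constants.
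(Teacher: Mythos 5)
Your overall route is the paper's: separate by \(\zz\)-type, compare the generating functions \(W\) and \(d\) pointwise via the last rows of Tables \ref{tab_w1} and \ref{tab_d1} together with Proposition \ref{prop_features_bl}, then supply extremal pairs. The genuine gap is in the bookkeeping of the \(\an\)/\(\zz\) factor of two (Definition \ref{defn_convert}), and it is not cosmetic: it breaks your closed-closed case. You convert \(d_\dbl=\max\{|i_1-i_2|,|j_1-j_2|\}\) (Proposition \ref{prop_features_bl}, \(\zz\)-coordinates) into \(\an\)-coordinates and conclude \(d_\dar^{r,\infty}\leq 4r\cdot d_\dbl\); but for \([\cdot,\cdot]_\zz\) modules, where \(W_\dbl=\infty\), the upper bound is carried entirely by the \(d\)-comparison, so your own arithmetic yields \(4r\) there, not the claimed \(2r\) --- the absence of a \(W\)-constraint cannot shrink a constant that your \(d\)-comparison already demands. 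The paper's accounting is the opposite: reading both rows of Table \ref{tab_d1} in the same coordinates of Notation \ref{not_comp_diff}, the \(d\)-comparison costs only \(2r\) (which is where the closed-closed case stops), and the larger constants \(8r,4r,4r\) are forced purely by the \(W\)-columns of Table \ref{tab_w1}; in particular your derivation of \(4r\) for the half-open types from the \(d\)-comparison is not where the paper's \(4r\) comes from. You must fix one normalization of \(d_\dbl\) and redo all four columns consistently; as written, the closed-closed bound \(D_\dar^{r,\infty}\leq 2r\cdot D_\dbl\) is not established (it is contradicted) by your intermediate inequality.

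Two further problems concern the lower bounds. Applying Proposition \ref{prop_bn_compare} to \(r\cdot D_\dbl\leq D_\dar^{r,\infty}\) requires \(r\cdot W_\dbl\leq W_\dar^{r,\infty}\), which fails rather than being automatic in the closed-closed type where \(W_\dbl=\infty\); that case is precisely why the theorem carries the caveat \(D_\dbl<\infty\) and must be handled by matchings that leave nothing unmatched, which your outline does not do. Finally, your sharpness witnesses for the lower bound are misplaced: the \(W\)-ratios at large support tend to \(8r\) and \(4r\), so they cannot certify that \(r\) is the optimal left-hand constant; the paper's necessity instead comes from same-type pairs differing in a single endpoint (first column of Table \ref{tab_d1}), where the \(\ell^1\)-type quantity \(d_\dar^{r,\infty}\) degenerates to a single term and the ratio against \(d_\dbl\) is smallest.
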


% \begin{corollary}\label{cor_blar_limit}
%   Over any \(\mathbb{A}_n\) with \(R\)-zigzag orientation
%   for \(r>1\),
%   \[
%     2r\cdot D_\dbl\leq D^\infty_\dar\leq 8r\cdot D_\dbl
%   \] whenever \(D_\dbl<\infty\).
% \end{corollary}

\begin{proof}
  All left hand inequalities \(r\cdot D_\dbl\leq D_\dar^{r,\infty}\)
  are necessary by the first column of Table \ref{tab_d1}, and
  sufficiency is easy to see by examination of Table \ref{tab_w1}
  (columns two three and four of Table \ref{tab_d1} simply revert
  to problems of comparing values in Table \ref{tab_w1}).

  For the right hand inequalities, a Lipschitz constant of
  \(D_\dar^{r,\infty}\leq 2r\cdot D_\dbl\) is permissible when
  considering only Table \ref{tab_d1}. However, the different
  \(W_\dbl\) behaviors in Table \ref{tab_w1} force some of the values
  to be larger.
\end{proof}

As seen in the initial statement of the proof at the beginning of this
section, one may as well choose the minimal zigzag extension of
\(r=2\) if there is no contextual motivation for selecting a larger
value.

\section{Weighted Interleaving Distance}\label{sec_dwil}

As briefly discussed in the introduction, the weighted interleaving
distance on some orientation of \(\mathbb{A}_n\) measures
similarity between two interval modules by the
depth or shallowness on the `wells' over which their supports differ
(Figure \ref{fig_wil_ex1}).

\begin{definition}
For a general orientation \(P=\mathbb{A}_n\), enumerate the poset's
source vertices from left to right as \(m_1,\ldots,m_p\). Define
\(V_i\) to be the maximal sub-poset given by all elements
comparable to \(m_i\). \[
  V_i=\{x\in P: x\geq m_i\}.
\] Label the left and right sinks of \(V_i\) (if they exist) as
\(1_i\) and \(n_i\) respectively: \[
  \{1_i\gets\ldots\gets m_i\to\ldots\to n_i\}.
\] Let \([V_i]\) denote the interval representation \([1_i,n_i]\).

Lastly, as independent posets, the \emph{wedge} of \(V_i\) and
\(V_{i+1}\) is the poset in which \(n_i\) is identified with
\(1_{i+1}\): \[
  V_i\wedge V_{i+1}=\{1_i\gets\ldots\gets m_i\to\ldots\to
  n_i=1_{i+1}\gets\ldots\gets m_{i+1}\to\ldots\to n_{i+1}\}.
\] as in Definition \ref{defn_wedge}.
\end{definition}

\begin{remark}
  Any orientation \(P=\mathbb{A}_n\) can be uniquely expressed as a
  wedge of \(V_i\)'s \[
    P=V_1\wedge V_2\wedge\ldots\wedge V_l,
  \] where \(V_1\) and \(V_l\) may be equioriented segments.
\end{remark}

Moving forward, we will view representations of an orienation of
\(\mathbb{A}_n\) as persistence modules over a one-vertex refinement
of the original poset.

\begin{definition}
  For a poset \(P\), let \(\tilde{P}\) be the poset \(P\cup\{\infty\}\)
  with the relation \(x\leq y\) if and only if either \(x,y\in
  P\) with \(x\leq_P y\), or \(y=\infty\).

  We call \(\tilde{P}\) \emph{the poset} \(P\) \emph{suspended at
    infinity}.
\end{definition}

Translations on \(P=\mathbb{A}_n\) can be viewed as a wedge of
translations on each individual \(V_i\).

\begin{prop}\label{prop_split}
  Any translation \(\Lambda\) on \(P=V_1\wedge\ldots\wedge V_p\) can
  be fully described by how it acts on the individual
  \(V_i\). Similarly, any collection
  \(\{\Lambda_i\in\mathrm{Trans}(V_i)\}_{1\leq i\leq p}\) determine a
    translation on \(P\).

  Similarly, any translation \(\Lambda\) on \(\tilde{P}\) where
  \(P=V_1\wedge\ldots\wedge V_p\) can
  be fully described by how it acts on the individual \(\tilde{V}_i\).
  However, in reverse we must add the extra condition that
  pairs of translations for adjacent \(V_i\) agree at the points of
  overlap. That is, any collection
  \[
    \{\Lambda_i\in\mathrm{Trans}(\tilde{V}_i)\}_{1\leq i\leq
      p}: \Lambda_i(n_i)=\Lambda_{i+1}(1_{i+1})\text{ for all }1\leq
    i< p
  \]
  determines a translation on \(\tilde{P}\).

\end{prop}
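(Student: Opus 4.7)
The plan is to handle both statements in parallel, exploiting the V-shape of each $V_i$ (a unique source $m_i$ at the bottom and sinks at the ends) and the combinatorial structure of the wedge.

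For the first statement, I would begin by observing that any translation $\Lambda$ on $P$ restricts to each $V_i$: if $x \in V_i = \{y \in P : y \ge m_i\}$, then $\Lambda(x) \ge x \ge m_i$, so $\Lambda(x) \in V_i$. The restriction $\Lambda|_{V_i}$ then inherits both the extensivity ($x \le \Lambda(x)$) and monotonicity axioms, so it is a translation on $V_i$. Since $\bigcup_i V_i = P$, the family $\{\Lambda|_{V_i}\}$ recovers $\Lambda$. Conversely, given $\{\Lambda_i\}$, I would define $\Lambda$ by patching: $\Lambda(x) = \Lambda_i(x)$ whenever $x \in V_i$. The only possible ambiguity occurs at overlap points $n_i = 1_{i+1} \in V_i \cap V_{i+1}$; but $n_i$ is a sink (maximal) in $V_i$, so extensivity forces $\Lambda_i(n_i) = n_i$, and symmetrically $\Lambda_{i+1}(1_{i+1}) = 1_{i+1} = n_i$, so the patching is unambiguous without any explicit compatibility hypothesis. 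Extensivity of $\Lambda$ is local, and monotonicity reduces to the local case because any comparison $x \le y$ in $P$ corresponds to a directed path that cannot traverse a shared sink into a neighboring valley, forcing $x$ and $y$ into a common $V_i$.

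For the second statement, the restriction argument is essentially identical, using $\tilde{V}_i = V_i \cup \{\infty\}$ and the observation that the elements of $\tilde{P}$ comparable to and above $m_i$ are exactly $\tilde{V}_i$. The crucial difference arises in the reverse direction: the shared point $n_i$ is no longer maximal in $\tilde{V}_i$, since $\infty$ sits above it, so $\Lambda_i(n_i) \in \{n_i, \infty\}$ and likewise $\Lambda_{i+1}(1_{i+1}) \in \{1_{i+1}, \infty\}$. Well-definedness of the patched map therefore requires exactly the stated compatibility condition $\Lambda_i(n_i) = \Lambda_{i+1}(1_{i+1})$. Granted this, extensivity remains local; for monotonicity, comparisons within a single $\tilde{V}_i$ are handled by $\Lambda_i$, and any genuinely cross-valley comparison in $\tilde{P}$ must have $\infty$ as the upper endpoint, with $\Lambda_i(\infty) = \infty$ for every $i$ automatic since $\infty$ is the unique maximum of $\tilde{V}_i$.

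The main point to verify carefully is the combinatorial structure of the wedge: that $V_i \cap V_j = \emptyset$ for $|i-j| > 1$ while $V_i \cap V_{i+1}$ consists of the single sink $n_i = 1_{i+1}$, and that any non-trivial comparison $x \le y$ in $P$ (or in $\tilde{P}$ with $y \neq \infty$) forces $x$ and $y$ into a common $V_i$. Both are immediate from the V-shape of each summand and the wedge definition, but they are the load-bearing combinatorial facts, and once they are in hand everything else is bookkeeping. I do not expect any genuine obstacle beyond correctly distinguishing the roles of shared sinks in $P$ (where they are maximal, hence pinned) versus in $\tilde{P}$ (where they become two-valued, generating the compatibility requirement).
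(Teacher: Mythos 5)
Your proof is correct. The paper states Proposition \ref{prop_split} without giving any proof, so there is nothing to diverge from; your restriction-and-patching argument is the natural one, and it correctly isolates the two decisive points: the shared sinks \(n_i=1_{i+1}\) are maximal both in \(P\) and in each \(V_i\), so every \(\Lambda_i\) fixes them and no compatibility condition is needed, whereas in \(\tilde{V}_i\) the element \(\infty\) lies above them, so \(\Lambda_i(n_i)\in\{n_i,\infty\}\) and well-definedness of the glued map is precisely the stated agreement condition \(\Lambda_i(n_i)=\Lambda_{i+1}(1_{i+1})\), with monotonicity reducing to the single-valley case because every nontrivial comparison in \(P\) (or in \(\tilde{P}\) with upper endpoint \(\neq\infty\)) lies inside one \(V_i\).
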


We now define an interleaving-type distance using the poset suspended
at \(\infty\).

\begin{definition}\label{def_wt}
  For a poset \(P\) and a pair \((a,b)\in\mathbb{N}\times\mathbb{N}\),
  define the \emph{weighted height} of a translation \(\Lambda\) over
  \(\tilde{P}\) to be \[
    \tilde{h}(\Lambda)=\max_{x\in P}\delta^{(a,b)}(x,\Lambda x),
  \] where \(\delta^{(a,b)}(x,y)\) is the directed graph distance between
  \(x\) and \(y\), with edges of \(P\) counted with weight \(a\), and
  added edges of \(\tilde{P}\) counted with weight \(b\).
\end{definition}

At a weight of \((1,1)\), this is the directed graph distance induced
by the poset structure. However, as we want to make the movement of
former maximals possible without entirely losing track of the
significance of that operation, we have the ability to feather the
``penalty'' of moving these former maximal to \(\infty\) with the
weight \(b\) (or rather, the weight of \(b\) relative to \(a\)).

\begin{definition}\label{def_wil}
  For a poset \(P\) and a pair \((a,b)\in\mathbb{N}\times\mathbb{N}\),
  we define the \emph{weighted interleaving distance}
  \(D_\dwil^{(a,b)}\) to be the interleaving distance (Definition
  \ref{def_il}) on the set of representations of \(P\), but with
  translations taken over \(\tilde{P}\), using the height function
  given in Definition \ref{def_wt}.

  Throughout, the notation \(D_\dwil^{(a,b)}\) will be reduced to
  \(D_\dwil\).
\end{definition}

We introduce the following notation for future convenience.

\begin{notation}\label{not_t}
  
  For a given \(V_i\), let
  \(T_i=\min\{m_i-1_i,n_i-m_i\}\) be the length of the short side,
  and \(S_i=\max\{m_i-1,n_i-m_i\}\) be the length of the long
  side. Define \(T_i=0\)
  if \(V_i\) is equioriented.

  Define \(T \vcentcolon =\displaystyle\max_{1\leq i\leq p}T_i\).

  Define \(S \vcentcolon =\displaystyle\max_{1\leq i\leq p}S_i\).

\end{notation}

\begin{prop}[Classification of Translations on
  \(\tilde{P}\)]\label{prop_class}
  Let \(P=V_1\wedge V_2\wedge\ldots\wedge V_p\) be an orientation of
  \(\mathbb{A}_n\), and assume \(a\leq b\). Let \(\Lambda\) be a
  translation on \(\tilde{P}\). The collection of full translations
  (Remark \ref{rmk_full}) are described below by how they act on each
  individual  \(\tilde{V}_i\) (Remark \ref{prop_split}).
  \begin{itemize}
  \item If \(\tilde{h}(\Lambda)<a\), then each \(\Lambda_i\) (and
    so \(\Lambda\) itself) is the trivial translation.
  \item If \(a\leq\tilde{h}(\Lambda)<b\), then the sources and sinks
    of each \(V_i\) are fixed by \(\Lambda_i\). All other vertices
    move upwards by
    \(k\) vertices, where \(ak\leq \tilde{h}(\Lambda)< a(k+1)\),
    or to their unique comparable sink, if that is closer than
    \(k\) vertices.
  \item If \(b\leq \tilde{h}(\Lambda)<a(T_i-1)+b\), then \(\Lambda\)
    can be described in the same way as above, save that now the
    sinks are sent to \(\infty\). Also, if \(ak+b\leq
    \tilde{h}(\Lambda)<a(k+1)+b\), then any
    vertices (other than unique source) that are within \(k\) vertices
    from their corresponding sink are also sent to \(\infty\).
  \item If \(a(T_i-1)+b\leq\tilde{h}(\Lambda)\), then the entire shorter
    leg (sans the source) can be sent to \(\infty\) by the
    translation. Each vertex of the longer leg (including the
    source), is sent up the longer side as far as the translation
    permits (including being sent to \(\infty\)).

    Barring extreme differences between the length of the two sides
    combined with small
    values of \(b\),  \(\Lambda^2\) will almost always send every
    vertex of \(V_i\) to \(\infty\).
  \end{itemize}

  To summarize, if \(I\) and \(J\) are two arbitrary persistence
  modules over \(P\):
  \begin{itemize}
  \item \(D_\dwil(I,J)<b\) guarantees that \(I\) and \(J\) are isomorphic
    on every fixed point.
  \item \(D_\dwil(I,J)=ak+b\) for some \(k\geq 0\) guarantees
    that they \(I\) and \(J\) are isomorphic on minimal vertices of any
    \(V_i\) in which \(T_i-1\geq ak\).
  \end{itemize}
\end{prop}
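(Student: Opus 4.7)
The plan is to prove the proposition by first decomposing the problem via Proposition \ref{prop_split}: a full translation $\Lambda$ on $\tilde{P}$ is determined by its restrictions $\Lambda_i$ to each $\tilde{V}_i$, subject only to the requirement that adjacent restrictions agree at the wedge points. In each case below, sinks will behave uniformly (either all fixed, or all sent to $\infty$), so the wedge-compatibility is automatic, and it is enough to analyze a single $\tilde{V}_i$ in isolation, dropping the subscript throughout.

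The next step is to catalog the weighted directed distances. For $x\leq y$ with $y$ finite, any directed path from $x$ to $y$ lies along $V$-edges, so $\delta^{(a,b)}(x,y)$ equals $a$ times the edge-count from $x$ to $y$ inside $V$. To reach $\infty$ from a non-source vertex $x$, the cheapest path ascends to the nearer sink along $V$ and then crosses the added edge of weight $b$, for total cost $a\cdot d(x,\text{nearest sink}) + b$; from the source $m$ itself, the cheapest exit uses the short leg, costing $aT + b$. With these distances, the question of which $\Lambda$ of prescribed weighted height $\tilde{h}$ are full reduces to pushing each $\Lambda x$ up as far as possible in the partial order on $\tilde{V}$ subject to the budget $\delta^{(a,b)}(x,\Lambda x)\leq \tilde{h}$ \emph{and} order preservation.

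The critical subtlety, and the main obstacle, is the order-preservation constraint at the source. If $\Lambda m$ were strictly above $m$ in one arm, say the right arm, then every vertex $v$ on the left arm satisfies $v\geq m$, so $\Lambda v\geq \Lambda m$; but the only upper bounds of a right-arm vertex among the images accessible to $v$ lie in $\{\infty\}$, forcing the entire left arm to evacuate to $\infty$. The maximum per-vertex cost of this evacuation is paid by the left-arm vertex adjacent to $m$, namely $a(T-1)+b$ when the left arm is the short leg, and this is what produces the threshold in case four. With the source constraint pinned down, the four cases follow by direct monotone maximization. When $\tilde{h}<a$, no edge is traversable, so $\Lambda$ is trivial. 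When $a\leq \tilde{h}<b$, edges inside $V$ are available but $\infty$ is out of reach; sinks are then stuck, the source is pinned by the evacuation argument (which would cost at least $b$), and intermediate vertices ascend $\lfloor \tilde{h}/a\rfloor$ steps or saturate at their sink, giving case two. When $b\leq \tilde{h}<a(T-1)+b$, the sinks may now escape to $\infty$, and any vertex within $k$ edges of a sink can join them provided $ak+b\leq \tilde{h}$, giving case three. Finally, once $\tilde{h}\geq a(T-1)+b$, the short leg can be fully evacuated and $m$ shifted up the long arm as far as the remaining budget permits, yielding case four. The closing summary statements are immediate corollaries: if $D_\dwil(I,J)<b$ then the realizing translation fixes every source and sink, so the interleaving morphisms restrict to isomorphisms at those points; and the condition $T_i-1\geq ak$ places the height $ak+b$ strictly below the case-four threshold $a(T_i-1)+b$, keeping the sources of $V_i$ fixed.
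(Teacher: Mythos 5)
The paper states Proposition \ref{prop_class} without giving a proof, so there is nothing to compare line by line; judged on its own, your argument is sound and is almost certainly the intended one. You correctly reduce to a single valley via Proposition \ref{prop_split} (noting that sink behaviour is governed uniformly by whether \(\tilde{h}\geq b\), so wedge-compatibility is automatic), you compute the weighted distances \(\delta^{(a,b)}(x,\infty)=a\cdot d(x,\text{sink})+b\) and \(\delta^{(a,b)}(m,\infty)=aT+b\) correctly, and you isolate the real content of the proposition: the order-preservation constraint at the source, namely that moving \(m\) strictly up one arm forces the entire opposite arm to evacuate to \(\infty\), whose most expensive member (the vertex adjacent to \(m\) on the short leg) costs \(a(T-1)+b\). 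That single observation yields all four thresholds, and the two closing statements do follow from fixed points of the realizing translation exactly as you say.

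Two small caveats, both of which are really imprecisions in the proposition itself rather than defects of your argument. First, your evacuation argument needs both legs of \(V_i\) nonempty; for the equioriented end-segments allowed in the wedge decomposition (\(T_i=0\), one-legged valleys) the source is an endpoint of the poset and can move at height \(<b\), so ``sources are fixed'' fails there -- you should flag that these segments are excluded (or handled trivially as degenerate cases). Second, your justification of the last bullet (``\(T_i-1\geq ak\) places \(ak+b\) strictly below \(a(T_i-1)+b\)'') is not literally strict when \(k=0\) and \(T_i=1\); the intended reading is that the height stays below the case-four threshold for \(V_i\), and it is worth stating the inequality you actually need, \(ak<a(T_i-1)\), rather than the looser one. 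Also, phrase the case-four maximization in terms of the per-vertex bound \(\delta^{(a,b)}(x,\Lambda x)\leq\tilde{h}\) rather than a ``remaining budget,'' since the height is a maximum over vertices, not a shared resource.
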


\begin{ex}
  \begin{figure}
    \centering
    \includegraphics[scale=1]{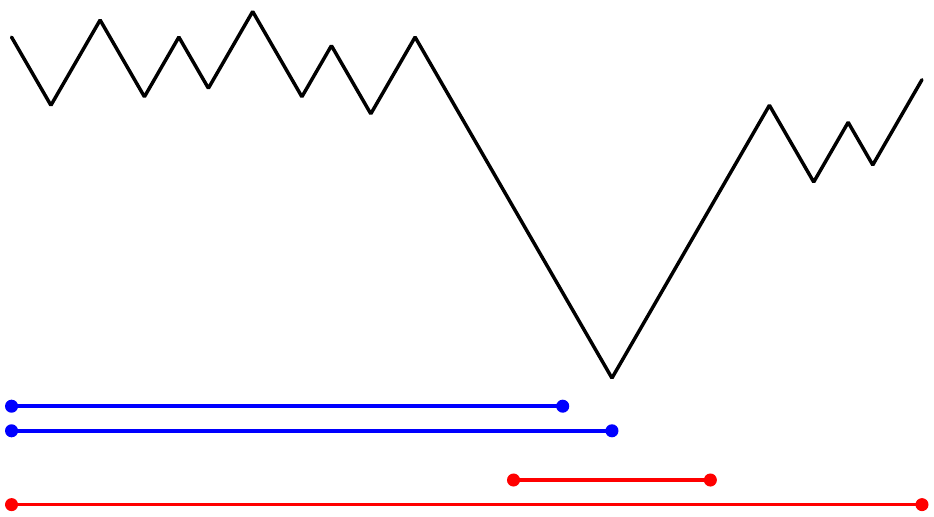}
    \caption{An orientation of \(\mathbb{A}_n\) with two pairs of
      intervals. For \(b>>a\), the red intervals are much closer under
      \(D^{(a,b)}_\dwil\) than the blue intervals.}
    \label{fig_wil_ex1}
  \end{figure}

  With all definitions in place, we can make an easy example to convey
  what \(D_\dwil\) measures, and what it ignores.
  
  In Figure \ref{fig_wil_ex1}, the red intervals are much closer to
  each other in the weighted interleaving distance than the blue
  intervals are.

  In particular, \(D_I(\textrm{red modules})\) requires a translation
  of height sufficient to annihilate all the shallow \(V_i\)'s,
  but not the large one. However, \(D_I(\textrm{blue modules})\)
  immediately requires moving the minimal at the bottom of the deepest
  \(V_i\), already demanding a larger translation than anything involved in
  interleaving the red modules.
\end{ex}

\subsection{Stability of \(D_\dwil\) over \(D_\dar\) as Bottleneck Distances}

\begin{remark}\label{rmk_always_bneck}
  We will compare \(D_\dwil\) and \(D_\dar\) as \emph{bottleneck
    distances}. From here onward, let \(D_\dwil\) denote the bottleneck
  distance induced by the weighted interleaving distance.
  
  The focus of this section is the minimization of weights
  \((a,b)\) (under lexicographic \(\mathbb{N}\times\mathbb{N}\)
  ordering) such that \(D_\dar\leq D_\dwil^{(a,b)}\) (again, as
  bottleneck distances).
\end{remark}

The weighted interleaving distance measures different features than
the other distances in this paper, and was adopted as one of our
directions of investigation due to its ability to preserve an
interleaving-like approach to finite posets that is not immediately
stalled by sink/source vertices, which must remain
\emph{fixed} under the ordinary interleaving distance.

The authors previously proved an algebraic stability result using this
distance for ``branch''-type posets \cite{meehan_meyer_1}. While not
supplying an algebraic stability result for arbitrary \(\mathbb{A}_n\)
quivers, we do compare \(D_\dwil\) (its induced bottleneck distance)
against \(D_\dar\).

Instead of single-variable Lipschitz stability results, we state
\(D_\dwil\) stability against another distance in terms of the
two-parameter weight used to
define it: \((a,b)\in\mathbb{N}\times\mathbb{N}\), where we
consider \(\mathbb{N}\times\mathbb{N}\) to be ordered
lexicographically (Definition \ref{def_wil}). This ordering is to
prioritize first minimizing the weight attached to the original poset
structure, and afterwards the weight that determines distances to
\(\infty\).

\begin{theorem}\label{thm_pairs}
  Let \(P=V_1\wedge\ldots\wedge V_l\) be some orientation of
  \(\mathbb{A}_n\). Let \(T,S\) be as in Notation \ref{not_t}. The
  classification of stable weights for \(D_\dwil\geq D_\dar\) as
  bottleneck distances is:
  \begin{itemize}
  \item for equioriented, see Corollary \ref{cor_equi},
  \item for \(S<3\), see Proposition \ref{prop_s_3},
  \item for \(S\geq 3\) and \(T=1\), see Proposition \ref{prop_s_3},
  \item other non-shallow posets are not addressed (see Definition
    \ref{defn_shallow}), and remaining posets are split by centrality
    (see Definition \ref{defn_central}),
  \item for shallow and central orientations, see Proposition
    \ref{prop_shallow_central},
  \item for shallow and non-central orientations, see Corollary
    \ref{cor_shallow_non_central}.
  \end{itemize}
\end{theorem}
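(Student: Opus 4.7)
The plan is to treat this theorem as a case-analysis classification and reduce the comparison $D_\dar \leq D_\dwil^{(a,b)}$ of bottleneck distances to pairwise comparisons of generators via Proposition \ref{prop_bn_compare}. Specifically, I need to find the lexicographically minimal $(a,b) \in \mathbb{N}\times\mathbb{N}$ such that for all pairs of indecomposables $\sigma,\tau$, both $d_\dar(\sigma,\tau) \leq D_\dwil^{(a,b)}(\{\sigma\},\{\tau\})$ and $W_\dar(\sigma) \leq D_\dwil^{(a,b)}(\{\sigma\},\emptyset)$ hold. The machinery that does the bulk of the computational work is Proposition \ref{prop_class}, which gives an explicit description of all full translations on $\tilde{P}$ in terms of their effect on each $V_i$, parameterized by the weighted height. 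Combined with the closed-form expressions for $\delta_\dar$ in Proposition \ref{prop_formula} and $W_\dar$ in Lemma \ref{lemma_hull_w}, this reduces each case to an explicit optimization over $(a,b)$.

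The proof then proceeds by partitioning into the cases listed. For the equioriented case, $T_i = 0$ for all $i$, so the $\infty$-suspension plays no role and $D_\dwil$ collapses to the standard interleaving on $\mathbb{A}_n$; this is immediate in Corollary \ref{cor_equi}. For the degenerate cases $S < 3$ or $(S \geq 3, T = 1)$, the poset is too shallow for interleavings to exploit genuine wedge depth meaningfully, and these reduce quickly in Proposition \ref{prop_s_3}. For the remaining non-degenerate cases, I would split by a structural dichotomy: shallow posets (where every valley is close enough to an endpoint that Lemma \ref{lemma_hull_w} gives manageable $W_\dar$ values) versus non-shallow ones (the latter not being addressed). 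Within the shallow regime, central versus non-central orientations control whether the worst-case $d_\dar$ comparison happens between intervals whose supports straddle the deep valley or lie off to one side; this is what Definitions \ref{defn_shallow} and \ref{defn_central} isolate, and why Proposition \ref{prop_shallow_central} and Corollary \ref{cor_shallow_non_central} handle them separately.

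The main obstacle will be establishing \emph{sharpness} of the $(a,b)$ weights in each case. The necessary direction (that smaller $(a,b)$ fails) requires exhibiting a pair of interval modules for which $d_\dar$ or $W_\dar$ is attained at its extreme; these are precisely the intervals identified by the hull analysis in Lemma \ref{lemma_hull_w}, i.e., supports centered within deep valleys where paths of decreasing dimension to simples are blocked. Showing the sufficient direction requires a matching construction: given two interval modules whose supports first agree above height $h$ in the weighted graph distance on $\tilde{P}$, one must produce an explicit interleaving built from the translations classified in Proposition \ref{prop_class}. The subtlety, particularly in the non-central case, is that interleavings can exploit asymmetric translations on different valleys (via the wedge description of Proposition \ref{prop_split}) to push some endpoints to $\infty$ at cost $b$ while leaving other endpoints fixed; balancing this against the hull penalty in $W_\dar$ is what forces the specific minimal $(a,b)$ in each branch of the classification.
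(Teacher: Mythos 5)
Your reduction of the theorem to generator-wise comparisons is mis-stated in a way that would sink the whole classification. Proposition \ref{prop_bn_compare} requires \(\bar{d}_\dar\leq\bar{d}_\dwil\) and \(W_\dar\leq W_\dwil\), where \(\bar{d}_\dar(\sigma,\tau)=D_\dar(\{\sigma\},\{\tau\})=\min\{d_\dar(\sigma,\tau),\max\{W_\dar(\sigma),W_\dar(\tau)\}\}\); you instead demand the unreduced \(d_\dar(\sigma,\tau)\leq D_\dwil(\{\sigma\},\{\tau\})\), which is strictly stronger and is false for the very weights being classified. Concretely, take two simple modules \([s],[s']\) at interior (non-sink, non-source) vertices lying in opposite compass regions \(\mathcal{N}\) and \(\mathcal{S}\) of the AR quiver: Proposition \ref{prop_formula} gives \(d_\dar([s],[s'])=(s-1)+(s'-1)+(n-s)+(n-s')=2n-2\), while \(W_\dar=1\) and \(W_\dwil=a\), so both singleton bottleneck distances are tiny and the pair is harmlessly stable. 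Under your criterion one would need \(a\geq 2n-2\), so no weight with \(a=2\) could ever be admissible, contradicting every branch of the theorem. The paper's stability arguments lean precisely on the cap you discard: Lemma \ref{lemma_1_2_3}(2) and Lemma \ref{lemma_stable_intervals} repeatedly bound \(D_\dar(\sigma,\tau)\) by \(W_\dar(\sigma)\) exactly when \(d_\dar\) is a sum of support complements.

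Beyond this, the program for sharpness is partly reversed and is missing its main engine. For sufficiency of a weight you do not ``produce an explicit interleaving''; you must extract endpoint constraints from an \emph{arbitrary} interleaving of a given weighted height (the inequalities on \(x_2,y_2\) in the proof of Lemma \ref{lemma_1_2_3}(3)), whereas the explicit constructions (Definition \ref{defn_lambda}, Proposition \ref{prop_kill}) serve the opposite roles: computing \(W_\dwil\) exactly and exhibiting cheap interleavings that witness \emph{necessity}. Your extremal candidates are also the wrong ones: ``shallow'' in Definition \ref{defn_shallow} means \(T\geq 2\) and \(2S\leq n\) (so that all \(W_\dwil\)-values are short-escape determined), not that valleys sit near an endpoint, and in that regime the hull intervals you point to are benign (Proposition \ref{prop_shallow_w}). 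The sharp non-central bound \(b\leq n-T/2-1\) of Corollary \ref{cor_shallow_non_central} is forced instead by long intervals of opposite right-endpoint parity anchored just inside the off-center deepest valley, and the paper needs the entire maximally anti-stable pair reduction (Propositions \ref{prop_anti1}, \ref{prop_anti2}, Corollary \ref{cor_min_pair}, Example \ref{ex_upper_bound}) to identify them and verify sharpness. Your proposal offers no substitute for that step, so even after repairing the reduction it would not deliver the stated classification.
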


\subsection{Stable values of \(a\)}

% \begin{prop}
%   For posets with \(S<3\), stability is minimally obtained by \(a=1\),
%   and otherwise by \(a=2\).
% \end{prop}

\begin{prop}\label{prop_aok}
  Let \(P=V_1\wedge\ldots\wedge V_p\).
  \begin{itemize}
  \item If \(S\geq 3\), the minimal permissible weight is of the form
    \((2,b)\).
  \item If \(S=2\), the minimal permissible weight is of the form
    \((1,b)\) if there is only a single equioriented segment of two
    consecutive edges, and is of the form \((2,b)\) otherwise.
  \item If \(S=1\), the minimal permissible weight is of the form
    \((1,b)\).
  \end{itemize}
\end{prop}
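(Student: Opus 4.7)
The plan is to prove each case by combining necessity (explicit witness pairs of indecomposables forcing the claimed lower bound on \(a\)) with sufficiency (a general upper bound on \(\delta_\dar\) in terms of the weighted translation height). The driving dichotomy is this: a translation that simultaneously shifts both endpoints of an interval module by one step, while keeping both intervals in a single compass region, contributes \(2\) to \(\delta_\dar\) by Proposition \ref{prop_formula} but only \(a\) to the translation height by Proposition \ref{prop_class}. Hence \(a \geq 2\) is forced precisely when such a simultaneous shift can be realized within a single compass region.

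For necessity when \(S \geq 3\), we pick some \(V_i\) whose long side has length at least \(3\) and take \(\sigma = [v]\), \(\tau = [v+1]\) to be simples at two consecutive interior (nonsource, nonsink) long-side vertices; existence is guaranteed by \(S_i \geq 3\). Checking against Notation \ref{not_ewsn}, both lie in the same compass region \(\mathcal{S}\) (or \(\mathcal{N}\), depending on orientation), so Proposition \ref{prop_formula} gives \(\delta_\dar(\sigma, \tau) = 2\). Letting \(\Lambda\) be the minimal translation of height \(a\) that shifts \(v\) one step up the long side (sources and sinks fixed), one verifies directly that \(\phi:\sigma\to\tau\Lambda\) taken as the identity on \(v\) and \(\psi:\tau\to\sigma\Lambda\) taken as the zero map satisfy the interleaving diagram, since \(\sigma\Lambda^2\) and \(\tau\Lambda^2\) both vanish. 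Thus \(D_\dwil^{(a,b)}(\sigma,\tau) = a\), and \(D_\dar \leq D_\dwil^{(a,b)}\) forces \(a \geq 2\). For \(S = 2\) with multiple equioriented \(2\)-segments, an analogous witness chained across two such segments (using paired simples in different segments or an interval spanning both) yields the same bound; with \(S \leq 2\) having at most one equioriented \(2\)-segment, no such configuration exists, since each long side carries at most one interior vertex.

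For sufficiency, we analyze an arbitrary \(\Lambda\)-interleaving of height \(\epsilon < b\) using Proposition \ref{prop_class}: two applications of \(\Lambda\) shift an endpoint by at most \(2\lfloor\epsilon/a\rfloor\) steps along any long side, whence Proposition \ref{prop_formula} yields \(\delta_\dar \leq (4/a)\epsilon\) for same-compass pairs. With \(a = 2\) this gives \(\delta_\dar \leq 2\epsilon\), and in the restricted cases where \(a = 1\) is claimed, the absence of the doubly-shifting configuration reduces the effective bound to \(\delta_\dar \leq \epsilon\); the only remaining pairs with \(\delta_\dar \geq 2\) differ by a single-endpoint shift or must traverse a fixed source or sink. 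Such boundary-crossing pairs push \(D_\dwil\) into the \(b\)-regime, where a sufficiently large \(b\) (bounded above by \(n\) via Corollary \ref{cor_dar_n}) absorbs the remaining distance. The main obstacle will be verifying that the proposed witness actually saturates \(D_\dwil\) --- ruling out less obvious interleavings of smaller height --- and carefully tracking the transition between the sub-\(b\) and \(b\)-regimes, especially in the more delicate \(S = 2\) chained case; the precise minimal value of \(b\) in each subcase is left to the subsequent propositions referenced in Theorem \ref{thm_pairs}.
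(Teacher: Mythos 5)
There is a genuine gap in your necessity argument for \(S\geq 3\) (and in the ``paired simples'' variant for \(S=2\)). Recall Remark \ref{rmk_always_bneck}: the comparison \(D_\dar\leq D_\dwil^{(a,b)}\) is between \emph{bottleneck} distances, and for singleton barcodes \(D_\dar(\sigma,\tau)=\min\{\delta_\dar(\sigma,\tau),\max\{W_\dar(\sigma),W_\dar(\tau)\}\}\), since the empty matching is allowed. For your witness \(\sigma=[v]\), \(\tau=[v+1]\) one has \(W_\dar([v])=W_\dar([v+1])=1\), so \(D_\dar(\sigma,\tau)=1\) no matter that \(\delta_\dar=2\); since any nontrivial translation has weighted height at least \(a\geq 1\) and the two simples are not isomorphic, \(D_\dwil\geq 1\) as well, and no constraint on \(a\) is produced. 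Simples are exactly the wrong test modules here: to force \(a\geq 2\) you need a pair whose \(W_\dar\)-values are large enough that the bottleneck minimum is realized by \(\delta_\dar\). This is what the paper's witnesses do: for \(S\geq 3\) a pair of long intervals both of whose endpoints sit at interior (non-source, non-sink) vertices of a single equioriented segment and differ by one step at each end (so \(\delta_\dar=2\), both \(W_\dar\) large, and a height-\(a\) translation moving interior vertices one step interleaves them), and for \(S=2\) the explicit pair \(\sigma=[i-1,j+1]\), \(\tau=[i,j]\) whose two endpoint shifts are distributed over \emph{two distinct} equioriented \(2\)-segments — which is also why a single such segment does not obstruct \(a=1\).

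Two further points. First, your sufficiency bound is too weak as stated: bounding the endpoint displacement by \(2\lfloor\epsilon/a\rfloor\) (i.e.\ via \(\Lambda^2\)) gives \(\delta_\dar\leq(4/a)\epsilon\), which at \(a=2\) only yields \(D_\dar\leq 2D_\dwil\), not \(D_\dar\leq D_\dwil\); the correct count is that an \(\epsilon\)-interleaving displaces each endpoint by at most one application of \(\Lambda\), i.e.\ at most \(\lfloor\epsilon/a\rfloor\) steps, giving \(\delta_\dar\leq(2/a)\epsilon\leq\epsilon\) when \(a=2\). Second, the paper's sufficiency check is organized differently and more narrowly: it reduces to \(W\)-values of intervals containing no sources or sinks and \(d\)-values of pairs whose supports share exactly the same fixed points (everything else is deferred to the \(b\)-regime and the later propositions), which is the cleaner way to isolate what the parameter \(a\) alone must control.
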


\begin{proof}
  \textbf{Necessity:} For \(S\geq 3\) consider the following diagram.
  \begin{center}
    \includegraphics[scale=1.5]{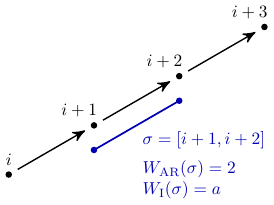}
  \end{center}
  For \(S=2\) consider the following. For any two equioriented
  segments with \(i-2\leftrightarrow i-1\leftrightarrow i\) left of
  \(j\leftrightarrow j+1\leftrightarrow j+2\),
  the intervals \(\sigma=[i-1,j+1]\) and \(\tau=[i,j]\) are
  interleaved by a translation height \(a\) while having an AR
  distance of \(2\). Two of the four possible configurations appear
  below, the remaining two being those with segments of
  \(\nearrow\nearrow\) and \(\searrow\searrow\) arrangements.
  \begin{center}
    \includegraphics[scale=1.5]{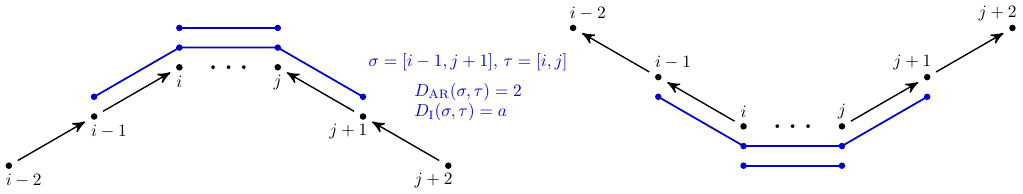}
  \end{center}  
  That is, the only \(S=2\) type of poset permitting \(a=1\) is pure
  zigzag with a single pair of consecutive edges with the same
  orientation.

  \textbf{Sufficiency:} One need only consider \(W\)-values of
  interval modules containing no maximals or minimals, and
  \(d\)-values of pairs of interval modules whose supports share
  precisely the same fixed points. This is easy to check in all
  cases.

\end{proof}

\begin{corollary}\label{cor_aok}
  For any poset \(P\) with \(S\geq 3\) and weight \((2,b)\) with
  \(b>2\), if \(D_\dwil(\sigma,\tau)<b\) then \[
    D_\dar(\sigma,\tau)\leq D_\dwil(\sigma,\tau).
  \]
\end{corollary}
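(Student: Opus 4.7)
The plan is to reduce the bottleneck inequality to two pointwise estimates on the minimal generators. Since $\sigma$ and $\tau$ are indecomposable, for singleton multisets the bottleneck value collapses to
\[
D(\sigma,\tau) = \min\bigl\{d(\sigma,\tau),\, \max\{W(\sigma),W(\tau)\}\bigr\},
\]
so the hypothesis $D_\dwil(\sigma,\tau) < b$ forces at least one of $d_\dwil(\sigma,\tau)$, $W_\dwil(\sigma)$, $W_\dwil(\tau)$ to be strictly less than $b$. It therefore suffices to prove the two conditional estimates
\[
d_\dwil(\sigma,\tau) < b \implies d_\dar(\sigma,\tau) \leq d_\dwil(\sigma,\tau),
\qquad
W_\dwil(\sigma) < b \implies W_\dar(\sigma) \leq W_\dwil(\sigma),
\]
and combine them via Proposition \ref{prop_bn_compare}.

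First I would apply Proposition \ref{prop_class} to describe any translation $\Lambda$ with $\tilde h(\Lambda) < b$: no vertex is sent to $\infty$, every source and sink of $P$ is fixed, and any other vertex moves upward toward its unique comparable sink by at most $\lfloor \tilde h(\Lambda)/2 \rfloor$ vertex-steps, since $a=2$. This classification is the common engine driving both estimates.

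For the $d$-estimate, suppose $\sigma = [x_1,y_1]$ and $\tau = [x_2,y_2]$ are $\Lambda$-interleaved with $\tilde h(\Lambda) = \epsilon < b$. Because $\Lambda$ fixes every source and sink, neither interleaving morphism can be nonzero at a fixed vertex unless both supports agree there; in particular, the endpoints $x_1,x_2$ must lie in a common maximal segment cut out by consecutive sources and sinks, and likewise for $y_1,y_2$. By Notation \ref{not_ewsn} this places $\sigma$ and $\tau$ in the same compass region of the AR quiver, so Proposition \ref{prop_formula} gives $\delta_\dar(\sigma,\tau) = |x_1-x_2| + |y_1-y_2|$. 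Each endpoint can shift by at most $\lfloor \epsilon/2 \rfloor$ vertex-steps under the interleaving, so the sum is bounded by $\epsilon$ and $d_\dar(\sigma,\tau) \leq d_\dwil(\sigma,\tau)$. For the $W$-estimate, $W_\dwil(\sigma) = \epsilon < b$ produces a translation $\Lambda$ not using $\infty$ that annihilates $\sigma$, so the internal morphism $\sigma \to \sigma\Lambda^2$ must vanish. This is impossible if $\mathrm{supp}(\sigma)$ contains a source or a sink, since those are fixed by $\Lambda$; hence $\sigma$ contains neither, and in particular $\sigma \not\in \mathrm{hull}(P)$. Proposition \ref{prop_diam} then gives $W_\dar(\sigma) = \mathrm{dim}(\sigma)$, while $\Lambda^2$ must push the left endpoint $x_1$ past $y_1$, requiring a total shift of at least $\mathrm{dim}(\sigma)$ vertex-steps and forcing $\epsilon \geq \mathrm{dim}(\sigma)$. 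Combined, $W_\dar(\sigma) \leq W_\dwil(\sigma)$.

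The main obstacle will be the additive bookkeeping $|x_1-x_2| + |y_1-y_2| \leq \epsilon$ for a weighted interleaving in the non-equioriented setting: the structure of $\Lambda$ must be shown to distribute the allowable vertex-shifts between the two endpoints in a way that exactly matches the $a=2$ weighting. This is precisely where the choice $a=2$ (rather than the naive $a=1$) is forced, mirroring the necessity half of Proposition \ref{prop_aok}. Once this accounting is in hand, both conditional estimates, and hence the corollary, follow immediately.
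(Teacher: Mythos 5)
Your reduction to the two conditional estimates is where the argument breaks: the claimed implication \(d_\dwil(\sigma,\tau)<b \implies d_\dar(\sigma,\tau)\leq d_\dwil(\sigma,\tau)\) is false. An interleaving of height \(\epsilon<b\) need not use nonzero morphisms: if \(\Lambda^2\) annihilates both \(\sigma\) and \(\tau\) (the zero maps then satisfy the interleaving conditions vacuously), nothing forces the two supports to meet a common segment, so your step ``the endpoints must lie in a common maximal segment, hence \(\sigma,\tau\) lie in the same compass region'' fails. Concretely, take \(P\) with orientation \(1\to2\to3\to4\gets5\gets6\gets7\) (so \(S=3\)), \(\sigma=[2]\), \(\tau=[6]\). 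Both supports avoid every source and sink, the translation moving each interior vertex up one step has weighted height \(2\), and \(\Lambda^2\) kills both modules, so \(d_\dwil(\sigma,\tau)=2<b\); but \(\sigma\in\mathcal{S}\), \(\tau\in\mathcal{N}\), and Proposition \ref{prop_formula} gives \(\delta_\dar(\sigma,\tau)=(x_1-1+x_2-1)+(n-y_1+n-y_2)=12\), so \(d_\dar\not\leq d_\dwil\). The corollary itself survives in this example only because the singleton bottleneck value is a minimum, \(D_\dar(\sigma,\tau)=\min\{d_\dar(\sigma,\tau),\max\{W_\dar(\sigma),W_\dar(\tau)\}\}=1\leq 2\); your case split discards that unmatched option exactly where it is needed.

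The repair is to split cases not by which of \(d_\dwil,W_\dwil\) is small, but by whether the optimal interleaving of height \(<b\) uses nonzero morphisms (equivalently, by Proposition \ref{prop_class}, whether the common support meets a source or sink). If it does, your same-region argument and the endpoint-displacement count \(|x_1-x_2|+|y_1-y_2|\leq 2\lfloor\epsilon/2\rfloor\leq\epsilon\) go through and give \(D_\dar\leq d_\dar\leq D_\dwil\). If it does not, then both modules are supported strictly inside monotone segments, neither lies in \(\mathrm{hull}(P)\), and the annihilation cost argument you already give for the \(W\)-estimate yields \(W_\dwil(\cdot)\geq\mathrm{dim}(\cdot)=W_\dar(\cdot)\) for each module, whence \(D_\dar\leq\max\{W_\dar(\sigma),W_\dar(\tau)\}\leq D_\dwil\). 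This is precisely the dichotomy behind the paper's terse sufficiency remark in Proposition \ref{prop_aok} (check \(W\)-values for intervals containing no maximals or minimals, and \(d\)-values only for pairs whose supports share the same fixed points); your \(W\)-estimate itself is essentially correct.
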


\begin{corollary}
  For any orientation \(Q\) of \(\mathbb{A}_n\) and the appropriate
  choice of \(a=1,2\), the weight \((a,n)\) is \emph{always}
  permissible. I.e., \(b=n\) is an upper bound for the value of \(b\)
  in the minimal permissible weight.
\end{corollary}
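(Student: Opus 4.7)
The plan is to split the comparison into two regimes based on the magnitude of $D_\dwil^{(a,n)}$ and to invoke one of two prior results in each regime. By Proposition \ref{prop_aok}, first fix $a \in \{1,2\}$ as dictated by the orientation of $Q$. By Proposition \ref{prop_bn_compare}, verifying $D_\dar \leq D_\dwil^{(a,n)}$ as bottleneck distances reduces to the pointwise comparison $D_\dar(\sigma,\tau) \leq D_\dwil^{(a,n)}(\sigma,\tau)$ for pairs of indecomposables, together with the analogous comparison between any indecomposable and the zero module (which supplies the $W$-inequality).

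Fix such a pair $(\sigma,\tau)$. In the first case, $D_\dwil^{(a,n)}(\sigma,\tau) < n$: Corollary \ref{cor_aok} applied with $b=n$ yields $D_\dar(\sigma,\tau) \leq D_\dwil^{(a,n)}(\sigma,\tau)$ directly, since the hypothesis $b > 2$ holds whenever $n \geq 3$ and the corner cases $n \leq 2$ are trivial. In the second case, $D_\dwil^{(a,n)}(\sigma,\tau) \geq n$: Corollary \ref{cor_dar_n} furnishes $D_\dar(\sigma,\tau) \leq n \leq D_\dwil^{(a,n)}(\sigma,\tau)$. Combining the two cases produces the desired pointwise domination, and Proposition \ref{prop_bn_compare} then upgrades it to the bottleneck inequality, establishing permissibility of $(a,n)$.

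The main obstacle is that Corollary \ref{cor_aok} is stated only for $S \geq 3$ with $a=2$, whereas Proposition \ref{prop_aok} sometimes selects $a=1$. For those remaining settings one needs the analogous implication that if $D_\dwil^{(1,b)}(\sigma,\tau) < b$ then $D_\dar(\sigma,\tau) \leq D_\dwil^{(1,b)}(\sigma,\tau)$. This follows by the same sufficiency reasoning used in Proposition \ref{prop_aok}: below the threshold $b$, Proposition \ref{prop_class} guarantees every sink of $P$ is fixed by any interleaving translation, which forces the two intervals to share their fixed points; at that point the $d_\dar$ formula of Proposition \ref{prop_formula} reduces to the controllable quantity $\mathrm{LH}^{\mathrm{diff}} + \mathrm{RH}^{\mathrm{diff}}$, bounded by $D_\dwil^{(1,b)}$ in exactly the regimes where $a=1$ was declared sufficient. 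Granted this supplementary implication, the dichotomy is airtight and the corollary is immediate from combining Corollaries \ref{cor_aok} and \ref{cor_dar_n}.
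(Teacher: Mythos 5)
Your proof is correct and takes essentially the same route as the paper, whose own proof simply cites Propositions \ref{prop_aok} and \ref{prop_diam}: below the threshold \(b=n\) stability is exactly the sufficiency part of Proposition \ref{prop_aok} (of which Corollary \ref{cor_aok} is the packaged form), while at or above it one uses \(D_\dar\leq n\) (Proposition \ref{prop_diam} / Corollary \ref{cor_dar_n}). One bookkeeping remark: the settings outside the literal hypotheses of Corollary \ref{cor_aok} are not only the \(a=1\) orientations but also \(S=2\) with more than one doubly-oriented segment (where \(a=2\) but \(S<3\)); your appeal to ``the same sufficiency reasoning as Proposition \ref{prop_aok}'' handles that case identically, so no new idea is required.
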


\begin{proof}
  This follows immediately from Propositions \ref{prop_aok} and
  \ref{prop_diam}.
\end{proof}

Lastly, a much less general corollary is the resulting minimal
stable weights for equioriented \(\mathbb{A}_n\).

\begin{corollary}\label{cor_equi}
  Let \(Q\) be an equi-orientation of \(\mathbb{A}_n\).
  Then \((2,1)\) is the minimal stable weight.
\end{corollary}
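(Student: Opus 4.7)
The plan is to verify necessity and sufficiency of \((2,1)\) separately. For necessity of \(a \geq 2\), Proposition~\ref{prop_aok} yields this directly when \(S = n-1 \geq 3\); for the remaining small case \(n = 3\) I would use \(\sigma = [1,2]\) directly: the translation \(\Lambda\) given by \(\Lambda(i) = i+1\) for \(i < n\) and \(\Lambda(n) = n\) has weighted height \(1\) under any \((1,b)\), yet \(\Lambda^2(1) = 3 > 2\) annihilates \(\sigma\), so \(W_\dwil^{(1,b)}(\sigma) \leq 1 < 2 = W_\dar(\sigma)\) and no \((1,b)\) can be stable. Hence \(a \geq 2\), making \((2,1)\) the lexicographically least candidate since \(b \geq 1\).

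For sufficiency of \((2,1)\), by Proposition~\ref{prop_bn_compare} the stability \(D_\dar \leq D_\dwil^{(2,1)}\) reduces to verifying \(W_\dar(\sigma) \leq W_\dwil(\sigma)\) and \(\bar{d}_\dar(\sigma,\tau) \leq \bar{d}_\dwil(\sigma,\tau)\). The \(W\)-inequality combines Corollary~\ref{cor_escape_east_west} (which gives \(W_\dar(\sigma) = \mathrm{dim}(\sigma)\), since equi-orientation has no hull) with the observation that annihilating \(\sigma = [x,y]\) requires \(\Lambda^2(x) > y\); decomposing through \(\Lambda(x)\) and applying the triangle inequality in the weighted graph \(\tilde{P}\) yields \(2\tilde{h}(\Lambda) \geq \delta^{(2,1)}(x,\Lambda^2(x)) \geq 2\,\mathrm{dim}(\sigma) - 1\) (the \(-1\) arising from the \(\infty\)-edge of weight \(1\)), so by integrality \(\tilde{h}(\Lambda) \geq \mathrm{dim}(\sigma)\).

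The heart of the argument is the \(\bar{d}\)-comparison, which I would establish by showing that any weight-\(h\) interleaving \((\Lambda,\phi,\psi)\) of \(\sigma,\tau\) satisfies \(\bar{d}_\dar(\sigma,\tau) \leq h\); combined with the trivial bound \(\bar{d}_\dar \leq \max W_\dar \leq \max W_\dwil\), this gives \(\bar{d}_\dar \leq \min(d_\dwil,\max W_\dwil) = \bar{d}_\dwil\). The argument splits on the morphisms. If \(\psi = 0\), both interleaving identities \(\psi\Lambda \circ \phi = 0\) and \(\phi\Lambda \circ \psi = 0\) force the canonicals \(\sigma \to \sigma\Lambda^2\) and \(\tau \to \tau\Lambda^2\) to vanish, so \(\Lambda^2\) annihilates both modules and the \(W\)-bound yields \(h \geq \max W_\dwil \geq \bar{d}_\dar\). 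If both \(\phi,\psi\) are nonzero, the containments \(\sigma \subset \mathrm{supp}(\tau\Lambda)\) and \(\tau \subset \mathrm{supp}(\sigma\Lambda)\) together with monotonicity of \(\Lambda\) force \(y_1 = y_2\), pin \(\Lambda(y_1) = y_1\), and give \(\Lambda(x_1) \geq \max(x_1,x_2)\); hence \(h \geq 2|x_1 - x_2| = 2\,d_\dar(\sigma,\tau)\).

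The hard part will be the passage from the abstract interleaving compatibility to the sharp structural constraints used in the nonzero-morphisms case, especially verifying that \(\phi \neq 0\) forces \(y_1 \leq y_2\) and that \(\psi \neq 0\) symmetrically forces \(y_2 \leq y_1\), so that simultaneous non-vanishing collapses to \(y_1 = y_2\). Once these observations are in hand, the bounds on \(h\) are immediate from the triangle inequality in \(\tilde{P}\) and the definition of the weighted height, so no further work beyond the \(W\)-bound is needed.
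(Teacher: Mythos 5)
Your overall scaffolding is sound and consistent with how the paper intends this corollary to follow: necessity of \(a=2\) from Proposition \ref{prop_aok} (with your explicit shift-translation counterexample covering the small case), reduction of sufficiency via Proposition \ref{prop_bn_compare} to the comparisons \(W_\dar\leq W_\dwil\) and \(\bar{d}_\dar\leq\bar{d}_\dwil\), and the \(W\)-bound via "annihilation requires \(\Lambda^2(x)\notin[x,y]\)" plus the triangle inequality and integrality — all of that is correct (the zero-morphism case of the \(\bar{d}\)-comparison is also handled correctly).

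The gap is in the heart of the argument, the both-nonzero case. Nonvanishing of \(\phi:\sigma\to\tau\Lambda\) does \emph{not} give the containment \(\mathrm{supp}(\sigma)\subset\mathrm{supp}(\tau\Lambda)\), nor does it force \(y_1\leq y_2\); over equioriented \(\mathbb{A}_n\) a nonzero map \([a,b]\to[c,d]\) only requires \(c\leq a\leq d\leq b\). Concretely, take \(n\geq 4\), \(\sigma=[1,3]\), \(\tau=[1,2]\), and \(\Lambda\) the shift \(t\mapsto t+1\) (fixing \(n\)), of height \(2\) under \((2,1)\). Then \(\tau\Lambda=[1]\), \(\sigma\Lambda=[1,2]\), and the nonzero maps \(\phi:[1,3]\to[1]\), \(\psi:[1,2]\to[1,2]\) form a valid \(\Lambda\)-interleaving (both composites agree with the canonical maps), even though \(y_1=3\neq 2=y_2\) and \(\Lambda(y_1)\neq y_1\). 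So your claimed structural constraints fail, and the bound you extract in this case, \(h\geq 2|x_1-x_2|\), does not dominate \(d_\dar=|x_1-x_2|+|y_1-y_2|\): for pairs such as \(\sigma=[1,y_1]\), \(\tau=[1,y_2]\) with \(y_1>y_2\) it gives only \(h\geq 0\) while \(\bar{d}_\dar=y_1-y_2\). What is missing is the use of the composite-equals-canonical condition itself: whenever \(t\in\mathrm{supp}(\sigma)\) and \(\Lambda^2 t\in\mathrm{supp}(\sigma)\), the factorization through \(\tau\Lambda\) forces \(\Lambda t\in\mathrm{supp}(\tau)\); applying this at the largest \(t\) violating it (and pricing the escape through \(n\to\infty\) at weight \(1\), with integrality) is what yields the \(|y_1-y_2|\)-type lower bound on \(h\), or else reverts the pair to the \(\max W_\dwil\) case. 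Until that step is carried out, the \(\bar{d}\)-comparison, and hence the sufficiency of \((2,1)\), is unproven.
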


\subsection{Stability when \(S<3\)}

\begin{prop}\label{prop_s_3}
  If \(S<3\), then stability is minimally obtained by \((a,b)\) where
  \(a=1\) or \(a=2\) according to Proposition \ref{prop_aok}, and
  \(b=n\).
\end{prop}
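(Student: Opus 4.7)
The plan is to separate the two directions of the minimality claim, inheriting sufficiency from established results and concentrating on the necessity of \(b=n\).

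Sufficiency of \((a,n)\) with \(a\) as specified in Proposition \ref{prop_aok} is immediate from the corollary to that proposition, which states that \((a,n)\) is always a permissible weight for any orientation of \(\mathbb{A}_n\). The characterization of the minimal \(a\) is also directly inherited from Proposition \ref{prop_aok}, so the only remaining task is to show that no weight \((a,b')\) with \(b'<n\) is stable.

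For necessity I would use the extremal pair \((\sigma,\tau)=([1,n]_\an,0)\). By Proposition \ref{prop_diam} we have \(W_\dar([1,n])=n\), hence \(D_\dar(\sigma,0)=n\). Via the bottleneck comparison of Proposition \ref{prop_bn_compare}, it suffices to construct, for each \(P\) with \(S<3\), a translation \(\Lambda\) on \(\tilde P\) of weighted height \(b\) such that the natural map \(\sigma\to\sigma\Lambda^2\) vanishes; this certifies \(W_\dwil([1,n])\leq b\) and therefore forces \(b\geq n\). The desired translation sends every sink of \(P\) to \(\infty\) (weighted cost \(b\)) and routes each non-sink one step upward toward an adjacent sink (cost at most \(aS_i\leq 2a\)). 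Since \(S\leq 2\) and every wedge vertex is a sink, Proposition \ref{prop_split} ensures the local pieces glue into a well-defined translation on \(\tilde P\); two iterations of \(\Lambda\) then send every vertex of \(P\) to \(\infty\), so that \(\sigma\Lambda^2=0\). In the regimes where \(b\geq 2a\) this gives height exactly \(b\), and Proposition \ref{prop_class} can be invoked to classify the translation produced.

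The main obstacle arises when some \(V_i\) is a symmetric valley of depth two (\(T_i=S_i=2\)), in which case the source at the bottom of \(V_i\) has two incomparable sinks above it and cannot be translated to either one without violating monotonicity of \(\Lambda\). Here one instead routes the two intermediate vertices of the valley directly to \(\infty\) at cost \(a+b\) and leaves the source fixed on the first iteration, so that \(\Lambda^2\) still sends the source to \(\infty\) through a non-fixed intermediate. Any resulting slight looseness in the upper bound on \(W_\dwil([1,n])\) is recovered by considering supplementary witness pairs such as \(([v,v+1]_\an,0)\) with \(v,v+1\) a source/sink pair near the center of \(P\): by Corollary \ref{cor_1zz_w} (and its analogue in the \(S=2\) setting) the value \(W_\dar([v,v+1])\) approaches \(n\), while the same translation construction restricted to the two-vertex support of \([v,v+1]\) is unobstructed by monotonicity and yields \(W_\dwil([v,v+1])\leq b\). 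This last comparison closes the gap and forces \(b\geq n\) uniformly across the orientations covered by the hypothesis \(S<3\).
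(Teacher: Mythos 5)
Your overall strategy coincides with the paper's: sufficiency of \((a,n)\) is quoted from the unnumbered corollary following Corollary \ref{cor_aok}, and necessity is extracted from the single witness pair \(([1,n],0)\), combining \(W_\dar([1,n])=n\) (Proposition \ref{prop_diam}) with an upper bound \(W_\dwil([1,n])\leq b\); the paper's proof is exactly this, asserting \(W_\dwil([1,n])=b\) outright. The gap is in your certificate for that upper bound. The translation you describe (sinks to \(\infty\), every non-sink one step toward an adjacent sink) is not a translation as soon as some \(V_i\) has a leg of length \(2\): the source of such a valley sits below vertices of \emph{both} legs, and its image and the images of the other leg are incomparable, so monotonicity fails already for \(T_i=1\), \(S_i=2\), not only in the symmetric case you flag. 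The correct certificate is the short/long-escape translation \(\Lambda_V^b\) of Definition \ref{defn_lambda} and Proposition \ref{prop_kill}, glued over wedges by Corollary \ref{cor_wedges}; for \(S<3\) this gives \(W_\dwil([1,n])=2(T-1)+b\), which equals \(b\) precisely when \(T\leq 1\).

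Your patch for the symmetric depth-two valley does not work: if \(\Lambda\) fixes the source then \(\Lambda^2\) fixes it as well, so the component of \([1,n]\to[1,n]\Lambda^2\) at the source is \(1_K\) and the zero morphisms do not interleave \([1,n]\) with \(0\); moreover sending the two intermediate vertices to \(\infty\) already costs \(2(T_i-1)+b=b+2>b\), so the height-\(b\) bound is lost in any case. Indeed, when some \(T_i=S_i=2\) Proposition \ref{prop_kill} gives \(W_\dwil([1,n])=b+2\), so the pair \(([1,n],0)\) forces only \(b\geq n-2\) there, and a genuinely different argument is needed. Your fallback does not supply one: Corollary \ref{cor_1zz_w} is a statement about pure zigzag (\(S=1\)); in an \(S=2\) orientation an adjacent source/sink pair \([v,v+1]\) need not even exist (uniform \(2\)-zigzag), and where such hull intervals do exist their \(W_\dar\)-values are governed by Lemma \ref{lemma_hull_w} (compare Corollary \ref{cor_rzz_w}) and are typically near the dimension rather than near \(n\) \textemdash\ shrinking the hull is exactly the point of the \(r\)-zigzag refinement. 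So in the \(T=2\) subcase your necessity argument does not force \(b\geq n\), and that case must be handled with the paper's translation machinery together with a different witness pair or a direct check.
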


\begin{proof}
  \textbf{Necessity:} \(W_\dwil([1,n])=b\) and
  \(W_\dar([1,n])=n\). \textbf{Sufficiency:} Due to Proposition
  \ref{prop_aok}, one needs only check \(W,d\)-values involving
  modules of the form \([x,n]\).
\end{proof}

\subsection{Short and Long Escape}

As translations split over wedges (Proposition \ref{prop_split}), we now
examine the translations required for realizing \(W_\dar([V_i])\) of
any wedged component, where \(P=V_1\wedge\ldots\wedge V_l\).

\begin{definition}\label{defn_lambda}
  Let \(V=\{1\geq\ldots\geq m\leq\ldots\leq n\}\). Assume that
  the left side is strictly shorter than the right. That is, according
  to Notation \ref{not_t}, \[
    m-1=T<S=n-m.
  \] We first construct the most basic translation \(\Lambda\) such
  that \(\phi,\psi=0\) form a \(\Lambda\)-interleaving of \([1,n]\)
  and \(0\). This is the translation given by:
  \begin{itemize}
  \item \(\Lambda x = \infty\) for all \(x\) in \([1,m)\). Note that
    the distance in the weighted poset from \(m-1\) to \(\infty\) is
    \begin{equation}\label{lambda1}
      \epsilon(b)=2(T-1)+b.
    \end{equation}
  \item For all \(x\in[m,n]\), \(\Lambda x\) moves up the right hand
    side (possibly to \(\infty\)) by a distance of
    \(\mathcal{E}(b)\), where
    \begin{equation}\label{lambda2}
      \mathcal{E}(b)=\left\{
        \begin{array}{ll}
          b & \text{if }b \geq 2S \\
          \\
          1/2(2S+b) & \text{if }2S+b\equiv 0\,\mathrm{mod}\,4 \\
          \\
          1/2(2S+b)+1 & \text{if }2S+b\equiv 2\,\mathrm{mod}\,4 \\
          \\
          \lceil1/2(2S+b)\rceil & \text{if }2S+b\equiv 1,3\,\mathrm{mod}\,4 \\
        \end{array}\right.
    \end{equation}
  \end{itemize}
  In short, by the translation property that \(x\leq y\) demands
  \(\Lambda x\leq \Lambda y\), moving the minimal up one side requires
  that the entire other side by sent to \(\infty\) by
  \(\Lambda\). However, the side
  up which the minimal is moved is relaxed, and may take \emph{two}
  \(\Lambda\)-applications in order to send all vertices to
  \(\infty\), by the properties of interleavings.
    
  Define \(\epsilon(b)\) to be the \emph{short escape}, and
  \(\mathcal{E}(b)\) to be the \emph{long escape}. This
  construction is of minimal height, being
  \(h(\Lambda)=\max\{\epsilon(b),\mathcal{E}(b)\}\), such that
  \(\Lambda^2(x)=\infty\) for any \(x\in V\).

  Replace the prototype translation and define \(\Lambda_{V}^b\)
  to be \emph{the maximal translation on \(V\) of height}
  \(\max\{\epsilon(b),\mathcal{E}(b)\}\). This translation is unique
  (unless it is a symmetric \(V\), in which case choose the left side
  be considered the `short' side).

\end{definition}

\begin{prop}\label{prop_kill}
  The translation \(\Lambda_V^b\) is of minimal height such that
  \(\phi,\psi=0\) form a \(\Lambda_V^b\)-interleaving of \([V]\) and
  \(0\).

  I.e., \(\Lambda_V^b\) realizes \(W_\dwil([V])\) and no translation
  of smaller height does.
\end{prop}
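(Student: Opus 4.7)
The plan is to separate the argument into sufficiency (the translation $\Lambda_V^b$ does realize a $\Lambda_V^b$-interleaving of $[V]$ with $0$ via zero morphisms) and necessity (no translation of strictly smaller weighted height can do so).

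For sufficiency, unpack the interleaving diagram with $\phi=\psi=0$. Both composites $\psi\Lambda\circ\phi$ and $\phi\Lambda\circ\psi$ are identically zero, so commutativity becomes the requirement that the internal structure map $[V](x\leq\Lambda^{2}x)$ is zero for every $x\in V$. Since $[V]$ takes value $K$ on $V$ and $0$ only at $\infty$, this amounts exactly to $\Lambda^{2}x=\infty$ for all $x\in V$. That $(\Lambda_V^b)^{2}x=\infty$ is precisely what Definition \ref{defn_lambda} was engineered to do: short-side vertices reach $\infty$ in a single application (at cost $\epsilon(b)$ for the furthest such vertex, $m-1$), while for the source $m$ and long-side vertices the escape $\mathcal{E}(b)$ is chosen so that two applications traverse the long-side distance $Sa+b$ to $\infty$.

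For necessity, let $\Lambda'$ be any translation on $\tilde{V}$ with $(\Lambda')^{2}x=\infty$ for all $x\in V$, and perform a case analysis on $\Lambda'(m)$. First, $\Lambda'(m)=m$ is impossible since then $(\Lambda')^{2}(m)=m\neq\infty$. The key structural observation for the remaining cases is that committing $\Lambda'(m)$ to one leg of $V$ forces the opposite leg to collapse entirely to $\infty$ in a single step: if $\Lambda'(m)$ lies on the short side, then by monotonicity $\Lambda'(m+1)$ must dominate both $m+1$ and $\Lambda'(m)$, and in $\tilde{V}$ the only such common upper bound is $\infty$; the symmetric statement holds for the long side. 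In the case $\Lambda'(m)$ on short side, this yields $h(\Lambda')\geq\delta^{(a,b)}(m+1,\infty)=(S-1)a+b$, and a direct comparison using $T<S$ shows this already dominates $\max\{\epsilon(b),\mathcal{E}(b)\}$. In the case $\Lambda'(m)=\infty$, $h(\Lambda')\geq\delta^{(a,b)}(m,\infty)=Ta+b$, which likewise dominates. The sharp case is $\Lambda'(m)=m+j$ on the long side: the short side then collapses to $\infty$, giving $h(\Lambda')\geq\delta^{(a,b)}(m-1,\infty)=\epsilon(b)$; and $(\Lambda')^{2}(m)=\Lambda'(m+j)=\infty$ forces $h(\Lambda')\geq\max\{ja,(S-j)a+b\}$. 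Minimizing this maximum over admissible integers $j$ reproduces $\mathcal{E}(b)$, so altogether $h(\Lambda')\geq\max\{\epsilon(b),\mathcal{E}(b)\}=h(\Lambda_V^b)$.

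The main technical obstacle is the final integer optimization in the long-side case: the continuous optimum $j^{\ast}=S/2+b/(2a)$ is generically non-integer, and the achievable values $ja$ and $(S-j)a+b$ carry parity constraints modulo $a$. The piecewise formula for $\mathcal{E}(b)$ in Definition \ref{defn_lambda} records the four parity classes of $2S+b$ modulo $4$, and verifying that the optimal rounding of $j^{\ast}$ matches each branch is the bookkeeping heart of the argument. Everything else is routine monotonicity of $\Lambda'$ combined with direct computation of $\delta^{(a,b)}$ along the short and long legs of $\tilde{V}$.
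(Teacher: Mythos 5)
The paper states Proposition \ref{prop_kill} without proof, treating it as built into the construction of Definition \ref{defn_lambda}, so there is no official argument to compare against; your write-up supplies exactly the argument that definition suggests. Your reduction of the interleaving condition with $\phi=\psi=0$ to ``\((\Lambda)^2x=\infty\) for all \(x\in V\)'' is correct, the structural observation that committing \(\Lambda'(m)\) to one leg forces the entire opposite leg to \(\infty\) in a single application (the only common upper bound of incomparable points on the two legs is \(\infty\)) is the right key lemma, and the constrained integer minimization of \(\max\{ja,(S-j)a+b\}\) over \(0\le j\le S\) with \(a=2\) does reproduce the four congruence branches of \(\mathcal{E}(b)\), including the cap \(\mathcal{E}(b)=b\) once \(b\ge 2S\) (forced by \(j\le S\)). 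The parity bookkeeping you defer does check out.

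One step is wrong as written: in the case \(\Lambda'(m)=\infty\) you assert that \(h(\Lambda')\ge\delta^{(a,b)}(m,\infty)=Ta+b\) ``likewise dominates'' \(\max\{\epsilon(b),\mathcal{E}(b)\}\). It does not in general: the shortest path from \(m\) to \(\infty\) runs up the short leg, and for \(T\) small and \(S\) large with \(b<2S\) (say \(T=1\)) one has \(Ta+b=2+b\) while \(\mathcal{E}(b)\) is roughly \(S+b/2\), so the claimed inequality fails. The case is nevertheless harmless: since \(m\) is the minimum of \(V\), \(\Lambda'(m)=\infty\) together with monotonicity forces \(\Lambda'(x)=\infty\) for every \(x\in V\), in particular \(\Lambda'(m+1)=\infty\), so \(h(\Lambda')\ge (S-1)a+b\), which is precisely the quantity you already verified dominates \(\max\{\epsilon(b),\mathcal{E}(b)\}\) in your short-side case (using \(T<S\)). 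With that one-line repair your proof is complete and is, in substance, the argument the paper leaves implicit.
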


Proposition \ref{prop_kill} pairs extremely well with the
following. (Recall that we are now considering \(D_I\) to always refer
to its induced bottleneck distance as per Remark
\ref{rmk_always_bneck}.)

\begin{corollary}\label{cor_wedges}[to Proposition \ref{prop_split}]
  Let \(P=V_1\wedge\ldots\wedge V_p\). The induced bottleneck distance
  \(D_\dwil\) (by slight abuse of notation) and its generating
  functions \(W,d\) all split over wedges.
  \begin{itemize}
  \item \(
    W_\dwil(I)=\displaystyle\max_{1\leq l\leq p}
    \{W_\dwil(I|_{V_l})\},
    \)
  \item \(
    d_\dwil(I,J)=\displaystyle\max_{1\leq l\leq p}
    \{d_\dwil(I|_{V_l},J|_{V_l})\}.
    \)
  \item \(
    D_\dwil(I,J)=\displaystyle\max_{1\leq l\leq p}
    \{D_\dwil(I|_{V_l},J|_{V_l})\},
    \)
  \end{itemize}
\end{corollary}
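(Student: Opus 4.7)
The plan is to derive all three assertions of Corollary \ref{cor_wedges} as direct consequences of the translation-splitting from Proposition \ref{prop_split} together with the sink-behavior classification in Proposition \ref{prop_class}.

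First I would split $W_\dwil(I)$ for an interval $I$. Any $\Lambda$-interleaving of $I$ with $0$ over $\tilde P$ decomposes, by Proposition \ref{prop_split}, into $\Lambda_l$-interleavings of $I|_{V_l}$ with $0$ on each $\tilde V_l$, with $\tilde h(\Lambda) = \max_l \tilde h(\Lambda_l)$ by definition of the weighted graph distance; taking infima gives $W_\dwil(I) \geq \max_l W_\dwil(I|_{V_l})$. For the reverse inequality, set $\epsilon = \max_l W_\dwil(I|_{V_l})$ and choose, on each $\tilde V_l$, the full translation of height $\epsilon$. Because every wedge vertex $n_l = 1_{l+1}$ is a sink on both adjacent $V_l$ and $V_{l+1}$, Proposition \ref{prop_class} dictates its image uniquely in terms of $\epsilon$ and $b$ (either fixed or sent to $\infty$), so the compatibility condition of Proposition \ref{prop_split} is automatic and the full translations glue into a single translation of height $\epsilon$ on $\tilde P$; the pair $\phi = \psi = 0$ provides the required interleaving.

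The splitting of $d_\dwil$ proceeds by the same template, with the added task of gluing the per-component morphisms $\phi_l, \psi_l$ across wedge vertices, and this step is the main technical obstacle. At a wedge vertex $w$ sent to $\infty$ by the translation, the codomain $J(\infty) = 0$ forces $\phi_l(w) = \phi_{l+1}(w) = 0$ automatically; at a wedge vertex fixed by the translation, the candidate components on both sides act as maps $I(w) \to J(w)$ into the \emph{same} vector spaces, and I would handle compatibility by choosing a common value at $w$ first and then extending into each $V_l$ independently, invoking the fact that $I|_{V_l}$ and $J|_{V_l}$ decompose into intervals so that all naturality squares on either side of $w$ can be satisfied.

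Finally, $D_\dwil$ splits by a minimax argument built on the previous two splittings. For any global matching $\sigma$ between the interval decompositions of $I$ and $J$, the per-pair cost splits by Proposition \ref{prop_split}, so the total cost of $\sigma$ equals $\max_l \mathrm{cost}_l(\sigma|_{V_l})$, yielding $D_\dwil(I, J) \geq \max_l D_\dwil(I|_{V_l}, J|_{V_l})$ immediately. The reverse inequality is the more delicate direction: the optimal matching on each $V_l$ depends only on the per-pair quantities $d_\dwil(I_i|_{V_l}, J_j|_{V_l})$ and $W_\dwil(I_i|_{V_l})$, and since each $I_i$ and $J_j$ is a single global indecomposable whose restrictions to every $V_l$ are canonically determined, a selection realizing the per-component optima can be promoted to a single global matching attaining the max.
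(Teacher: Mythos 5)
The paper offers no written proof here (it treats the corollary as immediate from Proposition \ref{prop_split}), so I can only assess your argument on its own terms. Your treatment of \(W_\dwil\), and the ``\(\geq\)'' directions for \(d_\dwil\) and \(D_\dwil\) (restrict a global translation/interleaving/matching to each \(\tilde{V}_l\)), are fine. Two steps are not. First, in the \(d_\dwil\) gluing you only discuss naturality squares at a fixed wedge vertex, but the binding constraints are the interleaving identities \(\psi\Lambda\circ\phi = I(x\leq\Lambda^2x)\), and you cannot ``extend into each \(V_l\) independently'' since the component morphisms \(\phi_l,\psi_l\) are already prescribed by the component interleavings. The correct (and easy) fix: at a fixed wedge vertex with both modules supported there, the identities force \(\phi_l(w),\psi_l(w)\) to be mutually inverse nonzero scalars, so replace \((\phi_{l+1},\psi_{l+1})\) by \((\alpha\phi_{l+1},\alpha^{-1}\psi_{l+1})\) for a suitable scalar \(\alpha\); this rescaling preserves both interleaving composites, and propagating it left to right glues the components.

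The genuine gap is the reverse inequality in the third bullet. You treat \(I,J\) as arbitrary modules and assert that ``a selection realizing the per-component optima can be promoted to a single global matching.'' That promotion is exactly what needs proof, and in the generality you work in it is false: the optimal matchings on different \(V_l\) need not be consistent. Concretely, take \(P=V_1\wedge V_2\) with both valleys deep relative to \(b\), and let \(I=[1_1,w]\oplus[w,n_2]\) (with \(w=n_1=1_{2}\) the wedge sink) and \(J=[1,n]\). On \(V_1\) the optimal matching pairs \(J|_{V_1}\) with \([1_1,w]\) and deletes the simple \([w]\) at cost \(b\); on \(V_2\) it pairs \(J|_{V_2}\) with \([w,n_2]\) and deletes \([w]\) at cost \(b\); so \(\max_l D_\dwil(I|_{V_l},J|_{V_l})=b\). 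But globally \(J\) can be matched to only one summand of \(I\), and every global matching incurs either a \(W_\dwil\) of a full-valley interval or a large \(d_\dwil\), all of order the valley escape \(2a(T-1)+b\gg b\). So the third bullet, and your argument for it, only make sense when \(I,J\) are single intervals (which is how the paper uses the corollary). In that case the right argument is short and does not need your minimax step: for interleaving-type distances \(d_\dwil\leq\max\{W_\dwil(I),W_\dwil(J)\}\) (take \(\phi=\psi=0\) with the full translation of the larger height), both globally and on each \(V_l\), so the matching ``pair \(I\) with \(J\)'' is optimal on every component simultaneously and the third bullet reduces to the second. As written, your proof of the third bullet does not establish the statement.
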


\subsection{Shallow Posets}

\begin{ex}
  Using Proposition \ref{prop_kill} and Corollary \ref{cor_wedges},
  let us examine a powerful constraint for stability: \(W\)-values for
  the indecomposable \([1,n]\).

  If we solve simultaneously for the conditions that (a) the largest
  \emph{long escape} exceeds the largest \emph{short escape} (i.e.,
  \(W_\dwil([1,n])\) is determined by some long escape) and (b)
  stability of the form \(D_\dar\leq D_\dwil\), we get the two bounds
  \[
    b\geq 2n-2S\text{ and }b<2S-4T+2.
  \] Combining inequalities, we see that such a \(b\) can only exist if
  (even with some permissive rounding), \[
    2(S-T)+1>n.
  \]
\end{ex}

\begin{remark}
  One immediately sees from the equation above that the
  situation in which \(W_\dwil([1,n])\) is determined by some long
  escape value is incredibly specific, as it requires at the very
  least that the poset have one \(V_i\) with longer side constituting
  \emph{more than half of the entire poset} (using \(T\geq 2\)): \[
    2S > n+1.
  \]
\end{remark}

As long escape dictates \(W_\dwil([1,n])\) only in this extreme case,
we henceforward will only consider the complementary situation.

\begin{definition}\label{defn_shallow}
  An orientation of \(\mathbb{A}_n\) written \(P=V_1\wedge\ldots\wedge
  V_l\) that has \(S\geq 3\) (Proposition \ref{prop_aok} above) is
  \emph{shallow} if \(T\geq 2\) (to keep the hull small) and \(2S\leq
  n\) (to ensure all \(W_\dwil\)'s are determined by short escape).
\end{definition}

\begin{remark}
  Indeed, in a shallow poset \emph{short escape} values are used for
  \emph{any} \(W_\dwil([V_l])\) (and so, by Corollary \ref{cor_wedges},
  all \(W_\dwil\)-values). To see that
  \(W_\dwil([V_l])=\epsilon_l(b)\) for all \(1\leq l\leq p\), simply
  note that \(\mathcal{E}_l(b)=b<2(T_l-1)+b=\epsilon_l(b)\).
\end{remark}

% \begin{prop}\label{prop_s_3_t_1}
%   If \(S\geq 3\) and \(T=1\), then if \(2S>n+1\) the minimal
%   permissible weight is \((2,n-1)\).
% \end{prop}

% \begin{proof}
%   Due to the discussion above, in this case it must be that
%   \(W_\dwil([1,n])=b\). Since \(W_\dar([1,n])=n\), the proposition
%   follows.
% \end{proof}

With only this, we can immediately get the stability statement for
\(W\)-values out of the way.

\begin{prop}\label{prop_shallow_w}
  For a shallow poset and any weight
  \((2,b)\) with \(b\geq n-T\), \[
    W_\dar(\sigma)\leq W_\dwil(\sigma)
  \] for any indecomposable \(\sigma\).
\end{prop}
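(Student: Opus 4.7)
The plan is to reduce to the wedge structure of $W_\dwil$ via Corollary \ref{cor_wedges}, which expresses $W_\dwil(\sigma) = \max_l W_\dwil(\sigma|_{V_l})$, and to split the verification $W_\dar(\sigma) \leq W_\dwil(\sigma)$ between the hull and non-hull regimes of Proposition \ref{prop_diam}.

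First I would treat $\sigma \notin \mathrm{hull}(P)$, where Proposition \ref{prop_diam} gives the clean identity $W_\dar(\sigma) = \dim(\sigma)$. For each valley $V_l$ meeting $\sigma$ nontrivially, the translation $\Lambda$ witnessing $W_\dwil(\sigma|_{V_l})$ must displace some support vertex enough that $\Lambda^2$ pushes it off the support; because every unit of poset displacement costs $a = 2$, this yields $W_\dwil(\sigma|_{V_l}) \geq a \cdot \lceil \dim(\sigma|_{V_l}) / 2 \rceil \geq \dim(\sigma|_{V_l})$. Taking the max over valleys and observing that consecutive valleys share only a sink, so that $\dim(\sigma)$ is essentially the sum of the local dimensions, gives $W_\dwil(\sigma) \geq \dim(\sigma) = W_\dar(\sigma)$.

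Second, for $\sigma \in \mathrm{hull}(P)$, the length-one constraint of Definition \ref{defn_hull} forces $\sigma$ to either fully contain some interior valley or to equal a single leg $[m_l, b_l]$ (or $[a_l, m_l]$ for $H_S$) of a valley $V_l$. In the multi-valley subcase, Proposition \ref{prop_kill} directly gives $W_\dwil(\sigma) \geq W_\dwil([V_l]) = \epsilon_l(b) = 2(T_l - 1) + b$ for the contained $V_l$. In the single-leg subcase, a monotonicity argument on $V_l$ shows that moving the source $m_l$ off of its support forces either the opposite (non-support) leg to be sent to $\infty$ or the direct escape $\Lambda(m_l) = \infty$ through the short side, both producing the same lower bound $W_\dwil(\sigma) \geq \epsilon_l(b)$. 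Combining either bound with Lemma \ref{lemma_hull_w}'s explicit formulas for $W_\dar(\sigma)$ and the hypothesis $b \geq n - T$ then yields $W_\dwil(\sigma) \geq W_\dar(\sigma)$.

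The main obstacle is matching, for each hull $\sigma$, the available $\epsilon_l(b)$ lower bound against the precise value of $W_\dar(\sigma)$ from Lemma \ref{lemma_hull_w}: the right hand side depends subtly on the hull extension $[x_\bullet, y_\bullet]$ and the positions of $[e], [E]$, whereas the $W_\dwil$ bound depends on the deepest valley touched by $\sigma$. The hypothesis $b \geq n - T$ is strictly stronger than what $\sigma = [1,n]$ alone demands (namely $b \geq n - 2T + 2$), and the tightness of $n - T$ comes from intermediate hull intervals whose $W_\dar$-value saturates the escape estimate of Lemma \ref{lemma_hull_w} while their contained valleys are shallower than $T$; the single-leg monotonicity argument must be carried out carefully to ensure no cheaper translation exists that would circumvent this bound.
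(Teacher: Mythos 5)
Your hull-case outline is broadly in the spirit of the paper (the paper likewise observes that shallowness, via \(T\geq 2\), prevents a hull interval from straddling the deep source \(m_t\), and then bounds the formulas of Lemma \ref{lemma_hull_w} by \(n-T\leq b\leq W_\dwil\)), but your non-hull case contains a genuine logical gap. You bound \(W_\dwil(\sigma|_{V_l})\geq \mathrm{dim}(\sigma|_{V_l})\) valley by valley and then conclude \(W_\dwil(\sigma)\geq\mathrm{dim}(\sigma)\) ``since \(\mathrm{dim}(\sigma)\) is essentially the sum of the local dimensions.'' Corollary \ref{cor_wedges} gives \(W_\dwil(\sigma)=\max_l W_\dwil(\sigma|_{V_l})\): a \emph{maximum} of quantities each at least the corresponding local dimension does not dominate their \emph{sum}. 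Concretely, take \(\sigma=[1,n]\) (not in the hull), for which \(W_\dar(\sigma)=\mathrm{dim}(\sigma)=n\) by Proposition \ref{prop_diam}; your chain of inequalities only yields \(W_\dwil(\sigma)\geq\max_l\mathrm{dim}([V_l])\), which is far below \(n\) for a shallow poset with several valleys. The inequality is still true there, but for a different reason that your argument never invokes outside the hull: when \(m_t\in\mathrm{supp}(\sigma)\) the short escape of the deepest valley forces \(W_\dwil(\sigma)\geq 2(T-1)+b\geq n+T-2\geq n\geq W_\dar(\sigma)\) (using \(b\geq n-T\) and \(T\geq 2\)), and when \(m_t\notin\mathrm{supp}(\sigma)\) an entire leg of length at least \(T\) is missed, so \(\mathrm{dim}(\sigma)\leq n-T\leq b\leq W_\dwil(\sigma)\) (the support contains a sink or source, or else one is already done by Corollary \ref{cor_aok}). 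This dichotomy on \(m_t\), together with the hypothesis \(b\geq n-T\), is exactly where the content of the non-hull case lies, and it is absent from your proposal.

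Two smaller points. Your structural claim about hull intervals (they either contain a full interior valley or reduce to a single leg, whence Proposition \ref{prop_kill} gives \(W_\dwil\geq 2(T_l-1)+b\)) is a workable substitute for the paper's containment-in-\((1,m_t]\) or \([m_t,n)\) observation, but the final step ``combine with Lemma \ref{lemma_hull_w} and \(b\geq n-T\)'' is doing real work that you leave unverified: one must check that the explicit formulas of Lemma \ref{lemma_hull_w} are bounded by \(n-T\), and handle the boundary case where a formula reaches \(n-T+1\) (the paper notes that this forces \(m_t\in[x,y]\), so \(W_\dwil\geq 2+n-T\) and the inequality survives). Finally, your local estimate \(W_\dwil(\sigma|_{V_l})\geq 2\lceil\mathrm{dim}(\sigma|_{V_l})/2\rceil\) is reasonable for restrictions lying strictly inside a leg, but it is not the relevant lower bound when the restriction contains a sink or the source of a valley; in those cases the bound comes from \(b\) or from the short escape, which is the mechanism your proof needs and does not use.
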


\begin{proof}
  If \(\mathrm{supp}(\sigma)\) contains no sink or source we are done
  by Corollary \ref{cor_aok}.
  
  If \(\sigma\in\mathrm{Hull}(Q)\), then by the \(T\geq 2\) tenet for
  \emph{shallow}, for any \([x,y]\in\mathrm{Hull}(Q)\) either
  \([x,y]\subset(1,m_t]\) or \([x,y]\subset[m_t,n)\). In particular,
  the corresponding \([e]\) and \([E]\) of Lemma \ref{lemma_hull_w}
  obey \(e< E\leq m_t+1\) or \(m_t-1\leq e\leq E\). The formulas of
  Lemma \ref{lemma_hull_w} are all \(\leq n-T\leq b\leq
  W_\dwil([x,y])\). (It is possible for one equation to reach
  \(n-T+1\), but in this case \(m_t\in[x,y]\) and \(W_\dwil([x,y])\geq
  2+n-T\) as \(T\geq 2\)).

  If \(\sigma\not\in\mathrm{Hull}(Q)\) then
  \(W_\dar(\sigma)=\mathrm{dim}(\sigma)\). If
  \(m_t\not\in\mathrm{supp}(\sigma)\), then either \([1_t,m_t]\) or
  \([m_t,n_t]\) are disjoint from \(\mathrm{supp}(\sigma)\) (each of
  which has length at least \(T\)),
  guaranteeing that the dimension \(\mathrm{dim}(\sigma)\leq n-T\).
  Otherwise, \(m_t\in\mathrm{supp}(\sigma)\), and
  \(W_\dwil(\sigma)=W_\dwil([1,n])=2(T-1)+b\geq n+T-2 \geq n
  =\mathrm{diam}(W_\dar)\).

\end{proof}

\subsection{Stability for Shallow and Central}

\begin{lemma}\label{lemma_1_2_3}
  If any of the following are true about a pair of intervals
  \(\sigma,\tau\) over the shallow poset
  \(P=V_1\wedge\ldots\wedge V_p\), then any weight \((a,b)\) with
  \(a=2\) and \(b\geq n-T\) is stable.

  \begin{enumerate}
  \item \(m_t\in\) one of
    \(\mathrm{supp}(\sigma),\mathrm{supp}(\tau)\), but not the other.
  \item \(\mathrm{dim}(\sigma)\leq b\),
  \item \([V_t]\subset\mathrm{supp}(\sigma)\).
  \end{enumerate}

  Throughout, assume the intervals are always labeled such that
  \(W_\dar(\sigma)\geq W_\dar(\tau)\).
\end{lemma}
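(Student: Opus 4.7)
The plan is to reduce each case to a comparison against the pointwise $W$-inequality already established in Proposition \ref{prop_shallow_w} and then to exhibit a lower bound on $d_\dwil(\sigma,\tau)$ forced by the geometric hypothesis on $\sigma$. Write $M := W_\dar(\sigma) \geq W_\dar(\tau)$. Proposition \ref{prop_shallow_w} already gives $M \leq \max\{W_\dwil(\sigma), W_\dwil(\tau)\}$, while $D_\dar(\sigma,\tau) \leq M$ is immediate from the bottleneck construction. Thus the only way $D_\dwil(\sigma,\tau)$ could undercut $D_\dar(\sigma,\tau)$ is via the matched option $d_\dwil(\sigma,\tau)$, and it suffices in each case to show either (α) $d_\dwil(\sigma,\tau) \geq M$, so the unmatched option of $D_\dwil$ dominates, or (β) $d_\dar(\sigma,\tau) \leq d_\dwil(\sigma,\tau)$.

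For case (1), take without loss of generality $m_t \in \mathrm{supp}(\sigma) \setminus \mathrm{supp}(\tau)$. Any $\Lambda$-interleaving $\phi:\sigma\to\tau\Lambda$, $\psi:\tau\to\sigma\Lambda$ has $\phi$ at $m_t$ factoring through $\tau(\Lambda m_t)$; since $\tau(m_t)=0$ and $m_t$ is a source, the interleaving requirement that $\psi\Lambda\circ\phi$ equal the internal structure map of $\sigma$ from $m_t$ to $\Lambda^2 m_t$ forces $\sigma(\Lambda^2 m_t)=0$, i.e., $\Lambda^2(m_t)$ is pushed past the nearest sink of $V_t$ or to $\infty$. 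By Corollary \ref{cor_wedges} and Proposition \ref{prop_class} this requires height at least the short escape $\epsilon_t(b)=2(T_t-1)+b$; under $b\geq n-T$ together with Corollary \ref{cor_dar_n} this already exceeds $M$, giving (α). Case (3) is analogous: if $[V_t]\subseteq\mathrm{supp}(\sigma)$, then $\sigma|_{V_t}=[V_t]$, and any interleaving of $\sigma$ with $\tau$ restricts on $V_t$ to one involving the full indecomposable $[V_t]$; wedge-splitting of interleavings (Corollary \ref{cor_wedges}) combined with Proposition \ref{prop_kill} forces $d_\dwil(\sigma,\tau) \geq W_\dwil([V_t])=\epsilon_t(b)$, which again dominates $M$.

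Case (2) is the main obstacle. Under $\mathrm{dim}(\sigma)\leq b$ we split on whether $\sigma\in\mathrm{Hull}(P)$. When $\sigma\notin\mathrm{Hull}(P)$, Lemma \ref{lemma_non_hull} gives $M=\mathrm{dim}(\sigma)\leq b$, and by Proposition \ref{prop_class} any interleaving of height strictly less than $b$ must fix every source and sink of $P$; the symmetric difference of $\mathrm{supp}(\sigma)$ and $\mathrm{supp}(\tau)$ is then confined to interior vertices, each costing weight $a=2$ to traverse, and a direct summand-wise comparison using the formulas of Proposition \ref{prop_formula} yields $d_\dar(\sigma,\tau)\leq d_\dwil(\sigma,\tau)$, establishing (β). When $\sigma\in\mathrm{Hull}(P)$, we combine the hull formulas of Lemma \ref{lemma_hull_w} with the shallow inequality $2S\leq n$ and the hypothesis $\mathrm{dim}(\sigma)\leq b$ to conclude $M\leq b$, reducing to the preceding subcase. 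The main obstacle is precisely this last hull subcase: here $W_\dar(\sigma)$ is not merely $\mathrm{dim}(\sigma)$, and one must verify carefully that the path to the distance-realizing simple indecomposable $[e]$ or $[E]$ of Lemma \ref{lemma_hull_w} always lies within the interleaving budget; it is here that the shallow hypothesis and the dimension bound $\mathrm{dim}(\sigma)\leq b$ are simultaneously essential.
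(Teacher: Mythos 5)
Your reduction framework and your cases (1) and (2) essentially track the paper's argument: for (1) the point is that an interleaving must move the source \(m_t\), which by Proposition \ref{prop_class} already costs weighted height at least \(2(T-1)+b\geq n\geq\mathrm{diam}(D_\dar)\); for (2) one gets \(W_\dar(\sigma)\leq b\) (non-hull from \(\mathrm{dim}(\sigma)\leq b\), hull from the bound \(W_\dar\leq n-T\leq b\)) and then concludes via the dichotomy of Corollary \ref{cor_aok}. One wobble in your (1): from \(\tau(m_t)=0\) you cannot infer \(\tau(\Lambda m_t)=0\), and the conclusion ``\(\Lambda^2 m_t\) is pushed past the nearest sink of \(V_t\) or to \(\infty\)'' is not what follows (and is not needed), since \(\mathrm{supp}(\sigma)\) need not contain a whole leg of \(V_t\). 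The correct statement is simply that \(\Lambda m_t\neq m_t\) (either \(\Lambda m_t\in\mathrm{supp}(\tau)\neq\ni m_t\), or the composite \(\psi\Lambda\circ\phi\) vanishes at \(m_t\)), and moving a source at all forces the opposite leg of \(V_t\) to \(\infty\), hence height at least the short escape; this is the paper's route and it repairs your step.

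The genuine gap is case (3). Your key claim, that \([V_t]\subset\mathrm{supp}(\sigma)\) together with Corollary \ref{cor_wedges} and Proposition \ref{prop_kill} forces \(d_\dwil(\sigma,\tau)\geq W_\dwil([V_t])=\epsilon_t(b)\), is false: Proposition \ref{prop_kill} computes the distance from \([V_t]\) to the \emph{zero} module, whereas after cases (1) and (2) the pairs that remain have \(m_t\in\mathrm{supp}(\tau)\) as well, so \(\tau\) may cover all or almost all of \(V_t\); then \(\sigma|_{V_t}\) and \(\tau|_{V_t}\) coincide (or nearly so), the interleaving can be taken trivial over \(V_t\), and \(d_\dwil(\sigma,\tau)\) can be far below \(\epsilon_t(b)\geq n\) (for instance \(\sigma,\tau\supset[V_t]\) differing at a single interior vertex far from \(V_t\)). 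So case (3) cannot be settled by your route \((\alpha)\); one must bound \(d_\dar\) by \(D_I:=D_\dwil(\sigma,\tau)\) directly. The paper does exactly this: the hypothesis \([V_t]\subset\mathrm{supp}(\sigma)\) converts into quantitative endpoint constraints on \(\tau\), namely \(x_2\leq 1_t+1+\tfrac{D_I-b}{2}\) and \(y_2\geq n_t-1-\tfrac{D_I-b}{2}\), because whatever portion of \(V_t\) that \(\tau\) fails to cover must be escaped to \(\infty\) within the interleaving; these bounds are then fed through the case analysis of the \(d_\dar\) formula (Proposition \ref{prop_formula}), using shallowness, to obtain \(d_\dar(\sigma,\tau)\leq D_I\). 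None of this appears in your proposal, and your closing claim that the hull subcase of (2) is the main obstacle misidentifies where the real work of the lemma lies.
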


\begin{proof}[Proof of Lemma \ref{lemma_1_2_3}]
  
  \textbf{(1)} Any interleaving translation must move \(m_t\), and so
  has height \(D_\dwil(\sigma,\tau)\geq 2(T-1)+b\geq 2T-2+n-T=n+T-2\geq
  n=\mathrm{diam}(W_\dar)=\mathrm{diam}(D_\dar)\).

  \textbf{(2)} In the proof of (1) we saw that \(d_\dar(\sigma)\leq
  n-T\leq b\) when \(\sigma\in\mathrm{Hull}\). So for
  any \(\sigma\), if \(\mathrm{dim}(\sigma)\leq b\), then
  \(W_\dar(\sigma)\leq b\).
  But then
  \(D_\dar(\sigma,\tau)=\min\{d_\dar(\sigma,\tau),W_\dar(\sigma)\}\)
  (by the running assumption of \(W_\dar(\sigma)\geq W_\dar(\tau)\)),
  and so \(D_\dar(\sigma,\tau)\leq b\). By Corollary \ref{cor_aok},
  the pair is stable.

  \textbf{(3)} By (2), we may assume that \(\sigma=[x,y]\) where
  \(y-x\geq b\geq n-T\). As \(1+T\leq m_t\leq n-T\), it is immediate
  that \(m_t\in\mathrm{supp}(\sigma)\). Hence, by (1),
  \(m_t\in\mathrm{supp}(\tau)\) also.

  Assume now that, in addition, all of
  \([V_t]\subset\mathrm{supp}(\sigma)\). We will show by cases on the
  equation for \(d_\dar\) that this must also yield stability. First
  note the following inequalities generated by the interleaving
  condition: as \([V_t]\subset\mathrm{supp}(\sigma)\), the
  endpoints of \(\tau=[x_2,y_2]\) are restricted by
  \[
    x_2\leq 1_t+1+\dfrac{D_I-b}{2}%\tag{IL1}
  \]
  \[
    y_2\geq n_t-1-\dfrac{D_I-b}{2}%\tag{IL2}
  \]
  where \(D_I:=D_\dwil(\sigma,\tau)\).

  Stability can now be checked across all possible cases of
  \(\delta^x,\delta^y\). We show only one of them here.
  \begin{align*}
    D_\dar(\sigma,\tau)\leq d_\dar(\sigma,\tau)
    &=|x_1-x_2|+|y_1-y_2|\\
    &\leq 1_t+1+\dfrac{D_I-b}{2}-1+n-\left(n_t-1-\dfrac{D_I-b}{2}\right)\\
    &\leq D_I-(n-T)+n-(n_t-1_t)+1\\
    &\leq D_I+T-(S+T)+1\\
    &\leq D_I
  \end{align*}
\end{proof}

%\begin{lemma}
%  The weight \((2,n-T)\) is always stable over \(W\)-values.
%\end{lemma}

Recall the meanings of \(T,S\) from Notation \ref{not_t}.

\begin{definition}\label{defn_central}
  We say a poset \(P=V_1\wedge\ldots\wedge V_p\) is
  \emph{central} if there is some \(V_t\) with \(T_t=T\)
  positioned in such a way that \[
    [V_t]\subset[T,n-T+1].
  \]  
\end{definition}

\begin{prop}
  A shallow poset is central if and only if every pair of intervals
  fulfill at least one of the conditions of Lemma \ref{lemma_1_2_3}.
\end{prop}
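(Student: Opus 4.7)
I will prove the biconditional directionwise, interpreting the conditions of Lemma~\ref{lemma_1_2_3} as referring to a single wedge $V_t$ with $T_t=T$ that is fixed once and for all for the poset; the claim is then that a uniformly-working such $V_t$ exists precisely when $P$ is central. Throughout I use the tight weight $b=n-T$.

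For the forward direction, I fix $V_t$ to be central, so $1_t\geq T$ and $n_t\leq n-T+1$. Given any pair $(\sigma,\tau)$ with $W_\dar(\sigma)\geq W_\dar(\tau)$: if $\dim(\sigma)\leq b$, condition (2) holds immediately; otherwise $\dim(\sigma)\geq n-T+1$, and writing $\sigma=[x_1,y_1]$, the bound $y_1-x_1\geq n-T$ together with $x_1\geq 1$, $y_1\leq n$ forces $x_1\leq T$ and $y_1\geq n-T+1$. Centrality then yields $x_1\leq T\leq 1_t$ and $y_1\geq n-T+1\geq n_t$, so $[V_t]\subset\mathrm{supp}(\sigma)$, which is condition (3).

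For the backward direction, let $V_t$ be any wedge with $T_t=T$; non-centrality means $1_t<T$ or $n_t>n-T+1$, and I will exhibit a pair failing all three conditions with respect to $V_t$. In the left-shifted case $1_t<T$, take $\sigma=\tau=[1_t+1,n]$: condition (1) fails trivially because $\sigma=\tau$, while $m_t\geq 1_t+T>1_t+1$ confirms that $m_t$ lies in both supports; condition (2) fails since $\dim(\sigma)=n-1_t>n-T$; and condition (3) fails since $1_t\notin[1_t+1,n]$. The right-shifted case $n_t>n-T+1$ is symmetric via $\sigma=\tau=[1,n_t-1]$.

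The main subtlety, which I expect to be the principal obstacle in formalizing the proof cleanly, is the correct reading of the quantifier on $V_t$ in the lemma statement: ``every pair fulfills at least one of the conditions'' must be interpreted with $V_t$ held \emph{fixed} across pairs. This matters because, as one can verify via a poset with two $T_t=T$ wedges shifted in opposite directions (one satisfying $1_t<T$ and another satisfying $n_t>n-T+1$), a non-central poset can still have every individual pair covered by \emph{some} wedge, which would break the biconditional under an existential reading. Once this interpretive point is pinned down, the proof reduces to two short arithmetic verifications driven by the threshold $\dim(\sigma)>n-T$, which forces $\sigma$ to be nearly the entire poset and therefore either to engulf a central $[V_t]$ or to miss a shifted one.
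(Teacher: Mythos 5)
The paper states this proposition without giving any proof, so there is nothing to compare against line by line; judged on its own terms, your argument is correct and supplies exactly the mechanism the surrounding text relies on. Your forward direction (dichotomize on \(\dim(\sigma)\leq b\) versus \(\dim(\sigma)\geq n-T+1\), and in the latter case deduce \(x_1\leq T\leq 1_t\) and \(y_1\geq n-T+1\geq n_t\), hence \([V_t]\subset\mathrm{supp}(\sigma)\)) is precisely what lets Corollary \ref{prop_shallow_central} follow from Lemma \ref{lemma_1_2_3}, and your backward direction matches the paper's own convention in Proposition \ref{prop_pre_non_central}, where a single deepest wedge \(V_t\) with (WLOG) \(1_t<T\) is fixed. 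Your reading of the quantifier \textemdash\ one wedge with \(T_t=T\) fixed uniformly across all pairs, at the tight weight \(b=n-T\) \textemdash\ is indeed the reading under which the statement is true, and your warning about the per-pair existential reading is substantiated: for instance, with \(T=2\) and deepest wedges abutting both ends of the poset, any \(\sigma\) with \(\dim(\sigma)>n-T\) omits at most one vertex and therefore contains one of the two wedges, so every pair would be covered by \emph{some} wedge even though the poset is non-central.

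Two small caveats. First, your witness pairs take \(\sigma=\tau\), so condition (1) fails only by degeneracy; a reader may object that a pair of equal intervals is vacuous. This is easily repaired with a genuinely distinct pair: in the left-shifted case take \(\sigma=[1_t+1,n]\) and \(\tau=[m_t,n]\). Neither lies in the hull (both contain \(n\)), so \(W_\dar=\dim\) for both and the labeling \(W_\dar(\sigma)\geq W_\dar(\tau)\) holds; moreover \(m_t\) lies in both supports, \(\dim(\sigma)=n-1_t>n-T=b\), and \(1_t\notin\mathrm{supp}(\sigma)\), so all three conditions still fail (symmetrically, \(\sigma=[1,n_t-1]\), \(\tau=[1,m_t]\) in the right-shifted case). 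Second, your witnesses fail condition (2) only at the tight weight: for \(b\geq n-1_t\) the same pair is absorbed by condition (2), so fixing \(b=n-T\) (equivalently, demanding the conditions for every \(b\geq n-T\)) is genuinely load-bearing and should be made explicit in the statement; with that reading pinned down, your proof is complete.
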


\begin{corollary}\label{prop_shallow_central}
  If \(P\) is a shallow and central poset, then every pair of
  indecomposable modules \(\sigma,\tau\) satisfies the inequality \[
    D_\dar(\sigma,\tau)\leq D_\dwil(\sigma,\tau)
  \] for any weight \((2,b)\) with \(b\geq n-T\).
\end{corollary}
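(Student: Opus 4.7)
The plan is to establish the bottleneck distance inequality via Proposition \ref{prop_bn_compare}, which reduces the comparison of two bottleneck distances to the separate comparison of their minimal generators $(\bar{d}, W)$. Thus I need both $W_\dar(\sigma) \leq W_\dwil(\sigma)$ for every indecomposable $\sigma$, and $D_\dar(\sigma, \tau) \leq D_\dwil(\sigma, \tau)$ on singleton multisets for every pair $\sigma, \tau$.

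The first comparison is precisely the content of Proposition \ref{prop_shallow_w}, which holds for any shallow poset at the chosen weight $(2, b)$ with $b \geq n - T$; no use of centrality is needed for this half. For the second, I would invoke the proposition immediately preceding the corollary, which states that on a shallow poset, centrality is equivalent to every pair of indecomposables satisfying at least one of the three conditions of Lemma \ref{lemma_1_2_3}. Since $P$ is assumed central, every pair $(\sigma, \tau)$ (relabelled if necessary so that $W_\dar(\sigma) \geq W_\dar(\tau)$) falls into one of those three cases, and Lemma \ref{lemma_1_2_3} then delivers the pairwise inequality $D_\dar(\sigma,\tau) \leq D_\dwil(\sigma,\tau)$.

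Combining both halves via Proposition \ref{prop_bn_compare} completes the proof. All heavy lifting has already been performed in the earlier supporting results: Proposition \ref{prop_shallow_w} handles the $W$-comparison through a case analysis on whether $\sigma$ lies in the hull and whether any $m_t \in \mathrm{supp}(\sigma)$, while Lemma \ref{lemma_1_2_3} handles the pairwise $D$-comparison through direct computations using the interleaving-induced bounds on endpoint displacements. Hence the corollary itself is essentially a bookkeeping combination, and there is no serious obstacle at this stage; the real difficulty was front-loaded into the preceding proposition, whose nontrivial content is the equivalence between the geometric notion of centrality and the combinatorial statement that Lemma \ref{lemma_1_2_3}'s three conditions exhaust all pairs of indecomposables.
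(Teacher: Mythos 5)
Your proof is correct and takes essentially the paper's route: the corollary is immediate from the proposition directly preceding it (for a shallow poset, centrality is equivalent to every pair of indecomposables satisfying one of the conditions of Lemma \ref{lemma_1_2_3}) together with Lemma \ref{lemma_1_2_3}, exactly as you argue for the pairwise half. The additional pass through Proposition \ref{prop_bn_compare} and Proposition \ref{prop_shallow_w} is not required for the corollary as literally stated (a pairwise claim about indecomposables, i.e.\ the $\bar{d}$-comparison), but it is harmless and in fact upgrades the conclusion to the full bottleneck inequality, which is the comparison the paper is ultimately after.
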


\subsection{Stability for Shallow and non-Central}

% ***Below is true for \(b=n-T\) but that's not... relevant. As
% \(b=n-T\) is never stable for a non-central poset (right?) there's no
% need to even say this. I should be chasing \(b=n-T/2\) from the start.

\begin{prop}\label{prop_pre_non_central}
  Suppose  \(P=\mathbb{A}_n\) is a shallow and \emph{non-central}
  poset: suppose without loss of generality that \(1_t<T\). Consider a
  weight \((2,b)\) with \(b\geq n-T\).

  Any pair of indecomposables \(\sigma=[x_1,y_1],\tau=[x_2,y_2]\) is
  stable under this weight unless \[
    x_1,x_2\in(1_t,m_t]\text{ and }\delta^y=n-y_1+n-y_2.
  \]
\end{prop}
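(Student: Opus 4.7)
The plan is to reduce via Lemma \ref{lemma_1_2_3} to a residual family of pairs, translate the residuality into coordinate constraints on $(x_1, y_1, x_2, y_2)$, and then run a case analysis on the Auslander--Reiten compass regions using the closed-form expression of Proposition \ref{prop_formula}. First I would dispose of the easy cases: labeling $\sigma, \tau$ so that $W_\dar(\sigma) \geq W_\dar(\tau)$, Lemma \ref{lemma_1_2_3} yields stability whenever $m_t$ lies in exactly one of the two supports, or $\dim(\sigma) \leq b$, or $[V_t] \subset \mathrm{supp}(\sigma)$. Dually, Proposition \ref{prop_shallow_w} already gives the $W$-side bottleneck inequality once $b \geq n - T$. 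So I may restrict attention to pairs where $m_t$ lies in both or neither support, $\dim(\sigma) > b$, and $[V_t] \not\subset \mathrm{supp}(\sigma)$, and where the AR bottleneck value is attained by $d_\dar$ rather than by $W_\dar(\sigma)$. The only remaining inequality to prove is $d_\dar(\sigma, \tau) \leq d_\dwil(\sigma, \tau)$.

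Next, the combination $\dim(\sigma) \geq n - T + 1$ together with $[V_t] \not\subset [x_1, y_1]$ forces $[x_1, y_1]$ to ``hang off'' one end of $V_t$. Together with the non-centrality assumption $1_t < T$, this leaves essentially two shape possibilities for $\sigma$: either $1_t < x_1 \leq T$ with $y_1 \geq m_t$, or the symmetric situation $y_1 < n_t$ near the right end (which the shallowness inequality $2S \leq n$ either makes vacuous or reduces to Lemma \ref{lemma_1_2_3}(1) after relabeling). An identical analysis constrains $\tau$. I would then apply Proposition \ref{prop_formula}: $d_\dar(\sigma, \tau) = \delta^x(\sigma,\tau) + \delta^y(\sigma,\tau)$, each summand piecewise in two forms depending on the compass placement of $\sigma, \tau$.

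The ``matched'' sub-cases $\delta^x = |x_1 - x_2|$ and $\delta^y = |y_1 - y_2|$ are immediate, since any $\Lambda$-interleaving of $\sigma$ and $\tau$ must shift each endpoint at least to its counterpart, giving $d_\dwil \geq \max\{|x_1 - x_2|, |y_1 - y_2|\}$. Mixed sub-cases with $\delta^x = x_1 + x_2 - 2$ but at least one $x_i \notin (1_t, m_t]$ also fall through: if some $x_i \leq 1_t$ the interleaving is forced to translate past the sink $1_t$ (paying weight $\geq b$), while if some $x_i > m_t$ then $\mathrm{supp}(\sigma) \cap \mathrm{supp}(\tau)$ lives entirely in the long right arm of $V_t$ and the arithmetic of Proposition \ref{prop_formula} collapses back to an absolute-value comparison. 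In the surviving branches, the inequality $d_\dar \leq d_\dwil$ is verified by a routine coordinate chase combined with the $\Lambda$-translation lower bound.

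The main obstacle I expect is precisely the excluded configuration $x_1, x_2 \in (1_t, m_t]$ together with $\delta^y = n - y_1 + n - y_2$. Here the compass mismatch on the right forces $\delta^y$ to pay the full complementary length on both sides, and $\delta^x = x_1 + x_2 - 2$ can accumulate up to roughly $2(m_t - 1)$ because $m_t$ sits close to the left boundary of the poset; meanwhile a weighted interleaving only needs to rearrange the short left arm of $V_t$, which is cheap under a $(2,b)$ weight. None of the $d_\dwil$-lower bounds available from the constraints above is strong enough to absorb both summands simultaneously, which is exactly why the proposition flags this configuration as the sole exception and hands it off to Corollary \ref{cor_shallow_non_central} for separate treatment.
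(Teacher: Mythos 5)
Your overall skeleton matches the paper's: the paper also reduces via Lemma \ref{lemma_1_2_3} and then runs a compass-region case analysis (this is exactly Lemma \ref{lemma_stable_intervals}, from which Proposition \ref{prop_pre_non_central} is deduced in one line), and you correctly identify the surviving configuration $x_1,x_2\in(1_t,m_t]$ with $\delta^y=n-y_1+n-y_2$. The gap is in how you close the residual cases. Your key claim, that any $\Lambda$-interleaving ``must shift each endpoint at least to its counterpart,'' so that $d_\dwil\geq\max\{|x_1-x_2|,|y_1-y_2|\}$, is both insufficient and unjustified. Insufficient: in the matched case $d_\dar=|x_1-x_2|+|y_1-y_2|$ is the \emph{sum}, so a bound by the max does not give $d_\dar\leq d_\dwil$ (one would at least need the factor $2$ coming from the weight $a=2$, as in Corollary \ref{cor_aok}, and that corollary only applies when $D_\dwil<b$). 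Unjustified: the weighted interleaving of two interval modules can be realized by \emph{zero} morphisms, at a height governed by short-escape $W_\dwil$-values (Proposition \ref{prop_kill}), which can be far smaller than the endpoint discrepancy; moreover sinks and sources are never ``shifted'' within the poset by a translation of height $<b$ (Proposition \ref{prop_class}) -- they are fixed or sent to $\infty$ -- so endpoint differences do not translate directly into lower bounds on $d_\dwil$. The same looseness affects your mixed case: ``some $x_i\leq 1_t$ forces the interleaving to pay $\geq b$'' does not finish the argument, because $d_\dar$ in that configuration can exceed $b$.

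What actually closes these cases in the paper is quantitative: after the Lemma \ref{lemma_1_2_3} reductions one has $m_t$ in both supports and $\dim(\sigma)>b$, and a nontrivial interleaving of height $D_I\geq b$ constrains the endpoints of $\tau$ by
\begin{align*}
  x_2\leq 1_t+1+\tfrac{D_I-b}{2},\qquad y_2\geq n_t-1-\tfrac{D_I-b}{2},
\end{align*}
and these are combined with the shallow hypotheses ($T\geq2$, $2S\leq n$, hence $n_t-1_t=S_t+T_t$ with $S\geq T$) and $b\geq n-T$ in explicit inequality chains to get $d_\dar(\sigma,\tau)\leq D_I$ in each of the three sub-cases of Lemma \ref{lemma_stable_intervals}. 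None of this follows from a ``routine coordinate chase'' on top of your claimed endpoint-shift bound; it is precisely the content you would need to supply to make the proposal a proof.
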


% From Corollary \ref{cor_centrality}, we may suppose
% throughout that \(\mathbb{A}_n\) has a unique longest
% \(T_t=T\), and that, without loss of generality, it is positioned
% toward the left side of the quiver: \(1_t\leq T\).

Proposition \ref{prop_pre_non_central} follows from the subsequent
lemma.

\begin{lemma}\label{lemma_stable_intervals}
  If any of the following are true about a pair of intervals
  \(\sigma,\tau\) over a shallow poset
  \(P=V_1\wedge\ldots\wedge V_p\), then any weight \((a,b)\) with
  \(a=2\) and \(b\geq n-T\) is stable.

  Throughout, assume the intervals are always labeled such that
  \(W_\dar(\sigma)\geq W_\dar(\tau)\).

  \begin{enumerate}

  \item \(\sigma,\tau\) are in the same region of the AR quiver.
    
  \item \(\sigma,\tau\) are in opposite regions of the AR quiver
    (a north-south or east-west pair).

  \item \(1_t\not\in\mathrm{supp}(\sigma)\) and \(x_2\leq 1_t\)
    (symmetrically, \(n_t\not\in\mathrm{supp}(\sigma)\) and
    \(x_2\geq n_t\)).

  \end{enumerate}
\end{lemma}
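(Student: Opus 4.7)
The overall strategy is to prove $D_\dar(\sigma,\tau) \leq D_\dwil(\sigma,\tau)$ in each case by exploiting the bound $D_\dar(\sigma,\tau) \leq \min\{d_\dar(\sigma,\tau),W_\dar(\sigma)\}$, which is valid because $W_\dar(\sigma) \geq W_\dar(\tau)$ by assumption. In each case, I either bound $d_\dar$ directly by the interleaving height, or else fall back on $W_\dar(\sigma) \leq W_\dwil(\sigma)$ from Proposition \ref{prop_shallow_w} together with an argument that any $\Lambda$-interleaving realizing $D_\dwil(\sigma,\tau)$ must force $\sigma$ itself to be annihilated, giving $D_\dwil(\sigma,\tau) \geq W_\dwil(\sigma)$.

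For Case (1), Proposition \ref{prop_formula} combined with the monotonicity observation of Remark \ref{rmk_monotone} yields $d_\dar(\sigma,\tau) = |x_1 - x_2| + |y_1 - y_2|$ whenever $\sigma$ and $\tau$ lie in the same compass region. For a $\Lambda$-interleaving of height $\epsilon$ with $a = 2$, the translation $\Lambda$ moves each vertex of $P$ by at most $\lfloor \epsilon/2 \rfloor$ positions without incurring the additional $b$ cost of routing through $\infty$. Unpacking the commutativity of the interleaving square for two interval modules in a common compass region forces each endpoint shift to satisfy $|x_1-x_2|, |y_1-y_2| \leq \epsilon/2$, so $d_\dar(\sigma,\tau) \leq \epsilon = D_\dwil(\sigma,\tau)$.

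For Case (2), where $\sigma$ and $\tau$ lie in opposite compass regions, Proposition \ref{prop_formula} makes $d_\dar(\sigma,\tau)$ a sum of complement terms that can be as large as the diameter, so a direct bound is out of reach. Instead, I argue that the endpoint-type mismatch between opposite regions (the left and right endpoints of $\sigma$ land in differently-oriented segments than those of $\tau$) precludes any nonzero interleaving that does not first annihilate both modules. By Proposition \ref{prop_kill} applied to the wedge components containing the supports, this forces $h(\Lambda) \geq W_\dwil(\sigma)$, and combining with Proposition \ref{prop_shallow_w} gives $D_\dwil(\sigma,\tau) \geq W_\dwil(\sigma) \geq W_\dar(\sigma) \geq D_\dar(\sigma,\tau)$.

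For Case (3), the condition $1_t \notin \mathrm{supp}(\sigma)$ splits into the sub-case $y_1 < 1_t$ (where $\sigma$ lies strictly to the left of $V_t$) and the sub-case $x_1 > 1_t$ (where $\sigma$ sits inside the right leg of $V_t$ or further right). The first sub-case reduces via Corollary \ref{cor_wedges} to the sub-poset $V_1 \wedge \cdots \wedge V_{t-1}$, returning us to a smaller instance of Cases (1)--(2). The second sub-case uses the constraint $x_2 \leq 1_t$ together with the interleaving to derive, from any $\Lambda$-interleaving of height $D_\dwil(\sigma,\tau)$, endpoint bounds on $x_2, y_2$; these, substituted into Proposition \ref{prop_formula}, yield $d_\dar(\sigma,\tau) \leq D_\dwil(\sigma,\tau)$, paralleling the calculation at the end of the proof of Lemma \ref{lemma_1_2_3}(3). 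The main obstacle will be the bookkeeping across the compass-region combinations and the symmetric swap statement, but each sub-configuration reduces to a direct algebraic inequality once the interleaving-induced endpoint constraints are substituted.
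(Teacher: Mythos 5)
Your case (3) is broadly in the spirit of the paper's argument (reduce to the two possible forms of \(d_\dar\) from Proposition \ref{prop_formula} and rerun the algebraic bound of Lemma \ref{lemma_1_2_3}(3) using the interleaving-induced endpoint constraints), but cases (1) and (2) rest on claims about \(D_\dwil\) that fail in exactly the regime the lemma is about. For case (1), the assertion that a \(\Lambda\)-interleaving of height \(\epsilon\) forces \(|x_1-x_2|,|y_1-y_2|\leq\epsilon/2\) is only plausible when \(\epsilon<b\), where sinks and sources are fixed and vertices only slide upward; but pairs with \(D_\dwil(\sigma,\tau)<b\) are already disposed of by Corollary \ref{cor_aok}, so the substantive case is \(D_\dwil\geq b\), where the translation may send vertices to \(\infty\) and two same-region modules with distant endpoints (their supports differing across one or more shallow \(V_i\)'s) can be interleaved at a cost governed by short/long escapes rather than by endpoint displacement. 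Your argument never engages the hypothesis \(b\geq n-T\), which is what actually makes the comparison work; the paper instead funnels case (1) through the reductions of Lemma \ref{lemma_1_2_3} (so that \(m_t\) lies in both supports, \(\dim(\sigma)>b\), and \([V_t]\not\subset\mathrm{supp}(\sigma)\)), extracts the bounds \(x_2\leq 1_t+1+\tfrac{D_I-b}{2}\) and \(y_2\geq n_t-1-\tfrac{D_I-b}{2}\) from the interleaving, and closes with an explicit inequality using shallowness.

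For case (2) the gap is sharper: it is simply not true that opposite compass regions "preclude any nonzero interleaving that does not first annihilate both modules." Two modules in opposite regions can share almost all of their support, differing only near their endpoints across shallow valleys; an interleaving then only has to annihilate those boundary discrepancies, at a cost that can be strictly smaller than \(W_\dwil(\sigma)\) whenever \(\mathrm{supp}(\sigma)\) contains a valley with short side \(T\geq 2\). So the chain \(D_\dwil(\sigma,\tau)\geq W_\dwil(\sigma)\geq W_\dar(\sigma)\) does not get off the ground, and Proposition \ref{prop_kill} (which concerns interleaving \([V]\) with \(0\)) does not supply it. What the paper actually uses is that, after the Lemma \ref{lemma_1_2_3} reductions place \(x_1\in(1_t,m_t]\), the opposite-region formula \(\delta^x=x_1-1+x_2-1\) forces \(x_2\leq 1_t\), at which point the same algebraic estimate as in Lemma \ref{lemma_1_2_3}(3) applies. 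To repair your write-up, cases (1) and (2) need to be rebuilt on those reductions and on the quantitative use of \(b\geq n-T\), not on endpoint-shift or forced-annihilation heuristics.
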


\begin{proof}

  \textbf{(1)} From Lemma \ref{lemma_1_2_3} we may assume
  \([V_t]\not\subset\mathrm{supp}(\sigma)\). As
  \(m_t\in\mathrm{supp}(\sigma)\), it follows that either
  \(1_t\) or \(n_t\) is in \(\mathrm{supp}(\sigma)\). Suppose then,
  without loss of generality, that
  \(1_t\not\in\mathrm{supp}(\sigma)\): that is, \(x_1\in(1_t,m_t]\).

  We may assume that \(m_t\in\mathrm{supp}(\tau)\). If \(x_2\leq
  1_t\), then the bound on \(|x_1-x_2|+|y_1-y_2|\) proceeds
  identically to the similar equation in the proof of Lemma
  \ref{lemma_1_2_3} (3). Otherwise,
  \(x_2\in(1_t,m_t]\). Then,
  
  \begin{align*}
    D_\dar(\sigma,\tau)\leq d_\dar(\sigma,\tau)
    &=|x_1-x_2|+|y_1-y_2|\\
    &\leq m_t-1_t+n-\left(n_t-1-\dfrac{D_I-b}{2}\right)\\
    &< D_I-(n-T)+n-(n_t-m_t)-1_t+1\\
    &\leq D_I
  \end{align*}

  \textbf{(2)} Again by Lemma \ref{lemma_1_2_3}, assume without
  loss of generality that
  \(x_1\in\mathrm(1_t,m_t]\). Then it must be that \(x_2\leq 1_t\) in
  order to have \(\delta^x=x_1-1+x_2-1\). But in such a situation, the
  bound on \(x_1-1+x_2-1+n-y_1+n-y_2\) proceeds identically
  to the similar equation in the proof of Lemma \ref{lemma_1_2_3} (3).

  \textbf{(3)} Using Lemma \ref{lemma_1_2_3} and this lemma's (1)
  and (2), we may assume without loss of
  generality that \(1_t\not\in\mathrm{supp}(\sigma)\), and either
  \begin{itemize}
  \item \(d_\dar(\sigma,\tau)=x_1-1+x_2-1+|y_1-y_2|\) or
  \item \(d_\dar(\sigma,\tau)=|x_1-x_2|+n-y_1+n-y_2\).
  \end{itemize}
  However, given the assumption
  \(1_t\not\in\mathrm{supp}(\sigma)\), the first equation above also
  yields stability. If \(d_\dar(\sigma,\tau)\) is the first equation,
  then \(x_2\leq 1_t\), and so:
  \begin{align*}
    D_\dar(\sigma,\tau)\leq d_\dar(\sigma,\tau)
    &=x_1-1+x_2-1+|y_1-y_2|\\
    &\leq 1_t+1+\dfrac{D_I-b}{2}-1+1_t-1+n-\left(n_t-1-\dfrac{D_I-b}{2}\right)\\
    &\leq D_I-b+n+2\cdot1_t-n_t\\
    &\leq D_I+1_t+T-(n_t-1_t)\\
    &<D_I+2T-(S+T)
  \end{align*}

  Assume the second equation, and assume that \(x_2\leq
  1_t\). However, one can immediately see from the bound on the
  similar eqation in the proof of Lemma \ref{lemma_1_2_3} (3) that
  this assumption results in stability as well.
\end{proof}

This result allows us to narrow down a \emph{maximally anti-stable}
candidate pair for any shallow non-central poset.

\subsection{Maximally Anti-Stable Pairs}

The structure of this section is as follows.

Suppose \(P\) is a shallow but non-central orientation of
\(\mathbb{A}_n\). Without loss of generality suppose that
\(1_t<T\). We have already shown by Lemmas \ref{lemma_1_2_3} (3) and
\ref{lemma_stable_intervals} that any anti-stable pair
\(\sigma=[x_1,y_1]\), \(\tau=[x_2,y_2]\) has the property that
\(x_1,x_2\in(1_t,m_t]\) and \(y_1,y_2\geq m_t\) are of opposite
orientation from each other.

This means that \(\delta_\dar(\sigma,\tau)=|x_1-x_2|+n-y_1+n-y_2\) for
any anti-stable pair. We measure anti-stability by the size of the
difference \(D_\dar-D_I\), and show that starting from \emph{any}
anti-stable pair, we can reduce down to one of two canonical
anti-stable pairs that between them maximize anti-stability.

First, choosing \(x_1,x_2\) as far apart as possible increases
\(D_\dar\) while having no effect on \(D_I\). But \(y_1\) has a lower
bound dependent on \(x_1\)'s position (while \(y_2\) does not depend
on \(x_2\)), so to maximize later freedom we choose \(x_1=1_t+1\) and
\(x_2=m_t\).

Then, \(y_2\) has two \(d_\dar\)-minimizing possibilities based on the
orientation of \(y_1\). Lastly, \(y_1\) can be shifted left to further
minimize \(d_\dar\). This leftward shifting of \(y_1\) potentially
alters the interleaving distance between \(\sigma\) and \(\tau\), but
as long as \(y_1\) is chosen such that \(\mathrm{dim}(\sigma)>b\)
[Lemma \ref{lemma_1_2_3} (2)] it causes a strict increase in
anti-stability of the pair.

\begin{definition}
  For any \(y>n_t\), define \(k(y)=\displaystyle\max_{t<j\leq
    i}\{T_j\}\) where \(y\in[m_i,m_{i+1})\).
\end{definition}

For and vertex \(y\) right of \(V_t\), the value \(k(y)\) returns the
length of the longest shortest edge of the \(V_i\)'s contained between
\(V_t\) and \(y\). This value determines the interleaving distance
between two modules containing \(m_i\), one of whose right endpoints
is \(y\), and the other of which is contained between \(m_i\) and
\(m_{i+1}\).

As \(D_\dar(\sigma,\tau)\leq W_\dar(\sigma)\), if \(W_\dar(\sigma)\leq
D_\dwil(\sigma,\tau)\) then we are done. It suffices to assume
throughout that \(W_\dar(\sigma)>D_\dwil(\sigma,\tau)\), and to then
show that \(d_\dar(\sigma,\tau)\leq D_\dwil\). The assumption
\(W_\dar(\sigma)>D_\dwil(\sigma,\tau)\) amounts to the inequality \[
  y_1-x_1+1>2(k(y_1)-1)+b.
\] This is clear from Lemma \ref{lemma_stable_intervals} plus the
foreknowledge that we will be adjusting all other vertices such that
the defining feature of \(D_\dwil(\sigma,\tau)\) will be \(W_\dwil\)
of the \(V_p\)'s between \(n_t\) and \(y_1\), as these are in the
support of \(\sigma\) and outside the support of
\(\tau\).

More conveniently, we will replace \(x_1=1_t+1\) and write the above
inequality as \[
  y_1>2k(y_1)-2+b+1_t.
\]

\begin{definition}\label{defn_k_stuff}  
  For a weight \((2,b)\) and vertex \(y>n_t\), consider the statement
  \[
    % P(y):1_t+1+b+2k(y)\leq y.
    \Theta(y): y>2k(y)-2+b+1_t.
  \]
  Define \[y_u(b)=\min\{y:\Theta(y)\text{ holds and }y\text{ is upward
      oriented}\}\] and \[y_d(b)=\min\{y:\Theta(y)\text{ holds and }y\text{ 
      is downward oriented}\}\]
  where we will simply write \(y_u\) and \(y_d\) when context makes
  clear the value of \(b\).

\end{definition}

\begin{corollary}\label{cor_min_pair}
  If there is any pair that violates stability for the weight
  \((2,n-T)\), then at least one of the
  pairs \[(\sigma_u=[1_t+1,y_u],\tau_u=[m_t,n_t])\text{
  or }(\sigma_d=[1_t+1,y_d],\tau_d=[m_t,n_t-k(y_d)])\]
  also violates stability for that weight and is maximally anti-stable
  out of all pairs of intervals over the poset (that is, the
  value of \(R=D_\dar-D_\dwil\) is positive and maximal for the correct
  pair).
\end{corollary}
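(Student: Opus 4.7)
The plan is to start from an arbitrary anti-stable pair $(\sigma,\tau)$ and apply a sequence of endpoint modifications, each of which does not decrease the anti-stability gap $R \vcentcolon= D_\dar - D_\dwil$, until the pair becomes $(\sigma_u,\tau_u)$ or $(\sigma_d,\tau_d)$. By Proposition~\ref{prop_pre_non_central} we may already assume $x_1,x_2 \in (1_t,m_t]$ and $\delta^y(\sigma,\tau) = n-y_1 + n-y_2$, i.e.\ that $\sigma$ and $\tau$ have opposite $y$-orientations, and we label so that $W_\dar(\sigma)\geq W_\dar(\tau)$. First I would push the left endpoints to their extremes $x_1 = 1_t+1$ and $x_2 = m_t$. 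The value of $D_\dwil(\sigma,\tau)$ is dominated by the short-escape values of the wells right of $V_t$ that lie in the symmetric difference of the supports (each at least $b \geq n-T$), which absorb any $x$-side contribution of size at most $T-1$; so $D_\dwil$ is unchanged, while $|x_1-x_2|$ is maximized at $m_t-1_t-1$, weakly increasing $d_\dar$.

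Next I would split into the \emph{u-case} ($y_1$ a source) and \emph{d-case} ($y_1$ a sink) and shift $y_1$ leftward. Anti-stability forces $W_\dar(\sigma) > D_\dwil(\sigma,\tau)$, since otherwise $D_\dar\leq W_\dar\leq D_\dwil$ gives stability; in the reduced setting with $x_1 = 1_t+1$ this is exactly the inequality $\Theta(y_1)$, and the moment it fails stability follows from Lemma~\ref{lemma_1_2_3}(2). Thus I may repeatedly shift $y_1$ by one vertex to the left within its orientation type as long as $\Theta$ still holds; each such step strictly increases the $n-y_1$ contribution to $d_\dar$ and weakly decreases $D_\dwil$ (one fewer right-hand well separates the two supports), so $R$ is non-decreasing. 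The minimal $y_1$ values for the two cases are, by construction, exactly $y_u$ and $y_d$ from Definition~\ref{defn_k_stuff}.

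Finally I would optimize $y_2$ for each fixed $y_1$. In the u-case $y_2$ must be a sink, and $y_2 = n_t$ is the closest sink at or above $m_t$; this keeps $\tau$'s support inside $V_t$ so no additional well enters $D_\dwil$, and yields $(\sigma_u,\tau_u)$. In the d-case $y_2$ must lie in some $[\text{source},\text{next sink})$ segment, and the positional choice $y_2 = n_t-k(y_d)$, sitting on the upward slope inside $V_t$, is calibrated so that the translation needed to align $\tau_d$'s right endpoint with $n_t$ has height exactly $k(y_d)$, matching the short-escape cost of the worst well between $V_t$ and where $y_d$ lies; no extra height is required, so $D_\dwil$ is not inflated. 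The main obstacle is this last step: I must verify that no alternative upward-oriented $y_2$ — in particular no source $m_{t+j}$ farther right of $V_t$ — yields a strictly larger $R$, by tracking in parallel how $n-y_2$, $D_\dwil$, and the function $k(\cdot)$ all change as $y_2$ crosses each successive well. Once this calibration argument is in hand, taking the larger of $R(\sigma_u,\tau_u)$ and $R(\sigma_d,\tau_d)$ produces the maximally anti-stable pair asserted, and the very existence of any anti-stable pair forces at least one of the two $R$-values to be positive.
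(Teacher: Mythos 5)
Your reduction scheme is the same one the paper uses (its proof is exactly ``fix the left endpoints, then normalize \(y_2\), with the leftward shift of \(y_1\) to \(y_u\)/\(y_d\) justified by \(\Theta\) and Lemma \ref{lemma_1_2_3}(2)''), and your first two steps are essentially sound: pushing \(x_1\to 1_t+1\), \(x_2\to m_t\) is Proposition \ref{prop_anti1}, and your argument that shifting \(y_1\) leftward within its orientation type, as long as \(\Theta(y_1)\) holds, weakly decreases \(D_\dwil\) (the deepest separating well can only get shallower) while strictly increasing the \(n-y_1\) term of \(d_\dar\) matches the paper's narrative. But your final step is a genuine gap, and you flag it yourself: you never prove that the canonical choices \(y_2=n_t\) (u-case) and \(y_2=n_t-k(y_d)\) (d-case) maximize \(R\) over all admissible right endpoints of \(\tau\). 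This is precisely the content of Proposition \ref{prop_anti2}, which is the quantitative heart of the corollary, and ``once this calibration argument is in hand'' is not a substitute for it.

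The missing comparison is the following. If \(y_2\) lies beyond the canonical position, so that \(\mathrm{supp}(\tau)\) absorbs additional wells \(V_{t+1},\ldots\) up to some \(V_j\), then the only way \(D_\dwil\) can drop is that the deepest well formerly separating the supports is now covered by \(\tau\); that drop is bounded by \(2(T_j-1)\), where \(T_j=\max T_i\) over the newly covered wells. Against this one must show the loss in \(\delta_\dar\) from the larger \(y_2\) exceeds the drop: in the d-case \(y_2-(n_t-k(y_1))\geq m_j-n_t+k(y_1)\geq 2T_j-1\), and in the u-case \(y_2\geq n_{t+1}\) forces \(y_2-n_t>2T_j\); also the easy subcase where \(y_2\) stays inside \((n_t-k,n_t)\) or \((n_t,m_{t+1})\) leaves \(D_\dwil\) unchanged while \(\delta_\dar\) weakly increases. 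Without this well-by-well bookkeeping, your claim that no source \(m_{t+j}\) farther right yields a larger \(R\) is unsupported, and with it the assertion of maximal anti-stability. Your heuristic that \(y_2=n_t-k(y_d)\) is ``calibrated'' so the alignment translation costs exactly \(k(y_d)\) points in the right direction but is not a proof. (A smaller caveat: your claim that the left-endpoint normalization leaves \(D_\dwil\) literally unchanged also deserves a line of justification — what is actually needed, and what Proposition \ref{prop_anti1} uses, is only that any change on the left leg of \(V_t\) is dominated by the short-escape cost \(\geq b\geq n-T\) already forced on the right, so that \(R\) does not decrease.)
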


% \begin{proof}
%   See Appendix \ref{proof_prop_min_pair}.
% \end{proof}

In the event that there is any anti-stable pair for the poset, call
the pair above with the greater anti-stability the \emph{maximal
anti-stable pair for the poset}. If both pairs are just as
anti-stable, choose \((\sigma_u,\tau_u)\).

\begin{proof}\label{proof_prop_min_pair}
  This follows from Propositions \ref{prop_anti1} and
  \ref{prop_anti2}.
  % it remains
  % only to show that either \(R(\sigma_u,\tau_u)\geq
  % R(\hat{\sigma},\tau_u)\), or \(R(\sigma_d,\tau_d)\geq
  % R(\hat{\sigma},\tau_d)\). But this is immediate, as moving
  % the right endpoint of \(\sigma'\) to the left \emph{decreases}
  % \(D_I\) and \emph{increases} \(D_{\mathrm{AR}}\).
\end{proof}

\subsection{Maximally Anti-stable Pairs}

Let \(P\) be a shallow and non-central orientation of
\(\mathbb{A}_n\).

Suppose there exists a pair
\(\hat{\hat{\sigma}}=[x_1',y_1'],\hat{\hat{\tau}}=[x_2',y_2']\) with
\(W_\dar(\hat{\hat{\sigma}})\geq W_\dar(\hat{\hat{\tau}})\) such
that \((\hat{\hat{\sigma}},\hat{\hat{\tau}})\) is an anti-stable pair
for any weight \((2,b)\) with \(b\geq n-T\).

\begin{prop}\label{prop_anti1}
  If \((\hat{\hat{\sigma}},\hat{\hat{\tau}})\) is an anti-stable pair,
  then  \(\hat{\sigma}=[1_t+1,y_1'],\hat{\tau}=[m_t,y_2']\) also
  comprise an anti-stable pair. Furthermore,
  \(R(\hat{\sigma},\hat{\tau})\geq
  R(\hat{\hat{\sigma}},\hat{\hat{\tau}})\) and
  \(W_\dar(\hat{\sigma})\geq W_\dar(\hat{\tau})\).
\end{prop}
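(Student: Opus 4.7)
The plan is to exploit locality: the modification $\hat{\hat{\sigma}} \to \hat{\sigma}$, $\hat{\hat{\tau}} \to \hat{\tau}$ changes only the left endpoints, and these sit entirely inside $V_t$. I will handle $d_\dar$, $W_\dar$, and $D_\dwil$ separately, using Proposition~\ref{prop_formula}, Lemma~\ref{lemma_hull_w} / Proposition~\ref{prop_decreasing_path}, and Corollary~\ref{cor_wedges} respectively. The anti-stable regime from Proposition~\ref{prop_pre_non_central} already pins $\delta^y = (n - y_1') + (n - y_2')$, a function of the (unchanged) right endpoints only.

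For $d_\dar$, all four candidate left endpoints $x_1', x_2', 1_t+1, m_t$ lie in the segment $(1_t, m_t] \subset \bar{\mathcal{E}} \cup \bar{\mathcal{N}}$, so Proposition~\ref{prop_formula} makes $\delta^x$ the absolute difference. Since $1_t+1$ and $m_t$ are the extremes of $(1_t, m_t]$,
\[
d_\dar(\hat{\sigma}, \hat{\tau}) - d_\dar(\hat{\hat{\sigma}}, \hat{\hat{\tau}}) = (m_t - 1_t - 1) - |x_1' - x_2'| \geq 0.
\]
For $W_\dar$, the extension $\hat{\hat{\sigma}} \subseteq \hat{\sigma}$ keeps the interval in the same compass class, so either Corollary~\ref{cor_escape_east_west} (outside the hull) or Lemma~\ref{lemma_hull_w} (inside) yields $W_\dar(\hat{\sigma}) \geq W_\dar(\hat{\hat{\sigma}})$. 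Symmetrically $W_\dar(\hat{\tau}) \leq W_\dar(\hat{\hat{\tau}})$, and chaining with the hypothesis produces the full ordering $W_\dar(\hat{\sigma}) \geq W_\dar(\hat{\hat{\sigma}}) \geq W_\dar(\hat{\hat{\tau}}) \geq W_\dar(\hat{\tau})$.

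For $D_\dwil$, Corollary~\ref{cor_wedges} reduces the comparison to $V_t$ since all other $V_i$-restrictions are unchanged. I construct on $\tilde{V}_t$, in the style of Definition~\ref{defn_lambda}, the translation sending the entire left-leg discrepancy $\{1_t+1, \ldots, m_t - 1\}$ through the sink $1_t$ to $\infty$; this interleaves $\hat{\sigma}|_{V_t}$ with $\hat{\tau}|_{V_t}$ at height equal to the short escape $\epsilon_t(b) = 2(T-1) + b$, since $V_t$ is the chosen non-central $T$-attaining component with $m_t - 1_t = T_t = T$.

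The main obstacle is controlling the net change in $R = D_\dar - D_\dwil$ once the $V_t$-component of $D_\dwil$ may swell to $\epsilon_t(b)$. I would handle this by case analysis on where $D_\dwil(\hat{\hat{\sigma}}, \hat{\hat{\tau}})$ is attained: if outside $V_t$, any bump up to $\epsilon_t(b)$ on $V_t$ is absorbed by the pre-existing maximum, so $\Delta D_\dwil = 0$ and $\Delta R = \Delta D_\dar \geq 0$; if on $V_t$, the $d_\dar$-gain and the $V_t$-cost both scale in $m_t - x_2'$, and the $W_\dar$-branch of $D_\dar$ at level $W_\dar(\hat{\sigma}) \geq \epsilon_t(b)$ (ensured by the shallow hypothesis) supplies the remaining slack. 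In every case $R(\hat{\sigma}, \hat{\tau}) \geq R(\hat{\hat{\sigma}}, \hat{\hat{\tau}})$, and anti-stability transfers because $R(\hat{\hat{\sigma}}, \hat{\hat{\tau}}) > 0$ is strict.
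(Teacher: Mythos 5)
Your treatment of \(d_\dar\) is fine and coincides with essentially all the paper's own (very short) argument: all four left endpoints lie in \((1_t,m_t]\), so by Proposition \ref{prop_formula} \(\delta^x\) is a plain difference, maximized by pushing the two endpoints to the extremes of that segment, while \(\delta^y\) sees only the unchanged right endpoints (the paper adds just the remark that \(\hat{\sigma}\), not \(\hat{\tau}\), must receive \(1_t+1\) because the later lower bound on \(y_1\) is \(x_1\)-dependent). The first genuine gap is your \(W_\dar\) chain. \(W_\dar\) is \emph{not} monotone under inclusion of intervals, even within a fixed compass region, because hull membership can flip when an endpoint crosses a source: for the orientation \(1\to2\to3\leftarrow4\to5\to\cdots\to n\) one has \([3,4]\subset[2,4]\), both in \(\mathcal{S}\), yet \(W_\dar([3,4])=4>3=W_\dar([2,4])\), since \([3,4]\) lies in the hull and \([2,4]\) does not. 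Exactly this mechanism is available here: if \(x_1'=m_t\), \([m_t,y_1']\in H_N\), and the left leg of \(V_t\) has length two (possible when \(T=2\)), then Lemma \ref{lemma_hull_w} gives \(W_\dar(\hat{\hat{\sigma}})=y_1'-m_t+3\) while \(W_\dar(\hat{\sigma})=\dim(\hat{\sigma})=y_1'-m_t+2\), so your first link fails; the second link fails symmetrically when shrinking \(x_2'\) to \(m_t\) pushes \(\hat{\tau}\) into the hull. Hence \(W_\dar(\hat{\sigma})\geq W_\dar(\hat{\tau})\), and the cap \(\max\{W_\dar\}\) sitting inside \(D_\dar\), must be argued from the hull formulas together with the shallow hypotheses (\(T\geq 2\), \(2S\leq n\), \(\dim(\hat{\sigma})>b\)), not from an inclusion-monotonicity principle.

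The second gap is the \(D_\dwil\) control, which is the real content behind the paper's claim that the move ``has no effect on \(D_I\)''. You bound the new \(V_t\)-component by the short escape \(2(T-1)+b\) and then, in your first case, need this to be absorbed by the old maximum attained outside \(V_t\); but components outside \(V_t\) contribute at most \(2(T_p-1)+b\) with \(T_p\leq T\), and since \(V_t\) is a component realizing the global maximum \(T\), the intermediate \(T_p\) are in general strictly smaller, so nothing absorbs \(2(T-1)+b\) and this case, as written, allows \(D_\dwil\) to increase, which is exactly what must be excluded; your second case is only a sketch. What is actually needed is that the new left-leg discrepancy \(\{1_t+1,\ldots,m_t-1\}\) can be interleaved away \emph{without ever using} \(\infty\): because \(1_t\notin\mathrm{supp}(\hat{\sigma})\), each difference vertex only has to be translated, over two applications of \(\Lambda\), past \(1_t+1\), at height at most roughly the leg length, i.e.\ at most \(S\leq n-T\leq b\) (using \(2S\leq n\) and \(T\leq S\)); meanwhile any pair in the anti-stable regime of Proposition \ref{prop_pre_non_central} has a right-hand support difference of opposite endpoint orientation, annihilating which forces some sink (or a source together with an adjacent sink) to \(\infty\), so that component costs at least \(b\) and is identical for the old and the new pair. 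With that, the componentwise maximum of Corollary \ref{cor_wedges} is unchanged, \(D_\dwil(\hat{\sigma},\hat{\tau})=D_\dwil(\hat{\hat{\sigma}},\hat{\hat{\tau}})\), and \(R\) grows with \(\delta_\dar\); sending the whole leg to \(\infty\) is both unnecessary and the step that breaks your domination argument.
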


\begin{proof}
  It is immediate that this choice of \(x_1,x_2\) maximize the value
  of \(\delta_\dar(\sigma,\tau)\). The opposite assignment would do
  the same, however, \(y_1\) (which maximizes \(\delta_\dar\) by being
  \emph{small}) has an \(x_1\)-dependent lower bound, while \(y_2\)
  has no \(x_2\)-dependency. For this reason the precise assignment of
  \(x_1,x_2\) in the proposition is ideal going forward.
\end{proof}

% ***(Just say earlier than they're both outside the hull, or, could
% it be that \(\tau\) \emph{becomes} inside the hull? In such a case,
% I guess we'd just default it to a left endpoint of \(m_t-1\).)***

Suppose there exists a pair
\(\hat{\sigma}=[1_t+1,y_1'],\hat{\tau}=[m_t,y_2']\) with
\(W_\dar(\hat{\sigma})\geq W_\dar(\hat{\tau})\) such
that \((\hat{\sigma},\hat{\tau})\) is an anti-stable pair
for any weight \((2,b)\) with \(b\geq n-T\).

\begin{prop}\label{prop_anti2}
  If \((\hat{\sigma},\hat{\tau})\) is an anti-stable pair,
  then \(\hat{\sigma}=[1_t+1,y_1'],\tau=[m_t,y_2]\) also comprise an
  anti-stable pair, where
  \(y_2=n_t\) or \(y_2=n_t-k(y_1')\): whichever has opposite
  \(y\)-orientation from \(y_1'\). Furthermore,
  \(R(\hat{\sigma},\tau)\geq R(\hat{\sigma},\hat{\tau})\), and
  \(\hat{\sigma}\) has larger dimension than \(\tau\).
\end{prop}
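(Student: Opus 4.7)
The plan is to show that, starting from any anti-stable pair $(\hat{\sigma}, \hat{\tau}) = ([1_t+1, y_1'], [m_t, y_2'])$, we may replace $y_2'$ by either $n_t$ or $n_t - k(y_1')$ (whichever gives opposite orientation to $y_1'$) without decreasing the anti-stability measure $R = D_\dar - D_\dwil$, and that the resulting $\tau$ has smaller dimension than $\hat{\sigma}$.

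I would first use Proposition \ref{prop_formula} to write $d_\dar(\hat{\sigma}, \tau) = (m_t - 1_t - 1) + (n - y_1') + (n - y_2)$, valid exactly when $y_1'$ and $y_2$ lie in opposite rising/falling regions. This quantity is strictly decreasing in $y_2$, so maximizing $d_\dar$ pushes $y_2$ downward; among vertices $y_2 \geq m_t$ with the required opposite orientation, the candidates $n_t$ (the smallest sink $\geq m_t$) and $n_t - k(y_1')$ (a rising vertex chosen to match the deepest intervening valley's depth) are the natural extremal choices inside $V_t$. Exactly one of these has the orientation opposite to $y_1'$, corresponding to the two cases in the statement.

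Next I would analyze $D_\dwil(\hat{\sigma}, \tau)$ using Corollary \ref{cor_wedges} to decompose the weighted interleaving distance over the wedge structure $P = V_1 \wedge \ldots \wedge V_p$. The contribution from $V_t$ itself (reflecting that $\hat{\sigma}$ covers the left-leg interior $\{1_t+1, \ldots, m_t-1\}$ while $\tau$ does not) equals $\epsilon_t(b) = 2(T_t - 1) + b$ and is independent of $y_2$. The contribution from the valleys $V_{t+1}, \ldots, V_i$, where $y_1' \in [m_i, m_{i+1})$, is bounded by the short escape of the deepest such valley, namely $2(k(y_1') - 1) + b$, by Proposition \ref{prop_kill}. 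The running hypothesis $W_\dar(\hat{\sigma}) > D_\dwil(\hat{\sigma}, \tau)$, which is precisely the statement $\Theta(y_1')$ from Definition \ref{defn_k_stuff}, ensures we are in the regime where $D_\dar = d_\dar$, so improvements in $d_\dar$ translate directly into improvements in $R$.

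The main obstacle will be verifying that the right-side interleaving cost does not grow as $y_2$ shrinks inside $V_t$. Concretely, for any $y_2 \in [m_t, n_t]$, the interleaving must still match the right tail of $\hat{\sigma}$ extending past $n_t$ into $V_{t+1}, \ldots, V_i$, regardless of precisely where $y_2$ sits within $V_t$; hence the right-side cost stays at $2(k(y_1') - 1) + b$. Consequently $D_\dwil$ remains constant while $d_\dar$ grows linearly as $y_2$ decreases from $y_2'$, so $R$ strictly increases. The extremal stopping points are then forced by the orientation constraint to lie in $\{n_t, n_t - k(y_1')\}$, since further shrinkage either violates the opposite-orientation requirement or triggers $W_\dar(\hat{\sigma}) \leq D_\dwil$, which Lemma \ref{lemma_1_2_3} already rules out as anti-stable.

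Finally, I would verify $\dim(\hat{\sigma}) > \dim(\tau)$. Since $\tau \subset V_t$, we have $\dim(\tau) \leq n_t - m_t + 1 = S_t + 1 \leq S + 1$. From $\Theta(y_1')$, $\dim(\hat{\sigma}) = y_1' - 1_t > 2k(y_1') - 2 + b \geq b \geq n - T$. Under the shallow hypothesis ($2S \leq n$ and $T \geq 2$) one obtains $n - T \geq S + 1$ in the generic regime, and the small-parameter boundary cases are checked directly, yielding the required inequality and in particular $W_\dar(\hat{\sigma}) \geq W_\dar(\tau)$.
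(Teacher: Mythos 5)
Your proposal has a genuine gap: it only treats the case where the right endpoint of \(\hat{\tau}\) lies inside \(V_t\), and in that regime it asserts that ``\(D_\dwil\) remains constant while \(d_\dar\) grows linearly as \(y_2\) decreases from \(y_2'\).'' But an anti-stable pair \(\hat{\tau}=[m_t,y_2']\) may have \(y_2'\) well beyond \(n_t\), reaching into later valleys \(V_{t+1},\ldots\); this is exactly the case the paper's proof spends most of its effort on. When you shrink the right endpoint from such a \(y_2'\) down to \(n_t\) or \(n_t-k(y_1')\), valleys that were previously covered by \emph{both} modules become disagreement regions, and by Corollary \ref{cor_wedges} together with Proposition \ref{prop_kill} the weighted interleaving distance can strictly \emph{increase}: the paper quantifies this as
\[
D_\dwil(\hat{\sigma},\tau)-D_\dwil(\hat{\sigma},\hat{\tau})
=2\Bigl(\max_{m_t<m_i\leq y_1'}\{T_i\}-\max_{y_2'< m_i\leq y_1'}\{T_i\}\Bigr)\leq 2(T_j-1).
\]
Since \(R=D_\dar-D_\dwil\), your claimed monotonicity of \(R\) does not follow; one must additionally show that the gain in \(\delta_\dar\), namely \(y_2'-(n_t-k(y_1'))\) (resp.\ \(y_2'-n_t\)), dominates this increase. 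The paper does this by exploiting the orientation constraint: if the drop in short-escape depth occurs because \(y_2'\) has passed \(m_j\) (hence \(n_j\)), then \(y_2'-n_t\) is forced to be at least \(2T_j-1\) (resp.\ \(>2T_j\)), which beats \(2(T_j-1)\). Without this comparison the inequality \(R(\hat{\sigma},\tau)\geq R(\hat{\sigma},\hat{\tau})\) is unproven precisely in the main case.

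A secondary inaccuracy: even within \(V_t\), the ``right-side cost'' is not constant at \(2(k(y_1')-1)+b\) for every \(y_2\in[m_t,n_t]\); if \(y_2\) drops below \(n_t-k(y_1')\), the disagreement on the right leg of \(V_t\) itself forces a larger translation, which is exactly why the canonical endpoint is \(n_t-k(y_1')\) rather than a deeper one. This does not hurt the specific replacement in the statement, but your ``natural extremal choices'' framing hides the calibration that makes these two endpoints, and not smaller ones, the correct targets. Your closing dimension estimate is plausible but hand-waves the boundary case \(S=T\) with \(2S=n\); this is minor compared with the missing case analysis above.
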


\begin{proof}
  \textbf{(1)} Suppose \(y_1'\in[\textrm{max},\textrm{next min})\). Then
  \(\tau=[m_t,n_t-k(y_1')]\) and \(\hat{\tau}=[m_t,y_2]\), with
  \(y_2\geq n_t-k(y_1')\) and having orientation
  \(y_2\in[\textrm{min},\textrm{next max})\).

  If \(n_t-1-k(y_1')<y_2<n_t\), then \[
    D_I(\hat{\sigma},\tau)=D_I(\hat{\sigma},\hat{\tau})
  \] but \[
    D_{\mathrm{AR}}(\hat{\sigma},\tau)-D_{\mathrm{AR}}(\hat{\sigma},\hat{\tau})
    =y_2-(n_t-k(y_1'))\geq 0,
  \] and so \[
    R(\hat{\sigma},\tau)\geq R(\hat{\sigma},\hat{\tau}).
  \]

  Otherwise, \(y_2\in[m_p,n_p)\) for some \(p\geq t+1\). From \(\tau\)
  to \(\hat{\tau}\), the right endpoint \emph{increases}, and so the
  value of \(D_I\) may \emph{decrease}. Specifically, if
  \(D_I(\hat{\sigma},\tau)\) was determined by a particularly large
  \(2\)-V that is then included in the larger support of
  \(\hat{\tau}\), it will not be taken into account for that
  interleaving distance, and we will have a non-zero value for \[
    D_I(\hat{\sigma},\tau)-D_I(\hat{\sigma},\hat{\tau})=
    2\left(\max_{m_t<m_i\leq y_1'}\{T_i\}-\max_{y_2< m_i\leq y_1'}\{T_i\}\right).
  \] Let \(\displaystyle T_j=\max_{m_t<m_i\leq y_1'}\{T_i\}\). Then the difference
  above is at most \(2(T_j-1)\). If we can show that the difference
  between the \(D_{\mathrm{AR}}\)'s is larger than this, we will have shown a
  net increase in \(R(\hat{\sigma},\tau)\) over
  \(R(\hat{\sigma},\hat{\tau})\). \[
    D_{\mathrm{AR}}(\hat{\sigma},\tau)-D_{\mathrm{AR}}(\hat{\sigma},\hat{\tau})
    =y_2-(n_t-k(y_1'))\geq m_j-n_t+k(y_1'),
  \] as the drop in \(D_I\)'s was assumed to have happened by \(y_2\)
  exceeding the value of \(m_j\) (and so \(n_j\) by orientation
  conditions). As \(k(y_1')=T_j-1\), the difference
  in \(D_{\mathrm{AR}}\)'s becomes \[
    m_j-n_t+k(y_1')\geq T_j+T_j-1=2(T_j)-1.
  \] This is precisely what was desired, and so we have the inequality
  for \(R\)-values.

  \textbf{(2)} Suppose next that \(y_1'\in[\textrm{min},\textrm{next max})\). Let
  \(y_2>n_t\) of orientation \([\textrm{max},\textrm{next min})\).

  If \(n_t<y_2<m_{t+1}\), then \[
    D_I(\hat{\sigma},[m_t,n_t])=D_I(\hat{\sigma},[m_t,y_2])
  \] and \[
    D_{\mathrm{AR}}(\hat{\sigma},[m_t,n_t])>D_{\mathrm{AR}}(\hat{\sigma},[m_t,y_2]),
  \] so \(R\) strictly increases from choosing the left endpoint of
  \(\tau\) to be \(n_t\).

  Otherwise, by the requirement of orientation, \(n_{t+1}\leq
  y_2\). Then \[
    D_I(\hat{\sigma},[m_t,n_t])-D_I(\hat{\sigma},[m_t,y_2])=
    2\left(\max_{m_t<m_i\leq y_1'}\{T_i\}
    -\max_{y_2<m_i\leq y_1'}\{T_i\}\right).
  \] %In much simpler terms, the interleaving distace might decrease,
  %as a larger \(y_2\) causes \([m_t,y_2]\) to share more minimals with
  %\(\hat{\sigma}\), and thus possibly relax the interleaving distance
  %by the relative difference between the longest \(T_i\) value
  %consumed by \([m_t,y_2]\) and those remaining in the difference
  %between supports: \(\mathrm{supp}(\sigma')/[m_t,y_2]\). All told,
  The above difference is bounded above by \(2(T_j-1)\), where
  \(\displaystyle T_j:=\max_{y_2<m_i\leq y_1}\{T_i\}\).

  At the same time, \[
    D_{\mathrm{AR}}(\hat{\sigma},[m_t,n_t])-D_{\mathrm{AR}}(\hat{\sigma},[m_t,y_2])=y_2-n_t,
  \] where \(y_2\geq n_j\). But, \(y_2-m_j\geq 2T_j\), and so
  \(y_2-n_t>2T_j\).
  
  Combined, we see that
  \(R(\hat{\sigma},[m_t,n_t])>R(\hat{\sigma},[m_t,y_2])\) for any
   choice of \(y_2>n_t\).
\end{proof}

\subsection{Permissibility of \(n-T/2\)}

\begin{corollary}\label{cor_shallow_non_central} Let \(P=V_1\wedge
  V_2\wedge\ldots\wedge V_p\) be a shallow and \emph{non}-central
  orientation of \(\mathbb{A}_n\). The minimal weight such
  that \(D_\dar\leq D_\dwil\) is \((2,b)\) where \(b\) is bounded
  above by \[
    b\leq n-T/2-1.
  \]
\end{corollary}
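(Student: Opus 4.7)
The plan is to show stability of the weight $(2, b)$ with $b = n - T/2 - 1$, meaning $D_\dar(\sigma, \tau) \leq D_\dwil(\sigma, \tau)$ for every pair of indecomposables. Since $T \geq 2$ in any shallow poset, we have $b = n - T/2 - 1 \geq n - T$, so Proposition \ref{prop_shallow_w} already takes care of the $W$-value leg $W_\dar \leq W_\dwil$; only the $d$-value leg remains. Corollary \ref{cor_min_pair} (whose proof via Propositions \ref{prop_anti1} and \ref{prop_anti2} applies at every $b \geq n - T$) reduces the task to verifying the inequality on the two canonical pairs $(\sigma_u, \tau_u) = ([1_t + 1, y_u], [m_t, n_t])$ and $(\sigma_d, \tau_d) = ([1_t + 1, y_d], [m_t, n_t - k(y_d)])$, since any other pair is already stable by Lemmas \ref{lemma_1_2_3} and \ref{lemma_stable_intervals}.

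For $(\sigma_u, \tau_u)$ I would apply Proposition \ref{prop_formula}. Both $x$-endpoints $1_t + 1$ and $m_t$ lie in the shared $(1_t, m_t]$ segment, while the $y$-endpoints $y_u$ (upward oriented, in $[\text{source}, \text{next sink})$) and $n_t$ (a sink, in $[\text{sink}, \text{next source})$) sit in opposite $y$-axis regions, so $d_\dar(\sigma_u, \tau_u) = (T - 1) + (2n - y_u - n_t)$. The defining minimality of $y_u$ in Definition \ref{defn_k_stuff} gives, in the generic case, the identity $y_u = 2 k(y_u) + b + 1_t - 1$; substituting this along with $n_t = 1_t + T + S_t$ (under the non-central convention that the short side of $V_t$ is the left side) collapses the distance to $d_\dar(\sigma_u, \tau_u) = 2n - 2 k(y_u) - b - 2 \cdot 1_t - S_t$.

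For the lower bound, Corollary \ref{cor_wedges} splits $D_\dwil(\sigma_u, \tau_u)$ over the wedge components. Any wedge piece $V_j$ with $t < j$ lying entirely inside $\mathrm{supp}(\sigma_u) \setminus \mathrm{supp}(\tau_u)$ restricts to the pair $([V_j], 0)$, and interleaving these forces the internal sinks of $V_j$ to escape to $\infty$ at cost $b$ by Proposition \ref{prop_kill}, giving $D_\dwil(\sigma_u, \tau_u) \geq b$. The target inequality $d_\dar \leq D_\dwil$ then reduces to $k(y_u) + 1_t + S_t / 2 \geq T/2 + 1$, which follows at once from $k(y_u), 1_t \geq 1$ and $S_t \geq T \geq 2$. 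The parallel computation for $(\sigma_d, \tau_d)$, where $\tau_d$'s right endpoint $n_t - k(y_d)$ shifts the $y$-difference to $2n - y_d - n_t + k(y_d)$, yields the even slacker inequality $k(y_d) + 2 \cdot 1_t + S_t \geq T + 2$.

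The main obstacle is the degenerate situation where no wedge piece $V_j$ strictly between $V_t$ and the terminal $V_p$ (the one containing $y_u$ or $y_d$) exists, so the simple sink-to-$\infty$ lower bound is unavailable. In that case one has to extract the bound from $V_p$ itself, where the restricted module is a proper ``half-$V_p$'' not addressed directly by Proposition \ref{prop_kill}; this requires redoing the short/long escape analysis of Definition \ref{defn_lambda} with the added consistency constraint that the image of the source $m_p$ must dominate $\Lambda$ on both sides of $V_p$. Running through those cases confirms the same $D_\dwil \geq b$ lower bound survives, where the slack from the shallow hypothesis $2S \leq n$ together with $T \geq 2$ is what keeps $b = n - T/2 - 1$ in range.
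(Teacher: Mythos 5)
Your overall route is the paper's own: dispose of the \(W\)-leg (Proposition \ref{prop_shallow_w}) and of all non-extremal pairs (Lemmas \ref{lemma_1_2_3}, \ref{lemma_stable_intervals}), reduce via Corollary \ref{cor_min_pair} to the two canonical pairs \((\sigma_u,\tau_u)\), \((\sigma_d,\tau_d)\), and close with the algebra coming from the minimality of \(y_u,y_d\) in Definition \ref{defn_k_stuff}; your computations for both pairs are correct, and the only cosmetic difference is that the paper compares \(\delta_\dar\) against the exact value \(D_\dwil=b+2(k-1)\) while you use only \(D_\dwil\geq b\), which the slack in your final inequalities absorbs. The genuine gap is the justification of that lower bound. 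Your mechanism --- some \(V_j\), \(t<j\), wholly contained in \(\mathrm{supp}(\sigma)\setminus\mathrm{supp}(\tau)\), so that the restriction is \(([V_j],0)\) and Proposition \ref{prop_kill} applies --- need not be available: \(V_{t+1}\) always meets \(\mathrm{supp}(\tau_u)\) in the glued sink \(n_t=1_{t+1}\), and \(y_u\) or \(y_d\) may already lie in \(V_{t+1}\), which is exactly the degenerate case you flag. There you replace the argument with a promise (``running through those cases confirms\ldots''); the proposed re-run of the escape analysis of Definition \ref{defn_lambda} for a ``half-\(V\)'' with a consistency constraint at the source is precisely the unproven step, so as written the proof is incomplete at its one load-bearing estimate.

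The missing step is easily supplied, and by the same fact the paper itself leans on when it asserts \(D_I=b+2(k(y_d)-1)\): by Proposition \ref{prop_class} (with \(a=2\leq b\)), any translation of weighted height \(<b\) fixes every sink and source, so two interval modules whose supports disagree at a fixed point cannot be interleaved at height \(<b\) --- this is the mechanism of Lemma \ref{lemma_1_2_3}(1). For \((\sigma_u,\tau_u)\) the upward orientation of \(y_u\) forces \(y_u\geq m_{t+1}\), so the source \(m_{t+1}\) lies in \(\mathrm{supp}(\sigma_u)\setminus\mathrm{supp}(\tau_u)\); for \((\sigma_d,\tau_d)\), since \(k(y_d)\geq 1\) and \(y_d>n_t\), the sink \(n_t\) lies in \(\mathrm{supp}(\sigma_d)\setminus\mathrm{supp}(\tau_d)\). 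Either way \(D_\dwil\geq b\) with no case analysis, and your reduced inequalities \(2k(y_u)+2\cdot 1_t+S_t\geq T+2\) and \(k(y_d)+2\cdot 1_t+S_t\geq T+2\) then finish the argument. Note finally that, exactly like the paper's proof, this establishes only that \(b=n-T/2-1\) is permissible; attainment of that bound is the separate content of Example \ref{ex_upper_bound}.
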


\begin{proof}
  \emph{The minimal pair is always stable for} \(b\geq n-T/2-1\)
  : Of the two possible minimals pairs of Corollary \ref{cor_min_pair}
  we will only show the proof of \(\sigma_d=[1_t+1,y_d]\) and
  \(\tau_d=[m_t,n_t-k(y_d)]\). (The proof for \(\sigma_u,\tau_u\) is
  incredibly similar, and a slightly less restrictive inequality.)
  Recall that \(y_d\) is minimal such that
  \(x_1+D_I=1_t+n-T/2+2k(y_d)\leq y_d\)
  (Definition \ref{defn_k_stuff}).

  So \(D_I=b+2(k(y_d)-1)\) and %\(\delta_\dar(\sigma,\tau) > D_I(\sigma,\tau) \geq n-T/2-1+2k(y_d)\),
  \(\delta_\dar(\sigma,\tau)=m_t-1_t-1+n-(n_t-k(y_d))+n-y_d\). Comparing,
  we get
  \begin{align*}
    \delta_\dar(\sigma_d,\tau_d)&\leq D_I(\sigma,\tau)\text{ if}\\
    m_t-1_t-1+n-n_t+k(y_d)+n-y_d &\leq b+2(k(y_d)-1)\text{ if} \\
    m_t-1_t-1+2n-n_t+k(y_d)-(1_t+b+2k(y_d))&\leq
    b+2(k(y_d)-1)\text{ if}\\
    m_t-1-2\cdot1_t+2n-n_t-k(y_d)-2b&\leq 2(k(y_d)-1)\text{
                                     if}\\
    m_t-1-2\cdot1_t+2n-n_t-2n+T&\leq 2k(y_d)-2\\
    m_t-n_t+T-2\cdot1_t+3&\leq 3k(y_d)\text{ if}\\
    -2\cdot1_t+3&\leq 3k(y_d)
  \end{align*}
  the last statement of which is true due to the left being \(\leq 1\)
  and the right being \(\geq 3\).

  % \emph{The weight }\(b=n-2(T-1)\)\emph{ is a lower bound on stability:}
  % This follows from testing \(W\)-values on \([1,n]\), in which
  % \(W_\dwil([1,n])=2(T-1)+b\) and \(W_\dar([1,n])=n\).
  
  % Suppose \(b<n-T/2\). Then one of the two minimal pairs should be
  % anti-stable. David's original proof is that there exists some poset
  % in the shallow but non-central collection that has a
  % counter-example. So, it is true that \(n-T/2\) is a minimal weight
  % for \(b\) in \emph{each} such poset, or only for the class as a
  % whole? (This would be a much weaker result. But I'm tempted to say
  % you can't drop under \(b=n-T/2\) for \emph{any} poset in this
  % collection. I'd at least like to try and disprove that notion to
  % myself before I write this.)

\end{proof}

% ****** I AM TREATING \(k(y_d)\) AS BOTH \(0\) AND \(1\) WHERE IT SUITS
% ME. THIS IS WRONG, OBVIOUSLY, AND IS THE VERY LIKELY SOURCE OF THE
% OFF-BY-ONE ERRORS INCLINING ME TOWARDS TEARING MY HAIR OUT.

\begin{figure}
  \includegraphics[scale = 4]{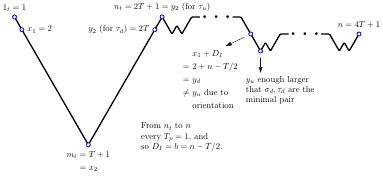}
  \caption{General example of a poset that attains \((2,n-T/2-1)\)
    as its minimal stable weight.}
  \label{fig_upper_bound}
\end{figure}

\begin{ex}\label{ex_upper_bound}(See Figure \ref{fig_upper_bound}.)
  We show a sample poset in which the minimal stable value equals the
  upper bound \(b=n-T/2-1\).

  For \(T>1\), let \(1_t=1\), \(m_t=T+1\), \(n_t=2T+1\),
  \(n=1+4T\). Let the region from \(n_t\) to \(n\) consist of
  \(V_p\)'s with \(T_p=1\) and of orientation such that \(y_u\) is
  forced to be (even just slightly) larger than the minimization given
  by \ref{defn_k_stuff}.

  Then \((\sigma_d,\tau_d)\) form the minimal pair, and we can
  explicity check that \(b=n-T/2-1\) is permissible while no smaller
  weight will be:
  \begin{align*}
    \delta(\sigma_d,\tau_d)&\leq D_I(\sigma_d,\tau_d)\text{ iff}\\
    T-1+n-2T+n-(2+b)&\leq b\text{ iff}\\
    2n-T-3&\leq 2b\text{ iff}\\
    n-T/2-1&\leq b\text{ if }T\text{ is even, or}\\
    n-T/2-2&\leq b\text{ if }T\text{ is odd.}
  \end{align*}

\end{ex}

\bibliography{master_bib}
\bibliographystyle{alpha}

\bigskip
\footnotesize

Killian~Meehan, \textsc{Kyoto University Institute of Advanced Study}\par\nopagebreak
\textit{E-mail address}: \texttt{killian.f.meehan@gmail.com}

\medskip

David~C.~Meyer, \textsc{Smith College}\par\nopagebreak
\textit{E-mail address}: \texttt{dmeyer@smith.edu}

\end{document}